\numberwithin{equation}{section}
\theoremstyle{plain}
\newtheorem{thm}{Theorem}[section]
\newtheorem{prop}[thm]{Proposition}
\newtheorem{cor}[thm]{Corollary}
\newtheorem{lem}[thm]{Lemma}
\newtheorem{conjecture}{Conjecture}
\theoremstyle{definition}
\newtheorem{defi}[thm]{Definition}
\newtheorem{exm}[thm]{Example}
\theoremstyle{remark}
\newtheorem{rmk}[thm]{\bf Remark}
\def\g{\gamma}
\def\G{\Gamma}
\def\xra{\xrightarrow[]{}}
\def\a{\alpha}
\def\b{\beta}
\def\mG{\mathcal{G}}
\def\VV{\mathcal{V}}
\def \Z{\mathbb Z}
\def\-{\text{-}}
\def\TT{\mathcal{T}}
\def\L{L}
\def\TT{\mathscr{T}}
\newcommand{\FKbar}{\operatorname{\mathrm{F}\ol{\mathrm{K}}}}
\def\VV{\mathcal{V}}
\def\LL{\mathscr{L}}
\newcommand{\ol}{\overline}
\def\sub{\subseteq}
\newcommand{\FK}{\operatorname{FK}}
\newcommand{\BF}{\operatorname{BF}}
\newcommand{\idd}{\operatorname{id}}
\newcommand{\Imm}{\operatorname{Im}}
\newcommand{\Ker}{\operatorname{Ker}}
\newcommand{\Coker}{\operatorname{Coker}}
\def\sub{\subseteq}
\newcommand{\gr}{\operatorname{gr}}
\newcommand{\Mod}{\operatorname{Mod}}
\newcommand{\Gr}{\operatorname{Gr}}
\newcommand{\Spec}{\operatorname{Spec}}
\newcommand{\Prime}{\operatorname{Prime}}
\begin{document}

\title[Graded $K$-Theory, Filtered $K$-theory and the classification of graph algebras]{Graded $K$-Theory, Filtered $K$-theory and \\ the classification of graph algebras}

\subjclass[2010]{16D70,18F30}

\keywords{Leavitt path algebra, graph $C^*$-algebra, graded $K$-theory, filtered $K$-theory, graded prime ideal, graded Grothendieck group}

\date{\today}

\author{Pere Ara}
\address{Pere Ara: Department of Mathematics\\
Universitat Aut\`onoma de Barcelona\\
 08193 Bellaterra (Barcelona)\\And}
\address{\qquad \qquad \quad Barcelona Graduate School of Mathematics (BGSM), Barcelona, Spain}
\email{para@mat.uab.cat}

\author{Roozbeh Hazrat}
\address{Roozbeh Hazrat: 
Centre for Research in Mathematics and Data Sceince\\
Western Sydney University\\
Australia} \email{r.hazrat@westernsydney.edu.au}

\author{Huanhuan Li}
\address{
Huanhuan Li: School of Mathematical Sciences\\
Anhui University, Hefei 230601, Anhui, PR China} \email{lihuanhuan2005@163.com}

\begin{abstract} 
We prove that an isomorphism of  graded Grothendieck groups $K^{\gr}_0$ of two Leavitt path algebras  induces an isomorphism of a certain quotient of algebraic filtered $K$-theory and consequently an isomorphism of filtered $K$-theory of their associated graph $C^*$-algebras. As an application, we show that, since for a finite graph $E$ with no sinks, $K^{\gr}_0\big(L(E)\big)$ of the Leavitt path algebra $L(E)$ coincides with Krieger's dimension group of its adjacency matrix $A_E$, our result relates the shift equivalence of graphs to the filtered $K$-theory and consequently gives that two arbitrary shift equivalent matrices give stably isomorphic graph $C^*$-algebras. This result was only known for irreducible graphs. 


 \end{abstract}

\maketitle


\section{introduction}

 One of the beauties of the theory of Leavitt path algebras is that one can obtain a substantial amount of information about the structure of the algebra from the geometry of its associated graph. The first theorem proved in this theory was that the simplicity of a Leavitt path algebra is equivalent to that in the associated graph every cycle has an exit and every vertex connects to every infinite path and every finite path ending in a sink (\cite{ap}, \cite[\S2.9]{lpabook}).  
 
 The theory of Leavitt path algebras is intrinsically related, via graphs, to the theory of symbolic dynamics and $C^*$-algebras where the major classification programs have been a domain of intense research in the last 50 years.  However, it is not yet clear what is the right invariant for the classification of Leavitt path algebras, and for that matter, graph $C^*$-algebras~\cite{tomforde2}. In the case of simple graph $C^*$-algebras (i.e., algebras with no nontrivial ideals), it is now established that $K$-theory functors $K_0$ and $K_1$ can classify these algebras completely~\cite{ror2,ror3}. Following the early work of R\o rdam~\cite{ror}  and Restorff~\cite{restorff},  it became clear that one way to preserve enough information in the presence of ideals in a $C^*$-algebra, is to further consider the $K$-groups of the ideals, their subquotients and how they are related to each other via the six-term sequence. Over the next ten years since \cite{ror,restorff} this approach, which is now called \emph{filtered $K$-theory},  was subsequently investigated and further developed by Eilers, Restorff, Ruiz and S\o rensen~\cite{errs4,errs2}, where it was shown that the sublattice of gauge invariant prime ideals and their subquotient $K$-groups can be used as an invariant.  In a major work~\cite{errs3}  it was shown that filtered $K$-theory is a complete invariant for unital graph $C^*$-algebras. In~\cite{errs} the four authors introduced the filtered $K$-theory in the purely algebraic setting and showed that if two Leavitt path algebras with coefficients in complex numbers $\mathbb C$ have isomorphic filtered algebraic $K$-theory then the associated graph $C^*$-algebras have isomorphic filtered $K$-theory.
 
 This paper is devoted to  \emph{graded $K$-theory} as a capable invariant for the classification of graph algebras. This approach was initiated in~\cite{roozbehhazrat2013} and further studied in~\cite{arapardo,haz2013,haz3}. The main aim of this paper is to show that in the setting of graph algebras, graded $K$-theory determines a large portion of filtered $K$-theory. To be precise, we show that for two Leavitt path algebras over a field, if their graded Grothendieck groups $K^{\gr}_0$ are isomorphic, then certain precisely defined quotients of their filtered $K$-theories are also isomorphic. This shows the richness of a graded Grothendieck group as an invariant. Namely, the single group $K_0^{\gr}(L_k(E))$ of the Leavitt path algebra $L_k(E)$ associated to a graph $E$, with coefficients in a field $k$, contains all the information about the $K_0$ groups and the quotient groups $\ol{K}_1$ of $K_1$ defined below of the subquotients of graded ideals of $L_k(E)$, and how they are related via the long exact sequence of $K$-theory. For the sake of precision, let us point out that the graded Grothendieck group $K_0^{\gr}$ is a {\it graded invariant} of Leavitt path algebras, but not an algebra invariant: isomorphic Leavitt path algebras may have non-isomorphic graded Grothendieck groups.

A crucial first step in relating graded $K$-theory to the filtered $K$-theory is the realisation that there is a tight connection between the algebraic structure of $L_k(E)$ and the monoid structure of $\VV^{\gr}(L_k(E))$, i.e., the positive cone of $K_0^{\gr}(L_k(E))$. 
Namely, we observe that  the lattice of graded (prime) ideals of $L_k(E)$ is isomorphic to the lattice of order (prime) ideals of $\VV^{\gr}(L_k(E))$. This allows us to lift an 
order-preserving $\mathbb Z[x,x^{-1}]$-module isomorphism between the graded Grothendieck groups of two Leavitt path algebras
\begin{equation}\label{fdgfdrge1}
\varphi: K_0^{\gr}(L_k(E)) \longrightarrow  K_0^{\gr}(L_k(F)),
\end{equation}
to a natural homeomorphism  between the space of spectrums of their graded prime ideals,
\[\varphi: \Spec^{\gr}(L_k(E)) \longrightarrow  \Spec^{\gr}(L_k(F)).\]
   We then proceed to piece together $K_0$ and $K_1$ groups of ideals together from this correspondence. This is possible as we can establish a van den Bergh type exact sequence relating $K^{\gr}_0$ to $K_0$ and $K_1$ (Proposition~\ref{someother}),  
\begin{equation}
\xymatrix{
K_1(\L_k(E)) \ar[r]& K_0^{\gr}(\L_k(E)) \ar[r]& K_0^{\gr}(\L_k(E)) \ar[r]& K_0(\L_k(E))\ar[r]&0.}
\end{equation} 

The group $K_1(L_k(E))$ splits in a non-canonical way as
$$K_1(L_k(E)) \cong {\rm Coker} \bigg( A_E^t-I: (k^{\times})^{R}\longrightarrow (k^{\times})^{E^0}\bigg)\bigoplus \Ker \bigg( A^t-I : \Z^{R}\longrightarrow \Z^{E^0}\bigg),$$
where $A_E$ is the adjacency matrix of the graph $E$ (see Lemma~\ref{k0k1}(ii)).    
In order to get a canonical splitting, which is functorial with respect to the maps induced by graded subquotients, we consider the group 
$$\ol{K}_1(L_k(E))= K_1(L_k(E))/G_E,$$ (see \eqref{definition-quotient}).
Here $G_E$ is the subgroup of $K_1(L_k(E))$ generated by the elements  $[-v + (1-v)]_1$ for all $v\in E^0$, with $1$ the unit for the unitization of $L_k(E)$. Corresponding groups $\ol{K}_1(J/I)$ can also be defined for all graded subquotients $J/I$ of $L_k(E)$. Using these groups, and setting $\ol{K}_0(J/I)=K_0(J/I)$, we define a quotient $\FKbar_{0,1}(L_k(E))$ of algebraic filtered $K$-theory $\FK_{0,1}(L_k(E))$ (see Definition \ref{def:FKbar}).


We then show that the isomorphism $\varphi$ of (\ref{fdgfdrge1}) induces a commutative diagram involving the natural transformation of filtered $\ol{K}$-theory (Theorem~\ref{maintheorem}). Namely for graded  ideals $I \subseteq J\subseteq P$ of $L_k(E)$, one can obtain induced isomorphisms  $\alpha_{J/I, n}: \overline{K}_n(J/I)\xra \overline{K}_n(\widetilde{J}/\widetilde{I})$, $\alpha_{P/I, n}: \overline{K}_n(P/I)\xra \overline{K}_n(\widetilde{P}/\widetilde{I})$, and $\alpha_{P/J, n}: \overline{K}_n(P/J)\xra \overline{K}_n(\widetilde{P}/\widetilde{J})$, for $n=0,1$, where $\widetilde{I}=\varphi(I), \widetilde{J}=\varphi(J)$ and $\widetilde{P}=\varphi(P)$, such that the following diagram commutes:
\begin{equation*}
\xymatrix@C+.1pc@R+1.1pc{
\overline{K}_1(J/I)\ar[r]^{}\ar[d]^{\alpha_{J/I, 1} \hskip .2in{\txt{\tt } \hskip -.4in}} \ar[r]^{}& \overline{K}_1(P/I)\ar[r]^{} \ar[d]^{\alpha_{P/I, 1} \hskip .2in{\txt{ \tt } \hskip -.4in}}& \overline{K}_1(P/J)\ar[r]^{} \ar[d]^{\alpha_{P/J, 1} \hskip .2in{\txt{ \tt } \hskip -.4in}} & K_0(J/I)\ar[r]^{}\ar[d]^{\alpha_{J/I, 0} \hskip .2in{\txt{ \tt } \hskip -.4in}}&K_0(P/I)\ar[r]^{}\ar[d]^{\alpha_{P/I, 0} \hskip .2in{\txt{ \tt } \hskip -.4in}}&K_0(P/J)\ar[d]^{\alpha_{P/J, 0}}\\
\overline{K}_1(\widetilde{J}/\widetilde{I})\ar[r]^{}& \overline{K}_1(\widetilde{P}/\widetilde{I})\ar[r]^{} & \overline{K}_1(\widetilde{P}/\varphi(J))\ar[r]^{} & K_0(\widetilde{J}/\widetilde{I})\ar[r]^{}&K_0(\widetilde{P}/\widetilde{I})\ar[r]^{}&K_0(\widetilde{P}/\widetilde{J}).
}
\end{equation*}
Here the rows come from the long exact sequence of algebraic $K$-theory (see \cite[Theorem 2.4.1]{cortinas}). 

Since the isomorphism of graded $K$-theory of Leavitt path algebras with coefficients in complex numbers $\mathbb C$ implies the isomorphisms of filtered $K$-theory of their corresponding graph $C^*$-algebras (Theorem \ref{lateradd}), we relate the shift equivalent matrices via Krieger's dimension groups to graded $K$-theory and in return to algebraic and thus analytic filtered $K$-theory (\S\ref{gdtbryr777}). Then invoking  the Eilers, Restorff, Ruiz and S\o rensen recent result~\cite{errs3} on the classification of finite graph algebras via filtered $K$-theory,  we can conclude that shift equivalent matrices have stably isomorphic Cuntz-Krieger $C^*$-algebras (Proposition~\ref{bfg1998d}).
This was only known in the case of irreducible matrices by a combination of the Franks Theorem on the classification of flow equivalence via the  Bowen-Franks group~\cite{franks}, Parry and Sullivan's description of flow matrices in terms of moves~\cite{parrysullivan}  and Bates and Pask's paper~\cite{batespask} translating these moves into the setting of graph $C^*$-algebras. 

The following diagram summarizes the connections between the graphs $E$ and $F$, their adjacency matrices $A_E$ and $A_F$, their associated graph algebras and their graded and filtered $K$-theories.

\begin{equation}\label{tgftgrtgete3}
\xymatrix{
A_E \ar@{<->}[rrrr]^{\text{shift equivalence of matrices}}  && \ar@2{<->}[d]&& A_F\\
\Delta_E \ar@{<->}[rrrr]^{\text{\, iso. of Krieger's \, \, \, dimension groups}} && \ar@2{<->}[d] && \Delta_F\\
K_0^{\gr}(L_k(E)) \ar@{<->}[rrrr]^{\text{\, \, \, iso. of  \, \, \, \, $K_0^{\gr}$-group}} &&\ar@[blue]@2{->}[d]&& K_0^{\gr}(L_k(F))\\
\FKbar_{0,1}(L_k(E)) \ar@{<->}[rrrr]^{\text{\, iso. of alg. \,  \, fil. $K$-groups}} &&\ar@2{->}^{\text{ for } k=\mathbb C}[d]&& \FKbar_{0,1}(L_k(F)) \\
\FK_{0,1}(C^*(E)) \ar@{<->}[rrrr]^{\text{\, \, \, \, \, iso. of  \, \, \,fil.  $K$-groups}} &&\ar@2{<->}[d]&& \FK_{0,1}(C^*(F)) \\
C^*(E)  \ar@{<->}[rrrr]^{\text{\, \, Morita \, \, \, \, equivalent}}_{\text{}} &&&& C^*(F)
}
\end{equation}

This paper is devoted to row-finite graphs (graphs such that each vertex emits a finite number of edges). In the presence of infinite emitters, the monoid of a Leavitt path algebra is more involved (see Remark~\ref{hgftgftrgt22}).  Although the majority of the techniques we employ are valid for arbitrary graphs, so that consequently the statements we establish hold for arbitrary graphs (albeit with more complex and lengthier proofs), there are several instances where the techniques for such graphs need to be yet established, such as working with quotient monoids.
This paper is thus devoted to the row-finite case. Our major application is related to symbolic dynamics and it only requires working with finite graphs and thus we are equipped to employ our results in this setting.

We close the introduction by recalling a conjecture posed in \cite[Conjecture~1]{roozbehhazrat2013}, namely the graded Grothendieck group $K_0^{\gr}$ along with its ordering and its module structure is a complete invariant for graded Morita equivalence for the class of (finite) Leavitt path algebras (see also~\cite{arapardo,haz2013}, \cite[\S~7.3.4]{lpabook}).  The language of homology of groupoids allows us to propose a single invariant which would classify both Leavitt and graph $C^*$-algebras. For an arbitrary graph $E$ and its associated graph groupoid $\mG_E$, it was proved  in \cite{hl2} that 
there is an order-preserving $\mathbb Z[x,x^{-1}]$-module isomorphism 
\begin{align*}
K_0^{\gr}(L_k(E)) &\longrightarrow  H_0^{\gr}(\mG_E),\\
[L_k(E)] &\longmapsto [1_{\mG_E^{(0)}}]
\end{align*}
where $H_0^{\gr}(\mG_E)$ is the zeroth graded homology group of the \'etale graph groupoid $\mG_E$. We can then formulate the following conjecture. 

\begin{conjecture}{\cite[Conjecture~1]{roozbehhazrat2013}}  \label{goldenconjecture1}
Let $E$ and $F$ be finite graphs and $k$ a field. Then the following are equivalent.

\begin{enumerate}[\upshape(1)]

\item There is a gauge preserving isomorphism $\varphi: C^*(E) \rightarrow  C^*(F)$;

\smallskip

\item There is a graded ring isomorphism $\varphi:L_k(E) \rightarrow L_k(F)$;

\smallskip

\item There is an order-preserving $\Z[x,x^{-1}]$-module isomorphism 
$\phi: H_0^{\gr}(\mG_E) \rightarrow H_0^{\gr}(\mG_F)$, such that $\varphi([1_{\mG_E^{(0)}}])=[1_{\mG_F^{(0)}}]$. 

\end{enumerate}

\end{conjecture}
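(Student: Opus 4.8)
The plan is to establish the three-way equivalence as a cycle in which two implications are purely functorial and the third is a realization (reconstruction) statement carrying the genuine difficulty. Throughout, I would treat the graph groupoid $\mG_E$ as the unifying object: both the gauge structure of $C^*(E)$ and the graded ring structure of $L_k(E)$ are governed by $\mG_E$ together with its canonical $\Z$-grading cocycle, and $H_0^{\gr}(\mG_E)$ is its homological shadow. The forward chain $(1)\Rightarrow(2)\Rightarrow(3)$ should be the functorial part, and the closing implication $(3)\Rightarrow(1)$ the hard part.

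For $(2)\Rightarrow(3)$, a graded ring isomorphism $L_k(E)\to L_k(F)$ induces an order-preserving $\Z[x,x^{-1}]$-module isomorphism on $K_0^{\gr}$ sending $[L_k(E)]$ to $[L_k(F)]$; transporting along the natural isomorphism $K_0^{\gr}\cong H_0^{\gr}$ of \cite{hl2} then yields $(3)$. For $(1)\Rightarrow(2)$ with $k=\mathbb C$, a gauge-preserving $*$-isomorphism preserves the spectral subspaces of the gauge action, hence should restrict to a graded isomorphism of the dense graded $*$-subalgebras, which are the complex Leavitt path algebras; the passage to an arbitrary field $k$ is then absorbed into the equivalence $(2)\Leftrightarrow(3)$, since condition $(3)$ is manifestly independent of $k$. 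Alternatively, both $(1)$ and $(2)$ can be recast, via the graded reconstruction theorems, as a grading-preserving isomorphism of the groupoids $\mG_E\cong\mG_F$, from which $(3)$ is immediate by functoriality of groupoid homology.

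The decisive implication is the realization $(3)\Rightarrow(1)$, which then also gives $(3)\Rightarrow(2)$. Here I would first transport the isomorphism of $(3)$ back to an order-preserving $\Z[x,x^{-1}]$-module isomorphism $\varphi\colon K_0^{\gr}(L_k(E))\to K_0^{\gr}(L_k(F))$ via $K_0^{\gr}\cong H_0^{\gr}$, and then run the machinery of this paper: by Theorem~\ref{maintheorem}, $\varphi$ induces a homeomorphism of graded prime spectra and an isomorphism of the filtered $\ol{K}$-theory quotient $\FKbar_{0,1}$, and by Theorem~\ref{lateradd} (taking $k=\mathbb C$) an isomorphism $\FK_{0,1}(C^*(E))\cong\FK_{0,1}(C^*(F))$. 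Invoking the Eilers--Restorff--Ruiz--S\o rensen classification \cite{errs3} should then produce a stable isomorphism $C^*(E)\otimes\mathcal K\cong C^*(F)\otimes\mathcal K$.

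The hard part will be the final upgrade, from this stable (and a priori neither unital nor gauge-equivariant) isomorphism to the gauge-preserving isomorphism demanded by $(1)$. The output of $\FK_{0,1}$ together with \cite{errs3} remembers only the ungraded, ideal-filtered data, whereas the order and $\Z[x,x^{-1}]$-module refinements encoded in $\varphi$ are exactly what ought to force gauge-equivariance; closing this gap appears to require a gauge-equivariant (graded) strengthening of the ERRS classification, which is not presently available. On the algebraic side the same obstruction surfaces, since $(3)\Rightarrow(2)$ is itself the graded classification conjecture for Leavitt path algebras — the assertion that $K_0^{\gr}$ is a complete graded invariant — so I expect the essential step to be a realization theorem constructing the graded ring isomorphism directly from $\varphi$, for instance by realizing $\varphi$ through a sequence of graph moves (graded Morita equivalences) refining the passage from shift equivalence to strong shift equivalence.
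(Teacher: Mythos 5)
The first thing to note is that the paper does not prove this statement: it is recorded as an open conjecture (Conjecture~\ref{goldenconjecture1}, quoted from \cite{roozbehhazrat2013}), and the body of the paper establishes only consequences of hypothesis (3) --- Theorem~\ref{maintheorem} (isomorphism of $\FKbar_{0,1}^+$), Theorem~\ref{lateradd} (isomorphism of analytic filtered $K$-theory for $k=\mathbb C$), and Proposition~\ref{bfg1998d} (Morita equivalence of $C^*(E)$ and $C^*(F)$). So there is no proof in the paper to compare yours against, and your proposal has to be judged as an attempt at an open problem. As such it is not a proof, and you say so yourself: the implication $(3)\Rightarrow(1)$ (equivalently $(3)\Rightarrow(2)$) is exactly the open content of the conjecture. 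Your reduction --- transport (3) to an order-preserving $\Z[x,x^{-1}]$-module isomorphism of $K_0^{\gr}$ via \cite{hl2}, apply Theorem~\ref{maintheorem} and Theorem~\ref{lateradd}, then invoke \cite{errs3} --- delivers only that $C^*(E)$ and $C^*(F)$ are stably isomorphic, with no gauge-equivariance and no control of the order unit; this is precisely the content of Proposition~\ref{bfg1998d}, and the upgrade to a gauge-preserving unital isomorphism is exactly the ``realization theorem'' you admit is not available. Identifying that gap is a fair reading of the state of the art, but it leaves the conjecture unproven.

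Beyond this acknowledged gap, you treat $(1)\Rightarrow(2)$ too lightly. A gauge-preserving $*$-isomorphism preserves the \emph{closed} spectral subspaces of the gauge action, but $L_{\mathbb C}(E)$ is only a dense graded $*$-subalgebra of $C^*(E)$, and nothing forces a $C^*$-isomorphism to carry this dense subalgebra onto $L_{\mathbb C}(F)\subseteq C^*(F)$. Likewise, the groupoid reconstruction theorems you invoke to recast both (1) and (2) as graded isomorphisms $\mG_E\cong\mG_F$ require diagonal (Cartan) preserving isomorphisms, which gauge-preserving isomorphisms need not be. So the equivalence $(1)\Leftrightarrow(2)$ is itself part of what is being conjectured, not a formality. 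Finally, your suggestion to ``absorb'' the passage from $k=\mathbb C$ to an arbitrary field into the equivalence $(2)\Leftrightarrow(3)$ is circular, since $(2)\Leftrightarrow(3)$ is likewise open; condition (3) being independent of $k$ is precisely why the conjecture is strong, not a device one can use in its own proof.
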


It is further conjectured that the ordered $\mathbb Z[x,x^{-1}]$-module isomorphism $H_0^{\gr}(\mG_E) \rightarrow H_0^{\gr}(\mG_F)$ (without preserving the order units) should give that these algebras are graded Morita equivalent (\cite[Remark~16]{haz3}).

Along the paper we will use basic concepts from algebraic $K$-theory, both in the graded and the non-graded settings. We refer the reader to \cite{cortinas}, \cite{haz} and \cite{rosenberg} for background on these concepts.

\section{The lattice of ideals of graph algebras}\label{monidgfhf}

\subsection{Lattices}\label{laikfgut6}

Throughout the paper we work with the lattice of graded ideals of a Leavitt path algebra and the corresponding lattice of graded order ideals of the associated graded Grothendieck group. They are linked via the lattice of admissible pairs of the graph which defines the algebra (see~\S\ref{lpabasis}). Recall that a lattice is a partially ordered set $M$ which any two elements $a,b\in M$ have \emph{meet} $a \wedge b $ (the greatest lower bound) and \emph{join} $a\vee b$ (the least upper bound).  A morphism between two lattices is a map which preserves meets and joins and thus preserves the order. An element $x\in M$ is called a \emph{prime} element if for any $a,b \in M$ with $a\wedge b \leq x$, we have $a\leq x$ or $b \leq x$. 
We will work with the prime elements in the lattice of graded ideals of a Leavitt path algebra $L(E)$ (\S\ref{lpabasis}) and the lattice of graded order ideals of $\VV^{\gr}(L(E))$ (Definition \ref{orfdregeret54}). Clearly an isomorphism between lattices preserves prime elements.

\subsection{Graphs}\label{}
Below we briefly recall the notions that we use throughout the paper. 

A (directed) graph $E$ is a tuple $(E^{0}, E^{1}, r, s)$, where $E^{0}$ and $E^{1}$ are
sets and $r,s$ are maps from $E^1$ to $E^0$. A graph $E$ is finite if $E^0$ and $E^1$ are both finite. We think of each $e \in E^1$ as an edge 
pointing from $s(e)$ to $r(e)$. 
We use the convention that a (finite) path $p$ in $E$ of length $n\geq 1$ is
a sequence $p=\a_{1}\a_{2}\cdots \a_{n}$ of edges $\a_{i}$ in $E$ such that
$r(\a_{i})=s(\a_{i+1})$ for $1\leq i\leq n-1$. We define $s(p) = s(\a_{1})$, and $r(p) =
r(\a_{n})$. A vertex is viewed as a path in $E$ of length $0$. If there is a path from a vertex $u$ to a vertex $v$, we write $u\ge v$. A subset $M$ of $E^0$ is \emph{downward directed}  if for any two $u,v\in M$ there exists $w\in M$ such that $u\geq w$ and $v\geq w$ (\cite[\S4.2]{lpabook}, \cite[\S2]{rangaswamy}).

A graph $E$ is said to be \emph{row-finite} if for each vertex $u\in E^{0}$,
there are at most finitely many edges in $s^{-1}(u)$. A vertex $u$ for which $s^{-1}(u)$
is empty is called a \emph{sink}. 

The \emph{covering graph}  $\overline{E} = E\times_1 \mathbb Z$  of $E$ is given by
\begin{gather*}
    \overline E^0 = \big\{v_n \mid v \in E^0 \text{ and } n \in \Z \big\},\qquad
    \overline E^1 = \big\{e_n \mid e\in E^1 \text{ and } n\in \Z \big\},\\
    s(e_n) = s(e)_n,\qquad\text{ and } \qquad  r(e_n) = r(e)_{n-1}.
\end{gather*}

As examples, consider the following graphs
\begin{equation*}
{\def\labelstyle{\displaystyle}
E : \quad \,\, \xymatrix{
 u \ar@(lu,ld)_e\ar@/^0.9pc/[r]^f & v \ar@/^0.9pc/[l]^g
 }} \qquad \quad
{\def\labelstyle{\displaystyle}
F: \quad \,\, \xymatrix{
   u \ar@(ur,rd)^e  \ar@(u,r)^f
}}
\end{equation*}
Then
\begin{equation*}
E\times_1 \mathbb Z: \quad \,\,\xymatrix@=15pt{
\dots  {u_{1}} \ar[rr]^-{e_1} \ar[drr]^(0.4){f_1} &&  {u_{0}} \ar[rr]^-{e_0} \ar[drr]^(0.4){f_0} && {u_{-1}}  \ar[rr]^-{e_{-1}} \ar[drr]^(0.4){f_{-1}} && \cdots\\
\dots {v_{1}}   \ar[urr]_(0.3){g_1} && {v_{0}} \ar[urr]_(0.3){g_0}  && {v_{-1}}  \ar[urr]_(0.3){g_{-1}}&& \cdots
}
\end{equation*}
and
\begin{equation*}
F \times_1 {\mathbb Z}: \quad \,\,\xymatrix@=15pt{
\dots  {u_{1}} \ar@/^0.9pc/[r]^{f_1} \ar@/_0.9pc/[r]_{e_1}  &  {u_{0}} \ar@/^0.9pc/[r]^{f_0} \ar@/_0.9pc/[r]_{e_0} & {u_{-1}}  \ar@/^0.9pc/[r]^{f_{-1}}  \ar@/_0.9pc/[r]_{e_{-1}} & \quad \cdots
}
\end{equation*}

Throughout the note $E$ is a row-finite graph. Recall that a subset $H \subseteq E^0$ is said to be \emph{hereditary} if
for any $e \in E^1$ we have that $s(e)\in H$ implies $r(e)\in H$. A hereditary subset $H
\subseteq E^0$ is called \emph{saturated} if whenever $v$ is not a sink, $\{r(e):
e\in E^1 \text{~and~} s(e)=v\}\subseteq H$ implies $v\in H$. We let $\TT_E$ denote the set of hereditary saturated subsets of $E^0$, and order two hereditary saturated subsets $H$ and $H'$ by $H\leq H'$ if  $H\sub H'$. It has been established that the ordered set $\TT_E$ is actually a lattice (see \cite[Proposition~2.5.6]{lpabook}).


We denote by $E_H$ the \emph{restriction graph} with $H$ a hereditary saturated subset of $E^0$ such that
$$E^0_H= H,$$
$$E^1_H=\{e\in E^1\;|\;  s(e) \in H\}$$ and we restrict $r$ and $s$ to $E^1_{H}$. On the other hand, for a hereditary saturated subset of $E^0$, we denote by $E/H$ the \emph{quotient graph} such that 
$$(E/H)^0=E^0\setminus H,$$
$$(E/H)^1= \{e\in E^1\;|\; r(e)\notin H\}$$ and we restrict $r$ and $s$ to $(E/H)^1$.

For hereditary saturated subsets $H_1$ and $H_2$ of $E$ with  $H_1 \subseteq  H_2$, define the quotient graph $H_2 / H_1 $ as a graph such that 
$(H_2/ H_1)^0=H_2\setminus H_1$ and $(H_2/H_1)^1=\{e\in E^1\;|\; s(e)\in H_2, r(e)\notin H_1\}$. The source and range maps of $H_2/H_1$ are restricted from the graph $E$. If $H_2=E^0$, then $H_2/H_1$ is the \emph{quotient graph} $E/H_1$ (\cite[Definition~2.4.11]{lpabook}).

\subsection{Cohn algebras and Leavitt path algebras}\label{lpabasis}

We refer the reader to \cite{lpabook} for concepts of Cohn path algebras and Leavitt path algebras.

Let $E$ be a row-finite graph and $k$ a field. 
The \emph{Cohn path algebra} $C_k(E)$ of $E$ is the quotient of the free associative $k$-algebra generated by the set $E^0\cup E^1 \cup \{e^*\;|\;  e\in  E^1\}$, subject to the relations:
\begin{itemize}
\item[(0)] $v\cdot w = \delta_{v, w}$ for $v, w\in E^0$;
\item[(1)] $s(e)\cdot e = e = e \cdot r(e)$ for $e\in E^1$;
\item[(2)]  $r(e) \cdot e^* = e^* = e^*\cdot s(e)$ for $e\in E^1$;
\item[(3)]  $e^*\cdot f = \delta_{e, f} r(e)$ for $e, f\in E^1$.
\end{itemize} The algebra $C_k(E)$ is in fact a $*$-algebra; it is equipped with an involution $*: C_k(E)\xra C_k(E)^{\rm op}$ which fixes vertices and maps $e$ to $e^*$ for $e\in E^1$. Here $C_k(E)^{\rm op}$ is the opposite algebra of $C_k(E)$.

Denote by $\mathcal{K}(E)$ the ideal of the Cohn path algebra $C_k(E)$ generated by the set 
\begin{equation}\label{koutyut}
\big \{v-\sum_{v\in s^{-1}(v)}ee^*\;|\; v\in E^0 \text{~is not a sink} \big\}.
\end{equation}
 The \emph{Leavitt path algebra} $L_k(E)$ of $E$ over the field $k$ (see \cite[Definition~1.2.3]{lpabook}) is the quotient algebra $C_k(E)/\mathcal{K}(E)$. Then there is a short exact sequence of rings
\begin{align}
 \label{sescohn}
 \CD
 0@>>> \mathcal{K}(E)@>{l}>>C_k(E)@>{p}>>L_k(E)@>>>0.
\endCD
\end{align} 

Throughout this paper we simply write $C(E)$ instead of $C_k(E)$ and $L(E)$ instead of $L_k(E)$. These algebras are naturally $\mathbb Z$-graded and this graded structure plays an important role in this paper (see \cite[\S2.1]{lpabook}).

Denote by $\LL^{\gr}\big(\L(E)\big)$ the lattice of graded (two-sided) ideals of $L(E)$. There is a lattice isomorphism between the set $\TT_E$ of hereditary saturated subsets of $E^0$ and the set $\LL^{\gr}\big(\L(E)\big)$ (\cite[Theorem~2.5.8]{lpabook}). The correspondence is  
 \begin{align}\label{latticeisosecideal}
 \Phi: \TT_E&\longrightarrow \LL^{\gr}\big(\L(E)\big),\\ 
 H &\longmapsto \langle H\rangle, \notag 
 \end{align} 
 where $H$ is a hereditary saturated subsets of $E^0$, and $\langle H\rangle$ is the graded ideal generated by the set $H$.

For graded ideals $I,J$ of $L(E)$, since  $I J= I\cap J$, the prime elements of the lattice $\LL^{\gr}\big(\L(E)\big)$ coincide with the graded prime ideals of $L(E)$ (see \ref{laikfgut6}). We denote by $\Spec^{\gr}(\L(E))$ the set of graded prime ideals of $\L(E)$. 
The prime ideals of the Leavitt path algebra $\L (E)$ are completely characterised in terms of their generators (\cite[Theorem 3.12]{rangaswamy}).
Denote by $\TT'_{E}$ the set consisting of the hereditary saturated subsets $H$ of $E^0$ such that $E^0\setminus H$ is downward directed.
Then the correspondence $\Phi$ of (\ref{latticeisosecideal}) restricts to the one-to-one correspondence
\begin{align}\label{latticeisosecidealpri}
 \Phi: \TT'_E&\longrightarrow \Spec^{\gr}(\L(E)),\\ 
 H&\longmapsto \langle H\rangle. \notag 
 \end{align}


\section{The monoid $\VV^{\gr}$ and the graded Grothendieck group $K_0^{\gr}$ of graph algebras}

Let $M$ be a commutative monoid with a group $\Gamma$ acting on it.  Throughout we assume that the group $\Gamma$ is abelian. Indeed in our setting of graph algebras, this group is $\mathbb Z$. We define an ordering on the monoid $M$ by $a\leq b$ if $b=a+c$, for some $c\in M$. 
 A $\Gamma$-\emph{order ideal} of a monoid $M$ is a  subset $I$ of $M$ such that for any $\alpha,\beta \in \Gamma$, ${}^\alpha a+{}^\beta b \in I$ if and only if 
$a,b \in I$. Equivalently, a $\Gamma$-order ideal is a submonoid $I$ of $M$ which is closed under the action of $\Gamma$ and it  is
\emph{hereditary} in the sense that $a \le b$ and $b \in I$ implies $a \in I$. The set
$\LL(M)$ of $\Gamma$-order ideals of $M$ forms a (complete) lattice (see \cite[\S5]{amp}). 

Let $G$ be the \emph{group completion} $M^+$ of a commutative monoid $M$. The action of $\Gamma$ on $M$ lifts to an action on $G$. There is a natural monoid 
homomorphism $\phi: M \rightarrow G$ and by $M_{+}$ we denote the image of $M$ under this homomorphism. The monoid $M_{+}$ is called the \emph{positive cone} of $G$, 
and induces a pre-ordering on $G$. We say that $I\subseteq G$ is a $\Gamma$-\emph{order ideal} of $G$ if $I=I_{+}-I_{+}$, where $I_{+}=I\cap M_{+}$ and $I_{+}$ is a $\Gamma$-order ideal of $M_{+}$.  It is not difficult to see that there is a lattice isomorphism between the $\Gamma$-order  ideals of $M_{+}$ and $\Gamma$-order ideals of $G$.  In our setting (i.e., the monoid of graded finitely generated projective modules) the monoid homomorphism $\phi: M\rightarrow G$ is injective and thus we can work with the lattice of $\Gamma$-order ideals of $M$. 

Let $I$ be submonoid of the monoid $M$. Define an equivalence relation $\sim_{I}$ on $M$ as follows: For $a, b\in M$,  $a\sim_{I} b$ if there exist $i,j\in I$ such that  $a+i=b+j$ in $M$. The quotient monoid $M/I$ is defined as $M/\sim$. Observe that $a\sim_{I} 0$ in $M$ for any $a\in I$. If $I$ is an order-ideal then $a\sim_I 0$ if and only if $a\in I$.

\subsection{The monoid $\VV^{\gr}$}

For a $\Gamma$-graded ring $A$ with identity, the isomorphism classes of graded finitely generated projective modules with the direct sum $[P] + [Q] = [P\oplus Q]$ as the addition operation constitute a monoid denoted by  $\VV^{\gr}(A)$. There is an action of the group $\Gamma$ on $\VV^{\gr}(A)$ via the shifting of the modules
$${}^\alpha [P] \longmapsto [P(\alpha)].$$
Here for a $\Gamma$-graded module $P=\bigoplus_{\gamma \in \Gamma}P_\gamma$, the $\alpha$-\emph{shifted} graded module $P(\alpha)$ is defined as $P(\alpha):=\bigoplus_{\gamma\in \Gamma}P(\alpha)_{\gamma}$, 
where  $P(\alpha)_{\gamma}=P_{\gamma+\alpha}$. Denote by $ A\-\Gr$ the category of graded left $A$-modules. For $\alpha\in\G$, the \emph{shift functor}
\begin{equation}
\label{shift}
\mathcal{T}_{\alpha}: A\text{-}{\Gr}\longrightarrow A\text{-}{\Gr},\quad M\mapsto M(\alpha)
\end{equation}
is an isomorphism with the property $\mathcal{T}_{\alpha}\mathcal{T}_{\beta}=\mathcal{T}_{\alpha+\beta}$
for $\alpha,\beta\in\Gamma$.

The group completion of $\VV^{\gr}(A)$ is called the \emph{graded Grothendieck group} $K_0^{\gr}(A)$. It naturally inherits the action of $\Gamma$. 
This is a pre-ordered abelian group and as above the monoid $K_0^{\gr}(A)_{+}$ consisting of isomorphism classes of graded finitely generated projective $A$-modules is the cone 
of the ordering (see \cite[\S3.6]{haz}). 

Denote by $A\-\Mod$ the category of left $A$-modules. 
The forgetful functor $U: A\-\Gr \rightarrow A\-\Mod$ (forgetting the graded structure) induces a homomorphism, $U: \VV^{\gr}(A)\xra \VV(A)$. Since the (graded) Grothendieck groups are the group completion of these monoids, the homomorphism $U$ extends to the homomorphism of groups $U:K_0^{\gr}(A)\xra K_0(A)$.

Note that if $\Gamma$ is trivial, the theory reduces to the classical (non-graded) theory, and $K_0^{\gr}(A)$  becomes the Grothendieck group $K_0(A)$ (\cite{goodearlbook,ls}).

When the ring $A$ is not unital, one can define $\VV^{\gr}(A)$ via idempotent matrices over $A$. We refer the reader to \cite{haz} for a comprehensive introduction to graded ring theory and the graded Grothendieck groups. 

For the case of a Leavitt path algebra $L(E)$ which is a $\mathbb Z$-graded algebra, we can describe the $\mathbb Z$-monoid $\VV^{\gr}(L(E))$ directly from the graph $E$. 
We do this first for $\VV(L(E))$ and then proceed to give the graded version. 

A commutative monoid $M_E$ associated to a directed row-finite graph $E$ was constructed in \cite{amp}. The monoid $M_E$ is generated by vertices $v\in E^0$ subject to relations 
\begin{equation} 
v=\sum_{e\in s^{-1}(v)} r(e),
\end{equation} for each $v\in E^0$ which is not a sink. It was proved that (see \cite[Theorem 3.5]{amp}) the natural map
\begin{align*}
M_E &\longrightarrow \VV(\L(E)),\\
v &\longmapsto [L(E)v],
\end{align*}
induces a monoid isomorphism.

There is an explicit description \cite[\S 4]{amp} of the congruence on the free commutative
monoid given by the defining relations of $M_{E}$ with $E$ a row-finite graph. Let $F$ be the free commutative monoid on
the set $E^{0}$. The nonzero elements of $F$ can be written in a unique form up to
permutation as $\sum_{i=1}^{n}v_{i}$, where $v_{i}\in E^{0}$. Define a binary relation
$\xra_{1}$ on $F\setminus\{0\}$ by $\sum_{i=1}^{n}v_{i}\xra_{1}\sum_{i\neq
j}v_{i}+\sum_{e\in s^{-1}(v_{j})}r(e)$ whenever $j\in \{1, \cdots, n\}$ is such that
$v_{j}$ is not a sink. Let $\xra$ be the transitive and reflexive closure of $\xra_{1}$
on $F\setminus\{0\}$ and $\sim$ the congruence on $F$ generated by the relation $\xra$.
Then $M_{E}=F/\sim$.

\begin{rmk}
Observe that for $\a, \b\in F\setminus\{0\}$, we have $\a \sim \b$ if and only if there is a finite string $\a = \a_0,\a_1,\cdots, \a_n =\b$, such that, for each $i = 0, \cdots , n-1$, either $\a_i\xra_1 \a_{i+1}$ or $\a_{i+1}\xra_1 \a_i$. The number $n$ above will be called the length of the string.
\end{rmk}

%

We state the following lemma, given in \cite[Lemma 4.3]{amp}, for later use.

\begin{lem}\label{monoidproperty}
Let $E$ be a row-finite graph, $F$ the free commutative monoid generated by $E^0$ and $M_E$ the monoid of the graph $E$.
For $\alpha, \beta\in F\setminus\{0\}$, $\alpha \sim \beta $ in
$F$ if and only if there is $\gamma \in F\setminus\{0\}$ such that $\alpha \to \gamma $ and $\beta \to
\gamma $.
\end{lem}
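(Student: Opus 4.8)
The plan is to deduce the statement from the confluence (Church--Rosser property) of the rewriting system $\to_1$ on $F\setminus\{0\}$. The ``if'' direction is immediate: since $\to$ is contained in the congruence $\sim$, from $\alpha\to\gamma$ and $\beta\to\gamma$ we obtain $\alpha\sim\gamma\sim\beta$. So the whole content lies in the ``only if'' direction, and my strategy has three stages: first prove a one-step diamond property for $\to_1$, then upgrade it to confluence of $\to$, and finally run an induction on the length of a connecting zigzag using the Remark preceding the lemma.

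For the first stage I would argue as follows. Write $\alpha=\sum_{i=1}^n v_i\in F\setminus\{0\}$ and suppose $\alpha\to_1\beta$ and $\alpha\to_1\gamma$ are two single rewriting steps, expanding the occurrences at positions $j$ and $k$ respectively, each corresponding to a non-sink vertex. If $j=k$ then $\beta=\gamma$ and there is nothing to do. If $j\neq k$, the key observation is that expanding the occurrence at position $j$ leaves the occurrence at position $k$ untouched (it is still present, and its vertex is still a non-sink), so it may be expanded in $\beta$; symmetrically, position $j$ may be expanded in $\gamma$. Both yield the common one-step descendant
\[
\delta=\sum_{i\neq j,k}v_i+\sum_{e\in s^{-1}(v_j)}r(e)+\sum_{e\in s^{-1}(v_k)}r(e),
\]
so that $\beta\to_1\delta$ and $\gamma\to_1\delta$. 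Row-finiteness guarantees these expansions are finite sums, and the non-sink condition guarantees they are nonempty, so $\delta\in F\setminus\{0\}$. This establishes the diamond property for the reflexive closure $\to_{\le 1}$ of $\to_1$.

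Since $\to$ is precisely the reflexive--transitive closure of $\to_1$ (equivalently of $\to_{\le 1}$), the standard tiling argument upgrades this diamond property to confluence of $\to$: whenever $\beta\leftarrow\alpha\to\gamma$ there is $\delta$ with $\beta\to\delta\leftarrow\gamma$. For the final stage, by the Remark any pair with $\alpha\sim\beta$ is joined by a zigzag $\alpha=\alpha_0,\dots,\alpha_n=\beta$ with consecutive terms related by $\to_1$ in one direction or the other, and I would induct on $n$. The case $n=0$ is trivial, and in the inductive step the hypothesis supplies a common descendant $\gamma'$ of $\alpha_0$ and $\alpha_{n-1}$. If $\alpha_{n-1}\to_1\alpha_n$, then confluence applied to $\alpha_{n-1}\to\gamma'$ and $\alpha_{n-1}\to\alpha_n$ produces a common descendant of $\gamma'$ and $\alpha_n$, hence of $\alpha_0$ and $\alpha_n$; if instead $\alpha_n\to_1\alpha_{n-1}$, then $\alpha_n\to\alpha_{n-1}\to\gamma'$, so $\gamma'$ already works.

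The main obstacle I anticipate is conceptual rather than computational: the system $\to_1$ is genuinely non-terminating (a cycle in $E$ produces infinite rewriting chains), so Newman's lemma---local confluence together with termination---is unavailable. What rescues the argument is that the diamond holds already at the level of \emph{single} steps, because rewrites at distinct occurrences literally commute. The delicate point to get right is therefore to track occurrences rather than vertices (the same vertex may occur several times, and an expansion may create new copies of another vertex), and to confirm, via row-finiteness and the non-sink hypothesis, that one never rewrites out of $F\setminus\{0\}$.
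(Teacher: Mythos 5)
Your proof is correct, and it is essentially the argument behind this statement in the literature: the paper itself gives no proof but cites \cite[Lemma 4.3]{amp} (``the Confluence Lemma''), which is proved there exactly as you do — a one-step diamond property for $\to_1$ (rewrites at distinct occurrences commute), a tiling/strip argument upgrading it to confluence of $\to$ without any termination hypothesis, and an induction on the length of the zigzag connecting $\alpha$ and $\beta$.
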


Next we recall the graded version of $M_E$ with $E$ a row-finite graph, which is a $\mathbb Z$-monoid~\cite[\S5.3]{ahls}. Let $M_{E}^{\gr}$ be an  
commutative monoid  generated by $\{v({i}) \mid  v\in E^{0}, i\in \Z\}$ subject to 
relations 
\begin{equation}
\label{rrr}
{v}({i})=\sum\limits_{e\in s^{-1}(v)}r(e)(i-1)
\end{equation} for $v\in E^{0}$ which is not a sink.

Throughout the paper, we simultaneously use $v\in E^0$ as a vertex of $E$, as an element of $L(E)$ and the element
$v = v(0)$ in $M^{\gr}_E$, as the meaning will be clear from the context.

The monoid $\mathcal{V}^{\gr}(\L(E))$ for a graph $E$ was studied in detail in \cite{ahls}. In the case that $E$ is a row-finite graph,  $\mathcal{V}^{\gr}(\L(E))$
is generated by graded finitely generated projective $L(E)$-modules $\big [\L(E)v(i)\big ]$. In  \cite[Proposition 5.7]{ahls}, $\VV^{\gr}(\L(E))$ and $M_E^{\gr}$ were related via the $\mathbb Z$-monoid isomorphism
%
\begin{align}\label{hfghfyft7}
M^{\gr}_E &\cong  M_{E\times_1 \Z} \cong \mathcal V(L(E\times_1 \Z))\cong\, \mathcal V^{\gr}(L(E)),
\\v(i) &\longmapsto v_i\longmapsto  [L(E\times_1 \Z) v_i ]   \longmapsto\big [(L(E)v\big) (-i)], \notag
\end{align} see \cite[Proposition~5.7]{ahls}. We correct here that the isomorphism $M^{\gr}_E\cong M_{E\times_1 \Z}$ should be given by $v(i)\mapsto v_i$ and that  the $\mathbb Z$-action on $M^{\gr}_E$ given by Equation (5-10) in \cite{ahls} should be ${}^n {v(i)}=v(i-n)$ for $n, i\in\mathbb Z$ and $v\in E^0$.

%

It was proved in \cite[Corollary 5.8]{ahls} that $\VV^{\gr}(\L(E))$ is a cancellative monoid and thus $\VV^{\gr}(\L(E))\xra K_0^{\gr}(\L(E))$ is injective and 
therefore the positive cone $K_0^{\gr}(\L(E))_{+}\cong \VV^{\gr}(\L(E))$. This shows that, in contrast with the non-graded setting, no information is lost going from the graded 
monoid to the graded Grothendieck group and as the morphisms involved are order-preserving, one can formulate the statements either on the level of $K_0^{\gr}$ or the monoid $\VV^{\gr}$.



Suppose that $H_1$ and $H_2$ are hereditary saturated subsets of $E^0$ with $H_1\subseteq H_2$. We denote by $M^{\gr}_{H_i}$ the monoid $M^{\gr}_{E_{H_i}}$ for $i=1,2$. 
We claim that $M^{\gr}_{H_1}$ is a submonoid of $M^{\gr}_{H_2}$. In fact, if $a=b$ in $M^{\gr}_{H_2}\cong M_{E_{H_2}\times_1\Z}$ with $a, b\in M^{\gr}_{H_1}$, by Lemma \ref{monoidproperty} 
there exists $\gamma$ in the free commutative monoid on the set $(E_{H_2}\times_1\Z)^0$ such that $a\xra \gamma$ and $b\xra \gamma$. Since $a, b\in M^{\gr}_{H_1}$, we have that $a$ and $b$ are sums of finitely 
many $v(i)$ with $v\in H_1$ and $i\in\Z$. Since $H_1$ is hereditary, all the binary relations $\xra_1$ appearing in $a\xra \gamma$ and $b\xra\gamma$ are in the free 
monoid generated by $(E_{H_1}\times_1\Z)^0$.  Thus $a=\g=b$ in $M^{\gr}_{H_1}\cong M_{E_{H_1}\times_1\Z}$.
 
 \begin{lem} \label{qmiso} 
 Suppose that $H_1\subseteq H_2$ with $H_1$ and $H_2$ two hereditary saturated subsets of $E^0$. There is a $\mathbb Z$-monoid isomorphism 
\begin{equation}
\label{quotientmonoid}
M_{H_2/H_1}^{\gr}\cong M_{H_2}^{\gr}/M_{H_1}^{\gr}.
\end{equation} In particular, for a hereditary saturated subset $H$ of $E^0$, and the order-ideal $I=\langle H \rangle \subseteq M_E^{\gr}$, we have a $\mathbb Z$-monoid isomorphism 
\[M_{E/H}^{\gr}\cong M_{E}^{\gr}/I.\]
\end{lem}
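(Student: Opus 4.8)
The plan is to construct the isomorphism and its inverse directly on the level of the defining presentations, reading off $\Z$-equivariance from the fact that every map preserves the index $i$. Recall that $M_{H_2}^{\gr}=M_{E_{H_2}}^{\gr}$ is presented by generators $\{v(i)\mid v\in H_2,\ i\in\Z\}$ subject to the relations \eqref{rrr}, where for $v\in H_2$ the edge set $s^{-1}(v)$ computed in $E_{H_2}$ coincides with $s^{-1}(v)$ computed in $E$ because $H_2$ is hereditary; in particular $v\in H_2$ is a sink in $E_{H_2}$ if and only if it is a sink in $E$. The monoid $M_{H_2/H_1}^{\gr}$ is presented by the generators $\{v(i)\mid v\in H_2\setminus H_1,\ i\in\Z\}$ subject to \eqref{rrr}, where now the relevant edges out of $v$ are those $e$ with $s(e)=v$ and $r(e)\notin H_1$.

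First I would define a homomorphism $\tilde\theta\colon M_{H_2}^{\gr}\to M_{H_2/H_1}^{\gr}$ by $v(i)\mapsto v(i)$ when $v\in H_2\setminus H_1$ and $v(i)\mapsto 0$ when $v\in H_1$, and verify that it respects the relations \eqref{rrr}. For $v\in H_1$ both sides map to $0$, since $H_1$ is hereditary and so all $r(e)\in H_1$. For $v\in H_2\setminus H_1$ that is not a sink in $E$, applying $\tilde\theta$ to the right-hand side kills exactly the summands $r(e)(i-1)$ with $r(e)\in H_1$ and leaves $\sum_{s(e)=v,\,r(e)\notin H_1}r(e)(i-1)$, which is precisely the right-hand side of the corresponding relation in $M_{H_2/H_1}^{\gr}$. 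The point requiring care is that this relation genuinely occurs in $M_{H_2/H_1}^{\gr}$, i.e. that $v$ is not a sink of $H_2/H_1$: this is exactly where saturation of $H_1$ enters, since a non-sink $v\notin H_1$ all of whose edges landed in $H_1$ would be forced into $H_1$. Thus $\tilde\theta$ is a well-defined $\Z$-monoid homomorphism, and since it annihilates the generators $w(j)$, $w\in H_1$, it annihilates the submonoid $M_{H_1}^{\gr}$ and therefore descends to $\theta\colon M_{H_2}^{\gr}/M_{H_1}^{\gr}\to M_{H_2/H_1}^{\gr}$.

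In the opposite direction I would define $\psi\colon M_{H_2/H_1}^{\gr}\to M_{H_2}^{\gr}/M_{H_1}^{\gr}$ on generators by $v(i)\mapsto[v(i)]$. To see that $\psi$ is well defined one checks that the defining relation at a non-sink $v$ of $H_2/H_1$ holds after passing to the quotient: in $M_{H_2}^{\gr}$ one has $v(i)=\sum_{s(e)=v}r(e)(i-1)$, and the summands with $r(e)\in H_1$ become trivial in $M_{H_2}^{\gr}/M_{H_1}^{\gr}$, leaving exactly $[v(i)]=\sum_{s(e)=v,\,r(e)\notin H_1}[r(e)(i-1)]$. Both $\psi$ and $\theta$ preserve the index $i$, hence commute with the $\Z$-action ${}^{n}v(i)=v(i-n)$, and a comparison on generators gives $\theta\circ\psi=\idd$ and $\psi\circ\theta=\idd$ (using $[w(j)]=0$ for $w\in H_1$). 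This yields the $\Z$-monoid isomorphism $M_{H_2/H_1}^{\gr}\cong M_{H_2}^{\gr}/M_{H_1}^{\gr}$.

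For the final assertion I would specialise to $H_2=E^0$, where $E_{H_2}=E$ and $H_2/H_1=E/H$, and identify the submonoid $M_{H}^{\gr}$ with the graded order ideal $I=\langle H\rangle\subseteq M_E^{\gr}$; this identification is valid because $H$ is hereditary and saturated, so the submonoid generated by $\{w(j)\mid w\in H\}$ is hereditary and hence an order ideal. The main obstacle, and the only place the hypotheses on $H_1$ and $H_2$ are really used, is the well-definedness of $\tilde\theta$: everything reduces to the combinatorial fact that, by hereditariness of $H_2$ and saturation of $H_1$, the non-sinks of $H_2/H_1$ are exactly the non-sinks of $E$ lying in $H_2\setminus H_1$, so the two presentations match summand by summand. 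Should one prefer to invoke the ungraded theory instead, an alternative route is to apply the covering-graph isomorphism \eqref{hfghfyft7} and reduce the statement to the ungraded quotient identity for $Y=E_{H_2}\times_1\Z$ and the hereditary saturated subset $K=\{v_n\mid v\in H_1\}\subseteq Y^0$, but the direct argument above appears shorter and self-contained.
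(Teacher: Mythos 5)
Your proof is correct and follows essentially the same route as the paper's: the paper defines the same pair of mutually inverse $\Z$-monoid homomorphisms $f$ (your $\tilde\theta$, sending $v(i)\mapsto 0$ for $v\in H_1$) and $g$ (your $\psi$), verifies that each respects the defining relations by splitting the sum over $s^{-1}(v)$ according to whether $r(e)$ lies in $H_1$, and concludes by checking $f\circ g$ and $g\circ f$ are the identity on generators. Your explicit observation that saturation of $H_1$ is what guarantees a non-sink of $E_{H_2}$ lying in $H_2\setminus H_1$ remains a non-sink of $H_2/H_1$ (so that the needed relation really is present in $M^{\gr}_{H_2/H_1}$) is a point the paper uses only implicitly, and is a welcome clarification.
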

 
 \begin{proof} We define a homomorphism of monoids $f: M_{H_2}^{\gr}\xra M_{H_2/H_1}^{\gr}$ such that 
\begin{equation}
\label{mapsqutt}
f\big(v(i)\big)=
\begin{cases}
v(i), & \text{if~} v\in H_2\setminus H_1;\\
0, & \text{otherwise}.
\end{cases}
\end{equation} for $v\in H_2$ and $i\in \Z$. Take a vertex $v\in E_{H_2}^0$ which is not a sink. We claim that the relation \eqref{rrr} for the restriction 
graph $E_{H_2}$ is preserved by $f$. If $v\in H_1$ and $i\in \Z$, then $r(e)\in H_1$ for all $e\in s^{-1}(v)$, since $H_1$ is hereditary. Thus $f\big(\sum_{e\in s^{-1}(v)}r(e)(i-1)\big)=0=f\big(v(i)\big)$. If $v\in H_2\setminus H_1$ and $i\in \Z$, then 
\begin{equation*}
\begin{split}
f\big(\sum_{e\in s^{-1}(v)}r(e)(i-1)\big)
&=\sum_{\substack{e\in s^{-1}(v)\\ r(e)\in H_1}}f\big(r(e)(i-1)\big)+\sum_{\substack{e\in s^{-1}(v)\\r(e)\in H_2\setminus H_1}}f\big(r(e)(i-1)\big)\\
&=\sum_{\substack{e\in s^{-1}(v)\\r(e)\in H_2\setminus H_1}}r(e)(i-1)\\
&=v(i)\\
&=f\big(v(i)\big).
\end{split}
\end{equation*} 
Here, the second last equality follows from the relation for the monoid $M^{\gr}_{H_2/H_1}$. Thus $f$ is well-defined. Since $f\big(v(i)\big)=0$ for $v\in H_1$ and $i\in\Z$, we have the induced homomorphism $M^{\gr}_{H_2}/M^{\gr}_{H_1}\xra M^{\gr}_{H_2/H_1}$, still denoted by $f$. 

Now we define a homomorphism of monoids $g: M^{\gr}_{H_2/H_1}\xra M^{\gr}_{H_2}/M^{\gr}_{H_1}$ such that $g\big(v(i)\big)=[v(i)]$ for $v\in H_2\setminus H_1$ and $i\in \Z$. 
Observe that \[v(i)=\sum_{e\in s^{-1}(v), r(e)\in H_2\setminus H_1}r(e)(i-1),\] is in $M^{\gr}_{H_2/H_1}$ for $i\in\Z$ and any vertex $v\in H_2\setminus H_1$ which is not a sink. Thus $g$ is 
well-defined. We can directly check that $g\circ f={\rm id}_{M^{\gr}_{H_2}/M^{\gr}_{H_1}}$ and $f\circ g={\rm id}_{M^{\gr}_{H_2/H_1}}$. This finishes the proof. 
\end{proof}

The following consequence holds immediately.

\begin{cor} Suppose that $H_1$ and $H_2$ are hereditary saturated subsets of $E^0$ with $H_1\subseteq H_2$. 
 There is a short exact sequence of commutative monoids
 \begin{align}\CD
 \label{sesforgradedmonoid}
 0@>>> M^{\gr}_{H_1}@>{\iota}>>M^{\gr}_{H_2}@>{\pi}>>M^{\gr}_{H_2/H_1}@>>>0, 
\endCD
\end{align} where $\iota: M^{\gr}_{H_1}\xra M^{\gr}_{H_2}$ is the inclusion and $\pi: M^{\gr}_{H_2}\xra M^{\gr}_{H_2/H_1}$ is the map given by \eqref{mapsqutt}.
\end{cor}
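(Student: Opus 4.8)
The plan is to read the short exact sequence off directly from Lemma~\ref{qmiso}, together with the injectivity of the inclusion established in the paragraph preceding it, so that the proof is an assembly of facts already in hand rather than a fresh argument. By a short exact sequence of commutative monoids $0\to M^{\gr}_{H_1}\xrightarrow{\iota} M^{\gr}_{H_2}\xrightarrow{\pi} M^{\gr}_{H_2/H_1}\to 0$ I understand three conditions: that $\iota$ is injective, that $\pi$ is surjective, and that the kernel congruence of $\pi$ (the relation $\pi(a)=\pi(b)$) coincides with the relation $\sim_{M^{\gr}_{H_1}}$ on $M^{\gr}_{H_2}$, so that $\pi$ factors through an isomorphism $M^{\gr}_{H_2}/M^{\gr}_{H_1}\cong M^{\gr}_{H_2/H_1}$. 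The substantive content is exactly the isomorphism of Lemma~\ref{qmiso}, so what remains is bookkeeping.

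First I would record that $\iota$ is injective: this is precisely the claim proved just before Lemma~\ref{qmiso}, namely that $M^{\gr}_{H_1}$ is a submonoid of $M^{\gr}_{H_2}$, which used that $H_1$ is hereditary together with Lemma~\ref{monoidproperty}. Next I would observe that $\pi$, being the map $f$ of \eqref{mapsqutt}, is surjective, since it carries each generator $v(i)$ with $v\in H_2\setminus H_1$ to the corresponding generator of $M^{\gr}_{H_2/H_1}$, and these generate the target. For exactness at $M^{\gr}_{H_2}$ I would argue in two directions. If $a\sim_{M^{\gr}_{H_1}} b$, say $a+s=b+t$ with $s,t\in M^{\gr}_{H_1}$, then applying $\pi$ and using $\pi(s)=\pi(t)=0$ (immediate from \eqref{mapsqutt}, since $f$ kills the generators coming from $H_1$) yields $\pi(a)=\pi(b)$. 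Conversely, if $\pi(a)=\pi(b)$, then because Lemma~\ref{qmiso} shows $f$ descends to an \emph{isomorphism} $M^{\gr}_{H_2}/M^{\gr}_{H_1}\cong M^{\gr}_{H_2/H_1}$ with the explicit inverse $g$ exhibited there, the classes $[a]$ and $[b]$ must already agree in $M^{\gr}_{H_2}/M^{\gr}_{H_1}$, i.e.\ $a\sim_{M^{\gr}_{H_1}} b$.

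The one place asking for a moment of care is this forward implication, where I invoke the isomorphism of Lemma~\ref{qmiso} rather than manipulating elements directly; I expect this to be the main (though still mild) obstacle, since it is here that one needs to know $f$ \emph{separates} classes and is not merely well defined. For the reading of the sequence as exact it is also worth noting that $M^{\gr}_{H_1}$ is an order-ideal of $M^{\gr}_{H_2}$, so that $a\sim_{M^{\gr}_{H_1}} 0$ holds precisely for $a\in M^{\gr}_{H_1}$; this follows from $H_1$ being hereditary and saturated together with the submonoid description recalled above. Given Lemma~\ref{qmiso}, however, no genuinely new computation is required, which is exactly why the statement follows immediately.
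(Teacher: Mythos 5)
Your proof is correct and follows the same route as the paper, which derives the corollary immediately from Lemma~\ref{qmiso} together with the preceding observation that $M^{\gr}_{H_1}$ embeds in $M^{\gr}_{H_2}$. Your write-up merely makes explicit the bookkeeping (surjectivity of $\pi$ on generators, and the identification of the kernel congruence of $\pi$ with $\sim_{M^{\gr}_{H_1}}$ via the isomorphism and its inverse $g$) that the paper leaves implicit in the word ``immediately.''
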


For a hereditary saturated subset $H$ of $E^0$, 
set $e=\sum_{v\in H}v\in \mathcal M (I)$, where $I=\langle H\rangle$ is the ideal of $L(E)$ generated by $H$ and $\mathcal M (I)$ denotes 
the multiplier algebra of $I$. 
Observe that $e$ is a full idempotent in $\mathcal M(I)$, in the sense that $IeI= I$. Then $I$ is graded Morita equivalent to the Leavitt path algebra $L(E_H)$ (see \cite[Example 2]{haz3} 
and compare \cite[Theorem 5.7 (3)]{tomforde}, where the Morita equivalence is indeed a graded Morita equivalence). It follows that 
\begin{equation}
\label{idealmon}
\VV^{\rm gr}(I)\cong \VV^{\rm gr}(L(E_H))=M^{\rm gr}_H.
\end{equation}

Suppose that $I_1, I_2$ are two graded ideals of $L(E)$ with $I_1\subseteq I_2$. Let $H_{1}$ and $H_2$ be the two hereditary saturated subsets of $E^0$ with $I_1=\langle H_1\rangle$ and $I_2=\langle H_2\rangle$. 
The quotient ideal $I_2/I_1$ is a graded ideal of $L(E)/I_1= L(E/H_1)$. As before, $I_2/I_1$ is graded Morita equivalent to $L((E/H_1)_{H_2/H_1})$ and combining this with \eqref{quotientmonoid} we have 
\begin{equation}
\label{quotientiso}
\VV^{\gr}(I_2/I_1)\cong \VV^{\gr}(L((E/H_1)_{H_2/H_1}))\cong M^{\gr}_{H_2/H_1} \cong  M^{\gr}_{H_2}/M^{\gr}_{H_1} \cong \VV^{\gr}(I_2)/\VV^{\gr}(I_1).
\end{equation}

It is a fact that for a cancellative commutative monoid $M$, if there is a short exact sequence of commutative monoids 
$$\CD
 0@>>> N@>{l}>>M@>{t}>>P@>>>0
\endCD$$ 
with $N$ an order-ideal of $M$, then there is a short exact sequence of their group completions 
$$\CD
 0@>>> N^+@>{\overline{l}}>>M^+@>{\overline{t}}>>P^+@>>>0. 
\endCD$$  By \cite[Corollary 5.8]{ahls} $M^{\gr}_E$ is cancellative for any graph $E$. Combining this with the fact that $M^{\gr}_{H_1}$ is an order-ideal of $M^{\gr}_{H_2}$ if $H_1$ and $H_2$ are hereditary saturated 
subsets of $E^0$ with $H_1\subseteq H_2$, by \eqref{sesforgradedmonoid} we have a short exact sequence  \begin{align}\CD
 \label{shortexactfor}
 0@>>> K_0^{\gr}(L(E_{H_1}))@>{}>>K_0^{\gr}(L(E_{H_2}))@>{}>>K_0^{\gr}(L(H_2/H_1))@>>>0, 
\endCD
\end{align}  where the maps are induced by the homomorphisms of algebras $L(E_{H_1})\xrightarrow {} L(E_{H_2})$ and $L(E_{H_2})\xrightarrow {} L(E_{H_2})/I_1\cong L(H_2/H_1)$. We also have $\overline{\iota}$ and $\overline{\pi}$ to make  the following diagram commute
\begin{align}
\label{2020}
\xymatrix{
0\ar[r] &K_0^{\gr}(I_1) \ar[r]^{\overline{\iota}} \ar[d]^{}& K_0^{\gr}(I_2)\ar[r]^{\overline{\pi}}\ar[d]^{} & K_0^{\gr}(I_2/I_1)\ar[d]\ar[r]&0\\
0\ar[r] &K_0^{\gr}(L(E_{H_1})) \ar[r]^{} & K_0^{\gr}(L(E_{H_2}))\ar[r]&K_0^{\gr}(L(H_2/H_1))\ar[r]&0,}
\end{align} where the vertical isomorphisms are induced by the isomorphisms of the monoids in \eqref{idealmon} and \eqref{quotientiso}.

%


 Denote by $\LL\big(K_0^{\gr}(\L(E))\big)$ and $\LL\big(\VV^{\gr}(\L(E))\big)$ the lattice of $\Z$-order ideals of $K_0^{\gr}(\L(E))$ and $\VV^{\gr}(\L(E))$, respectively.  There is a lattice isomorphism between the set $\TT_E$ of hereditary saturated subsets of $E^0$ and the set $\LL\big(\VV^{\gr}(\L(E))\big)$  (\cite[Theorem 5.11]{ahls}). The correspondence is  
 \begin{align}\label{latticeisosec}
 \Phi: \TT_E&\longrightarrow \LL\big(\VV^{\gr}(\L(E))\big), \\ 
 H &\longmapsto \Phi(H), \notag 
 \end{align} 
 where $H$ is a hereditary saturated subset of $E^0$, and $\Phi(H)$ is the $\Z$-order ideal generated by the set
$$\big \{v(i)\;|\; v\in H, i\in\Z  \big \}.$$ 
This correspondence for finite graphs with no sinks was first established in \cite[Theorem 12]{roozbehhazrat2013}. 

%
Recall from \S\ref{lpabasis} that the lattice of graded ideals of $\L(E)$ is denoted by $\LL^{\gr}(\L(E))$. 
 Combining  the lattice isomorphisms (\ref{latticeisosecideal}) and (\ref{latticeisosec}), there are lattice isomorphisms between the following lattices: 
\begin{equation}\label{latticncnc}
\LL^{\gr}(\L(E)) \cong
\TT_E
    \cong \LL(\VV^{\gr}(\L(E))).\qedhere
\end{equation}

The isomorphisms (\ref{latticncnc}) allow us to relate the prime elements of these lattices. As mentioned in \S\ref{lpabasis}, the prime elements of $\LL^{\gr}(\L(E)) $ 
coincide with the graded prime ideals of $L(E)$.  We give an explicit 
definition of prime elements (\S\ref{laikfgut6}) in the setting of a lattice $\mathcal L (M)$ for a monoid $M$ with a $\Gamma$-action.

\begin{defi}\label{orfdregeret54} 
Let $M$ be a monoid with the group $\G$ acting on it. A $\Gamma$-order ideal $N$ is called \emph{$\Gamma$-prime} if for any $\Gamma $-order ideals $N_1, N_2 \subseteq M$, $N_1\cap N_2 \subseteq N$ implies 
that $N_1\subseteq N$ or $N_2\subseteq N$. 
%
%
%
%
%
%
\end{defi}

We are in a position to relate the spectrum of $L(E)$ with the set of $\Z$-prime order ideals of $\VV^{\gr}(L(E))$.

\begin{thm} 
\label{thmprime}
Let $E$ be a row-finite graph and $\VV^{\gr}(L(E))$ its associated monoid. Then there is a one-to-one order-preserving correspondence between the set of graded prime ideals of $L(E)$ and the set of $\Z$-prime order ideals 
of  $\VV^{\gr}(L(E))$. 
\end{thm}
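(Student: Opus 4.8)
The plan is to recognize this correspondence as nothing more than the restriction to \emph{prime elements} of the lattice isomorphism (\ref{latticncnc}), invoking the principle recorded in \S\ref{laikfgut6} that an isomorphism of lattices preserves prime elements. So the whole argument reduces to (i) identifying both sides of the desired correspondence as the prime elements of the two lattices $\LL^{\gr}(L(E))$ and $\LL(\VV^{\gr}(L(E)))$, and (ii) transporting them through (\ref{latticncnc}).

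On the algebra side, identification (i) is already available in the excerpt: since $IJ=I\cap J$ for graded ideals, the graded prime ideals of $L(E)$ are exactly the prime elements of $\LL^{\gr}(L(E))$, and by (\ref{latticeisosecidealpri}) they correspond under $\Phi$ to the hereditary saturated subsets $H$ with $E^0\setminus H$ downward directed, that is, to the prime elements of $\TT_E$. On the monoid side I would first check that in the lattice $\LL(\VV^{\gr}(L(E)))$ of $\Z$-order ideals the meet of two order ideals $N_1,N_2$ is their intersection $N_1\cap N_2$: an intersection of submonoids that are $\Z$-invariant and hereditary is again such, and it is visibly the largest order ideal contained in both. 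Consequently the abstract definition of a prime element of a lattice (\S\ref{laikfgut6}) specialises, for this lattice, to the condition that $N_1\cap N_2\subseteq N$ implies $N_1\subseteq N$ or $N_2\subseteq N$, which is precisely Definition~\ref{orfdregeret54}. Hence the $\Z$-prime order ideals of $\VV^{\gr}(L(E))$ are exactly the prime elements of $\LL(\VV^{\gr}(L(E)))$.

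With both sides recast as prime elements, the theorem follows from (\ref{latticncnc}). The composite lattice isomorphism
\[
\LL^{\gr}(L(E)) \xrightarrow{\ \Phi^{-1}\ } \TT_E \xrightarrow{\ \Phi\ } \LL(\VV^{\gr}(L(E)))
\]
sends $\langle H\rangle$ to the $\Z$-order ideal generated by $\{v(i)\mid v\in H,\ i\in\Z\}$; since every isomorphism of lattices (and its inverse) carries prime elements to prime elements, it restricts to a bijection between the prime elements of $\LL^{\gr}(L(E))$ and those of $\LL(\VV^{\gr}(L(E)))$, i.e.\ between $\Spec^{\gr}(L(E))$ and the set of $\Z$-prime order ideals of $\VV^{\gr}(L(E))$. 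Order-preservation in both directions is automatic, as a lattice morphism preserves order by definition.

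The argument is essentially formal once the lattice isomorphisms of \cite{lpabook} and \cite{ahls} are in hand, so I do not expect a serious obstacle. The only point requiring genuine verification is the matching of the two notions of primeness on the monoid side, namely that the meet in $\LL(\VV^{\gr}(L(E)))$ is intersection, so that Definition~\ref{orfdregeret54} coincides with the abstract notion of a prime lattice element; no new structural fact about $\VV^{\gr}(L(E))$ beyond \cite[Theorem 5.11]{ahls} and its algebra counterpart is needed.
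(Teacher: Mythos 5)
Your proposal is correct and follows essentially the same route as the paper's proof, which likewise just restricts the lattice isomorphism \eqref{latticncnc} to prime elements and invokes the fact that lattice isomorphisms preserve them. The only thing you add is the (worthwhile but routine) verification that the meet in $\LL(\VV^{\gr}(L(E)))$ is intersection, so that Definition~\ref{orfdregeret54} agrees with the abstract notion of a prime lattice element—a point the paper leaves implicit.
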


\begin{proof}
Since there is a lattice isomorphism between the lattice of graded ideals of $L(E)$ and the lattice of $\Z$-order ideals of $\VV^{\gr}(L(E))$ via (\ref{latticncnc}) and the lattice isomorphisms preserve, 
in particular, prime elements, the statement follows. 
\end{proof}

\section{A short exact sequence of Grothendieck groups for a Leavitt path algebra} 

In order to relate the graded Grothendieck group $K_0^{\gr}$ to the non-graded $K_0$ and $K_1$ in the setting of Leavitt path algebras and ultimately to the filtered $K$-theory of these algebras, we establish a van den Bergh like exact sequence. For a right regular Noetherian $\mathbb Z$-graded ring $A$, van den Bergh established the long exact sequence 
\begin{equation}\label{gelatobal}
\cdots \longrightarrow K_{n+1}(A)  \longrightarrow K^{\gr}_n(A) \stackrel{\ol i}{\longrightarrow} K^{\gr}_n(A) \stackrel{U}{\longrightarrow} K_n(A) \longrightarrow \cdots.
\end{equation}
Here $\ol i= \overline{\mathcal T_1}  - \overline{\mathcal T_0}=\overline{\mathcal T_1}  - 1$, where $\mathcal T_1$ is the shift functor \eqref{shift} and $U$ is the forgetful functor \cite[\S 6.4]{haz}. Leavitt path algebras are in general  neither Noetherian nor regular. However for an arbitrary graph $E$, in Section~\ref{jdugt7htu6} we  establish an exact sequence 
\begin{align*}
K_1(L(E))  \longrightarrow K_0^{\gr}(\L(E))  \longrightarrow K_0^{\gr}(\L(E)) \longrightarrow K_0(\L(E)) \longrightarrow 0.
\end{align*}
This exact sequence involving only $K_0$-groups and for finite graphs with no sinks was first established in  \cite[Theorem 3]{haz2013}.

\subsection{A short exact sequence of Grothendieck groups} \label{jdugt7htu6}

Let $E$ be a row-finite graph and $\L(E)$ the Leavitt path algebra of $E$ over a field $k$. Recall from \cite[Corollary 5.8]{ahls} that $\VV^{\gr}(\L(E))\xra K_0^{\gr}(\L(E))$ is injective and there is a 
$\mathbb Z$-monoid isomorphism  $\VV^{\gr}(\L(E))\cong M_E^{\gr}$ (see~\eqref{hfghfyft7}).

We define a group homomorphism $\phi: K_0^{\gr}(\L(E))\xra K_0^{\gr}(\L(E))$ which takes the role of $\ol i$ in the sequence (\ref{gelatobal}). Let $F_E^{\gr}$ be the free commutative monoid generated by $\{v({i}) \mid  v\in
E^{0}, i\in \Z\}$. We first define a monoid homomorphism $\phi: F_E^{\gr}\xra K_0^{\gr}(\L(E))$ such that $\phi(v(i))=v(i+1)-v(i)$. Observe that $\phi$ preserves the relations of $M_E^{\gr}$. Thus there is an induced monoid homomorphism $M_E^{\gr}\xra K_0^{\gr}(\L(E))$, still denoted by $\phi$. Since $K_0^{\gr}(\L(E))$ is the group completion of $M_E^{\gr}$,  by universality of group completion, $\phi$ extends to a group homomorphism 
\begin{align}\label{mendhfuti6}
\phi: K_0^{\gr}(\L(E)) &\longrightarrow K_0^{\gr}(\L(E))\\
v(i) &\longmapsto v(i+1)-v(i), \notag 
\end{align}
where $i,j \in  \mathbb Z$. 

We are in a position to give a  short exact sequence relating the graded Grothendieck group to the nongraded one of a Leavitt path algebra over a row-finite graph.

\begin{prop}\label{3items} Let $E$ be a row-finite graph. We have the following short exact sequence 
\begin{align}\label{sesgroup}
K_0^{\gr}(\L(E)) \stackrel{\phi}{\longrightarrow} K_0^{\gr}(\L(E))   \stackrel{U}{\longrightarrow} K_0(\L(E)) \longrightarrow 0,
\end{align} 
where $\phi$ is the homomorphism   {\upshape (\ref{mendhfuti6})} and $U$ is the homomorphism induced by the forgetful functor. 
\end{prop}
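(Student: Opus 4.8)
The plan is to verify exactness at the two relevant spots: surjectivity of $U$ at $K_0(\L(E))$, and $\Imm(\phi)=\Ker(U)$ in the middle $K_0^{\gr}(\L(E))$. The first two facts are immediate. Surjectivity of $U$ holds because, under the identifications $\VV^{\gr}(\L(E))\cong M_E^{\gr}$ and $\VV(\L(E))\cong M_E$, the forgetful map sends the generator $v(i)$ to $v$, and the $v$ generate $M_E$, hence $K_0(\L(E))$. The inclusion $\Imm(\phi)\subseteq\Ker(U)$ is just as quick: $U\phi(v(i))=U\big(v(i+1)-v(i)\big)=v-v=0$, and since the classes $v(i)$ generate $K_0^{\gr}(\L(E))$ as a group, $U\phi=0$.

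For the remaining inclusion $\Ker(U)\subseteq\Imm(\phi)$ I would pass to the $\Z[x,x^{-1}]$-module structure carried by $K_0^{\gr}(\L(E))$, where $x$ acts by the shift ${}^1(-)$, so that $x\cdot v(i)=v(i-1)$ (consistent with the corrected convention ${}^n v(i)=v(i-n)$). With this convention $\phi(v(i))=v(i+1)-v(i)=(x^{-1}-1)v(i)$, i.e.\ $\phi$ is multiplication by $x^{-1}-1=-x^{-1}(x-1)$, a unit multiple of $x-1$. Since $x^{-1}$ acts as an automorphism, $\Imm(\phi)=(x-1)K_0^{\gr}(\L(E))$, so that $\Coker(\phi)=K_0^{\gr}(\L(E))/(x-1)K_0^{\gr}(\L(E))$. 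It therefore suffices to show that $U$ induces an isomorphism $K_0^{\gr}(\L(E))/(x-1)K_0^{\gr}(\L(E))\xra K_0(\L(E))$.

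To establish this I would use the explicit presentation obtained by group-completing $M_E^{\gr}$: as a $\Z[x,x^{-1}]$-module, $K_0^{\gr}(\L(E))$ is the free module on $\{v=v(0):v\in E^0\}$ modulo the relations $v-x\sum_{e\in s^{-1}(v)}r(e)$, one for each non-sink $v$. Indeed, writing $v(i)=x^{-i}v$ and $r(e)(i-1)=x^{-(i-1)}r(e)$, the whole $\Z$-indexed family of monoid relations \eqref{rrr} rescales to this single relation per regular vertex. Applying $-\otimes_{\Z[x,x^{-1}]}\Z$, where $\Z$ is the trivial module on which $x$ acts as $1$ (equivalently, reducing mod $(x-1)$), and invoking right-exactness of the tensor product, the relations specialise to $v-\sum_{e\in s^{-1}(v)}r(e)$, which is exactly the defining presentation of $K_0(\L(E))$ coming from $M_E$. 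The induced map is precisely the one induced by $U$, so it is an isomorphism, giving $\Ker(U)=(x-1)K_0^{\gr}(\L(E))=\Imm(\phi)$.

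The main obstacle, and the step deserving the most care, is the passage to and bookkeeping of the $\Z[x,x^{-1}]$-module presentation: one must correctly collapse the $\Z$-indexed family of relations \eqref{rrr} into a single relation per regular vertex, track the direction of the shift so that $\phi$ becomes $x^{-1}-1$ (a unit times $x-1$) rather than something noninvertible, and handle sinks, for which no relation is imposed on either side. Once the presentation is in hand, right-exactness of $-\otimes_{\Z[x,x^{-1}]}\Z$ does the rest. I would also remark that cancellativity of $M_E^{\gr}$ \cite[Corollary 5.8]{ahls} ensures $\VV^{\gr}(\L(E))$ embeds in $K_0^{\gr}(\L(E))$, so that the monoid-level relations faithfully describe the group, and that this argument recovers, for finite graphs with no sinks, the exact sequence of \cite[Theorem 3]{haz2013}.
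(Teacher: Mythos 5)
Your proof is correct, and it takes a genuinely different route from the paper's. After the same easy checks ($U\circ\phi=0$ and surjectivity of $U$), the paper proves $\Ker U\subseteq \Imm\,\phi$ combinatorially: given $b\in\Ker U$ written as a difference of elements of $M_E^{\gr}$, it uses \cite[Corollary 1.5]{ls} to turn equality in $K_0(\L(E))$ into an equality $\sum_s u_s+z=\sum_n v_n+z$ in $M_E$, invokes the confluence property of the congruence (Lemma \ref{monoidproperty}) to find a common target $\gamma$ under the rewriting relation $\to$, and then inducts on the length of the rewriting string, using the identity $u(i)-u(j)=\sum_{k=j}^{i-1}\phi(u(k))$ together with the observation that a single rewriting step $w\to_1\sum_{e\in s^{-1}(w)}r(e)$ produces $w(0)-w(1)=-\phi(w(0))\in\Imm\,\phi$. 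You instead exploit the $\Z[x,x^{-1}]$-module structure: $\phi$ is multiplication by $x^{-1}-1$, a unit multiple of $x-1$; the relations \eqref{rrr} rescale to the single relation $v-x\sum_{e\in s^{-1}(v)}r(e)$ per regular vertex, giving a $\Z[x,x^{-1}]$-presentation of $K_0^{\gr}(\L(E))$; and right-exactness of $-\otimes_{\Z[x,x^{-1}]}\Z$ identifies $\Coker\phi=K_0^{\gr}(\L(E))/(x-1)K_0^{\gr}(\L(E))$ with the presentation of $K_0(\L(E))$ coming from $M_E$, the induced map being $U$ (checked on the generators $v(i)$, which generate $K_0^{\gr}(\L(E))$ as a group). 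Both arguments rest on the same inputs --- $\VV^{\gr}(\L(E))\cong M_E^{\gr}$, $\VV(\L(E))\cong M_E$, and the $K$-groups as group completions --- but yours replaces the confluence lemma by the standard fact that group completion converts a monoid presentation into the corresponding abelian-group (equivalently, $\Z[x,x^{-1}]$-module) presentation; you should state this fact explicitly, since it is what legitimises your presentation of $K_0^{\gr}(\L(E))$, and note that it needs no appeal to cancellativity, so your closing remark invoking \cite[Corollary 5.8]{ahls} is not actually required. What your route buys is conceptual clarity: it exhibits $K_0(\L(E))$ as the coinvariants $K_0^{\gr}(\L(E))\otimes_{\Z[x,x^{-1}]}\Z$ of the shift action, meshing with the module-theoretic viewpoint used throughout the paper. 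What the paper's route buys is that it stays entirely within the monoid rewriting machinery already developed and reused elsewhere (for instance in the discussion preceding Lemma \ref{qmiso}), which is also the form of argument the authors indicate extends to arbitrary graphs in Remark \ref{hgftgftrgt22}.
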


\begin{proof}
Since each element $a\in K^{\gr}_0(\L(E))$ may be written as $a=x-y$ with $x=\sum_{i}u_i(s_i), y=\sum_{j}v_j(t_j)$ in $M_E^{\gr}$, 
we have $(U\circ \phi)(a)=U\big(\sum_{i}u_i(s_i+1)-\sum_{i}u_i(s_i)+\sum_{j}v_j(t_j+1)-\sum_{j}v_j(t_j)\big)=0$ by the definition of $U$ and $\phi$. 
Since the canonical homomorphism $M_E^{\gr}\xra M_E$ is surjective, the map $U: K^{\gr}_0(\L(E))\xra K_0(\L(E))$ is surjective.

Now we show that $\Ker U\sub {\rm Im}\, \phi$. Observe that \begin{equation}\label{ij}
u(i)-u(j)=\sum_{k=j}^{i-1}\phi(u(k))
\end{equation} with $u\in E^0, i, j\in\Z$ and $i>j$. Suppose that $b\in \Ker U$. We may write $b=x-y$ such that $$x=\sum_{s}u_s(k_s)$$ and $$y=\sum_{n}v_n(a_n)$$ in $M_E^{\gr}$. Since $$U(b)=\sum_{s}u_s-\sum_{n}v_n=0$$ in $K_0(\L(E))$, we have $$\sum_{s}u_s=\sum_{n}v_n$$ in $K_0(\L(E))$ and thus $$\sum_{s}u_s+z=\sum_{n}v_n+z$$ in $M_E$ for some $z\in M_E$ (see \cite[Corollary 1.5]{ls}). By Lemma \ref{monoidproperty}, there exists $\g\in F$ such that 
$$\sum_{s}u_s+z\longrightarrow \g$$ and 
$$\sum_{n}v_n+z \longrightarrow \g.$$
We will show that $\sum_{s}u_s+z - \g\in {\rm Im}\, \phi$ and $\sum_{n}v_n+z- \g\in {\rm Im}\, \phi$. Then it follows that $$\sum_{s}u_s+z-\g-(\sum_{n}v_n+z-\g)=\sum_{s}u_s-\sum_{n}v_n\in {\rm Im}\, \phi$$ and thus 
\begin{equation}
\begin{split}
b&=x-y\\
&=\sum_{s}u_s(k_s)-\sum_{n}v_n(a_n)\\
&= \big(\sum_{s}u_s(k_s)-\sum_{s}u_s\big)-\big(\sum_{n}v_n(a_n)-\sum_{n}v_n\big)+\big(\sum_{s}u_s-\sum_{n}v_n\big)\in {\rm Im}\, \phi,
\end{split}
\end{equation} since by \eqref{ij} $\sum_{s}u_s(k_s)-\sum_{s}u_s\in  {\rm Im}\, \phi$ and $\sum_{n}v_n(a_n)-\sum_{n}v_n\in {\rm Im}\, \phi$. We proceed by induction to show that $\sum_{s}u_s+z - \g\in {\rm Im}\, \phi$. Suppose that $\sum_{s}u_s+z\xra_1\g.$ We may write $\sum_{s}u_s+z=\sum_kw_k$ with $w_k\in E^0$.  If $w_1\xra_1 \sum_{e\in s^{-1}(w_1)}r(e)$ for $w_1\in E^0$ which is not a sink, then $$\sum_{s}u_s+z -\g=\sum_kw_k-\big(\sum_{k\neq 1}w_k+\sum_{e\in s^{-1}(w_1)}r(e)\big)=w_1-w_1(1)\in {\rm Im}\, \phi.$$ 
 By induction we get $\sum_{s}u_s+z -\g\in {\rm Im}\, \phi.$ Similarly, we have \[\sum_{n}v_n+z- \g\in {\rm Im}\, \phi.\] Therefore, we proved that the sequence \eqref{sesgroup} is exact.
\end{proof}

\begin{rmk}\label{hgftgftrgt22}

For an arbitrary graph $E$, $M_{E}^{\gr}$ is defined as a commutative monoid \cite [\S 5C]{ahls} such that the generators $\{v({i}) \mid  v\in
E^{0}, i\in \Z\}$ are supplemented  by generators $q_{Z}(i)$ as $i\in\Z$ and $Z$ runs
through all nonempty finite subsets of $s^{-1}(u)$ for infinite emitters $u\in E^{0}$.
The relations are
\begin{enumerate}
\item[(1)] $v({i})=\sum\limits_{e\in s^{-1}(v)}r(e)(i-1)$  for all
    regular vertices $v\in E^{0}$ and $i\in\Z$;
\item[(2)] $u({i})=\sum\limits_{e\in Z}r(e)(i-1)+q_{Z}({i})$ for
    all $i\in\Z$, infinite emitters $u\in E^{0}$ and nonempty finite subsets
    $Z\subseteq s^{-1}(u)$;
\item[(3)] $q_{Z_{1}}({i})=\sum\limits_{e\in Z_{2}\setminus
    Z_{1}}r(e)(i-1)+q_{Z_{2}}({i})$ for all $i\in\Z$, infinite
    emitters $u\in E^{0}$ and nonempty finite subsets $Z_{1}\subseteq Z_{2}\subseteq
    s^{-1}(u)$.
\end{enumerate}

Ara and Goodearl defined analogous monoids $M(E, C, S)$ and constructed natural
isomorphisms $M(E, C, S)\cong \mathcal{V}(CL_{K}(E, C, S))$ for arbitrary separated
graphs (\cite[Theorem~4.3]{ag}). The non-separated case reduces to that of ordinary
Leavitt path algebras, and extends the result of \cite{amp} to non-row-finite graphs.

For an arbitrary graph $E$, we do have the short exact sequence $$K_0^{\gr}(\L(E)) \stackrel{\phi}{\longrightarrow} K_0^{\gr}(\L(E))   \stackrel{U}{\longrightarrow} K_0(\L(E)) \longrightarrow 0$$ such that $\phi(q_{Z}(j))=q_Z(j+1)-q_{Z}(j)$ for generators $q_{Z}(j)$. The proof is similar to that for the row-finite case. This paper is devoted to row-finite graphs. In order to extend the results of this paper to arbitrary graphs, one needs to establish an exact sequence between $K_0^{\gr}$ of an arbitrary Leavitt path algebra $L(E)$ with the quotient $L(E)/I$, for any graded ideal $I$ as in Lemma~\ref{2dsnakelemma} (see Remark~\ref{hfgftrhyrrr}). 
\end{rmk}

%
%

Decompose the vertices of a row-finite graph $E$ as $E^0=R\sqcup S$, where $S$ is the set of sinks and $R=E^0\setminus S$. With respect to this decomposition we write the adjacency matrix of $E$ as
\[A_E=\tiny\begin{pmatrix}
B_E& C_E\\
0&0
\end{pmatrix}.\] 
Here, we list the vertices in $R$ first and then the vertices in $S$. 

Recall the following computation of $K$-theory for Leavitt path algebra from \cite[Corollary 7.7]{abc} and \cite[Proposition 3.4(ii)-(iii)]{grtw}. 

\begin{lem} \label{k0k1}Let $E$ be a row-finite graph and $L(E)$ the Leavitt path algebra over a field $k$. We have 
\begin{itemize}
\item [(i)] $K_0(L(E))\cong {\rm Coker} \bigg({\tiny{\begin{pmatrix}B_E^t-I\\ C_E^t\end{pmatrix}}}: \Z^{R}\longrightarrow \Z^{E^0}\bigg)$;

\item[(ii)] $K_1(L(E))$ is isomorphic to a direct sum: 

$${\rm Coker} \bigg({\tiny{\begin{pmatrix}B_E^t-I\\ C_E^t \end{pmatrix}}}: (k^{\times})^{R}\longrightarrow (k^{\times})^{E^0}\bigg)\bigoplus \Ker \bigg({\tiny{\begin{pmatrix}B_E^t-I\\ C_E^t \end{pmatrix}}}: \Z^{R}\longrightarrow \Z^{E^0}\bigg).$$ 
 \end{itemize}
\end{lem}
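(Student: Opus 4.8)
The plan is to handle (i) and (ii) by related but different routes: (i) falls out essentially for free from the monoid description recalled above, while (ii) carries the real content and requires a long exact sequence in algebraic $K$-theory.

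First I would prove (i) from the isomorphism $\VV(L(E)) \cong M_E$ of \cite[Theorem 3.5]{amp}. Since $K_0(L(E))$ is the group completion of $\VV(L(E)) \cong M_E$, it is the abelian group on generators $\{[v] : v \in E^0\}$ subject to the relations $v = \sum_{e \in s^{-1}(v)} r(e)$ for every regular vertex $v \in R$. Writing $A_E = \begin{pmatrix} B_E & C_E \\ 0 & 0 \end{pmatrix}$, the relation for $v \in R$ reads $v - \sum_{w} A_E(v,w)w = 0$; assembling these over $v \in R$ exhibits $K_0(L(E))$ as the cokernel of the map $\mathbb{Z}^{R} \to \mathbb{Z}^{E^0}$ whose matrix is, up to an overall sign, $\begin{pmatrix} B_E^t - I \\ C_E^t \end{pmatrix}$. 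As cokernels are insensitive to this sign, (i) follows.

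For (ii) the content is genuinely deeper, and I would obtain it from a long exact sequence in algebraic $K$-theory. The key input, established in \cite{abc}, is that $L(E)$ is a universal localization of the path algebra $kE$: one inverts the maps of finitely generated projective $L(E)$-modules $L(E)v \to \bigoplus_{e \in s^{-1}(v)} L(E)r(e)$ for each regular vertex $v \in R$. Applying the localization long exact sequence (in the Neeman--Ranicki form, after checking the requisite $\operatorname{Tor}$-vanishing) produces
\[
\cdots \to K_n(k)^{R} \xrightarrow{\ M\ } K_n(k)^{E^0} \to K_n(L(E)) \to K_{n-1}(k)^{R} \xrightarrow{\ M\ } K_{n-1}(k)^{E^0} \to \cdots,
\]
where $M = \begin{pmatrix} B_E^t - I \\ C_E^t \end{pmatrix}$ acts coefficientwise; the source is $R$ rather than $E^0$ precisely because only regular vertices contribute a Cuntz--Krieger relation and hence a localization map. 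Inserting $K_0(k) = \mathbb{Z}$ and $K_1(k) = k^{\times}$ and using exactness at $K_1(L(E))$ yields
\[
0 \to \operatorname{coker}\!\left(M : (k^{\times})^{R} \to (k^{\times})^{E^0}\right) \to K_1(L(E)) \xrightarrow{\ \partial\ } \ker\!\left(M : \mathbb{Z}^{R} \to \mathbb{Z}^{E^0}\right) \to 0,
\]
the left term being the image of $K_1(k)^{E^0} \to K_1(L(E))$ and the right term the image of $\partial$. Because $\ker\!\left(M : \mathbb{Z}^{R} \to \mathbb{Z}^{E^0}\right)$ is a subgroup of the free abelian group $\mathbb{Z}^{R}$, it is free and hence projective, so the sequence splits; this gives the direct-sum decomposition of (ii) (as carried out in \cite[Proposition 3.4]{grtw}), and again $M$ may be replaced by $-M$ without affecting kernel or cokernel.

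The main obstacle is establishing the long exact sequence itself: the technical heart lies in realizing $L(E)$ as a universal localization and verifying the hypotheses of the localization theorem, equivalently in producing the length-one projective bimodule resolution witnessing that $L(E)$ is quasi-free and then drawing its $K$-theoretic consequences. The treatment of sinks, which moves the source of $M$ from $E^0$ to the regular set $R$, is a bookkeeping subtlety that must be tracked throughout, but it runs exactly parallel to the bookkeeping already seen in (i). Once the long exact sequence is in hand, the extraction of (i) and (ii) is the routine homological argument sketched above.
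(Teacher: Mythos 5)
Your proposal is correct and follows essentially the same route as the paper, which states this lemma purely as a citation of \cite[Corollary 7.7]{abc} and \cite[Proposition 3.4(ii)-(iii)]{grtw}: those sources establish the result exactly as you do, via the universal-localization long exact sequence (recalled in the paper as Lemma~\ref{longexact}) together with the observation that $\Ker\big({\tiny{\begin{pmatrix}B_E^t-I\\ C_E^t\end{pmatrix}}}:\Z^{R}\to\Z^{E^0}\big)$ is free, hence the extension splits non-canonically. Your derivation of (i) from the monoid presentation $\VV(L(E))\cong M_E$ of \cite{amp} rather than from the $n=0$ part of the same exact sequence is an equivalent, equally standard variant.
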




Throughout the paper, for an abelian group $G$ and a set $S$ (possibly infinite), we denote the direct sum $\oplus_{S}G$ by $G^S$.

In order to extend the exact sequence \eqref{sesgroup} to the $K_1$-group, we need the following proposition whose proof follows the ideas in the proof of \cite[Theorem 3.2]{rs}. 

\begin{prop}\label{4itemses} Let $E$ be a row-finite graph. Define $K: \Z^{R}\xra \Z^{R}\oplus \Z^S$ by $K(x)=\big((B_E^t-I)(x), C_E^tx\big)$ and $\psi: \Z^{R}\oplus \Z^S\xra K_0^{\gr}(\L(E))$ by $\psi(y)=y(0)$ where an element $y=\sum_{i}n_i$ in $\Z^R\oplus\Z^S$ is identified as $y=\sum_{i}n_iv_i$ with $v_i\in E^0$ and $y(0)=\sum_{i}n_iv_i(0)$. Then $\psi$ restricts to an isomorphism $\psi|$ from $\Ker K$ onto $\Ker \phi$ and an isomorphism $\overline{\psi}$ from ${\rm Coker} K$ onto ${\rm Coker} \phi$ such that the following diagram commutes:
\begin{align}
\label{aim}
\xymatrix{
\Ker K \ar[r]^{} \ar[d]^{\psi|}& {\Z}^R \ar[r]^{K} \ar[d]^{\psi} & {\Z}^{R}\oplus {\Z}^S \ar[r]^{\psi}\ar[d]^{\psi}& {\rm Coker} K\ar[d]^{\overline{\psi}}\\
\Ker \phi \ar[r]^{} & K_0^{\gr}(\L(E))\ar[r]^{\phi}& K_0^{\gr}(\L(E)) \ar[r]^{}&{\rm Coker} \phi,}
\end{align} where $\phi$ is given in \eqref{mendhfuti6}.
\end{prop}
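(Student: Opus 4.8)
The plan is to realise $K_0^{\gr}(L(E))$ as the cokernel of an explicit matrix over $\Lambda:=\Z[x,x^{-1}]$ and to read off both isomorphisms from the snake lemma. Since $K_0^{\gr}(L(E))$ is the group completion of the cancellative monoid $M_E^{\gr}$, it is the abelian group with the same generators and relations; writing $v(i)=x^i v$ turns the relation \eqref{rrr} into $xv=\sum_{e\in s^{-1}(v)}r(e)$ for $v\in R$. Thus $K_0^{\gr}(L(E))=\Coker(D)$ with
\[
D\colon \Lambda^{R}\longrightarrow \Lambda^{E^0}=\Lambda^{R}\oplus\Lambda^{S},\qquad D(v)=xv-\sum_{e\in s^{-1}(v)} r(e),\qquad D=\begin{pmatrix} xI-B_E^t\\ -C_E^t\end{pmatrix}.
\]
Under this identification $\psi$ sends a $\Z$-combination of the generators (a degree-$0$ element of $\Lambda^{E^0}$) to its class, while $\phi$ of \eqref{mendhfuti6} is multiplication by $x-1$, since $\phi(v(i))=v(i+1)-v(i)=(x-1)x^i v$. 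I would first note that $D$ is injective: if $D(q)=0$ with $q$ of top degree $N$, the degree-$(N+1)$ coefficient of $D(q)$ equals the leading coefficient $q_N$ of $xq$, forcing $q_N=0$ and contradicting the choice of $N$; hence $q=0$. So $0\to\Lambda^{R}\xrightarrow{D}\Lambda^{E^0}\to K_0^{\gr}(L(E))\to 0$ is a short exact sequence of $\Lambda$-modules.

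Next I would verify that the middle square of \eqref{aim} commutes, i.e.\ $\psi\circ K=\phi\circ\psi$ on $\Z^{R}$. For $v\in R$ this is the identity $\sum_{e\in s^{-1}(v)}r(e)(0)-v(0)=v(1)-v(0)$, which is precisely \eqref{rrr} at $i=1$. In particular $\psi(\Ker K)\subseteq\Ker\phi$ and $\psi(\Imm K)\subseteq\Imm\phi$, so the restriction $\psi|$ lands in $\Ker\phi$ and the induced map $\overline{\psi}$ on cokernels is well defined; together with the trivial commutativity of the two outer squares this shows that the whole diagram \eqref{aim} commutes.

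For the isomorphisms I would apply the snake lemma to the ladder whose two rows are the short exact sequence above and whose three vertical maps are multiplication by $x-1$. As $\Lambda$ is a domain these verticals are injective on the two free modules, and $\Lambda/(x-1)\cong\Z$ via evaluation at $x=1$, so the connecting sequence collapses to
\[
0\longrightarrow \Ker\phi \xrightarrow{\ \delta\ } \Z^{R}\xrightarrow{\ \overline{D}\ }\Z^{E^0}\longrightarrow \Coker\phi\longrightarrow 0,\qquad \overline{D}=D|_{x=1}=-K.
\]
This already gives $\Ker\phi\cong\Ker K$ and $\Coker\phi\cong\Coker K$; it remains to match the maps with $\psi$. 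The cokernel map $\Z^{E^0}\to\Coker\phi$ is induced by the quotient $\Lambda^{E^0}\to K_0^{\gr}(L(E))$ restricted to degree-$0$ elements, which is exactly $\psi$, so $\overline{\psi}$ is an isomorphism. For the kernel I would compute $\delta$ on $\psi(a)$ with $a\in\Ker K$: lifting by the constant $a$ and using that $a\in\Ker K$ means $B_E^t a=a$ and $C_E^t a=0$, one gets $D(a)=xa-a=(x-1)a$, whence $\delta(\psi(a))=a$. Thus $\delta\circ\psi|=\idd_{\Ker K}$, and since $\delta$ is an isomorphism onto $\Ker\overline{D}=\Ker K$, the map $\psi|$ is its inverse, hence an isomorphism onto $\Ker\phi$.

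The main obstacle is the kernel statement; everything in it is concentrated in the single identity $D(a)=(x-1)a$ for $a\in\Ker K$ together with the injectivity of $D$, which is what forces the snake connecting map to agree with $\psi$ and rules out any extra kernel. If one prefers to avoid the snake lemma (essentially the hands-on route indicated before the statement), the same two facts can be unpacked directly: for surjectivity of $\psi|$ one lifts $z\in\Ker\phi$ to $p\in\Lambda^{E^0}$ with $(x-1)p=D(q)$, sets $a=q(1)\in\Ker K$, and uses divisibility of $q-q(1)$ by $x-1$ to obtain $p-a\in\Imm D$; for injectivity one writes a putative kernel element as $a=D(q)$, extracts the recursion $q_{j-1}=B_E^t q_j$, and concludes that $a=q_{-1}$ is simultaneously fixed and eventually annihilated by $B_E^t$, hence zero. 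As a consistency check, the cokernel isomorphism is also forced by Proposition~\ref{3items}, which gives $\Coker\phi\cong K_0(L(E))$ through $U$, and by Lemma~\ref{k0k1}(i), which gives $K_0(L(E))\cong\Coker K$; these agree with $\overline{\psi}$ since $U(\psi(v))=[L(E)v]$.
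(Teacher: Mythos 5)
Your proof is correct, and it takes a genuinely different route from the paper's. The paper follows the Raeburn--Szyma\'nski strategy (\cite[Theorem 3.2]{rs}): it identifies $K_0^{\gr}(L(E))$ with $K_0(L(E\times_1\Z))$, computes the latter as the direct limit of $\Z^{R}\oplus\Z^{S}\oplus\Z^{S}\oplus\cdots$ along an explicit infinite block matrix $D$, realizes elements of the limit as equivalence classes of eventually compatible sequences, and then checks by hand that the inclusion $\iota_*^0$ restricts to isomorphisms between $\Ker(D-1)$, $\Coker(D-1)$ and $\Ker(\beta_*^{-1}-1)$, $\Coker(\beta_*^{-1}-1)$, before finally comparing these with $\Ker K$ and $\Coker K$ via the corner inclusions $i$ and $j$. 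You instead use that $K_0^{\gr}(L(E))$ is the group completion of the presented monoid $M_E^{\gr}$, hence is the $\Lambda$-module $\Coker\bigl(D\colon \Lambda^{R}\to\Lambda^{E^0}\bigr)$ with $\Lambda=\Z[x,x^{-1}]$ and $D=\begin{pmatrix} xI-B_E^t\\ -C_E^t\end{pmatrix}$, observe that $\phi$ becomes multiplication by $x-1$, and run the snake lemma on the resulting ladder of short exact sequences. The key verifications are all present and correct: injectivity of $D$ by the leading-coefficient argument; commutativity of the middle square of \eqref{aim}, which is exactly relation \eqref{rrr} at $i=1$; the identification of the cokernel map of the snake sequence with $\overline{\psi}$; and the computation $D(a)=(x-1)a$ for $a\in\Ker K$, which yields $\delta\circ\psi|=\idd_{\Ker K}$ and hence that $\psi|$ is the inverse of the connecting isomorphism. (Your $x$-action $v(i)=x^i v$ is the inverse of the paper's convention ${}^n v(i)=v(i-n)$, but since the statement only concerns the fixed group homomorphisms $K$, $\psi$, $\phi$, this is harmless.) What your approach buys is brevity and transparency: it stays entirely inside $\Z[x,x^{-1}]$-module algebra, avoids the covering graph, continuity of $K$-theory and the sequence model of the direct limit, and makes the module structure of $K_0^{\gr}$ do all the work. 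What the paper's approach buys is an explicit direct-limit model of $K_0^{\gr}(L(E))\cong K_0(L(E\times_1\Z))$ that runs parallel to the established graph $C^*$-algebra computations, producing concrete descriptions of the maps (such as $\vartheta$ and the factorization of $\psi$ through the limit) in a form closest to the analytic literature it is imitating.
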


\begin{proof} By typographical reasons, we denote $(v,n):=v_n$ and $(e,n):= e_n$ in the course of the proof.

For integers $m\leq n$, we denote by $E\times_1[m, n]$ the subgraph of $E\times_1 \Z$ with vertices $\{(v, k)\;|\; m\leq k\leq n, v\in E^0\}$ and edges $\{(e, k)\;|\; m< k\leq n, e\in E^1\}$. We allow $m=-\infty$ with the obvious modification of the definition. 
Recall that the Leavitt path algebra of an acyclic finite graph is a direct sum of copies of matrix algebras, indexed by the sinks in the graph (see \cite[Theorem 2]{roozbehhazrat2013}). Similarly, since $E\times_1[m, n]$ is an acyclic row-finite graph with paths of length at most $n-m$, $L(E\times_1[m, n])$ is a direct sum of matrix algebras indexed by the set of sinks in $E\times_1[m, n]$. 
The set of sinks in $E\times_1[m, n]$ is $$\{(v, m)\;|\; v\in E^0\}\cup \{(v, k)\;|\; v\in S, m<k\leq n\}.$$  We have that $K_0(L(E\times_1 [m,n]))$ is the free abelian group with generators $$\{[p_{(v, m)}]\;|\; v\in R\}\cup \{[p_{(v, k)}]\;|\; v\in S, m\leq k\leq n\}. $$ By continuity of $K$-theory we can let $n\to \infty$ and deduce that 
\begin{align*}K_0\big(L(E\times_1 [m, \infty))\big)
&=\big(\bigoplus_{v\in R}\Z[p_{(v, m)}]\big)\bigoplus \big(\bigoplus_{k=0}^{\infty}\bigoplus_{v\in S}\Z[p_{(v, m+k)}]\big)\\
&\cong \Z^R\oplus \Z^{S_m}\oplus\Z^{S_{m+1}}\oplus \cdots, 
\end{align*} where each $S_j$ is a copy of $S$ labelled to indicate its place in the direct sum. Denote by $\iota_m$ the inclusion map from $L(E\times_1 [m+1, \infty))$ to $L(E\times_1 [m, \infty))$ which induces a map from $K_0\big(L(E\times_1 [m+1, \infty))\big)
$ to $K_0\big(L(E\times_1 [m, \infty))\big)$. If $v\in R$, then in  $K_0\big(L(E\times_1 [m, \infty))\big)$ we have 
\begin{align*}[p_{(v, m+1)}]
&=\sum_{e\in s_E^{-1}(v)}[(e, m+1)(e, m+1)^*]\\
&=\sum_{e\in s_E^{-1}(v)}[(e, m+1)^*(e, m+1)]\\
&=\sum_{e\in s_E^{-1}(v)}[p_{(r(e), m)}]\\
&=\sum_{w\in E^0}A(v, w)[p_{(w, m)}].
\end{align*} If $v\in S$ and $k\geq m+1$, $[p_{(v, k)}]$ is a generator in $K_0\big(L(E\times_1 [m, \infty))\big)$. 
Thus the induced map from $\Z^{R}\oplus \Z^{S_{m+1}}\oplus\Z^{S_{m+2}}\oplus \cdots$ to $\Z^{R}\oplus \Z^{S_{m}}\oplus\Z^{S_{m+1}}\oplus \cdots$ is given by the matrix
\[ D = \tiny\begin{pmatrix}B_E^t&0& 0&0& \cdot\\
C_E^t&0&0&0 &\cdot\\
0&1&0&0&\cdot\\
0&0&1&0&\cdot\\
\cdot&\cdot&\cdot&\cdot&\ddots
 \end{pmatrix}\] 
and $K_0^{\gr}(\L(E))\cong K_0(L(E\times_1\Z))$ is the limit of the system 
$$\xymatrix{{\Z}^{R}\oplus {\Z}^S \oplus {\Z}^S\oplus \cdots \ar[r]^{D} & {\Z}^{R}\oplus {\Z}^S\oplus {\Z}^S \oplus \cdots   \ar[r]^<<<<{D}& \cdots.\\}$$ 

Recall that there is a canonical action of $\Z$ on $E\times_1 \Z$ which induces an action $\gamma: \Z\xra {\rm Aut}(L(E\times_1\Z))$ characterised by $\g_{1}((v, n))=(v, n-1)$ and $\g_1((e, n))=(e, n-1)$. Set $\beta=\g_1$. Then 
\[\beta^{-1}: L(E\times_1[m, \infty))\longrightarrow L(E\times_1[m, \infty)),\] and that 
\[\beta_*^{-1}: K_0(L(E\times_1[m, \infty))) \longrightarrow K_0(L(E\times_1[m, \infty))),\] viewed as a map on $\Z^R\oplus \Z^{S}\oplus\Z^{S}\oplus \cdots$ is multiplication by $D$. We have the following commutative diagram 
$$
\xymatrix{
L(E\times_1 [m+1, \infty)) \ar[r]^{\iota_m} \ar[d]^{\beta^{-1}-1}& L(E\times_1 [m, \infty)) \ar[r]^{\iota^m} \ar[d]^{\beta^{-1}-1} & L(E\times_1\Z)\ar[d]^{\beta^{-1}-1}\\
L(E\times_1 [m+1, \infty)) \ar[r]^{\iota_m} & L(E\times_1 [m, \infty)) \ar[r]^{\iota^m} & L(E\times_1 \Z)}$$ and thus the following diagram 
\begin{equation}
\label{com}
\xymatrix{
\Z^{R}\oplus \Z^{S_{m+1}}\oplus\Z^{S_{m+2}}\oplus \cdots \ar[r]^{D} \ar[d]^{D-1}& \Z^{R}\oplus \Z^{S_{m}}\oplus\Z^{S_{m+1}}\oplus \cdots \ar[r]^{\iota^m_*} \ar[d]^{D-1} & K_0(L(E\times_1\Z))\ar[d]^{\beta_*^{-1}-1}\\
\Z^{R}\oplus \Z^{S_{m+1}}\oplus\Z^{S_{m+2}}\oplus \cdots \ar[r]^{D} & \Z^{R}\oplus \Z^{S_{m}}\oplus\Z^{S_{m+1}}\oplus \cdots \ar[r]^{\iota^m_*} & K_0(L(E\times_1 \Z))}
\end{equation}
commutes. 

We can realise $K_0(L(E\times_1 \Z))$ as the group of equivalence classes $[(x_i)]$ of sequences in $\prod_{i=0}^{-\infty}(\Z^{R}\oplus \Z^{S}\oplus\Z^{S}\oplus \cdots)$ which 
eventually satisfy $Dx_i=x_{i-1}$ and two sequences are equivalent if they eventually coincide. 
The natural map $\iota_*^m$ sends $x\in \Z^{R}\oplus \Z^{S_{m}}\oplus\Z^{S_{m+1}}\oplus \cdots$ to the class of the sequence $(x_i)$ with 
$$
x_i
=\begin{cases} D^{m-i}(x), & \text{if~} i\leq m;\\
 0, &\text{if~} i>m.
 \end{cases}
$$

Observe that the homomorphism $\iota_*^0$ restricts to an isomorphism of $\Ker (D-1)$ onto $\Ker(\beta_*^{-1}-1)$ and induces an isomorphism $\overline{\iota}_*^0$ of $\Coker (D-1)$ 
onto $\Coker(\beta_*^{-1}-1)$. To show that $\iota_*^0$ is injective on $\Ker(D-1)$, we check directly that $\iota_*^0(x)$ is the constant sequence $(x)$ as $x=D(x)$. If $z=\iota_*^m(y)\in \Ker(\beta_*^{-1}-1)$, then we have 
\[(\beta^{-1}_*-1)(\iota_*^m(y))=0=\iota_*^m((D-1)(y))=\iota_*^m(D(y)-y).\] Thus there exists a large $N$ such that $D^N(D(y)-y)=0$. 
Take $y'=D^N(y)$. It follows that $z=\iota_*^0(y')$ with $y'=D(y')$. Therefore $\iota_*^0$ restricts to an isomorphism of $\Ker(D-1)$ onto $\Ker(\beta_*^{-1}-1)$. 
To show that $\overline{\iota}_*^0$ from $\Coker(D-1)$ to $\Coker(\beta_*^{-1}-1)$ is injective, suppose $\iota_*^0(x)=[(z_i)]\in \rm Im (\beta_*^{-1}-1)$. Then $[(z_i)]=[(Dy_i-y_i)]$ 
for some $(y_i)\in K_0\big(L(E\times_1\Z)\big)$.  For a large enough $k$, we have $D^k(x)=z_{-k}=Dy_{-k}-y_{-k}$. On the other hand, we have 
\[x=x-D^kx+D^kx=(1-D)(1+D+D^2+\cdots+D^{k-1})x-(D-1)y_{-k}\in \Imm(D-1),\] which implies that $\overline{\iota}_*^0$ is injective. To show that $\overline{\iota}_*^0$ is surjective, let $\iota_*^m(y)\in K_0\big(L(E\times_1\Z)\big)$
for some $m\le 0$. By \eqref{com}, we have 
\[\iota_*^m(Dy)-\iota_*^m(y)=\iota_*^m((D-1)y)=(\beta_*^{-1}-1)(\iota_*^m(y)).\] 
Thus $\iota_*^m(y)$ and $\iota_*^m(Dy)$ define the same equivalence class in $\Coker(\beta_*^{-1}-1)$. So $\iota_*^0(y)=\iota_*^m(D^{-m}y)$ define the same equivalence class as $\iota_*^m(y)$. Hence $\iota_*^0(y)=\iota_*^m(y)$ in $\Coker(\beta_*^{-1}-1)$ and $\overline{\iota}_*^0$ is surjective.

Let $i$ and $j$ be the inclusion of $\Z^R$ and $\Z^R\oplus\Z^S$ as the first coordinates of $\Z^R\oplus\Z^S\oplus\Z^S\oplus\cdots$. Then the following diagram commutes
\begin{align}
\label{a}
\xymatrix{
 {\Z}^R \ar[r]^{K} \ar[d]^{i} & {\Z}^{R}\oplus {\Z}^S \ar[d]^{j}\\
\Z^R\oplus\Z^S\oplus\Z^S\oplus\cdots\ar[r]^{D-1} & \Z^R\oplus\Z^S\oplus\Z^S\oplus\cdots,}
\end{align} where $i$ restricts to an isomorphism of $\Ker K$ onto $\Ker(D-1)$, and $j$ induces an isomorphism $\overline{j}$ of $\Coker K$ onto $\Coker (D-1)$ (compare \cite[Lemma 3.4]{rs}).

Denote by $\vartheta: K_0(L(E\times_1 \Z))\xra K_0^{\gr}(L(E))$ the isomorphism induced by the isomorphism of monoids $M_{E\times_1 \Z}\xra M^{\gr}_{E}, v_i\mapsto v(i)$ for $v\in E^0$ and $i\in\Z$ in \cite[Proposition 5.7]{ahls}. Combining \eqref{com} and \eqref{a}, we have the following commutative diagram
\begin{equation}
\label{final}
\xymatrix{
\Ker K\ar[r]^{}\ar[d]^{}&\Z^{R}\ar[r]^{K} \ar[d]^{i}& \Z^{R}\oplus \Z^{S} \ar[r]^{} \ar[d]^{j} & \Coker K\ar[d]^{}\\
\Ker(D-1)\ar[r]^{}\ar[d]^{}&\Z^{R}\oplus \Z^{S}\oplus\Z^{S}\oplus \cdots \ar[r]^{D-1}\ar[d]^{\iota_*^0} & \Z^{R}\oplus \Z^{S}\oplus\Z^{S}\oplus \cdots \ar[r]^{} \ar[d]^{\iota_*^0}& \Coker(D-1)\ar[d]^{}\\
\Ker (\beta_*^{-1}-1) \ar[r]^{}\ar[d]^{}& K_0(L(E\times_1\Z))\ar[r]^{\beta_*^{-1}-1} \ar[d]^{\vartheta}&  K_0(L(E\times_1\Z)) \ar[r]^{} \ar[d]^{\vartheta} & \Coker(\beta_*^{-1}-1) \ar[d]^{}\\
\Ker \phi\ar[r]^{}& K_0^{\gr}(L(E)) \ar[r]^{\phi} &K_0^{\gr}(L(E)) \ar[r]^{} & \Coker\phi,}
\end{equation} implying the commutative diagram in \eqref{aim} follows. The map $\psi:\Z^{R}\oplus\Z^{S}\xra K_0^{\gr}(L(E))$ is the composition of the maps in the third column and thus is given by $\psi(y)=y(0)$.
\end{proof}

\begin{prop} \label{someother}
Let $E$ be a row-finite graph and $k$ a field. We have the following short exact sequence 
\begin{equation}\label{sesl}
\xymatrix{
K_1(\L(E)) \ar[r]^{T}& K_0^{\gr}(\L(E)) \ar[r]^{\phi} & K_0^{\gr}(\L(E)) \ar[r]^{U}& K_0(\L(E))\ar[r]&0,}
\end{equation} 
where $T$ is the homomorphism given by 
$$K_1(L(E))\cong {\rm Coker} \bigg({\tiny{\begin{pmatrix}B_E^t-I\\ C_E^t \end{pmatrix}}}: (k^{\times})^{R}\longrightarrow (k^{\times})^{E^0}\bigg)\oplus \Ker\phi\stackrel{{\tiny{\begin{pmatrix}0 & 1\end{pmatrix}}}}{\longrightarrow} K_0^{\gr}(\L(E)).$$
\end{prop}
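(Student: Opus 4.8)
The plan is to bootstrap from the short exact sequence already established in Proposition~\ref{3items}, so that only exactness at the left-hand copy of $K_0^{\gr}(L(E))$ remains to be checked. Indeed, Proposition~\ref{3items} provides the exact sequence
\[
K_0^{\gr}(L(E)) \stackrel{\phi}{\longrightarrow} K_0^{\gr}(L(E)) \stackrel{U}{\longrightarrow} K_0(L(E)) \longrightarrow 0,
\]
which gives at once the surjectivity of $U$ together with the identity $\Imm\phi = \Ker U$. Hence the sequence \eqref{sesl} is already exact at the second copy of $K_0^{\gr}(L(E))$ and at $K_0(L(E))$, and the whole content of the statement reduces to proving exactness at the first copy, namely $\Imm T = \Ker\phi$.

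To this end, I would first recall the decomposition of $K_1(L(E))$ furnished by Lemma~\ref{k0k1}(ii),
\[
K_1(L(E)) \cong \Coker\!\begin{pmatrix}B_E^t-I\\ C_E^t\end{pmatrix} \oplus \Ker\!\begin{pmatrix}B_E^t-I\\ C_E^t\end{pmatrix},
\]
and then invoke Proposition~\ref{4itemses} to rewrite the kernel summand. There the map $\psi$ restricts to an isomorphism $\psi|\colon \Ker K \xrightarrow{\ \cong\ } \Ker\phi$, where $K$ is precisely the $\Z$-linear map $K(x)=\big((B_E^t-I)(x),\, C_E^t x\big)$ with matrix $\begin{pmatrix}B_E^t-I\\ C_E^t\end{pmatrix}$ relative to the decomposition $\Z^{E^0}=\Z^R\oplus\Z^S$. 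Transporting along this isomorphism identifies the kernel summand with the subgroup $\Ker\phi \subseteq K_0^{\gr}(L(E))$ and yields exactly the decomposition
\[
K_1(L(E)) \cong \Coker\!\begin{pmatrix}B_E^t-I\\ C_E^t\end{pmatrix} \oplus \Ker\phi
\]
appearing in the statement of the proposition.

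With this identification in place, the homomorphism $T$ is by definition the composite of the above isomorphism with the map $\begin{pmatrix}0 & 1\end{pmatrix}$, that is, the projection onto the $\Ker\phi$ summand followed by the inclusion $\Ker\phi \hookrightarrow K_0^{\gr}(L(E))$. Since the projection onto a direct summand is surjective onto that summand, the image of $T$ is exactly the image of this inclusion, namely $\Ker\phi$; in particular this is independent of the (non-canonical) choice of splitting in Lemma~\ref{k0k1}(ii). Thus $\Imm T = \Ker\phi$, which establishes exactness at the first copy of $K_0^{\gr}(L(E))$ and completes the proof.

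As for the main obstacle, little genuine difficulty remains here, precisely because the substantive work has already been carried out in Propositions~\ref{3items} and~\ref{4itemses}. The only point demanding care is the compatibility of the isomorphism of Lemma~\ref{k0k1}(ii) with $\psi|$, that is, verifying that the kernel summand of $K_1(L(E))$ is matched with $\Ker\phi$ by the very map $\psi$ occurring in the van den Bergh--type sequence, so that the kernel $\Ker\begin{pmatrix}B_E^t-I\\ C_E^t\end{pmatrix}$ and $\Ker\phi$ are genuinely identified as one and the same summand. Once this bookkeeping is confirmed, exactness is immediate from the shape of $T$.
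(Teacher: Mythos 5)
Your proof is correct and takes essentially the same route as the paper: the paper's own proof consists of the single line that the result ``follows directly from Proposition~\ref{3items}, Lemma~\ref{k0k1}(ii) and Proposition~\ref{4itemses},'' which is precisely the combination you assemble. The only difference is that you spell out the bookkeeping the paper leaves implicit, namely that $\psi|$ from Proposition~\ref{4itemses} identifies the kernel summand of Lemma~\ref{k0k1}(ii) with $\Ker\phi$, and that $\Imm T=\Ker\phi$ holds regardless of the non-canonical choice of splitting.
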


\begin{proof} The result follows directly from Proposition \ref{3items}, Lemma \ref{k0k1} (ii) and Proposition~\ref{4itemses}. \end{proof}


\section{$K$-theory of a Leavitt path algebra}

In this section, we first recall the long exact sequence for the computation of $K$-theory of Leavitt path algebras in \S\ref{gfhftfhtr2}. The group homomorphism $K_1(L(E))\xra \Ker \begin{pmatrix}A_E^t-I\end{pmatrix}$ appears in the long exact sequence where $A_E$ is the adjacency matrix of the graph $E$. We describe a map $\chi_1':\Ker \begin{pmatrix}A_E^t-I\end{pmatrix}\xrightarrow{} K_1(L(E))$ which is a section for the map $K_1(L(E))\xra \Ker \begin{pmatrix}A_E^t-I\end{pmatrix}$. However, the map $\chi_1'$ depends on the choice of certain bijections (see Example \ref{exm:badbehaviour}) and is not in general a group homomorphism. By using a certain quotient $K_1(L(E))/G_E$ of $K_1(L(E))$ (see \eqref{definition-quotient}), we get a group homomorphism $\chi_1:\Ker \begin{pmatrix}A_E^t-I\end{pmatrix}\xrightarrow{} K_1(L(E))/G_E$ which does not depend on the choices of bijections. This will be used in the proof of the main theorem of this paper, Theorem \ref{maintheorem}. 


\subsection{$K$-theory of Leavitt path algebras}\label{gfhftfhtr2}

The computation of $K$-theory for Leavitt path algebras was considered in \cite{abc,grtw}. In \cite{abc} the authors consider the Leavitt path algebra of a row-finite graph. The authors of \cite{grtw} extend the $K$-theory computation of \cite[Theorem 7.6]{abc} to Leavitt path algebras of graphs that may contain infinite emitters.

Recall that we decompose the vertices of a row-finite graph $E$ as $E^0=R\sqcup S$, where $S$ is the set of sinks and $R=E^0\setminus S$. With respect to this decomposition we write the adjacency matrix of $E$ as
\[A_E=\tiny\begin{pmatrix}
B_E& C_E\\
0&0
\end{pmatrix}.\] 
Here, we list the vertices in $R$ first and then the vertices in $S$. 

The following result is taken from \cite[Theorem 3.1]{grtw} (see also \cite[Theorem 7.6]{abc}).

\begin{lem} \label{longexact}
	Let $E$ be a row-finite graph and $k$ a field.  There is a long exact sequence
	$$\xymatrix{\cdots\ar[r]^{} &K_n(k)^{R} \ar[r]^{\tiny{\begin{pmatrix}B_E^t-I\\ C_E^t\end{pmatrix}}} & K_n(k)^{E^0} \ar[r]^{}& K_n(L_k(E))\ar[r]^{}&K_{n-1}(k)^{R}\ar[r]^>>>{\tiny{\begin{pmatrix}B_E^t-I\\ C_E^t\end{pmatrix}}}& \cdots\\}$$ for any $n\in\Z$.
\end{lem}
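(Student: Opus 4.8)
The plan is to realise the stated sequence as the long exact sequence in algebraic $K$-theory attached to the Cohn extension \eqref{sescohn},
\[ 0 \longrightarrow \mathcal{K}(E) \stackrel{l}{\longrightarrow} C(E) \stackrel{p}{\longrightarrow} L(E) \longrightarrow 0. \]
The first step is to justify that this two-sided ideal extension produces a long exact sequence of $K$-groups. Since $\mathcal{K}(E)$ is the ideal generated by the idempotents $q_v = v - \sum_{e \in s^{-1}(v)} ee^{*}$ with $v$ a non-sink, it decomposes as a direct sum $\bigoplus_{v \in R} M_{\Lambda_v}(k)$ of matrix algebras over $k$ (one summand for each $v \in R$, with $\Lambda_v$ indexing the paths ending at $v$); in particular $\mathcal{K}(E)$ is a ring with local units, hence $H$-unital. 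By the Suslin--Wodzicki excision theorem, $H$-unital ideals satisfy excision in algebraic $K$-theory, so the extension yields the long exact sequence
\[ \cdots \longrightarrow K_n(\mathcal{K}(E)) \longrightarrow K_n(C(E)) \longrightarrow K_n(L(E)) \longrightarrow K_{n-1}(\mathcal{K}(E)) \longrightarrow \cdots, \]
which I would set up in the nonconnective (Bass) framework so that it runs over all $n \in \Z$.

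The second step is to compute the two outer terms. By Morita invariance and compatibility of $K$-theory with the filtered colimit of finite subsums, $K_n(\mathcal{K}(E)) \cong \bigoplus_{v \in R} K_n(k) = K_n(k)^{R}$. For the Cohn path algebra I would use that the inclusion of its vertex subalgebra $\bigoplus_{v \in E^0} kv$ induces an isomorphism on $K$-theory, giving $K_n(C(E)) \cong K_n(k)^{E^0}$; this is the algebraic analogue of the $K$-theory of a graph Toeplitz algebra and can be obtained by a tensor-algebra/homotopy-invariance argument, or simply quoted from the known computation of $K_{*}(C_k(E))$.

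The crucial third step is to identify the induced map $K_n(\mathcal{K}(E)) \to K_n(C(E))$ with $\begin{pmatrix} B_E^{t}-I \\ C_E^{t} \end{pmatrix}$. At the level of $K_0$ this is transparent: the generator of $K_0(\mathcal{K}(E))$ attached to $v \in R$ is $[q_v]$, and in $K_0(C(E)) = \Z^{E^0}$ one computes $[q_v] = [v] - \sum_{e \in s^{-1}(v)}[e^{*}e] = [v] - \sum_{w} A_E(v,w)[w]$, which is exactly the $v$-th column of $I - A_E^{t}$, that is, up to the harmless overall sign $-1$ (which negates the map without altering kernels, images or exactness), the $v$-th column of $\begin{pmatrix} B_E^{t}-I \\ C_E^{t} \end{pmatrix}$. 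To propagate this to every degree I would track the Morita bimodules explicitly so that $l$ is represented by the bimodule corresponding to this integer matrix, each integer entry $m$ then acting on $K_n(k)$ as multiplication by $m$ via $k \to M_m(k)$, so that naturality delivers the same matrix in all $n$.

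I expect the third step to be the main obstacle: making the identification of the map \emph{functorial in $n$} rather than merely valid on $K_0$ requires a careful comparison of the Morita equivalences chosen for $\mathcal{K}(E)$ and $C(E)$ against the inclusion $l$, so that the induced map of $K$-theory spectra is genuinely \emph{induced by} multiplication by the integer matrix and not just equal to it in degree zero. A secondary technical point is justifying excision and the computation $K_n(C(E)) \cong K_n(k)^{E^0}$ in higher and negative degrees, both standard but reliant on the $H$-unitality input and the nonconnective formalism. As an alternative that sidesteps the Cohn algebra entirely, one could instead realise $L(E)$ as the universal localization of the path algebra and invoke the Neeman--Ranicki localization sequence, recovering the same boundary matrix; this is essentially the route of \cite{abc}.
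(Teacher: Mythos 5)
Your proposal is correct and is essentially the argument behind the lemma as the paper uses it: the paper offers no proof of this statement, quoting it from \cite[Theorem 3.1]{grtw} and \cite[Theorem 7.6]{abc}, and the proof in \cite{abc} is precisely your route --- the Cohn extension \eqref{sescohn}, excision in algebraic $K$-theory for the locally unital (hence $H$-unital) ideal $\mathcal{K}(E)\cong \bigoplus_{v\in R}\mathbb{M}_{P_v}$ of \eqref{isoalgs}, the computation $K_n(C_k(E))\cong K_n(k)^{E^0}$ via the vertex subalgebra, and the bimodule-level identification of the induced map with the integer matrix uniformly in $n$. The sign discrepancy you flag is genuine but harmless, and is visible in the paper itself, whose diagram \eqref{k} in the proof of Proposition \ref{prop-section} records the map $K_0(\mathcal{K}(E))\to K_0(C(E))$ as $\tiny{\begin{pmatrix} I-B_E^t\\ -C_E^t\end{pmatrix}}$.
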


By Lemma \ref{longexact}, setting $n=1$, we have the following exact sequence.
\begin{equation}\label{01}
\xymatrix{
	{k^{\times}}^{R} \ar[r]^>>>>{\tiny{\begin{pmatrix}B_E^t-I\\ C_E^t\end{pmatrix}}} & {k^{\times}}^{E^0} \ar[r]^<<<{\lambda'}& K_1(L(E))\ar[r]^<<<{\xi'}& {\Z}^{R}\ar[r]^>>>>{\tiny{\begin{pmatrix}B_E^t-I\\ C_E^t\end{pmatrix}}}& \Z^{E^0}
}
\end{equation}
The definition of $\xi':K_1(L(E))\longrightarrow{}  {\Z}^{R}$ is given by the composition 
map $K_1(L(E))\xrightarrow[]{\partial} K_0(\mathcal{K}(E))\cong \Z^{R}$, where $\partial$ is the connecting map of $K$-theory with respect to the short exact sequence 
given in \eqref{sescohn}. Observe that by \eqref{01} $\xi'$ satisfies 
\[\rm Im \, \xi' = \Ker \bigg({\tiny{\begin{pmatrix}B_E^t-I\\ C_E^t\end{pmatrix}}}: \Z^{R}\longrightarrow \Z^{E^0}\bigg).\]
So we also have the map
\begin{equation}
\label{defxi}
\xi': K_1(L(E))\longrightarrow{} \Ker \bigg({\tiny{\begin{pmatrix}B_E^t-I\\ C_E^t\end{pmatrix}}}: \Z^{R}\longrightarrow\Z^{E^0}\bigg). 
\end{equation}

The map $\lambda':{k^{\times}}^{E^0} \xrightarrow[]{} K_1(L(E))$ in \eqref{01} is induced by the homomorphism of algebras 
\begin{align*}
\bigoplus_{v\in E^0}kv & \longrightarrow L(E),\\
v & \longmapsto v,
\end{align*}
for each $v\in E^0$. By $\eqref{01}$ we also have the map 
\[\lambda': \Coker\bigg({\tiny{\begin{pmatrix}B_E^t-I\\ C_E^t\end{pmatrix}}}: {k^{\times}}^{R} \longrightarrow {k^{\times}}^{E^0}\bigg)\longrightarrow K_1(L(E)),\]
such that  for $(\mu_v)_{v\in E^0}\in {k^{\times}}^{E^0},$
\[\lambda'((\mu_v)_{v\in E^0})= [\sum _{v\in E^0} \mu_v v]_1 \in K_1(L(E))
\] (see \cite[Proposition 3.4.3]{bort}). 


Recall that a group isomorphism from $\Ker \tiny{\begin{pmatrix}B_E^t-I\\ C_E^t\end{pmatrix}}$ to $K_1\big(C^*(E)\big)$ is described in \cite[Proposition 3.8]{cet}. Now we consider the Leavitt path algebra and define the map 
\[\chi_1': \Ker{\tiny{\begin{pmatrix}B_E^t-I\\ C_E^t\end{pmatrix}}}\longrightarrow K_1\big(L(E)\big),\] and show that $\xi'\circ \chi_1'=\rm id$.

Take $x$ in $\Ker\tiny{\begin{pmatrix}B_E^t-I\\ C_E^t\end{pmatrix}}$. Note that by definition $x$ has only finitely many nonzero entries $x_{v_1}, \cdots, x_{v_k}$. We define
\begin{equation}
\label{twosets}
\begin{split}&{L_x^+}=\Big \{(e, i)\;|\; e\in E^1, 1\leq i\leq -x_{s(e)}\Big\}\bigcup \Big\{(v, i)\;|\; v\in E^0, 1\leq i\leq x_v \Big \}, \text{~and~}\\ &{L_x^-}=\Big \{(e, i)\;|\; e\in E^1, 1\leq i\leq x_{s(e)}\Big\}\bigcup \Big\{(v, i)\;|\; v\in E^0, 1\leq i\leq -x_v\Big \}.
\end{split}\end{equation} By \cite[Lemma 3.1]{cet} and \cite[Lemma 3.2]{cet}, $L_x^+$ and $L_x^{-}$ have the same number of elements and there are bijections
$$[\cdot]: L_x^+\longrightarrow \{1, \cdots, h\}\qquad \text{ and} \qquad  \langle\cdot\rangle: L_x^-\longrightarrow \{1, \cdots, h\}$$ with the property that $[x, i]=\langle y, j\rangle$ implies $r(x)=r(y)$. Here $h$ is the common number of elements in $L^+_x$ and $L^-_x$.

With notation as above, define two matrices $V$ and $P$ by 
$$V=\sum_{\substack{1\leq i\leq x_w\\ s(e)=w}}eE_{[w, i],\langle e, i\rangle}+\sum_{\substack{1\leq i\leq -x_w\\ s(e)=w}}e^*E_{[e, i], \langle w, i\rangle}$$ and $$P=\sum_{1\leq i\leq x_w}wE_{[w, i],[w, i]}+\sum_{\substack{1\leq i\leq -x_w\\ s(e)=w, r(e)=v}}vE_{[e, i], [e, i]},$$ where $E_{\bullet, \bullet}$ denote the standard matrix units in $h\times h$-matrix algebra $\mathbb{M}_h(\mathcal{M}(L(E)))$ (see \cite[\S 2]{jackspielberg} and \cite[Definition 3.3]{cet}). 
Here $\mathcal{M}(L(E))$ is the multiplier algebra of $L(E)$. Recall that the multiplier of an algebra without unit has a unit.

We write $V_x$ and $P_x$ for the corresponding elements $V$ and $P$, using the added subscript to emphasise the dependence of each of $V$ and $P$ on $x \in \Ker \tiny{\begin{pmatrix}B_E^t-I \\ C_E^t\end{pmatrix}}$. In addition, we define \begin{equation} \label{u}
U_x=V_x+(1-P_x).\end{equation} The matrix $U_x$ is invertible and $U_x^{-1}=U_x^*=V_x^*+(1-P_x)$ (compare \cite[Lemma 3.4]{cet} and \cite[Fact 3.6]{cet}). 

The map $\chi_1':\Ker{\tiny{\begin{pmatrix}B_E^t-I\\ C_E^t\end{pmatrix}}}\longrightarrow K_1\big(L(E)\big)$ is defined as $$\chi_1'(x)=[U_x]_1$$ for $x\in \Ker{\tiny{\begin{pmatrix}B_E^t-I\\ C_E^t\end{pmatrix}}}.$



  The map $\chi_1'$ may depend on the choice of bijections $[\,\,]$ and $\langle \,\, \rangle$ chosen in its definition above.
  Indeed we have the following concrete example which shows that the element $\chi_1'(x)$ depends of the choices of these maps: 

\begin{exm}
	\label{exm:badbehaviour}
	The simplest example in which $\chi_1'$ depends on the choice of bijections is when $E$ has only one vertex $v$ and one edge $e$. In this case, $L_k(E)=k[t,t^{-1}]$ and $K_1(L_K(E))= K_0(k)\oplus K_1(k)= \Z\oplus k^{\times}$.
Consider $x=2\in \Ker (A_E^t-I) =\Z$. Then $L_x^+= \{(v,1), (v,2)\}$ and 
$L_x^-=\{(e,1),(e,2)\}$. Now taking $[v,i]= i$ and $\langle e,i \rangle = i$ for $i=1,2$, we obtain $\chi_1'(2)= \Big[\begin{pmatrix} e & 0 \\ 0 & e\end{pmatrix}\Big]_1$. Now taking $[v,i]= i$ for $i=1,2$ and $\langle e,1\rangle ' = 2$, $\langle e,2 \rangle '= 1$, we get $\tilde{\chi}_1'(2)=  \Big[ \begin{pmatrix} 0 & e \\ e & 0\end{pmatrix}\Big]_1\ne \chi_1'(2)$ (if the characteristic of $k$ is different from $2$).  
 \end{exm}

One can follow \cite{cortinasmontero2} and define a group homomorphism $\Ker\tiny{\begin{pmatrix}B_E^t-I\\ C_E^t\end{pmatrix}}\to K_1(L(E))$ by defining it on a basis of the free abelian group $\Ker\tiny{\begin{pmatrix}B_E^t-I\\ C_E^t\end{pmatrix}}$, and then (uniquely) extending to all of $\Ker\tiny{\begin{pmatrix}B_E^t-I\\ C_E^t\end{pmatrix}}$. However this procedure does not seem to be enough for our purposes, because we need functoriality with respect to graded subquotients of $L(E)$. We will instead show below  that we get a well-defined functorial group homomorphism $\chi_1\colon \Ker\tiny{\begin{pmatrix}B_E^t-I\\ C_E^t\end{pmatrix}}\to
\ol{K}_1(L(E))$, where $\ol{K}_1(L(E))$ is the quotient of $K_1(L(E))$ described in the following paragraph.

Let $E$ be a row-finite graph and $k$ a field. For $v\in E^0$, set $[-v]_1$ to be the image of the element $(1,\cdots, -1,\cdots, 1)^t\in {k^{\times}}^{E^0}$ under the map $\lambda' $ given by \eqref{01}. Here $-1$ corresponds to the component of $v$ and $1$ corresponds to all the other components.  We define $G=G_E$ to be the subgroup of $K_1(L_k(E))$ generated by $[-v]_1$, for $v\in E^0$. We write \begin{equation}
\label{definition-quotient}
\overline{K}_1(L(E)):=K_1(L(E))/G_E.
\end{equation}
We refer to $\ol{K}_1(L(E))$ as the {\it reduced $K_1$-group} of $L(E)$.

\begin{defi}
	\label{def:mapchi1}
	Let $E$ be a row finite graph. With the above notation we set $\chi_1\colon \Ker\tiny{\begin{pmatrix}B_E^t-I\\ C_E^t\end{pmatrix}}\to \ol{K}_1(L(E))$ to be the composition $\chi_1= p\circ \chi_1'$, where $p \colon K_1(L(E))\to \ol{K}_1(L(E))=K_1(L(E))/G_E$ is the natural projection.
\end{defi}

In order to show the properties of the map $\chi_1$, we need an easy lemma.

\begin{lem} 
	\label{lem:sign-of-a-permutation} Let $h$ be a positive integer, and suppose we have a decomposition $\{1,\dots , h\}= \bigsqcup_{v\in E^0} Z(v)$ for some subsets $Z(v)$ of $\{1,\dots , h\}$. Let $\sigma = \bigsqcup_{v\in E^0} \sigma_v$ be a permutation of $\{1,\dots ,h\}$, where each $\sigma_v$ is a permutation of $Z(v)$. Consider the element
	$$S=\sum_{v\in E^0} \sum_{t\in Z(v)}  v E_{t,\sigma_v(t)} \in M_h(L(E)).$$ 
	Then we have that $SS^*= S^*S= \sum_{v\in E^0} \sum_{t\in Z(v)} vE_{t,t}$ is a projection in $M_h(L(E))$ and we have
	$$[S+(1_h-SS^*)]_1 = \sum_{v\in E^0} [\mathrm{sign}(\sigma_v)v]_1\in K_1(L(E)).$$
\end{lem}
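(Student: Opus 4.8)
The plan is to lean on two elementary facts throughout. First, each vertex $v\in E^0$ is a self-adjoint idempotent, so $v^*=v$, $v^2=v$, and hence $v(1-v)=0$ in the unitization. Second, the hypotheses give that the sets $Z(v)$ are pairwise disjoint and that each $\sigma_v$ permutes $Z(v)$, so the image sets $\sigma_v(Z(v))=Z(v)$ are pairwise disjoint as well. These two observations are what make every relevant cross term vanish.

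First I would compute $SS^*$ and $S^*S$. Writing $S^*=\sum_{v}\sum_{t\in Z(v)} v\,E_{\sigma_v(t),t}$ and multiplying out, the matrix-unit products $E_{t,\sigma_v(t)}E_{\sigma_w(s),s}$ are nonzero only when $\sigma_v(t)=\sigma_w(s)$; by the disjointness above this forces $v=w$ and then $t=s$. Hence all cross terms die and, using $v^2=v$, one gets $SS^*=\sum_v\sum_{t\in Z(v)} vE_{t,t}=:Q$; the computation of $S^*S$ is identical after reindexing by $\sigma_v$ (a bijection of $Z(v)$). That $Q$ is a projection is immediate from $v^*=v$, $v^2=v$. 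This settles the first assertion.

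Next I would show $U:=S+(1_h-Q)$ is a unitary by checking $SQ=S$ and $QS^*=S^*$ (again a one-line block computation), which yield $S(1_h-Q)=0$ and $(1_h-Q)S^*=0$; then $UU^*=SS^*+(1_h-Q)^2=Q+(1_h-Q)=1_h$, and symmetrically $U^*U=1_h$. Since $U-1_h=S-Q\in M_h(L(E))$, the class $[U]_1\in K_1(L(E))$ is defined. The crucial structural point is that $U$ is block diagonal for the partition $\{Z(v)\}$: on the block indexed by $Z(v)$ it equals $W_v:=vP_{\sigma_v}+(1-v)I_{Z(v)}$, where $P_{\sigma_v}$ is the scalar permutation matrix of $\sigma_v$. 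Block diagonality then gives $[U]_1=\sum_v [W_v]_1$.

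It remains to identify $[W_v]_1$ with $[\mathrm{sign}(\sigma_v)\,v]_1$, and I expect this to be the main obstacle. The key reduction is that $\sigma\mapsto vP_\sigma+(1-v)I$ is multiplicative: in any product the cross terms carry a factor $v(1-v)=0$, so $\prod_i\big(vP_{\tau_i}+(1-v)I\big)=vP_{\sigma_v}+(1-v)I$ whenever $\sigma_v=\prod_i\tau_i$. As $[\,\cdot\,]_1$ sends products to sums, it suffices to treat a single transposition $\tau$, where $W_v$ is the identity off a $2\times2$ block equal to $\left(\begin{smallmatrix} 1-v & v \\ v & 1-v \end{smallmatrix}\right)$. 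The plan is to reduce this block by elementary row/column operations over the unitization to $\left(\begin{smallmatrix} 0 & -(1-2v) \\ 1 & 0 \end{smallmatrix}\right)$, and then, using that $1-2v$ is a unit with $(1-2v)^{-1}=1-2v$, to split off $\mathrm{diag}(1-2v,1)$ modulo $E_2$ (Whitehead's lemma); this gives $[W_v]_1=[1-2v]_1=[-v]_1$ for a transposition. Finally, writing $\sigma_v$ as a product of $m$ transpositions and using $(1-2v)^2=1$, the class $m[-v]_1$ collapses to $0$ for $m$ even and to $[-v]_1$ for $m$ odd, i.e. to $[\mathrm{sign}(\sigma_v)\,v]_1$, which completes the proof.
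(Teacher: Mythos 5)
Your proposal is correct, but it proves the lemma by a genuinely different route than the paper. The paper's proof is a one-liner by functoriality: the unitary $S+(1_h-SS^*)$ is block diagonal with $v$-block $vP_{\sigma_v}+(1-v)I$, hence it is the image under the map $\lambda'$ (induced by the inclusion $\bigoplus_{v\in E^0}kv\hookrightarrow L(E)$) of the tuple $([P_{v}]_1)_{v}\in K_1(k)^{E^0}=K_1(k^{E^0})$, and over the field $k$ the $K_1$-class of a permutation matrix is its determinant, i.e.\ $\mathrm{sign}(\sigma_v)\in k^{\times}=K_1(k)$; applying $\lambda'$ gives the asserted formula. You instead stay entirely inside $M_h(\widetilde{L(E)})$: after the same block decomposition, you exploit multiplicativity of $\sigma\mapsto vP_{\sigma}+(1-v)I$ (cross terms vanish since $v(1-v)=0$) to reduce to a single transposition, and then reduce the $2\times 2$ block $\left(\begin{smallmatrix}1-v&v\\ v&1-v\end{smallmatrix}\right)$ by elementary operations --- e.g.\ $R_1\to R_1+R_2$, $R_2\to R_2-vR_1$, $C_2\to C_2-C_1$ gives $\mathrm{diag}(1,1-2v)$ directly --- so that $[W_v]_1=[1-2v]_1=[-v]_1$, and finally use $2[1-2v]_1=[(1-2v)^2]_1=0$. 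In effect you re-derive, localized at the idempotent $v$, the classical fact that a permutation matrix has $K_1$-class equal to its sign, which the paper simply imports via $K_1(k)\cong k^{\times}$. What each approach buys: the paper's argument is shorter and highlights that the lemma is pure naturality of $K_1$ along $\lambda'$ (a map already central to the definition of $G_E$), while yours is self-contained, requiring neither the identification $K_1(k^{E^0})\cong (k^{\times})^{E^0}$ nor the determinant computation, at the cost of the explicit matrix reductions. All the steps you sketch do go through (the elementary operations take place in $GL_2(\widetilde{L(E)})$, which is harmless since every matrix whose class you record is congruent to the identity modulo $M_2(L(E))$, so all classes lie in $K_1(L(E))=\ker\bigl(K_1(\widetilde{L(E)})\to K_1(k)\bigr)$).
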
 

\begin{proof}
	Observe that $[S+(1_h-SS^*)]_1$ is the image under $\lambda'$ of the element
	$([P_v]_1)_{v\in E^0}\in K_1(k)^{E^0}=K_1(k^{E^0})$, where, for each $v\in E^0$, $P_v$ is the permutation matrix $P_v=\sum_{t\in Z(v)} E_{t,\sigma (t)}$.
\end{proof}

We can now show that $\chi_1$ is a well-defined map.

\begin{lem}
	\label{lem:chi1welldefined} With the above notation, the map $\chi_1\colon \Ker\tiny{\begin{pmatrix}B_E^t-I\\ C_E^t\end{pmatrix}}\to \ol{K}_1(L(E))$ is a well-defined map, that is, it does not depend on the choices of the bijections $[\,\,]$ and $\langle \,\, \rangle$.
\end{lem}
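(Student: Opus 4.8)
The plan is to track exactly how the invertible matrix $U_x$ changes when the two bijections are replaced, and to show that this change is by a factor whose $K_1$-class lies in $G_E$. Suppose $[\,\,],\langle\,\,\rangle$ and $[\,\,]',\langle\,\,\rangle'$ are two choices satisfying the range condition of the statement, and set $\sigma=[\,\,]'\circ[\,\,]^{-1}$ and $\tau=\langle\,\,\rangle'\circ\langle\,\,\rangle^{-1}$, both permutations of $\{1,\dots,h\}$. First I would record the block structure imposed by ranges: the diagonal vertex projection may be rewritten as $P_x=\sum_{v\in E^0}\sum_{t\in Z(v)}vE_{t,t}$, where $Z(v)\subseteq\{1,\dots,h\}$ collects the labels of the elements of $L_x^+$ (equivalently, by the compatibility $[x,i]=\langle y,j\rangle\Rightarrow r(x)=r(y)$, of $L_x^-$) whose range is $v$; indeed the diagonal entry of $P_x$ at a label $t=[a,i]$ is the range of $a$. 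Since both $\sigma$ and $\tau$ carry the block $Z(v)$ onto the corresponding block $Z'(v)$ of the primed labelling, the permutation $\rho:=\tau^{-1}\sigma$ preserves the decomposition $\{1,\dots,h\}=\bigsqcup_v Z(v)$, i.e.\ $\rho=\bigsqcup_v\rho_v$ with each $\rho_v$ a permutation of $Z(v)$. This is precisely the hypothesis of Lemma~\ref{lem:sign-of-a-permutation}.

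Introducing the scalar permutation matrices $\Pi_\sigma=\sum_t E_{\sigma(t),t}$ and $\Pi_\tau=\sum_t E_{\tau(t),t}$ in $M_h(\mathcal{M}(L(E)))$, the relabelling turns the defining formulas into $V_x'=\Pi_\sigma V_x\Pi_\tau^{-1}$ and $P_x'=\Pi_\sigma P_x\Pi_\sigma^{-1}$. Using $\Pi_\tau^{-1}\Pi_\sigma=\Pi_\rho$ I would then rewrite
\[
U_x' \;=\; V_x'+(1-P_x')\;=\;\Pi_\sigma\bigl(V_x\Pi_\rho+(1-P_x)\bigr)\Pi_\sigma^{-1}.
\]
Since conjugation acts trivially on $K_1$, this already gives $[U_x']_1=[V_x\Pi_\rho+(1-P_x)]_1$, so the dependence on the choices has been concentrated into the single permutation $\rho$.

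The key step is to factor $V_x\Pi_\rho+(1-P_x)$ through $U_x$. Setting $R:=P_x\Pi_\rho+(1-P_x)$, I would verify $U_xR=V_x\Pi_\rho+(1-P_x)$ directly from the Cohn relations: from $s(e)\,e=e\,r(e)=e$ and $r(e)\,e^*=e^*s(e)=e^*$ one obtains $V_xP_x=V_x$, and $v^2=v$ gives $P_x^2=P_x$, hence $(1-P_x)P_x=0$; thus $U_xR=(V_x+(1-P_x))(P_x\Pi_\rho+(1-P_x))=V_xP_x\Pi_\rho+(1-P_x)=V_x\Pi_\rho+(1-P_x)$. On the other hand $R=S+(1-SS^*)$ with $S:=P_x\Pi_\rho=\sum_v\sum_{t\in Z(v)}vE_{t,\rho_v^{-1}(t)}$ and $SS^*=P_x$, so Lemma~\ref{lem:sign-of-a-permutation} yields $[R]_1=\sum_v[\mathrm{sign}(\rho_v)v]_1=\lambda'\bigl((\mathrm{sign}(\rho_v))_{v\in E^0}\bigr)$, which lies in $G_E$ because the sign vector is the product of the generators $[-v]_1$ over those $v$ with $\rho_v$ odd. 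Combining the two displays, $[U_x']_1=[U_x]_1+[R]_1$ with $[R]_1\in G_E$, whence $p([U_x']_1)=p([U_x]_1)$ and $\chi_1$ is independent of the bijections.

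I expect the main obstacle to be careful bookkeeping rather than any deep idea. One must check that the range condition on \emph{both} admissible pairs forces $\rho=\tau^{-1}\sigma$ to respect the blocks $Z(v)$, since this is exactly what makes Lemma~\ref{lem:sign-of-a-permutation} applicable; and one must be attentive that $V_x$ is indexed on columns by $\langle\,\,\rangle$ while $P_x$ is indexed by $[\,\,]$, which is precisely why the composite $\rho=\tau^{-1}\sigma$—and not $\sigma$ or $\tau$ separately—is the permutation that survives into the final factor $R$.
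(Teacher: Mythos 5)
Your proof is correct and takes essentially the same approach as the paper's: both arguments reduce the change of bijections to block-preserving permutations and invoke Lemma~\ref{lem:sign-of-a-permutation} together with conjugation-invariance of $K_1$ to see that the discrepancy between $[U_x]_1$ and $[U_x']_1$ lies in $G_E$. The only difference is bookkeeping: the paper first conjugates to align the blocks of the two labellings and then factors $U_x'=(S^++(1-P_x))U_x(S^-+(1-P_x))$, applying the sign lemma twice, whereas you compose the two relabelling permutations into the single block-preserving $\rho=\tau^{-1}\sigma$ and apply the sign lemma once to $R=P_x\Pi_\rho+(1-P_x)$.
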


\begin{proof} Take $x\in \Ker\tiny{\begin{pmatrix}B_E^t-I\\ C_E^t\end{pmatrix}}$. 
	Let $[\,\,]'\colon L_x^+ \to \{1,\dots , h_x\}$ and $\langle \,\, \rangle '\colon L_x^-\to \{ 1,\dots , h_x \}$ be different bijections satisfying the required properties. Let $\tau\in S_{h_x}$ be a permutation sending $[L_x^+(v)]'=\langle L_x^-(v) \rangle' $ to $[L_x^+(v)]=\langle L_x^-(v) \rangle $ for all $v\in E^0$. Set $[\,\,]''=\tau\circ [\,\,]'$ and $\langle \,\, \rangle ''= \tau \circ \langle \,\, \rangle '$. Then $[L_x^+(v)]''=[L_x^+(v)]$ and $\langle L_x^-(v)\rangle ''= \langle L_x^-(v)\rangle $ for each $v\in E^0$. Moreover we have $U_x''=T^{-1}U_x'T$, where $T=\sum_j E_{j,\tau(j)}$ is the permutation matrix associated to $\tau$. It follows that $[U_x']_1=[U_x'']_1$ and thus we can assume that $$\langle L_x^-(v) \rangle = [L_x^+(v)]=[L_x^+(v)]'= \langle L_x^-(v)\rangle '$$ 
	for all $v\in E^0$. With this assumption we see that there are two permutations $\sigma^{\pm} \in S_{h_x}$ of the form $\sigma^{\pm} = \bigsqcup_v \sigma^{\pm} _v$ such that $\sigma ^+_v([t])= [t]'$ for all $t\in L_x^+(v)$ and
	$\sigma ^-_v(\langle s\rangle )= \langle s\rangle' $ for all $s\in L_x^-(v)$. 
	Set 
	$$S^+= \sum_{v\in E^0} \sum_{t\in L_x^+(v)} vE_{\sigma^+_v([t]), [t]}, \qquad S^-= \sum_{v\in E^0} \sum_{t\in L_x^-(v)} vE_{\langle t\rangle, \sigma^-_v(\langle t\rangle )}.$$
	Then we have $S^+V_xS^-= V_x'$ and
	$$(S^+)^*S^+= S^-(S^-)^* = P_x=P_x'= (S^-)^*S^-= S^+(S^+)^*.$$ It follows that $S^+(1-P_x)=0= (1-P_x)S^-$. Therefore we get $(S^++(1-P_x))U_x(S^-+(1-P_x))=U_x'$. Using Lemma \ref{lem:sign-of-a-permutation}, we get
	$$[U_x']_1 -[U_x]_1 = \sum_{v\in E^0}[\text{sign}(\sigma^+_v)v]+ \sum_{v\in E^0}[\text{sign}(\sigma_v^-)v] \in G_E ,$$
	which shows that $ p([U_x])= p([U_x'])$ in $\ol{K}_1(L(E))$, as desired. 
	\end{proof}


We now show that $\chi_1$ is a group homomorphism:

\begin{lem}
	\label{lem:group-homo}
	The map $\chi_1\colon \Ker\tiny{\begin{pmatrix}B_E^t-I\\ C_E^t\end{pmatrix}}\to \ol{K}_1(L(E))$ is a group homomorphism.
\end{lem}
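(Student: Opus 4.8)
The plan is to fix $x,y\in\Ker\begin{pmatrix}B_E^t-I\\ C_E^t\end{pmatrix}$ and show $\chi_1(x+y)=\chi_1(x)+\chi_1(y)$ in $\ol{K}_1(L(E))$; together with $\chi_1(0)=0$ (the sets $L_0^{\pm}$ are empty, so $U_0$ is trivial) this will give that $\chi_1$ is a homomorphism. A useful preliminary reduction is to exploit $\xi'$ of \eqref{defxi}: it is a group homomorphism, being induced by the connecting map $\partial$ of $K$-theory, and it satisfies $\xi'\circ\chi_1'=\idd$. Hence the defect $d:=\chi_1'(x+y)-\chi_1'(x)-\chi_1'(y)\in K_1(L(E))$ has $\xi'(d)=(x+y)-x-y=0$, so by exactness of \eqref{01} we get $d\in\Ker\xi'=\Imm\lambda'$. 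Thus $d$ is a priori represented by a diagonal matrix with entries in $k^{\times}$, and it remains only to show that these entries are signs, i.e. that $d\in G_E$.

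I would pin down $d$ by comparing $U_{x+y}$ directly with $U_x\oplus U_y$, using the freedom in the choice of the bijections $[\,\,]$ and $\langle\,\,\rangle$ granted by Lemma \ref{lem:chi1welldefined} to coordinate the enumerations for $x$, $y$ and $x+y$. In the case of \emph{no sign cancellation}, i.e. $x_vy_v\geq 0$ for every $v\in E^0$, the range $1\le i\le (x+y)_v$ splits as $1\le i\le x_v$ followed by $x_v<i\le x_v+y_v$, and the edge ranges split in the same way, so $L_{x+y}^{\pm}$ is, labelling- and $r$-preservingly, the disjoint union of $L_x^{\pm}$ and $L_y^{\pm}$. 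Choosing the bijections for $x+y$ to respect this decomposition gives $V_{x+y}=V_x\oplus V_y$, $P_{x+y}=P_x\oplus P_y$, hence $U_{x+y}=U_x\oplus U_y$, so additivity holds already in $K_1(L(E))$.

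The remaining and main obstacle is sign cancellation, where at some vertex one has $x_v>0>y_v$, so that the vertex block of $x$ and the edge blocks of $y$ emanating from $v$ partially annihilate in forming $L_{x+y}^{\pm}$. Here I would use the Cuntz--Krieger relation $v=\sum_{e\in s^{-1}(v)}ee^{*}$, which is exactly the identity permitting one to trade a vertex block $\{(v,i)\}$ against the corresponding edge blocks $\{(e,i)\}$. The plan is to realise each such trade, after stabilising to a common matrix size $N$, as multiplication of $U_x\oplus U_y\oplus 1$ by a unitary of the signed vertex-permutation form $\sum_{v}\sum_{t}vE_{t,\sigma_v(t)}+(1-P)$ occurring in Lemma \ref{lem:sign-of-a-permutation}; after finitely many such moves one produces unitaries $S,S'$ of this form with $U_{x+y}\oplus 1=S\,(U_x\oplus U_y\oplus 1)\,S'$ in $M_N(\mathcal{M}(L(E)))$.

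Finally, taking $K_1$-classes yields $[U_{x+y}]_1-[U_x]_1-[U_y]_1=[S]_1+[S']_1$, and by Lemma \ref{lem:sign-of-a-permutation} each of $[S]_1,[S']_1$ equals $\sum_{v}[\operatorname{sign}(\sigma_v)v]_1$, which lies in $G_E$ since $[\pm v]_1\in G_E$. Hence $d\in G_E$ and $p([U_{x+y}])=p([U_x])+p([U_y])$, i.e. $\chi_1(x+y)=\chi_1(x)+\chi_1(y)$. The delicate point throughout is the bookkeeping of the cancellation moves: one must verify that the Cuntz--Krieger trades assemble into genuine unitaries of the signed-permutation type and that the enumerations can be chosen consistently for all three of $x$, $y$, $x+y$ simultaneously, which is precisely where Lemma \ref{lem:chi1welldefined} and Lemma \ref{lem:sign-of-a-permutation} are invoked.
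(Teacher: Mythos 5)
Your skeleton is the same as the paper's: coordinate the bijections using Lemma \ref{lem:chi1welldefined}, dispose of the no-cancellation case ($x_vy_v\ge 0$ for all $v$) by a disjoint-union argument, and push the sign-cancellation defect into $G_E$ via Lemma \ref{lem:sign-of-a-permutation}. Your preliminary reduction through $\xi'$ is also correct (by exactness of \eqref{01}, the defect $d$ lies in $\Imm\lambda'$), though it is not needed. However, there is a genuine gap at the decisive step. The factorization you posit, $U_{x+y}\oplus 1=S\,(U_x\oplus U_y\oplus 1)\,S'$ with $S,S'$ (products of) signed vertex-permutation unitaries of the Lemma \ref{lem:sign-of-a-permutation} type, is impossible in general: every entry of such $S,S'$ lies in the linear span of the vertices and the multiplier unit, so multiplying by them can only permute, truncate, or change the sign of the entries of $U_x\oplus U_y\oplus 1$ — it can never implement the Cuntz--Krieger trade $v=\sum_{e\in s^{-1}(v)}ee^*$, which must convert edge entries into vertex entries. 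Concretely, let $E$ be the single-loop graph, so $L(E)=k[t,t^{-1}]$, and take $x=1$, $y=-1$. Then $U_x=t$, $U_y=t^{-1}$, $U_{x+y}=1$, and any product $S\,\mathrm{diag}(t,t^{-1},1,\dots,1)\,S'$ with $S,S'$ signed permutation matrices has exactly one entry of the form $\pm t$, whereas the identity matrix has none; so no such $S,S'$ exist, at any matrix size $N$.

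What is true — and what the paper actually proves — is that with the coordinated bijections one obtains
\begin{equation*}
(U_x\oplus U_y)\,S=\Big(U_{x+y}+(1-1_{[L_{x+y}^+]})\Big)\begin{pmatrix} 1-VV^* & V\\ V^* & 1-V^*V\end{pmatrix},
\end{equation*}
where $S$ is indeed of the signed vertex-permutation form (so $[S]_1\in G_E$ by Lemma \ref{lem:sign-of-a-permutation}), but the second correction factor is built from the partial isometry $V=V_{x,1}+V_{x,2}$ of \eqref{eq:nicedecompositionVxVy}, whose entries are edges and ghost edges, not vertices. This factor is not covered by Lemma \ref{lem:sign-of-a-permutation}; one needs the separate computation
\begin{equation*}
\Big[\begin{pmatrix} 1-VV^* & V\\ V^* & 1-V^*V\end{pmatrix}\Big]_1=[1-2VV^*]_1=\sum_{v\in E^0}[(-1)^{|A(v)|}v]_1\in G_E,
\end{equation*}
which exploits the fact that $VV^*$ and $V^*V$ are vertex-diagonal projections (precisely because of the Cuntz--Krieger relation). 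Identifying the $K_1$-class of this edge-type rotation unitary as a sign class is the heart of the proof and is the ingredient missing from your argument; the point you yourself flag as ``delicate'' — that the trades assemble into signed-permutation unitaries — is exactly the assertion that fails.
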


\begin{proof}
	Let $x,y\in \Ker\tiny{\begin{pmatrix}B_E^t-I\\ C_E^t\end{pmatrix}}$. We will show that  $\chi_1(x+y)= \chi_1(x) + \chi_1(y)$.

	We need some special bijections for each of $L_x^+,L_x^-,L_y^+,L_y^-$. We first introduce the following integers $x_v',y_v'$ for $v\in E^0$. 
	We write $x_v'=y_v'=0$ if $x_vy_v \ge 0$. Assume that $x_v>0$, $y_v<0$. Then we set
	$$x_v' = \text{min} \{ -y_v,x_v \} ,\quad y_v'= -x_v' = \text{max}\{ y_v, -x_v \}.$$
	Note that $0<x_v'\le x_v$ and $y_v \le y_v' <0$ in this situation. Also $x_v-x_v'= x_v+y_v$ if $x_v'<x_v $ and  $y_v-y_v'=x_v+y_v$ if $y_v<y_v'$.
	If $x_v=x_v'$ and $y_v=y_v'$ then $x_v+y_v=0$ (and conversely).
	
	Similarly, if $x_v<0$ and $y_v>0$, write 
	$$x_v' = \text{max} \{ -y_v,x_v \} ,\quad y_v'= -x_v' = \text{min}\{ y_v, -x_v \}.$$ 
	Similar remarks apply here.
	
	Now we can choose the bijections $[\,\,]_x, \langle \,\,\rangle_x, [\,\,]_y,\langle\,\, \rangle_y$ satisfying the following additional  properties:
	\begin{enumerate}
		\item For each $v\in E^0$ such that $x_v>0$ and $y_v<0$ and each $e\in E^1$ such that $s(e)=v$, we have $[v,i]_x= \langle v,i\rangle_y$ and $\langle e,i\rangle_x= [e,i]_y$ for $1\le i\le x_v'$.
		\item For each $v\in E^0$ such that $x_v<0$ and $y_v>0$ and each $e\in E^1$ such that $s(e)=v$, we have $\langle v,i\rangle _x=[v,i]_y$ and $[e,i]_x=\langle e,i\rangle_y$ for $1\le i\le y_v'$. 
	\end{enumerate}
	
	Note that Lemma \ref{lem:chi1welldefined} guarantees that $\chi_1 $ does not depend on the choice of bijections. We now explain how we can obtain bijections $[\,\,]$ and $\langle \,\, \rangle $ satisfying the above conditions (1) and (2) and also the required conditions that $[L_x^+(w)]_x=\langle L_x^-(w)\rangle_x$ and $[L_y^+(w)]_y=\langle L_y^-(w)\rangle _y$ for each $w\in E^0$. The key idea is to book space for the ``common" terms.  
	
	We first introduce the following sets:
	$$S_1^+= \{ (v,i)_x\mid x_v>0,y_v<0, 1\le i\le x_v' \},\qquad S_2^+=\{ (v,i)_y\mid x_v<0, y_v>0, 1 \le i\le y_v' \},$$
	and
	$$T_1^+=\{ (e,i)_x\mid s(e)=v, x_v<0, y_v>0\mid 1\le i\le -x_v' \},\qquad T_2^+=\{ (e,i)_y\mid s(e)= v, x_v>0, y_v<0, 1\le i\le -y_v' \}.$$ 
	Similarly, we write:
	$$S_1^-= \{ (v,i)_x \mid x_v <0 ,y_v>0,  1\le i\le - x_v'\},\qquad S_2^- = \{ (v,i)_y \mid x_v>0,y_v<0, 1\le i\le -y_v' \},$$
	and
	$$T_1^-= \{ (e,i)_x \mid s(e)=v, x_v >0 ,y_v<0,  1\le i\le x_v'\},\qquad T_2^- = \{ (e,i)_y \mid s(e)=v, x_v<0, y_v>0, 1\le i\le y_v' \}.$$
	Note that we have $|S_1^+(w)|=|S_2^-(w)|$ and  $|T_1^+(w)|=|T_2^-(w)|$,   where we indicate by $S(w)$ the set of elements $s\in S$ such that $r(s)=w$ for each of these sets $S$ and each $w\in E^0$. Simlarly we have   $|S_1^-(w)|=|S_2^+(w)|$ and  $|T_1^-(w)|=|T_2^+(w)|$ for each $w\in E^0$. 
	The sets $S_1^+$ and $T_1^+$ are identified with subsets of $L_x^+$ by the obvious identification $(a,i)_x\leftrightarrow (a,i)$. Similarly $S_1^-\sqcup T_1^-$ is identified with a subset of $L_x^-$,. Analogous considerations apply to the sets $S_2^{\pm}$ and $T_2^{\pm}$.  
	
	We first build the bijections $[\,\,]_x$ and $\langle \,\,\rangle_x$.
	We enumerate as $w_1, w_2, \dots ,w_N$ the vertices $w\in E^0$ such that
	$\text{max}\{ |S_1^+(w)|+|T_1^+(w)|, |S_1^-(w)|+|T_1^-(w)| \} >0$, and then we write 
	$$r_j= \text{max}\{ |S_1^+(w_j)|+|T_1^+(w_j)|, |S_1^-(w_j)|+|T_1^-(w_j)| \} \qquad (1\le j \le N).$$
	Observe that $r_j\le |L_x^+(w_j)|=|L_x^-(w_j)|$ for each $j$, so that $\sum_{j=1}^N r_j\le \sum_{w\in E^0}  |L_x^+(w)|= |L_x^+|=h_x$. 
	
	We first define $[\,\,]_x$ on $S_1^+(w_j) \sqcup T_1^+(w_j)$  and $\langle \,\, \rangle _x$ on $S_1^-(w_j)\sqcup T_1^-(w_j)$ respectively, by fixing injective maps  
	$$ [\,\,]_x \colon S_1^+(w_j) \sqcup T_1^+(w_j)\to \{\sum_{l=1}^{j-1} r_{l}+1,\dots ,\sum_{l=1}^j r_l \}, \quad \langle \,\, \rangle _x \colon  S_1^-(w_j)\sqcup T_1^-(w_j) \to   \{\sum_{l=1}^{j-1} r_{l}+1,\dots ,\sum_{l=1}^j r_l \}.$$
	Observe that at least half of the above maps are bijections. We now complete the definitions of $[\,\,]_x$ and $\langle \,\, \rangle _x$ to all of $L_x^+$ and $L_x^-$ respectively, in such a way that $[L_x^+(w)]_x=\langle L_x^-(w)\rangle_x$ for each $w\in E^0$. 
	
	We now define $[\,\,]_y$ and $\langle \,\, \rangle _y$. We first define injective maps 
		$$ [\,\,]_y \colon S_2^+(w_j) \sqcup T_2^+(w_j)\to \{\sum_{l=1}^{j-1} r_{l} + 1,\dots ,\sum_{l=1}^j r_l \}, \quad \langle \,\, \rangle _y \colon  S_2^-(w_j)\sqcup T_2^-(w_j) \to   \{\sum_{l=1}^{j-1} r_{l}+1,\dots ,\sum_{l=1}^j r_l \}$$
		by setting $[t]_y=\langle t\rangle _x$ for $t\in S_2^+(w_j)\sqcup T_2^+(w_j)$ and $\langle t\rangle _y= [t]_x$ for $t\in S_2^-(w_j)\sqcup T_2^-(w_j)$.  With this, we already obtain that conditions (1) and (2) above are satisfied.

		We now complete the definitions of $[\,\,]_y$ and $\langle \,\, \rangle _y$ to all of $L_y^+$ and $L_y^-$ respectively, in such a way that $[L_y^+(w)]_y=\langle L_y^-(w)\rangle_y$ for each $w\in E^0$. 
	
	At this point we have constructed maps $[\,\,]_x$,
	$\langle \,\,\rangle_x$, $[\,\,]_y$ and $\langle \,\, \rangle _y$ satisfying (1) and (2).

	We now consider the sets $L^+:=L_x^+\bigsqcup L_y^+$ and $L^-:= L_x^-\bigsqcup L_y^-$. In order to indicate the copy in $L^+$ of an element $(a,i)\in L_x^+$ we will write
	it as $(a,i)_x$, and similarly for the other factors. There are bijections $[\, \, ]$  and $\langle \,\, \rangle $ of $L^+$ and $L^-$ respectively onto $\{ 1,\dots , h_x+h_y \}$ given by 
	$[t]=[t]_x$, $\langle t'\rangle = \langle t'\rangle _x$ for $t\in L_x^+$ and $t'\in L_x^-$, and $[s]= [s]_y + h_x$, $\langle s' \rangle = \langle s'\rangle _y +h_x$ for $s\in L_y^+$ and $s'\in L_y^-$.  
	
	Embed $L_{x+y}^+$ into $L^+$ as follows. Assume first that $v\in E^0$, $x_v+y_v>0$ and $1\le i \le x_v+y_v$. There are three different cases. If $x_v\ge 0$ and $y_v\ge 0$, then
	send $(v,i)\in L^+_{x+y}$ to $(v,i)_x\in L_x^+$ if $1\le i \le x_v$ and to $(v, i-x_v)_y\in L_y^+$ if $x_v+1\le i\le x_v+y_v$. If $x_v>0$, $y_v<0$ and $-y_v<x_v$, then send
	$(v,i)$ to $(v, x_v'+i)_x\in L_x^+$. Note that $x_v'+1\le x_v'+i\le x_v$ in this case. Finally if $x_v<0$, $y_v>0$ and $-x_v<y_v$ then send $(v,i)$ to $(v, y_v'+i)_y\in L_y^+$.
	Note that $y_v'+1\le y_v'+i\le y_v$ in this case. 
	
	Now assume that $e\in E^1$, with $v=s(e)$, and that $x_v+y_v<0$ and suppose that $1\le i\le -x_v-y_v$. There are also three cases to consider. If $x_v<0$ and $y_v<0$, then send $(e,i)$ to $(e,i)_x\in L_x^+$
	if $1\le i\le -x_v$ and to $(e,i+x_v)_y\in L_y^+$ if $-x_v+1\le i\le -x_v-y_v$. If $x_v<0$, $y_v>0$ and $x_v<-y_v$ then send $(e,i)$ to $(e, -x_v'+i)_x\in L_x^+$. Observe that
	$-x_v'+1\le -x_v'+i\le -x_v$ in this case. Finally if $x_v>0$, $y_v<0$ and $y_v< -x_v$, then send $(e,i)$ to $(e, -y_v'+i)_y\in L_y^+$. Note that in this case we get
	$-y_v'+1\le -y_v'+i\le -y_v$. 
	
	In a similar way, we obtain an embedding of $L_{x+y}^-$ into $L^-$.
	
		Note that 
	$$L_x^+= (L_{x+y}^+\cap L_x^+) \sqcup S_1^+ \sqcup T_1^+ ,\qquad L_y^+ = (L_{x+y}^+\cap L_y^+)\sqcup S_2^+\sqcup T_2^+.$$
	Similarly
	$$L_x^-= (L_{x+y}^-\cap L_x^-) \sqcup S_1^-\sqcup T_1^- , \qquad L_y^-= (L_{x+y}^-\cap L_y^-)\sqcup S_2^-\sqcup T_2^- .$$
	We have
	\begin{align*}
	|L_{x+y}^+(v)|  & =  |L_x^+(v)|+|L_y^+(v)| - |S_1^+(v)| - |S_2^+(v)|-|T_1^+(v)|-|T_2^+(v)| \\
	&     = | L_x^-(v)| + | L_y^-(v)| - |S_2^-(v)| - |S_1^-(v)|-|T_2^-(v)|-|T_1^-(v)| = | L_{x+y}^-(v) |.
	\end{align*}	
	Now set
	$$V_{x,1}= \sum_{\substack{s(e)=w\\ (w,i)_x\in S_1^+ }} eE_{[w,i]_x,\langle e,i\rangle_x},\qquad V_{x,2} = \sum_{\substack{s(e)=w \\ (e,i)_x\in T_1^+ }} e^*E_{[e,i]_x,\langle w,i \rangle_x},$$
	and
	$$V_{y,1}= \sum_{\substack{s(e)=w\\ (w,i)_y\in S_2^+}} eE_{[w,i]_y+h_x,\langle e,i\rangle_y+h_x},\qquad V_{y,2} = \sum_{\substack{s(e)=w \\ (e,i)_y\in T_2^+ }} e^*E_{[e,i]_y +h_x,\langle w,i \rangle_y+h_x}.$$
	Then we have, using our choice of the functions
	\begin{equation}
	\label{eq:nicedecompositionVxVy}
V_x\oplus V_y= V_{x+y}' +[(V_{x,1}+V_{x,2}+V_{y,1}+V_{y,2})] = V_{x+y}' + [(V_{x,1}+V_{x,2})\oplus (V_{x,2}^*+V_{x,1}^*)],
		\end{equation}
	where $V'_{x+y}$ is the part of $V_x\oplus V_y$ coming from the terms in $L_{x+y}^+= (L_{x+y}^+\cap L_x^+)\sqcup (L_{x+y}^+\cap L_y^+)$. However it is not generally true that 
	$[L_{x+y}^+(v)]=\langle L_{x+y}^-(v) \rangle$, so we need to apply a permutation to get the desired element $V_{x+y}$ to compute $U_{x+y}$. 
	
	Let us define a permutation $\sigma = \sigma_1\sqcup \sigma _2\sqcup \sigma _3$ on $\{1,\dots , h_x+h_y \}$ as follows. 
	First $\sigma _1 \colon \langle S_1^-\sqcup T_1^- \rangle \to [S_2^+\sqcup T_2^+]$ is defined by
	$$\sigma _1 (\langle t\rangle_x) = [t]_y+h_x.$$
	This is well-defined by our choice of functions since indeed $\langle t \rangle _x=[t]_y$ for $t\in S_1^-\sqcup T_1^-$. Similarly we define
	$\sigma _2\colon \langle S_2^-\sqcup T_2^- \rangle \to [S_1^+\sqcup T_1^+]$ by
	$$\sigma _2 (\langle t\rangle_y + h_x) = [t]_x.$$
	Obviously the maps $\sigma _1$ and $\sigma _2$ are range and source preserving. Therefore, for each $v\in E^0$, the restriction of $\sigma_1\sqcup \sigma_2$ 
	defines a bijection from $\langle S_1^-(v)\sqcup T_1^-(v)\sqcup S_2^-(v)\sqcup T_2^-(v)\rangle$ onto $[S_1^+(v)\sqcup T_1^+(v)\sqcup S_2^+(v) \sqcup T_2^+(v)]$.
	Therefore we can define a bijection from $\langle L_{x+y}^-(v)\rangle $ onto $[ L_{x+y}^+(v)]$ so that $\sigma _1\sqcup \sigma _2\sqcup \sigma _3$ defines a bijection from
	$\langle L_x^-(v)\sqcup L_y^-(v)\rangle $ onto $[L_x^+(v)\sqcup L_y^+(v)]= \langle L_x^-(v)\sqcup L_y^-(v)\rangle $. In this way we obtain the permutation $\sigma$ of $\{ 1,\dots h_x+h_y\}$. 
	Note that if we replace the function $\langle \,\, \rangle $ by the new function $\langle \,\, \rangle ' := \sigma _3 \circ \langle \,\, \rangle $ in the definition of $V'_{x+y}$, we obtain a new 
	element $V_{x+y}$ with the property that $V_{x+y}V_{x+y}^* = V_{x+y}^*V_{x+y}$ and so we can compute the element $U_{x+y}$ using the functions $[\,\,]$ and $\langle \,\, \rangle '$.
	
	Now we want to express the element $U_{x+y}$ in terms of the element $U_x\oplus U_y$. 
	
	Write
	$$ S=\sum _{t\in L_x^-\sqcup L_y^-} r(t) E_{\langle t\rangle,\sigma (\langle t \rangle )} +(1-(P_x\oplus P_y))= \sum_{v\in E^0} \sum_{t\in L_x^-(v)\sqcup L_y^-(v)} vE_{\langle t\rangle , \sigma (\langle t \rangle )}+(1-P_x\oplus P_y))$$
	From Lemma \ref{lem:sign-of-a-permutation} we get that $SS^*=1=S^*S$ and that $[S]_1\in G_E$. Moreover we have $V_{x+y}= V'_{x+y}S$. Using this, putting $V:=V_{x,1}+V_{x,2}$, and taking into account \eqref{eq:nicedecompositionVxVy}, we get  
	\begin{align*}
	\Big( U_x\oplus U_y\Big) S &   = \Big( (V_x\oplus V_y) + (1-(P_x\oplus P_y))\Big) S\\
	& =\Big( V_{x+y}' + (V\oplus V^*)+(1-(P_x\oplus P_y))\Big)  \Big(\sum _{t\in L_x^-\sqcup L_y^-}r(t) E_{\langle t\rangle,\sigma (\langle t \rangle )} +(1-(P_x\oplus P_y))\Big)\\
	& =  V_{x+y} + \begin{pmatrix} 0 & V \\ V^* & 0 \end{pmatrix} +   (1-(P_x\oplus P_y))  \\
	& = \Big( V_{x+y}+ (1-V_{x+y}V_{x+y}^*) \Big) \begin{pmatrix} 1-VV^* & V \\ V^* & 1-V^*V \end{pmatrix} \\
	& = \Big( U_{x+y} + (1-1_{[L_{x+y}^+]}) \Big) \begin{pmatrix} 1-VV^* & V \\ V^* & 1-V^*V \end{pmatrix} .
	\end{align*}
	
Taking classes in $K_1(L_K(E))$, we obtain:
	$$[U_{x+y}]_1 + \Big[  \begin{pmatrix} 1-VV^* & V \\ V^* & 1-V^*V \end{pmatrix} \Big]_1= [U_x]_1 +[U_y]_1 + [S]_1 .$$
	Since $[S]_1\in G_E$, we only have to show that $\Big[  \begin{pmatrix} 1-VV^* & V \\ V^* & 1-V^*V \end{pmatrix} \Big]_1\in G_E$. However we have
	 $$ \Big[  \begin{pmatrix} 1-VV^* & V \\ V^* & 1-V^*V \end{pmatrix} \Big]_1 = [1-2VV^*]_1=[1-2V^*V]_1.$$
     Write $A(v):=S_1^+(v)\sqcup T_1^+(v)$ and
	$B(v)=S_2^+(v)\sqcup T_2^+(v)$. Observe that 
	$$[1-2VV^*]_1 = \sum_{v\in E^0}[(-1)^{|A(v)|}v]_1\quad \text{ and }\quad  [1-2V^*V]_1=\sum_{v\in E^0}[(-1)^{|B(v)|}v]_1,$$ so that  we get
	$$ \Big[  \begin{pmatrix} 1-VV^* & V \\ V^* & 1-V^*V \end{pmatrix} \Big]_1=
	 \sum_{v\in E^0}[(-1)^{|A(v)|}v]_1= \sum_{v\in E^0}[(-1)^{|B(v)|}v]_1\in G_E,$$
	 so that we finally conclude that $\chi_1(x+y)= \chi_1(x)+\chi_1(y)$.
	 \end{proof}

\begin{prop}\label{prop-section}
	Let $E$ be a row-finite graph. We have the following statements.
	\begin{itemize}
		\item[(i)] There exists a group isomorphism $\chi_0: \Coker{\tiny{\begin{pmatrix}B_E^t-I \\ C_E^t\end{pmatrix}}}\longrightarrow K_0\big(L(E)\big)$ given by $$\chi_0\Big({ {\bf e}_{v}+{\rm Im} \tiny{\begin{pmatrix}B_E^t-I \\ C_E^t\end{pmatrix}} } \Big) =v$$ for any $v\in E^0$. 
		\item[(ii)] The map $\chi_1':\Ker{\tiny{\begin{pmatrix}B_E^t-I\\ C_E^t\end{pmatrix}}}\longrightarrow K_1\big(L(E)\big)$ given by $$\chi_1'(x)=[U_x]_1$$ satisfies $\xi' \circ \chi_1' = \rm id,$ where $\xi': K_1\big(L(E)\big)\longrightarrow \Ker{\tiny{\begin{pmatrix}B_E^t-I\\ C_E^t\end{pmatrix}}}$ is given in \eqref{defxi}. 
	\end{itemize}
\end{prop}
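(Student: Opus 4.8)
For part (i), the plan is to read off the explicit isomorphism from the Ara--Moreno--Pardo description of $\VV(\L(E))$. Recall from \cite[Theorem 3.5]{amp} that $M_E\cong \VV(\L(E))$ via $v\mapsto [\L(E)v]$, where $M_E$ is the commutative monoid on the generators $E^0$ subject to $v=\sum_{e\in s^{-1}(v)}r(e)$ for each regular vertex. Passing to group completions identifies $K_0(\L(E))$ with the abelian group presented by the same generators and relations. Writing the relation $v-\sum_{e\in s^{-1}(v)}r(e)$ as (the negative of) a column of $\tiny{\begin{pmatrix}B_E^t-I\\ C_E^t\end{pmatrix}}$, this presentation is exactly $\Coker\tiny{\begin{pmatrix}B_E^t-I\\ C_E^t\end{pmatrix}}$, and under it the class of $\mathbf e_v+\Imm\tiny{\begin{pmatrix}B_E^t-I\\ C_E^t\end{pmatrix}}$ corresponds to $[\L(E)v]=v$. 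Concretely, the assignment $\mathbf e_v\mapsto [\L(E)v]$ is well defined because the image of each column of the matrix vanishes in $K_0(\L(E))$ by the defining relation of $\VV(\L(E))$; it is surjective because the classes $[\L(E)v]$ generate; and it is the inverse of the isomorphism of Lemma \ref{k0k1}(i). This gives (i).

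For part (ii), I would compute the connecting map $\partial\colon K_1(\L(E))\to K_0(\mathcal K(E))$ of the extension \eqref{sescohn} directly on the element $[U_x]_1$ and match the result with $x$ under the identification $K_0(\mathcal K(E))\cong\Z^R$, whose generator attached to a regular vertex $v$ is the class of the projection $q_v=v-\sum_{e\in s^{-1}(v)}ee^*\in\mathcal K(E)$. The key observation is that $U_x,U_x^*$ are written as explicit elements built from the generators $v,e,e^*$, so they lift verbatim to a matrix algebra over the multiplier algebra of the Cohn algebra $C(E)$; there they fail to be mutually inverse only by the amount recorded by the $q_v$. I would therefore expand $U_xU_x^*$ and $U_x^*U_x$ in $C(E)$, using $e^*f=\delta_{e,f}r(e)$ (which holds in $C(E)$) while keeping track that $\sum_{e\in s^{-1}(w)}ee^*=w-q_w$, rather than $w$, in $C(E)$.

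The heart of the computation is a bookkeeping of four products. Splitting $V_x=A+B$ into its $e$-part and its $e^*$-part, the cross terms $AB^*,BA^*,A^*B,B^*A$ all vanish because the bijections $[\,\,]$ and $\langle\,\,\rangle$ never identify a vertex-type index with an edge-type index. Among the remaining terms, only $AA^*$ and $B^*B$ involve $\sum_{e\in s^{-1}(w)}ee^*=w-q_w$ and hence carry a $q$-correction, while $A^*A$ and $BB^*$ use only the clean relation $e^*f=\delta_{e,f}r(e)$. Using the relations $s(e)e=e$ and $e^*s(e)=e^*$ one checks $V_xP_x=P_xV_x=V_x$ in $C(E)$, so the mixed terms in $U_xU_x^*=V_xV_x^*+V_x(1-P_x)+(1-P_x)V_x^*+(1-P_x)$ collapse and one obtains
\begin{align*}
1-U_xU_x^*&=\sum_{1\le i\le x_w}q_w\,E_{[w,i],[w,i]}=:Q,\\
1-U_x^*U_x&=\sum_{1\le i\le -x_w}q_w\,E_{\langle w,i\rangle,\langle w,i\rangle}=:Q',
\end{align*}
two orthogonal sums of projections $q_w$ (note that $x$ is supported on $R$, so every $w$ occurring is regular and $q_w$ is defined). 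Feeding these into the index formula $\partial[U_x]_1=[Q]-[Q']$, with the sign convention fixed to match the identification $K_0(\mathcal K(E))\cong\Z^R$, yields $\sum_{x_w>0}x_w[q_w]+\sum_{x_w<0}x_w[q_w]=\sum_w x_w[q_w]$, which corresponds to $x\in\Z^R$. Hence $\xi'\circ\chi_1'=\mathrm{id}$.

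The main obstacle I anticipate is twofold. First, making the index formula $\partial[U_x]_1=[1-U_xU_x^*]-[1-U_x^*U_x]$ rigorous in the purely algebraic, non-unital setting of \eqref{sescohn}: this requires lifting $U_x$ through the multiplier algebra $\mathcal M(C(E))\to\mathcal M(\L(E))$ and verifying that $1-U_xU_x^*$ and $1-U_x^*U_x$ really are idempotents in $M_h(\widetilde{\mathcal K(E)})$ defining classes in $K_0(\mathcal K(E))$. Second, the sign and orientation bookkeeping, so that the outcome is exactly $x$ and not $-x$. The algebraic manipulations themselves are an adaptation of \cite[Lemma 3.1]{cet}--\cite[Lemma 3.4]{cet} from the $C^*$-setting to the Cohn/Leavitt extension, so the conceptual content is the faithful transfer of that computation, the arithmetic of the $q_w$-corrections being the only genuinely new ingredient.
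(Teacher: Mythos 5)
Your proposal is correct, and for part (ii) it is essentially the paper's own argument: the paper likewise lifts $U_x$ to $\widetilde U_x=\widetilde V_x+(1-\widetilde P_x)\in\mathbb{M}_h(\mathcal{M}(C(E)))$ along the Cohn extension \eqref{sescohn}, applies the algebraic connecting-map formula of \cite[\S 2.4]{cortinas} (in the form $\xi'(U_x)=[p_h]-[ap_ha^{-1}]$, which is your index formula), computes the defect idempotents $1-\widetilde U_x\widetilde U_x^*=\widetilde P_x-\widetilde V_x\widetilde V_x^*=\sum_{1\leq i\leq x_w}q_w E_{[w,i],[w,i]}$ and $1-\widetilde U_x^*\widetilde U_x=\sum_{1\leq i\leq -x_w}q_w E_{\langle w,i\rangle,\langle w,i\rangle}$ with $q_w=w-\sum_{s(e)=w}ee^*$, and reads off $(\xi'\circ\chi_1')(x)=\sum_{w}x_w\mathbf{e}_w=x$ under $\kappa$; your $Q$, $Q'$ and cross-term bookkeeping is the same computation. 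Where you genuinely diverge is part (i). The paper proves (i) by running the $K_0$ sequence of the Cohn extension: it identifies $K_0(\mathcal{K}(E))\cong\Z^{R}$ via the decomposition $\mathcal{K}(E)\cong\bigoplus_{v\in R}\mathbb{M}_{P_v}$ of \eqref{isoalgs}, identifies $\lambda(v)=\mathbf{e}_v$ on $K_0(C(E))$ using \cite[Theorem 4.2]{cortinasmontero}, checks the square \eqref{k} commutes, and concludes that $\chi_0$ is the induced map on cokernels. You instead deduce (i) from \cite[Theorem 3.5]{amp}, $M_E\cong\VV(L(E))$, $v\mapsto[L(E)v]$, plus the fact that group completion preserves presentations, so that $K_0(L(E))$ is presented by generators $E^0$ and relations $v=\sum_{e\in s^{-1}(v)}r(e)$, i.e.\ is exactly $\Coker{\tiny{\begin{pmatrix}B_E^t-I\\ C_E^t\end{pmatrix}}}$ with $\mathbf{e}_v\mapsto v$. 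Both routes are valid; yours is more elementary (it needs no input from \cite{cortinasmontero}) and is consistent with how the paper itself treats $K_0(L(E))$ as the group completion of $M_E$ elsewhere (e.g.\ in the proof of Proposition \ref{3items}), though your aside that $\chi_0$ is ``the inverse of the isomorphism of Lemma \ref{k0k1}(i)'' is vacuous, since that lemma asserts only an abstract isomorphism. What the paper's heavier route buys is that it simultaneously pins down the map $\kappa$ of \eqref{ktheorymaps}, $[q_v]\mapsto\mathbf{e}_v$ --- precisely the identification $K_0(\mathcal{K}(E))\cong\Z^{R}$ entering the definition \eqref{defxi} of $\xi'$, which part (ii) (yours included) must invoke; with your version of (i) that identification still has to be justified separately, as you indeed do by appealing to the matrix-algebra structure of $\mathcal{K}(E)$.
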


\begin{proof}
	(i) There is a short exact sequence  \eqref{sescohn}
	\begin{align*}\CD
	\label{}
	0@>>> \mathcal{K}(E)@>{l}>>C(E)@>{p}>>L(E)@>>>0,
	\endCD
	\end{align*} where $\mathcal{K}(E)$ is the ideal (\ref{koutyut}) of the Cohn path algebra $C(E)$. We then have the induced maps of $K$-groups
	\begin{align}
	\label{k}
	\xymatrix{
		K_0\big(\mathcal{K}(E)\big) \ar[r]^{l_*} \ar[d]^{\kappa} & K_0\big(C(E)\big)\ar[r]^{p_*} \ar[d]^{\lambda}&K_0\big(L(E)\big)\\
		\Z^{R}\ar[r]_{\tiny{\begin{pmatrix} I- B_E^t \\ -C_E^t\end{pmatrix}}} & \Z^{E^0}.}
	\end{align} By \cite[Theorem 4.2]{cortinasmontero} we have $\lambda(v)=\bf{e}_v$ for any $v\in E^0$. By \cite[(4.13)]{cortinasmontero} there is an isomorphism of algebras 
	\begin{align}\label{isoalgs}
	\bigoplus_{v\in R}\mathbb{M}_{P_v}&\stackrel{\sim}\longrightarrow \mathcal{K}(E),\\
	\epsilon_{\alpha, \beta}&\longmapsto \alpha\big(v-\sum_{e\in s^{-1}(v)}ee^*\big)\beta^*. \notag
	\end{align}
	Observe that the induced isomorphism map $\kappa$ in (\ref{k}) is the composition of 
	\begin{equation}
	\label{ktheorymaps}
	K_0\big(\mathcal{K}(E)\big)\stackrel{\cong}{\longrightarrow} K_0(\oplus_{v\in R}\mathbb{M}_{P_v})\stackrel{\cong}{\longrightarrow} \Z^{R}\end{equation} which sends $v-\sum_{e\in s^{-1}(v)}ee^*$ to $\bf{e}_{v}$ and that the square in \eqref{k} is commutative which can be checked directly using the definitions of involved maps. Therefore 
	$\chi_0: \Coker{\tiny{\begin{pmatrix}B_E^t-I \\ C_E^t\end{pmatrix}}}\longrightarrow K_0\big(L(E)\big)$ is given by 
	$$\chi_0\Big ({\bf e}_{v}+{\rm Im} \tiny{\begin{pmatrix}B_E^t-I \\ C_E^t\end{pmatrix}} \Big)=v,$$ 
	for any $v\in E^0$. Obviously $p_*$ is surjective.
	
	(ii) To show that $\xi'\circ \chi_1'=\rm id$, take $x\in \Ker\tiny{\begin{pmatrix}B_E^t-I \\ C_E^t\end{pmatrix}}$. We lift $U_x$ to 
	$\widetilde{U}_x=\widetilde{V}_x+(1-\widetilde{P}_x)\in \mathbb{M}_h(\mathcal{M}(C(E))),$ where $\widetilde{V}_x$ and $\widetilde{P}_x$ are the elements $V$ 
	and $P$ in $\mathbb{M}_h(\mathcal{M}(L(E)))$ which we obtain by using universal property of the Cohn path algebra $C(E)$ (\S\ref{lpabasis}). Note that $\widetilde{U}_x$ is a partial isometry.  
	
	By \cite[\S 2.4]{cortinas}, we have $\xi'(U_x)=[p_h]-[a p_h a^{-1}]$ with $$a=\tiny{\begin{pmatrix}
		2\widetilde{U}_x-\widetilde{U}_x\widetilde{U}_x^*\widetilde{U}_x& \widetilde{U}_x\widetilde{U}_x^*-1\\ 1-\widetilde{U}_x^*\widetilde{U}_x&\widetilde{U}_x^*
		\end{pmatrix}}.$$ 
	We further have
	\[ a p_h a^{-1}=\tiny{\begin{pmatrix}
		\widetilde{U}_x\widetilde{U}_x^* & 0\\
		0&1-\widetilde{U}_x^*\widetilde{U}_x\end{pmatrix}},\] 
	\[1-\widetilde{U}_x\widetilde{U}_x^*=\widetilde{P}_x-\widetilde{V}_x\widetilde{V}_x^*=\sum_{1\leq i\leq x_w}(w-\sum_{\substack{e\in E^1\\ s(e)=w}}ee^*)E_{[w, i], [w, i]},\] and 
	
	\[1-\widetilde{U}_x^*\widetilde{U}_x=\widetilde{P}_x-\widetilde{V}_x^*\widetilde{V}_x=\sum_{1\leq i\leq -x_w}(w-\sum_{\substack{e\in E^1\\ s(e)=w}}ee^*)E_{\langle w, i\rangle, \langle w, i\rangle}.\]  Since $\kappa$ sends $w-\sum_{\substack{e\in E^1\\ s(e)=w}}ee^*$ to $\bf{e}_{w}$ by \eqref{ktheorymaps}, we have $(\xi'\circ \chi_1')(x)=\sum_{x_w\neq 0}x_w {{\bf e}_w}=x$, implying that $\xi'\circ \chi_1'=\rm id$. This completes the proof. 
\end{proof}


\subsection{Properties of $\overline{K}_1(L(E))$} 
\label{quotientforkone}

Let $E$ be a row-finite graph and $k$ a field. For $v\in E^0$, set $[-v]_1$ to be the image of the element $(1,\cdots, -1,\cdots, 1)^t\in {k^{\times}}^{E^0}$ under the map $\lambda$ given by \eqref{01}. Recall that we have defined the quotient $\overline{K}_1(L(E)):=K_1(L(E))/G_E$, where $G_E$ is the subgroup of $K_1(L(E))$ generated by $[-v]_1$, for $v\in E^0$.

Following from \eqref{01}, $\xi':K_1\big(L(E)\big)\xrightarrow{} \Ker{\tiny{\begin{pmatrix}B_E^t-I\\ C_E^t\end{pmatrix}}} $ factors through $\overline{K}_1(L(E))$. Then we have the following commutative diagram with $p$ the natural projection.
\begin{equation}
\label{thenewxi}
\xymatrix{
K_1\big(L(E)\big) \ar[rr]^{\xi'}\ar[rd]^p & & \Ker{\tiny{\begin{pmatrix}B_E^t-I\\ C_E^t\end{pmatrix}}}\\
& \overline{K}_1\big(L(E)\big)  \ar[ru]^{\xi} &}
\end{equation}

By Lemma \ref{lem:group-homo}, we have also the {\it group homomorphism} $\chi_1: \Ker{\tiny{\begin{pmatrix}B_E^t-I\\ C_E^t\end{pmatrix}}}\xrightarrow{} \overline{K}_1\big(L(E)\big) $ given by $\chi_1=p\circ \chi_1'$ (Definition \ref{def:mapchi1}).  

We mention that the $\Z$-action of $k^{\times}$ is given by $$n\cdot \mu=\mu^{n}$$ for $n\in\Z$ and $\mu\in k^{\times}$. We will consider the subgroup $\{-1,1\}$ of the multiplicative group $k^{\times}$, and the quotient group $\ol{k}^{\times}:=k^{\times}/\{-1,1\}$.

For a row-finite graph $E$, recall that $R$ is the set of non-sink vertices of $E^0$. We denote by $G_1=\{-1,1\}^{R}$ and $G_2=\{-1,1\}^{E^0}$ which are subgroups of ${k^{\times}}^{R}$ and ${k^{\times}}^{E^0}$, respectively. Observe that $\tiny{\begin{pmatrix}B_E^t-I\\ C_E^t\end{pmatrix}} G_1\subseteq G_2$. So $\tiny{\begin{pmatrix}B_E^t-I\\ C_E^t\end{pmatrix}}: {k^{\times}}^{R}\xrightarrow{}{k^{\times}}^{E^0}$ induces the morphism ${\ol{k}^{\times}}^R={k^{\times}}^{R}/G_1\xrightarrow{} {\ol{k}^{\times}}^{E^0} = (k^{\times})^{E^0}/G_2 $, which is also denoted by $\tiny{\begin{pmatrix}B_E^t-I\\ C_E^t\end{pmatrix}}$. The map $\lambda': {k^{\times}}^{E^0}\xrightarrow{} K_1(L(E))$ in \eqref{01} induces a map $\lambda: (k^{\times})^{E^0}/G_2\xrightarrow{} \overline{K}_1(L(E))$ such that $\lambda(xG_2)=\lambda'(x)G_E$.

We have the following commutative diagram with the first three vertical morphisms the natural projection. 
\begin{equation}
	\label{comfortwosses}
\xymatrix@C+2pc@R-1pc{
{k^{\times}}^R\ar[r]^{\tiny{\begin{pmatrix}B_E^t-I\\ C_E^t\end{pmatrix}}} \ar[d]^{}& {k^{\times}}^{E^0}\ar[r]^{\lambda'}\ar[d]^{}& K_1(L(E))\ar[r]^{\xi'}\ar[d]^{p}
&\Z^R\ar[d]^{\rm id} \ar[r]^{\tiny{\begin{pmatrix}B_E^t-I\\ C_E^t\end{pmatrix}}}&\Z^{E^0}\ar[d]^{\rm id}\\
(k^{\times})^{R}/G_1\ar[r]^{\tiny{\begin{pmatrix}B_E^t-I\\ C_E^t\end{pmatrix}}} & (k^{\times})^{E^0}/G_2\ar[r]^{\lambda}& \overline{K}_1(L(E))\ar[r]^{\xi}
& \Z^R \ar[r]^{\tiny{\begin{pmatrix}B_E^t-I\\ C_E^t\end{pmatrix}}}&\Z^{E^0}\\
 }\end{equation} It follows that $\lambda\circ \tiny{\begin{pmatrix}B_E^t-I\\ C_E^t\end{pmatrix}} =0, \xi\circ\lambda=0$, and $\tiny{\begin{pmatrix}B_E^t-I\\ C_E^t\end{pmatrix}}\circ \xi=0$. Clearly $\Ker \tiny{\begin{pmatrix}B_E^t-I\\ C_E^t\end{pmatrix}} \subseteq {\rm Im}\xi$ and $\Ker\xi\subseteq {\rm Im}\lambda$. So we have $\Ker\xi={\rm Im}\lambda.$ And we have the induced map from the cokernel \begin{equation}
 \overline{\Coker} \begin{pmatrix} B_E^t-I\\ C_E^t\end{pmatrix}=\Coker \bigg({\begin{pmatrix}B_E^t-I\\ C_E^t\end{pmatrix}}:(k^{\times})^{R}/G_1\longrightarrow{}(k^{\times})^{E^0}/G_2\bigg)\end{equation} to $\overline{K}_1(L(E))$ by $\lambda$, which is still denoted by $\lambda$.

\begin{prop} \label{newprop}Let $E$ be a row-finite graph. Keep the notation as above. We have the following statements.
\begin{itemize}
\item[(1)] We have the following commutative diagram whose two rows are both split short exact sequences of abelian groups.
\begin{equation}
\label{diagram-fortwo}\xymatrix@C+1pc@R1pc{
 0\ar[r]&\Coker\tiny{\begin{pmatrix}B_E^t-I\\ C_E^t\end{pmatrix}}\ar[r]^{\lambda'}\ar[d]^p& K_1(L(E))\ar[r]^>>>>>>>>>{\xi'}\ar[d]^{p}
&\Ker(\tiny{\begin{pmatrix}B_E^t-I\\ C_E^t\end{pmatrix}}:\Z^R\longrightarrow{}\Z^{E^0})\ar[d]^{\rm id}\ar[r]&0\\
0\ar[r]&  \overline{\Coker} \tiny{\begin{pmatrix}B_E^t-I\\ C_E^t\end{pmatrix}}\ar[r]^{\lambda}& \overline{K}_1(L(E))\ar[r]^>>>>>>>>>{\xi}
& \Ker(\tiny{\begin{pmatrix}B_E^t-I\\ C_E^t\end{pmatrix}}:\Z^R\longrightarrow{}\Z^{E^0})\ar[r]&0 \\
 }\end{equation}
\item[(2)]The homomorphism $\chi_1:\Ker{\tiny{\begin{pmatrix}B_E^t-I\\ C_E^t\end{pmatrix}}}\longrightarrow \overline{K}_1\big(L(E)\big)$ from Definition \ref{def:mapchi1} satisfies $\xi \circ \chi_1 = \rm id,$ where $\xi: \overline{K}_1\big(L(E)\big)\longrightarrow \Ker{\tiny{\begin{pmatrix}B_E^t-I\\ C_E^t\end{pmatrix}}}$ is given in \eqref{thenewxi}.
\end{itemize}
\end{prop}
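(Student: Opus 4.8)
The plan is to deduce both statements almost formally from the long exact sequence \eqref{01}, the commutative diagram \eqref{comfortwosses}, and Proposition~\ref{prop-section}; the only step needing a genuine argument is the injectivity of $\lambda$. Throughout I write $\Theta=\begin{pmatrix}B_E^t-I\\ C_E^t\end{pmatrix}$ for the relevant matrix map, acting on whichever of the groups $(k^{\times})^R$, $\Z^R$, or their quotients is appropriate.

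First I would prove (2), since it also supplies the splitting needed in (1). By Definition~\ref{def:mapchi1} we have $\chi_1=p\circ\chi_1'$, and the triangle \eqref{thenewxi} reads $\xi\circ p=\xi'$. Hence $\xi\circ\chi_1=\xi\circ p\circ\chi_1'=\xi'\circ\chi_1'=\mathrm{id}$, the last equality being Proposition~\ref{prop-section}(ii).

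For (1), I would first settle the top row. Reading off \eqref{01} gives $\Ker\lambda'={\rm Im}\,\Theta$, so $\lambda'$ is injective on $\Coker\Theta$; it gives ${\rm Im}\,\lambda'=\Ker\xi'$; and it gives ${\rm Im}\,\xi'=\Ker(\Theta\colon\Z^R\to\Z^{E^0})$, so $\xi'$ is onto this kernel. Thus the top row is a short exact sequence, and since its right-hand term is a subgroup of the free abelian group $\Z^R$ it is free, whence the row splits. The two squares commute by construction: the right square is \eqref{thenewxi}, and the left square commutes because $\lambda$ was defined by $\lambda(xG_2)=\lambda'(x)G_E$. For the bottom row, the surjectivity of $\xi$ and the equality ${\rm Im}\,\lambda=\Ker\xi$ are already recorded just after \eqref{comfortwosses}, and the splitting is provided by $\chi_1$ through part (2).

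The main (and essentially the only) point left is the injectivity of $\lambda$ on $\overline{\Coker}\,\Theta$. The key observation is that $G_E=\lambda'(G_2)$: indeed $G_E$ is generated by the classes $[-v]_1=\lambda'(\mathbf e_v)$, where $\mathbf e_v\in(k^{\times})^{E^0}$ has $-1$ in the $v$-coordinate and $1$ elsewhere, and these $\mathbf e_v$ generate $G_2=\{-1,1\}^{E^0}$. So if $\lambda(xG_2)=0$, then $\lambda'(x)\in G_E=\lambda'(G_2)$, whence $\lambda'(xg^{-1})=0$ for some $g\in G_2$; by exactness of the top row $xg^{-1}\in{\rm Im}\,\Theta$, so $x\in G_2\cdot{\rm Im}\,\Theta$ and therefore $xG_2=0$ in $\overline{\Coker}\,\Theta$. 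This completes the verification that the bottom row is a split short exact sequence and, together with the commuting squares, the proof.
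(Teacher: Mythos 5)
Your proposal is correct and takes essentially the same approach as the paper: part (2) is the identical one-line computation $\xi\circ\chi_1=\xi\circ p\circ\chi_1'=\xi'\circ\chi_1'=\mathrm{id}$ via Proposition~\ref{prop-section}, and the one substantive step of part (1), the injectivity of $\lambda$, is proved by the same argument of lifting to $\Coker\begin{pmatrix}B_E^t-I\\ C_E^t\end{pmatrix}$ and using $G_E=\lambda'(G_2)$ together with exactness of \eqref{01} to place the lift in $G_2\cdot\Imm\begin{pmatrix}B_E^t-I\\ C_E^t\end{pmatrix}$. The details you add beyond the paper's write-up (exactness of the top row read off from \eqref{01}, splitting of the top row via freeness of subgroups of free abelian groups, and splitting of the bottom row via $\chi_1$) merely make explicit what the paper leaves implicit.
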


\begin{proof}(1) We can directly check that the diagram \eqref{diagram-fortwo} commutes. We only need to show that $\lambda:\overline{\Coker} \tiny{\begin{pmatrix}B_E^t-I\\ C_E^t\end{pmatrix}}\xrightarrow{} \overline{K}_1(L(E))$ is injective. Suppose that $\lambda(x)=0$ for $x\in \overline{\Coker} \tiny{\begin{pmatrix}B_E^t-I\\ C_E^t\end{pmatrix}}$. Then there exists $x_0\in \Coker \tiny{\begin{pmatrix}B_E^t-I\\ C_E^t\end{pmatrix}} $ such that $p(x_0)=x$, and we have $(\lambda \circ p)(x_0)=(p\circ \lambda')(x_0)=0$. Thus $\lambda'(x_0)\in G_E$. It follows that $x_0 \in G_2 \cdot {\rm Im}  \tiny{\begin{pmatrix}B_E^t-I\\ C_E^t\end{pmatrix}} \Big/ {\rm Im}\tiny{\begin{pmatrix}B_E^t-I\\ C_E^t\end{pmatrix}}$. Therefore $x=0$ in $\overline{\Coker} \tiny{\begin{pmatrix}B_E^t-I\\ C_E^t\end{pmatrix}}.$


(2) We observe that $\xi\circ\chi_1=\xi\circ p\circ \chi_1'=\xi'\circ\chi_1'={\rm id}$ by Proposition \ref{prop-section}(2).
\end{proof}

By Proposition \ref{newprop}(1), we have the following consequence.

\begin{cor}\label{cor-new} Let $E$ be a row-finite graph. Keep the notation as above. Then $\overline{K}_1(L(E))$ is isomorphic to a direct sum:
$${\rm Coker} \bigg({\tiny{\begin{pmatrix}B_E^t-I\\ C_E^t \end{pmatrix}}}: (k^{\times})^{R}/G_1\longrightarrow (k^{\times})^{E^0}/G_2\bigg)\bigoplus \Ker \bigg({\tiny{\begin{pmatrix}B_E^t-I\\ C_E^t \end{pmatrix}}}: \Z^{R}\longrightarrow \Z^{E^0}\bigg).$$ 
\end{cor}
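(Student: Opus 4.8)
The plan is to read the decomposition straight off the split short exact sequence supplied by Proposition~\ref{newprop}, with essentially no further computation. First I would recall that Proposition~\ref{newprop}(1) identifies the bottom row of the commutative diagram \eqref{diagram-fortwo} as a short exact sequence of abelian groups,
\[
0\longrightarrow \overline{\Coker}\tiny{\begin{pmatrix}B_E^t-I\\ C_E^t\end{pmatrix}}\stackrel{\lambda}{\longrightarrow} \overline{K}_1(L(E))\stackrel{\xi}{\longrightarrow} \Ker\tiny{\begin{pmatrix}B_E^t-I\\ C_E^t\end{pmatrix}}\longrightarrow 0.
\]
By definition the left-hand term is precisely $\Coker\big(\tiny{\begin{pmatrix}B_E^t-I\\ C_E^t\end{pmatrix}}\colon (k^{\times})^{R}/G_1\to (k^{\times})^{E^0}/G_2\big)$ and the right-hand term is $\Ker\big(\tiny{\begin{pmatrix}B_E^t-I\\ C_E^t\end{pmatrix}}\colon \Z^{R}\to \Z^{E^0}\big)$, so these are exactly the two summands appearing in the statement.

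Next I would observe that this sequence splits. Indeed, the quotient term is a subgroup of the free abelian group $\Z^{R}$, hence itself free, which already forces the surjection $\xi$ to admit a section; but more usefully Proposition~\ref{newprop}(2) provides an explicit splitting, namely the group homomorphism $\chi_1$ with $\xi\circ\chi_1=\mathrm{id}$. Applying the splitting lemma for abelian groups to this split short exact sequence then yields the internal direct sum decomposition
\[
\overline{K}_1(L(E))\;\cong\; \Imm\lambda\,\oplus\,\Imm\chi_1\;\cong\; \overline{\Coker}\tiny{\begin{pmatrix}B_E^t-I\\ C_E^t\end{pmatrix}}\,\oplus\,\Ker\tiny{\begin{pmatrix}B_E^t-I\\ C_E^t\end{pmatrix}},
\]
which is the asserted isomorphism once the two summands are rewritten in the explicit form above.

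Since all the substantive work has already been carried out, I do not expect any genuine obstacle in this final step; it is a formal consequence of the splitting lemma. The only input that truly required effort is the existence of the section $\chi_1$, whose construction and independence of the auxiliary bijections were established in Lemmas~\ref{lem:chi1welldefined} and~\ref{lem:group-homo}, and whose compatibility $\xi\circ\chi_1=\mathrm{id}$ is Proposition~\ref{newprop}(2). In writing up the corollary I would therefore keep the argument to these two or three lines and simply cite the relevant parts of Proposition~\ref{newprop}.
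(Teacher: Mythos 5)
Your proposal is correct and follows the paper's own route: the paper derives the corollary directly from Proposition~\ref{newprop}(1), whose statement already asserts that the bottom row of \eqref{diagram-fortwo} is a \emph{split} short exact sequence, the splitting being exactly the section $\chi_1$ of Proposition~\ref{newprop}(2). Your extra remark that splitness also follows for free, since $\Ker\bigl(\begin{smallmatrix}B_E^t-I\\ C_E^t\end{smallmatrix}\bigr)$ is a subgroup of the free abelian group $\Z^{R}$ and hence free, is a harmless (and valid) reinforcement of the same argument.
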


Let $E$ be a row-finite graph, $H$ a hereditary saturated subset of $E^0$, $R$ the set of regular vertices  and $S=E^0\backslash R$ the set of all sinks in $E^0$. We write the adjacency matrix $A_E$ of $E$ with respect to the decomposition $$R\cap H, ~~~~S\cap H, ~~~~R\setminus H, \text{ and }~~~~S\setminus H .$$ Therefore
 \[A_E=\tiny\begin{pmatrix}A& \alpha& 0&0\\
0&0&0&0\\
X&\xi& B&\beta\\
0&0&0&0
\end{pmatrix}.\]

\begin{lem}\label{ssk} Let $E$ be a row-finite graph and $H$ a hereditary saturated subset of $E^0$. There are commutative diagrams

$$\xymatrix@C+1.5pc@R-1pc{
\Ker{\tiny{\begin{pmatrix}A^t-I\\ \alpha^t \end{pmatrix}}}\ar[r]^{} \ar[d]^{\sigma}& {k^{\times}}^{R\cap H}\ar[r]^{\tiny{\begin{pmatrix}A^t-I\\ \alpha^t\end{pmatrix}}}\ar[d]^{\sigma}& {k^{\times}}^{H}\ar[r]\ar[d]^{\sigma}
& \Coker\tiny{\begin{pmatrix}{A^t}-I\\ \alpha^t\end{pmatrix}} \ar[d]^{\overline{\sigma}} \\
\Ker{\tiny{\begin{pmatrix}
A^t-I& X^t\\
\alpha^t & \xi^t\\
0&B^t-I\\
0&\beta^t
\end{pmatrix}}}\ar[r]^{} \ar[d]^{\sigma'}& {k^{\times}}^{R}\ar[r]^{\tiny{\begin{pmatrix}A^t-I& X^t\\
\alpha^t & \xi^t\\
0&B^t-I\\
0&\beta^t
\end{pmatrix}}}\ar[d]^{\sigma'}& {k^{\times}}^{E^0}\ar[r]\ar[d]^{\sigma'}
& \Coker{\tiny{\begin{pmatrix}A^t-I& X^t\\
\alpha^t & \xi^t\\
0&B^t-I\\
0&\beta^t
\end{pmatrix}}} \ar[d]^{\overline{\sigma'}} \\
\Ker\tiny{\begin{pmatrix}B^t-I\\ \beta^t\end{pmatrix}} \ar[r]^{} & {k^{\times}}^{{R\setminus H}}\ar[r]^{\tiny{\begin{pmatrix}B^t-I\\ \beta^t\end{pmatrix}}}& {k^{\times}}^{{E^0\setminus H}}\ar[r]
& \Coker\tiny{\begin{pmatrix}B^t-I\\ \beta^t\end{pmatrix}} 
 }$$ where $\sigma$ is the natural inclusion sending an element $(m_1,\cdots,m_n)^t$ in ${k^{\times}}^{H}$ to $(m_1,\cdots, m_n,1\cdots, 1)^t\in {k^{\times}}^{E^0}$ and $\sigma'$ is the natural projection.
\end{lem}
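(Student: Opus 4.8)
The plan is to recognise the three rows as the canonical four–term exact sequences
\[0\to \Ker M_{E'} \to (k^{\times})^{R'} \xrightarrow{M_{E'}} (k^{\times})^{(E')^0} \to \Coker M_{E'}\to 0,\qquad M_{E'}=\begin{pmatrix}B_{E'}^t-I\\ C_{E'}^t\end{pmatrix},\]
attached to the three graphs $E'=E_H$, $E$, $E/H$ (with $R'$ the regular vertices and $(E')^0$ all vertices of $E'$), and then to reduce the whole diagram to checking that the two squares carrying the matrices commute; every other square is formal. Here I write $M_{E_H}=\begin{pmatrix}A^t-I\\ \alpha^t\end{pmatrix}$, $M_{E/H}=\begin{pmatrix}B^t-I\\ \beta^t\end{pmatrix}$, and $M_E$ for the displayed $4\times 2$ block matrix in the middle row.

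First I would pin down the adjacency blocks. Since $H$ is hereditary no edge leaves $H$, so every edge of $E_H$ stays inside $H$ and a vertex of $H$ is regular in $E_H$ exactly when it lies in $R\cap H$; reading off $A_E$ this gives $B_{E_H}=A$ and $C_{E_H}=\alpha$, identifying the first row. For the middle row, listing $E^0$ in the order $R\cap H,\ S\cap H,\ R\setminus H,\ S\setminus H$ turns $\begin{pmatrix}B_E^t-I\\ C_E^t\end{pmatrix}$ into exactly the displayed matrix $M_E$, the two top–right zero blocks of $A_E$ being forced by hereditariness. For the third row I would use that $H$ is \emph{saturated}: if some $v\in R\setminus H$ had all of its edges landing in $H$, saturation would force $v\in H$, a contradiction; hence every $v\in R\setminus H$ still emits an edge with range outside $H$, so $R\setminus H$ is precisely the set of regular vertices of $E/H$ and $(E/H)^0=(R\setminus H)\sqcup(S\setminus H)$. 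This yields $B_{E/H}=B$ and $C_{E/H}=\beta$, identifying the last row.

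The heart of the argument is the commutativity of the two matrix squares, where the block–triangular shape does all the work. For the top square I would feed the padded vector $\sigma(y)=(y,1)\in (k^{\times})^{R\cap H}\times (k^{\times})^{R\setminus H}$ through $M_E$: the $R\setminus H$–entry being trivial kills the $X^t,\xi^t,(B^t-I),\beta^t$ contributions and returns $\big((A^t-I)y,\ \alpha^t y,\ 1,\ 1\big)$, which is exactly $\sigma\big(M_{E_H}y\big)$; thus $M_E\circ\sigma=\sigma\circ M_{E_H}$. For the bottom square I would apply $M_E$ to an arbitrary $(y,z)$ and then project onto the $R\setminus H,\ S\setminus H$ coordinates via $\sigma'$: this discards the first two block rows and leaves $\big((B^t-I)z,\ \beta^t z\big)=M_{E/H}\,\sigma'(y,z)$, giving $\sigma'\circ M_E=M_{E/H}\circ\sigma'$.

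Finally I would read off the vertical maps on kernels and cokernels. Commutativity of the top matrix square gives $\sigma(\Imm M_{E_H})\subseteq \Imm M_E$, so $\sigma$ descends to $\overline{\sigma}$ on cokernels, and feeding $\sigma(y)=(y,1)$ through $M_E$ shows $\sigma$ carries $\Ker M_{E_H}$ into $\Ker M_E$; symmetrically, reading off the last two block rows of $M_E(y,z)=0$ shows $\sigma'$ sends $\Ker M_E$ into $\Ker M_{E/H}$, and $\sigma'(\Imm M_E)\subseteq \Imm M_{E/H}$ induces $\overline{\sigma'}$ on cokernels. Since the kernel– and cokernel–column maps are by construction the (co)restriction and the descent of $\sigma$ and $\sigma'$, the four outer squares commute tautologically. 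The only real care needed is the bookkeeping of the vertex reordering and the use of saturation to identify the regular vertices of $E/H$; beyond that I do not expect a serious obstacle.
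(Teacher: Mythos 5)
Your proof is correct and takes essentially the same approach as the paper: the paper's proof consists of the single remark that ``the middle two squares are commutative,'' which is exactly the block-triangular computation you carry out (feeding $\sigma(y)=(y,1)$ through the middle matrix, and projecting $M_E(y,z)$ onto the $E^0\setminus H$ coordinates), with the outer squares commuting automatically because the kernel and cokernel maps are the restriction and descent of $\sigma$ and $\sigma'$. Your additional bookkeeping — identifying the three rows with the sequences for $E_H$, $E$, $E/H$, and using hereditariness for the zero blocks and saturation to see that the regular vertices of $E/H$ are exactly $R\setminus H$ — fills in details the paper leaves implicit in its setup, but it is not a different argument.
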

\begin{proof} We can check directly that the middle two squares are commutative. 
\end{proof}

\begin{lem}\label{ssz} Let $E$ be a row-finite graph, $E^0=R\sqcup S$, where $S$ is the set of sinks and $R=E^0\setminus S$. Let $H$ be a hereditary saturated subset of $E^0$ and $A_E=\tiny{\begin{pmatrix}A_{E_H}&0\\ X&A_{E/H}\end{pmatrix}}$ the adjacency matrix of $E$ with respect to the decomposition $E^0=H\sqcup (E^0\setminus H)$.  Let $S'$ be the set of sink vertices in $E/H$. Then there is a commutative diagram

\begin{equation}\label{cccom}
\begin{tikzpicture}[descr/.style={fill=white,inner sep=1.4pt}] 
        \matrix (m) [
            matrix of math nodes,
            row sep=2.9em,
            column sep=5em,
            text height=1.5ex, text depth=0.25ex
        ]
        { & \Ker{\begin{pmatrix}A_{E_H}^t-I\end{pmatrix}} & \Ker{\begin{pmatrix}A_{E}^t-I\end{pmatrix}} & \Ker{\begin{pmatrix}A_{E/H}^t-I \end{pmatrix}} &\\
          0 & {\Z}^{H\setminus S} & {\Z}^{R} & {\Z}^{(E^0\setminus H)\setminus S'}&0 \\
           0& {\Z}^{H} & {\Z}^{E^0} & {\Z}^{E^0\setminus H}&0 \\
            & \Coker{\begin{pmatrix}A_{E_H}^t-I\end{pmatrix}}      &      \Coker{\begin{pmatrix}A_E^t-I\end{pmatrix}}            & \Coker{\begin{pmatrix}A_{E/H}^t-I\end{pmatrix}}    & \\
                  };

        \path[overlay,->, font=\scriptsize,>=latex]
        (m-1-2) edge node[auto] {\(\tau\)} (m-1-3)
        (m-1-2) edge node[auto] {\(\)} (m-2-2)
        (m-1-3) edge node[auto] {\(\tau'\)} (m-1-4)
        (m-1-3) edge  node[auto] {\(\)} (m-2-3)
        (m-1-4) edge (m-2-4)
        (m-1-4) edge[out=355,in=165,blue] node[descr,yshift=0.3ex] {$\delta_{E_H}, E$} (m-4-2)
        (m-2-1) edge (m-2-2)
        (m-2-4) edge (m-2-5)
         (m-3-1) edge (m-3-2)
        (m-3-4) edge (m-3-5)
        (m-2-2) edge node[auto] {\(\tau\)} (m-2-3)
        (m-2-2) edge node[auto] {\(A_{E_H}^t-I\)}  (m-3-2)
        (m-2-3) edge node[auto] {\(\tau'\)} (m-2-4)
        (m-2-3) edge node[auto] {\(A_{E}^t-I\)} (m-3-3)
        (m-2-4) edge  node[auto] {\(A_{E/H}^t-I\)}(m-3-4)
        (m-3-2) edge node[auto] {\(\tau\)} (m-3-3)
        (m-3-2) edge (m-4-2)
        (m-3-3) edge node[auto] {\(\tau'\)} (m-3-4)
        (m-3-3) edge node[auto] {\(\overline{\tau}\)} (m-4-3)
        (m-3-4) edge (m-4-4)
        (m-4-2) edge node[auto] {\(\overline{\tau}\)} (m-4-3)
        (m-4-3) edge node[auto] {\(\overline{\tau'}\)} (m-4-4);
\end{tikzpicture}
\end{equation} where $\tau$ is the natural inclusion and $\tau'$ is the natural projection. 
The connecting map $\delta$ is given by $\delta_{E_H, E}(x)=[X^tx]$ for $x\in \Ker({A_{E/H}^t-I})$. 
\end{lem}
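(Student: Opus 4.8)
The plan is to obtain the entire array \eqref{cccom} as the output of the snake lemma applied to the commutative ladder formed by its two middle rows; the kernel row, the cokernel row and the connecting map $\delta_{E_H,E}$ then all appear simultaneously. Throughout, write $\Phi_{E_H}$, $\Phi_E$, $\Phi_{E/H}$ for the three vertical maps $A_{E_H}^t-I$, $A_E^t-I$, $A_{E/H}^t-I$ of the diagram.

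First I would verify that rows two and three are split short exact sequences. Row three is immediate from the internal direct sum $\Z^{E^0}=\Z^H\oplus\Z^{E^0\setminus H}$, with $\tau$ the inclusion of the first summand and $\tau'$ the projection onto the second. For row two the only point is the set-theoretic identity $R=(H\setminus S)\sqcup\big((E^0\setminus H)\setminus S'\big)$. The equality $H\cap R=H\setminus S$ uses that a vertex of $H$ is a sink of $E_H$ exactly when it is a sink of $E$, which holds because $H$ is hereditary. The equality $R\setminus H=(E^0\setminus H)\setminus S'$ is where \emph{saturation} enters: a regular vertex $v\notin H$ all of whose emitted edges landed in $H$ would be forced into $H$ by saturation, so every $v\in R\setminus H$ emits an edge with range outside $H$ and is thus regular in $E/H$; conversely a vertex regular in $E/H$ is a fortiori regular in $E$. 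Granting this, $\Z^R=\Z^{H\setminus S}\oplus\Z^{(E^0\setminus H)\setminus S'}$ and row two is split exact. I expect this to be the only delicate point of the argument.

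Next I would check that the two central squares commute. Transposing $A_E=\begin{pmatrix}A_{E_H}&0\\ X&A_{E/H}\end{pmatrix}$ (whose vanishing upper-right block encodes hereditariness) gives $A_E^t-I=\begin{pmatrix}A_{E_H}^t-I & X^t\\ 0& A_{E/H}^t-I\end{pmatrix}$. For $v\in H\setminus S$ the $v$-column of $A_E^t-I$ is supported on the $H$-rows and equals the corresponding column of $A_{E_H}^t-I$, which gives $\Phi_E\circ\tau=\tau\circ\Phi_{E_H}$. For $v\in(E^0\setminus H)\setminus S'$, projecting its column onto the $E^0\setminus H$-rows annihilates the $X^t$-part and leaves the column of $A_{E/H}^t-I$, which gives $\Phi_{E/H}\circ\tau'=\tau'\circ\Phi_E$.

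With this ladder of split exact rows, the snake lemma supplies the induced maps $\tau,\tau'$ on kernels and $\overline{\tau},\overline{\tau'}$ on cokernels, together with a connecting homomorphism, and makes every square of \eqref{cccom} commute. It then remains only to identify $\delta_{E_H,E}$. Given $x\in\Ker(A_{E/H}^t-I)\subseteq\Z^{(E^0\setminus H)\setminus S'}$, I lift it along $\tau'$ by the evident section, viewing $x$ inside $\Z^R$; applying $\Phi_E$ and using the block form yields $\Phi_E(x)=\begin{pmatrix}X^tx\\ (A_{E/H}^t-I)x\end{pmatrix}=\begin{pmatrix}X^tx\\ 0\end{pmatrix}$, which lies in $\Z^H=\Ker\tau'$. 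Hence $\delta_{E_H,E}(x)=[X^tx]$ in $\Coker(A_{E_H}^t-I)$, as asserted; since $x$ vanishes on the sinks $S'$, the expression $X^tx$ is unambiguous. Thus the genuine content is the row-two exactness (the saturation argument), while the remainder is the snake lemma together with the block-triangular shape of $A_E^t$ forced by hereditariness.
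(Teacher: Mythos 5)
Your proposal is correct and takes essentially the same route as the paper: the paper likewise observes the set identity $(E^0\setminus S)\setminus(H\setminus S)=(E^0\setminus H)\setminus S'$ to get exactness of the middle row, checks the two central squares directly, and applies the Snake Lemma to obtain the kernel--cokernel sequence with connecting map $\delta_{E_H,E}(x)=[X^tx]$. Your write-up simply makes explicit what the paper leaves implicit, namely the hereditariness/saturation argument behind the vertex identity and the lift-and-apply computation identifying the connecting map.
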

\begin{proof} We observe that $(E^0\setminus S)\setminus (H\setminus S)=(E^0\setminus H)\setminus S'$. Then the second row of \eqref{cccom} is a short exact sequence. We check directly that the two squares in the middle of the above diagram are commutative. By the Snake Lemma we have the short exact sequence 
\begin{multline}
\Ker \big(A_{E_H}^t-I \big) \stackrel{\tau}{\longrightarrow}  \Ker \big(A_{E}^t-I\big) \stackrel{\tau'}{\longrightarrow} \Ker \big(A_{E/H}^t-I\big) \stackrel{\delta}{\longrightarrow} \\ \stackrel{\delta}{\longrightarrow} \Coker \big(A_{E_H}^t-I\big)  \stackrel{\overline \tau}{\longrightarrow}
 \Coker \big(A_E^t-I\big) \stackrel{\overline {\tau'}}{\longrightarrow}
 \Coker \big(A_{E/H}^t-I\big),
 \end{multline}
  with the connecting map $\delta_{E_H, E}$ defined by $\delta_{E_H, E}(x)=X^tx$. 
\end{proof}

%

\begin{lem}\label{2dsnakelemma}  Let $E$ be a row-finite graph, $S$ the set of sink vertices of $E^0$ and $E^0=R\sqcup S$ with $R=E^0\setminus S$. Let $H$ be a hereditary saturated subset of $E^0$, $I=\langle H\rangle$ the graded ideal of $L(E)$ 
generated by $H$, and $A_E$ the adjacency matrix of $E$ with respect to the decomposition $E^0=H\sqcup (E^0\setminus H)$, where the vertices in $H$ are listed first.  
Let $S'$ be the set of sink vertices in $E/H$. Then there is a commutative diagram
\begin{equation}\label{3dsnakel}
\begin{tikzpicture}[descr/.style={fill=white,inner sep=1.5pt}]
        \matrix (m) [
            matrix of math nodes,
            row sep=2.5em,
            column sep=.8em,
            text height=1.5ex, text depth=0.25ex
        ]
        {  &&&\Ker(A_{E_H}^t-I) &&\Ker(A_{E}^t-I)&& \Ker(A_{E/H}^t-I)&& \\
            & &\Ker(\phi_{E_{H}})&& \Ker(\phi_E) && \Ker(\phi_{E/H})&&& \\
           &0&&\Z^{H\setminus S}&&\Z^{R}&&\Z^{(E^0\setminus H)\setminus S'}&&0&\\
          0 &&K_0^{\gr}(I)&&K_0^{\gr}(L(E))&&K_0^{\gr}(L(E)/I)&&0&&\\            
             &0&&\Z^{H}&&\Z^{E^0}&&\Z^{E^0\setminus H}&&0&\\
            0 &&K_0^{\gr}(I)&&K_0^{\gr}(L(E))&&K_0^{\gr}(L(E)/I)&&0&&\\  
            &&&\Coker(A_{E_H}^t-I) &&\Coker(A_{E}^t-I)&& \Coker(A_{E/H}^t-I)&& \\
             &&K_0(I) &&K_0(L(E))&& K_0(L(E)/I)&&& \\
             };

        \path[overlay,->, font=\scriptsize,>=latex]
        (m-1-4) edge (m-1-6) edge node[descr,yshift=0.3ex] {} (m-3-4)  edge (m-2-3)
        (m-1-6) edge (m-1-8) edge node[descr,yshift=0.3ex] {} (m-3-6) edge  node[auto] {\(\)} (m-2-5)
        (m-1-8) edge[out=362,in=189,blue] node[descr,yshift=0.3ex] {$\delta$} (m-7-4)
        (m-1-8) edge  (m-3-8)
          (m-1-8) edge[blue]  node[auto] {\(\psi\)} (m-2-7)
        (m-2-3) edge  (m-2-5)
        (m-2-3) edge  (m-4-3)
          (m-2-3) edge  node[auto] {\(\)} (m-2-5)
        (m-2-5) edge node[auto] {\(\)} (m-2-7)
        (m-2-5) edge node[auto] {\(\)}(m-4-5)
        (m-2-7) edge[out=375,in=175,blue] node[descr,yshift=0.3ex] {$\rho$} (m-8-3)
         (m-2-7) edge node[auto] {\(\)}(m-4-7)
        (m-3-2) edge (m-3-4)
        (m-3-4) edge node[descr,yshift=0.3ex] {} (m-3-6) edge node[auto] {\(\psi\)}  (m-4-3) edge node[descr,yshift=0.3ex] {\(\)} (m-5-4)
        (m-3-6) edge node[descr,yshift=0.3ex] {} (m-3-8) edge node[auto] {\(\psi\)}   (m-4-5) edge node[descr,yshift=0.3ex] {\(A^t_E-I\)} (m-5-6)
        (m-3-8) edge node[auto] {\(\)} (m-3-10)  edge node[auto] {\(\psi\)}  (m-4-7) edge node[descr,yshift=0.3ex] {} (m-5-8)
        (m-4-1) edge (m-4-3)
        (m-4-3) edge (m-4-5) edge node[auto] {\(\)} (m-6-3)
        (m-4-5) edge (m-4-7) edge node[auto] {\(\phi\)} (m-6-5)
        (m-4-7) edge (m-4-9) edge node[auto] {\(\)} (m-6-7)       
        (m-5-2) edge (m-5-4) 
        (m-5-4) edge node[descr,yshift=0.3ex] {} (m-5-6) edge node[descr,yshift=0.3ex] {} (m-7-4) edge node[auto] {\(\psi\)}  (m-6-3) 
        (m-5-6) edge node[descr,yshift=0.3ex] {} (m-5-8) edge node[descr,yshift=0.3ex] {} (m-7-6) edge node[auto] {\(\psi\)}  (m-6-5) 
        (m-5-8) edge (m-5-10) edge node[descr,yshift=0.3ex] {} (m-7-8) edge node[auto] {\(\psi\)}  (m-6-7) 
        (m-6-1) edge (m-6-3)
        (m-6-3) edge (m-6-5) edge (m-8-3)
        (m-6-5) edge (m-6-7) edge (m-8-5)
        (m-6-7) edge (m-6-9) edge (m-8-7)
        (m-7-4) edge node[descr,yshift=0.3ex] {} (m-7-6) edge[blue] node[auto] {\(\psi\)} (m-8-3)
        (m-7-6) edge node[descr,yshift=0.3ex] {} (m-7-8) edge (m-8-5)
        (m-7-8) edge (m-8-7)
        (m-8-3) edge (m-8-5)
        (m-8-5) edge (m-8-7);
\end{tikzpicture}
\end{equation} such that $\rho\circ \psi=\psi\circ\delta$.
\end{lem}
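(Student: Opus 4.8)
The plan is to recognise the diagram \eqref{3dsnakel} as two ordinary snake-lemma diagrams, one living in the ``$\Z$-world'' and one in the ``$K_0^{\gr}$-world'', glued together along the vertical maps $\psi$, and then to obtain the identity $\rho\circ\psi=\psi\circ\delta$ from the naturality of the connecting homomorphism. First I would record the four short exact sequences forming the two floors of each world. The two $\Z$-rows
$$0\to \Z^{H\setminus S}\xrightarrow{\tau}\Z^{R}\xrightarrow{\tau'}\Z^{(E^0\setminus H)\setminus S'}\to 0 \quad\text{and}\quad 0\to\Z^{H}\xrightarrow{\tau}\Z^{E^0}\xrightarrow{\tau'}\Z^{E^0\setminus H}\to 0$$
are exact by Lemma \ref{ssz} (the first using $(E^0\setminus S)\setminus(H\setminus S)=(E^0\setminus H)\setminus S'$), and the vertical maps $A_{E_H}^t-I$, $A_E^t-I$, $A_{E/H}^t-I$ turn them into a morphism of short exact sequences. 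The two $K_0^{\gr}$-rows are the sequence \eqref{shortexactfor}, i.e. $0\to K_0^{\gr}(I)\to K_0^{\gr}(L(E))\to K_0^{\gr}(L(E)/I)\to 0$ under the identifications \eqref{idealmon} and \eqref{quotientiso}; the vertical map between them is $\phi$, which commutes with the inclusion and the projection because it is defined on generators by the uniform rule $v(i)\mapsto v(i+1)-v(i)$ and hence respects the graded-ideal structure, so $\phi$ too is a morphism of short exact sequences.

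Next I would apply the snake lemma in each world. Applied to $A_E^t-I$ it yields exactly Lemma \ref{ssz}: the top row of kernels, the bottom row of cokernels, and the connecting map $\delta=\delta_{E_H,E}$ with $\delta(x)=[X^tx]$. Applied to $\phi$ it yields the row $\Ker\phi_{E_H},\Ker\phi_E,\Ker\phi_{E/H}$, the row of cokernels, and a connecting map $\Ker\phi_{E/H}\to\Coker\phi_{E_H}$. By Proposition \ref{3items} the forgetful map $U$ identifies $\Coker\phi_G\cong K_0(L(G))$ for each graph $G$, and $K_0(L(E_H))\cong K_0(I)$ by the graded Morita equivalence \eqref{idealmon}; composing the snake connecting map with these isomorphisms produces $\rho\colon\Ker\phi_{E/H}\to K_0(I)$. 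This accounts for all rows and all interior squares of \eqref{3dsnakel}.

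The two worlds are linked by the maps $\psi$. By Proposition \ref{4itemses}, $\psi$ restricts to isomorphisms $\psi|\colon\Ker(A_G^t-I)\xrightarrow{\cong}\Ker\phi_G$ and induces isomorphisms $\overline{\psi}\colon\Coker(A_G^t-I)\xrightarrow{\cong}\Coker\phi_G$, and the square \eqref{aim} gives $\phi\circ\psi=\psi\circ(A^t-I)$ in each column (writing $A^t-I$ for the rectangular map $K$). Since $\psi(y)=y(0)$ is induced by the vertex-to-module assignment $v\mapsto[L(E)v]$, it also commutes with the horizontal maps $\tau,\tau'$ and their $K_0^{\gr}$-counterparts, and is compatible with the identifications $K_0^{\gr}(I)\cong K_0^{\gr}(L(E_H))$ and $\Coker\phi\cong K_0$. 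Hence the collection of maps $\psi$ constitutes a morphism from the snake datum $(A_E^t-I)$ to the snake datum $(\phi)$: the cube whose four vertical faces are the squares $\phi\circ\psi=\psi\circ(A^t-I)$ commutes.

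Finally I would invoke the naturality of the snake-lemma connecting homomorphism with respect to this morphism of snake diagrams. Naturality yields the commutative square
$$\xymatrix{\Ker(A_{E/H}^t-I)\ar[r]^-{\delta}\ar[d]_{\psi|} & \Coker(A_{E_H}^t-I)\ar[d]^{\overline{\psi}}\\ \Ker\phi_{E/H}\ar[r]^-{\rho} & K_0(I),}$$
which is precisely the asserted identity $\rho\circ\psi=\psi\circ\delta$, reading $\psi=\psi|$ on kernels and $\psi=\overline{\psi}$ (followed by the Morita isomorphism) on cokernels. The main obstacle is bookkeeping rather than conceptual: one must verify that $\psi$ is compatible with \emph{every} structure map on both sides so that the hypotheses of naturality are literally satisfied, while the single non-formal input, that $\psi$ intertwines $A^t-I$ with $\phi$, is already supplied by Proposition \ref{4itemses}. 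As a consistency check one can verify the identity directly on a basis element $x\in\Ker(A_{E/H}^t-I)$ using $\delta(x)=[X^tx]$ and the explicit form of $\psi$.
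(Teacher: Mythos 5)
Your proposal is correct and follows essentially the same route as the paper: the paper's proof likewise takes the square $\phi\circ\psi=\psi\circ(A_E^t-I)$ from Proposition~\ref{4itemses} (diagram~\eqref{aim}), checks the remaining squares directly, and then combines Lemma~\ref{ssz} with the short exact sequence~\eqref{shortexactfor} to invoke the snake lemma, obtaining $\delta$, $\rho$ and the identity $\rho\circ\psi=\psi\circ\delta$. Your writeup simply makes explicit the naturality-of-the-connecting-map argument and the identifications $\Coker\phi\cong K_0$ and $K_0(L(E_H))\cong K_0(I)$ that the paper leaves implicit.
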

\begin{proof} By \eqref{aim}, we have a commutative diagram 
\[\xymatrix{
 {\Z}^R \ar[r]^{A^t_E-I} \ar[d]^{\psi} & {\Z}^{R}\oplus {\Z}^S\ar[d]^{\psi}\\
 K_0^{\gr}(\L(E))\ar[r]^{\phi}& K_0^{\gr}(\L(E)).} 
 \]
 We can check directly that the other squares of \eqref{3dsnakel} are commutative. Combining with Lemma \ref{ssz} and \eqref{shortexactfor}, by the snake Lemma, there exist $\delta$ and $\rho$ such that  $\rho\circ \psi=\psi\circ\delta$.
\end{proof}

\begin{rmk}\label{hfgftrhyrrr} In order to obtain a similar result as Lemma \ref{2dsnakelemma} for an arbitrary graph $E$ (possibly with infinite emitters $v\in E^0$ with $|s^{-1}(v)|=\infty$), one needs to develop techniques for infinite emitters in order to have similar conditions as Lemma \ref{ssz} to use Snake Lemma and to have a short exact sequence \begin{align*}\CD
 0@>>> K_0^{\gr}(I)@>{}>>K_0^{\gr}(L(E))@>{}>>K_0^{\gr}(L(E)/I)@>>>0, 
\endCD
\end{align*} where $I$ is a graded ideal of $L(E)$ so that one can establish Lemma~\ref{2dsnakelemma}. 
\end{rmk}

In the following, we will use the notation $\ol{[u]}_1$ to indicate the class in $\ol{K}_1(L(E))$ of a $K_1$-class $[u]_1$.

For a hereditary saturated subset $H$ of $E^0$, we have natural homomorphisms of algebras $L(E_H)\xrightarrow{}L(E)\xrightarrow{}L(E/H)$. 
We have the induced morphisms of groups $K_1(L(E_H))\xrightarrow{}K_1(L(E))\xrightarrow{}K_1(L(E/H))$ and $\overline{K}_1(L(E_H))\xrightarrow{}\overline{K}_1(L(E))\xrightarrow{}\overline{K}_1(L(E/H))$. 

The following part of this subsection is devoted to show that $\chi_1$ is compatible with $\overline{K}_1(L(E))\xrightarrow{}\overline{K}_1(L(E_H))\xrightarrow{}\overline{K}_1(L(E/H))$; see Proposition \ref{newprop}.

Recall that we use $\overline{\Coker} \begin{pmatrix}B_E^t-I\\ C_E^t\end{pmatrix} $ to denote \begin{equation*}
 \Coker \bigg({\begin{pmatrix}B_E^t-I\\ C_E^t\end{pmatrix}}:(k^{\times})^{R}/G_1\longrightarrow{}(k^{\times})^{E^0}/G_2\bigg).\end{equation*}

\begin{lem}\label{Y} Let $E$ be a row-finite graph, $S$ the set of sink vertices of $E$, $E^0=R\sqcup S$ with $R=E^0\setminus S$, $H$ a hereditary saturated subset of $E^0$ and $S'$ the set of sink vertices in $E/ H$. We write the adjacency matrix $A_E$ of $E$ by arranging vertices in $H$ first such that $A_E=\tiny{\begin{pmatrix}A_{E_H}&0\\
X&A_{E/H}\end{pmatrix}}$. Then there is a commutative diagram
\[\xymatrix@C+.1pc@R+1.1pc{
\overline{\Coker}\left(A_{E_H}^t-I\right) \ar[r]^>>>>>>>>{\lambda_{E_H}} \ar[d]^{\overline{\sigma}}& \overline{K}_1\big(L(E_H)\big) \ar[r]^>>>>>>>>>{\xi_{E_H}} \ar[d]^{\alpha} & \Ker\left(A_{E_H}^t-I: \Z^{H\setminus S}\xra \Z^{H}\right)\ar@<1.5ex>[l]^<<<<<<<<{{\chi_1}_{E_H}}\ar[d]^{\tau}\\
\overline{\Coker}\left({\begin{matrix}A_E^t-I\end{matrix}}\right) \ar[r]^>>>>>>>>>{\lambda_E}\ar[d]^{\overline{\sigma'}} & \overline{K}_1\big(L(E)\big) \ar[r]^>>>>>>>>>{\xi_E} \ar[d]^{\alpha'}& \Ker\left({\begin{matrix}A_E^t-I\end{matrix}}: {\Z}^{R}\xra{\Z}^{E^0}\right)\ar[d]^{\tau'}\ar@<1.5ex>[l]^<<<<<<<<{{\chi_1}_E}\\
 \overline{\Coker}\left({A_{E/H}^t-I}\right) \ar[r]^>>>>>>{\lambda_{E/H}} & \overline{K}_1\big(L(E/H)\big) \ar[r]^>>>>>>{\xi_{E/ H}} & \Ker\left({A_{E/H}^t-I}: {\Z}^{(E^0\setminus H)\setminus S'}\xra{\Z}^{E^0\setminus H}\right)\ar@<1.5ex>[l]^<<<{{\chi_1}_{E/H}}
 }\]
  with $\alpha\circ {\chi_1}_{E_H}={\chi_1}_E\circ\tau$ and $\alpha'\circ {\chi_1}_E={\chi_1}_{E/H}\circ \tau'$, where $\xi_E$ is given by \eqref{defxi}. Here $\alpha$ and $\alpha'$ are induced by the canonical homomorphisms of algebras.
\end{lem}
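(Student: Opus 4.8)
The rows of the diagram are the split short exact sequences provided by Proposition~\ref{newprop}(1), and the three downward maps ${\chi_1}_{E_H},{\chi_1}_E,{\chi_1}_{E/H}$ are the group homomorphisms of Definition~\ref{def:mapchi1}, which satisfy $\xi\circ\chi_1=\mathrm{id}$ by Proposition~\ref{newprop}(2). The plan is to treat separately the two vertical steps $E_H\to E$ and $E\to E/H$. For each step the commutativity of the squares \emph{not} involving the sections follows from naturality: $\lambda$ is induced by the homomorphism $\bigoplus_v kv\to L(\cdot)$ and $\xi$ is induced by the connecting map $\partial$ of the extension \eqref{sescohn} together with the identification $K_0(\mathcal{K}(\cdot))\cong\Z^{(\cdot)}$. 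Both constructions are natural with respect to the canonical algebra homomorphisms $L(E_H)\to L(E)\to L(E/H)$, which carry the Cohn extension of $E_H$ (resp. of $E$) into that of $E$ (resp. of $E/H$) because $H$ is hereditary and saturated. Combining this naturality with the commutative diagrams of Lemma~\ref{ssk} (reduced modulo $G_1,G_2$ to the $\ol{\Coker}$ columns) and Lemma~\ref{ssz} (for the $\Ker$ columns) identifies the induced maps with $\ol\sigma,\ol{\sigma'}$ and $\tau,\tau'$ and yields $\xi_E\circ\alpha=\tau\circ\xi_{E_H}$, $\alpha\circ\lambda_{E_H}=\lambda_E\circ\ol\sigma$, and the analogous identities for the lower step. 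Hence only the two section relations $\alpha\circ{\chi_1}_{E_H}={\chi_1}_E\circ\tau$ and $\alpha'\circ{\chi_1}_E={\chi_1}_{E/H}\circ\tau'$ remain, and since $\chi_1$ is independent of the bijections $[\,\,]$ and $\langle\,\,\rangle$ in $\ol{K}_1$ (Lemma~\ref{lem:chi1welldefined}), we are free to choose these conveniently in each step.

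For the inclusion step, take $x\in\Ker(A_{E_H}^t-I)$ and regard $\tau(x)\in\Ker(A_E^t-I)$ as $x$ extended by zero outside $H$. Because $H$ is hereditary, every edge $e$ with $s(e)\in H$ has $r(e)\in H$, so the sets $L_{\tau(x)}^{\pm}$ computed in $E$ coincide with the sets $L_x^{\pm}$ computed in $E_H$. Choosing the same bijections on both sides, the matrices $V_{\tau(x)}$ and $P_{\tau(x)}$ are the images of $V_x$ and $P_x$ under $L(E_H)\to L(E)$, whence $U_{\tau(x)}$ is the image of $U_x$. Therefore $\alpha\big([U_x]_1\big)=[U_{\tau(x)}]_1$, i.e. $\alpha\circ{\chi_1}_{E_H}={\chi_1}_E\circ\tau$.

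For the quotient step, write $q\colon L(E)\to L(E)/I\cong L(E/H)$ for the canonical surjection, so $q(v)=0$ for $v\in H$ and $q(e)=0$ precisely when $r(e)\in H$. Fix $x\in\Ker(A_E^t-I)$. An element of $L_x^{\pm}$ ``survives'' under $q$ exactly when the associated generator is not killed, i.e. when it involves a vertex or edge lying outside $H$; using $\tau'(x)_w=x_w$ for $w\notin H$ and hereditariness, the surviving elements of $L_x^{\pm}$ are in canonical bijection with $L_{\tau'(x)}^{\pm}$ computed in $E/H$. I would choose the bijections $[\,\,]$ and $\langle\,\,\rangle$ for $E$ so that the surviving elements occupy the initial block $\{1,\dots,h_{\tau'(x)}\}$ and restrict there to the chosen bijections for $E/H$. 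Inspecting the defining sums shows that every entry of $V_x$ linking a surviving index to a non-surviving one carries a generator killed by $q$; hence the images of $V_x$ and $P_x$ are block-diagonal, the surviving block reproduces $V_{\tau'(x)}$ and $P_{\tau'(x)}$, and on the complementary block both images vanish. Consequently
\begin{equation*}
q(U_x)=U_{\tau'(x)}\oplus 1_{h_x-h_{\tau'(x)}},
\end{equation*}
so $\alpha'\big([U_x]_1\big)=[U_{\tau'(x)}]_1$ in $\ol{K}_1(L(E/H))$, which is the second relation.

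The main obstacle is the bookkeeping in the quotient step: arranging the bijections for $E$ so that, simultaneously, the range-matching condition $[L_x^+(v)]=\langle L_x^-(v)\rangle$ holds, the surviving block restricts to an admissible labelling for $E/H$, and no surviving off-block entry of $V_x$ persists after applying $q$. Should a residual permutation between two admissible labellings appear, Lemma~\ref{lem:sign-of-a-permutation} shows its contribution is a sum of terms $[\pm v]_1$ lying in $G_{E/H}$, so the desired identity still holds in $\ol{K}_1$, even where it would fail in $K_1$ (as Example~\ref{exm:badbehaviour} already shows for $\chi_1'$). This is precisely why the reduced group $\ol{K}_1$, rather than $K_1$, is the correct target for a functorial section.
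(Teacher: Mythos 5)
Your proposal is correct and takes essentially the same approach as the paper: the non-section squares are handled by naturality of $\lambda$ and $\xi$ with respect to the canonical algebra maps and the Cohn extensions (combined with Lemmas~\ref{ssk} and~\ref{ssz}), and the two section identities are proved by the same explicit computations, namely $L^{\pm}_{\tau(x)}=L^{\pm}_x(E_H)$ with identical bijections for the inclusion step, and a range-based block decomposition of $V_x$ and $P_x$ for the quotient step yielding $q(U_x)=U_{\tau'(x)}\oplus 1$. The only deviations are cosmetic: you index the surviving block first (the paper indexes it last), and your fallback through Lemma~\ref{lem:sign-of-a-permutation} and $G_{E/H}$ is unnecessary once the bijections are fixed as in the paper.
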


\begin{proof} We can check directly that $\alpha\circ \lambda_{E_H}=\lambda_E\circ \overline{\sigma}$ and $\alpha'\circ \lambda_E=\lambda_{E/H}\circ\overline{{\sigma}'}$. In order to show that $\tau\circ \xi_{E_H}=\xi_E\circ \alpha$ and $\tau'\circ \xi_E=\xi_{E/H}\circ \alpha'$, we use the following commutative diagram 

\begin{equation}
\xymatrix@C+.1pc@R+1.1pc{0\ar[r]&\mathcal{K}(E_H)\ar[r]^{}\ar[d]^{\Delta}&C(E_H)\ar[r]^{}\ar[d]^{l}&L(E_H)\ar[r]\ar[d]^{}&0\\
0\ar[r]&\mathcal{K}(E)\ar[r]^{}\ar[d]^{\Delta'}&C(E) \ar[d]^{p}\ar[r]^{}&L(E)\ar[d]^{}\ar[r]^{}&0\\
0\ar[r]&\mathcal{K}(E/H)\ar[r]^{}&C(E/H) \ar[r]^{}&L(E/H)\ar[r]^{}&0,
}
\end{equation} where $l: C(E_H)\rightarrow C(E)$ and $p: C(E)\rightarrow C(E/H)$ are canonical homomorphisms of algebras. It follows that the following diagram is commutative and such that in each row the composition of the horizontal morphisms coincides with $\xi'_{E_H}, \xi'_E$ and $\xi'_{E/H}$ respectively.
\begin{equation*}
\xymatrix@C+.7pc@R+.7pc{K_1\big(L(E_H)\big)\ar[r]\ar[d]^{\alpha}\ar@{.>}@/^2pc/[rr]^>>>>>>>>>>{\xi_{E_H}'}&K_0\big(\mathcal{K}(E_H)\big)\ar[r]^{}\ar[d]^{}&\Z^{H\setminus S}\ar[d]^{\tau}\\
K_1\big(L(E)\big)\ar[r]\ar[d]^{\alpha'} \ar@{.>}@/^2pc/[rr]^ >>>>>>>>>>{\xi_{E}'}&K_0\big(\mathcal{K}(E)\big)\ar[r]^{}\ar[d]^{}&\Z^{E^0\setminus S}\ar[d]^{\tau'}\\K_1\big(L(E/ H)\big)\ar[r] \ar@{.>}@/^2pc/[rr]^ >>>>>>>>>>{\xi_{E/H}'}&K_0\big(\mathcal{K}(E/H)\big)\ar[r]^{}&\Z^{{(E^0\setminus H)\setminus S'}}.
}
\end{equation*} Combining with \eqref{thenewxi}, we have the following diagram with $\xi_{E_H}'=\xi_{E_H}\circ p$, $\xi_{E}'=\xi_{E}\circ p$ and $\xi_{E/H}'=\xi_{E/H}\circ p$ and the left two squares commute. 
\begin{equation*}
\xymatrix@C+.7pc@R+.7pc{K_1\big(L(E_H)\big)\ar[r]^{p}\ar[d]^{\alpha} \ar@{.>}@/^2pc/[rr]^<<<<<<<<<{\xi_{E_H}'}&\overline{K}_1\big(L(E_H)\big)\ar[r]^{\xi_{E_H}}\ar[d]^<<<<{\alpha}&\Z^{H\setminus S}\ar[d]^{\tau}\\
K_1\big(L(E)\big)\ar[r]^{p}\ar[d]^{\alpha'} \ar@{.>}@/^2pc/[rr]^ <<<<<<<<<{\xi_{E}'}& \overline{K}_1\big(L(E)\big)\ar[r]^{\xi_E}\ar[d]^<<<<{\alpha'}&\Z^{E\setminus S}\ar[d]^{\tau'}\\K_1\big(L(E/ H)\big)\ar[r]^{p} \ar@{.>}@/^2pc/[rr]^ <<<<<<<<<{\xi_{E/H}'}& \overline{K}_1\big(L(E/H)\big)\ar[r]^{\xi_{E/H}}&\Z^{{(E^0\setminus H)\setminus S'}}
}
\end{equation*} We deduce that $\tau\circ \xi_{E_H}=\xi_E\circ \alpha$ and $\tau'\circ \xi_E=\xi_{E/H}\circ \alpha'$.

Now we show that $\alpha\circ {\chi_1}_{E_H}={\chi_1}_{E}\circ \tau$. Take $x\in\Ker(A_{E_H}^t-I:\Z^{H\setminus S}\longrightarrow \Z^{H})$. Note that $x$ has finitely many nonzero entries 
$x_{v_1}, \cdots, x_{v_l}$ with $v_1, \cdots, v_l\in H\setminus S$. By \eqref{twosets}, 
\[
L_x^+(E_H)=\Big \{(e, i)\;|\; e\in E^1_H, 1\leq i\leq -x_{s(e)}\Big \}\bigcup \Big\{(v, i)\;|\; v\in E^0_H, 1\leq i\leq x_v \Big\},
\] and 
\[L_x^-(E_H)=\Big\{(e, i)\;|\; e\in E^1_H, 1\leq i\leq x_{s(e)}\Big\}\bigcup \Big\{(v, i)\;|\; v\in E^0_H, 1\leq i\leq -x_v\Big\}.
\]
 Let $h_1$ be the cardinality of the sets $L_x^+(E_H)$ and $L_x^-(E_H)$. By definition of ${\chi_1}_{E_H}$, ${\chi_1}_{E_H}(x)=\ol{[U_x]}_1$ with $U_x$ a matrix in $\mathbb{M}_{h_1}(\mathcal{M}(L(E_H)))$.  Let $$x'=\tau(x)=\tiny\begin{pmatrix}x\\ 0\\ \vdots\\ 0\end{pmatrix}.$$ The element $x'$ has the same nonzero entries $x_{v_1},\cdots, x_{v_l}$ as $x$. Since $v_1, \cdots, v_l$ belong to $H$, we have 
 \[L_{x'}^+=\Big \{(e, i)\;|\; e\in E^1, 1\leq i\leq -x_{s(e)} \Big \}\bigcup \Big\{(v, i)\;|\; v\in E^0, 1\leq i\leq x_v\Big\}=L_x^+(E_H),\] and similarly $L_{x'}^-=L_x^-(E_H)$. For the bijections $[\cdot]: L_{x'}^+\xra\{1, \cdots, h_1\}$ and $\langle \cdot \rangle: L_{x'}^-\xra \{1, \cdots, h_1\}$, we use the same bijections $[\cdot]: L_{x}^+(E_H)\xra\{1, \cdots, h_1\}$ and $\langle \cdot \rangle: L_{x}^-(E_H)\xra \{1, \cdots, h_1\}$. Then we have \[{\chi_1}_E(\tau(x))={\chi_1}_E(x')=\ol{[U_x]}_1={\chi_1}_{E_H}(x)=(\alpha\circ {\chi_1}_{E_H})(x).\] 
 
Now we show that $\alpha'\circ {\chi_1}_E={\chi_1}_{E/H}\circ \tau'$. Suppose that $x\in \Ker(A_E^t-I)$. 
The element $x$ has finitely many nonzero entries $x_{v_1}, \cdots, x_{v_l}, x_{v_{l+1}},\cdots, x_{v_n}$ with 
$v_1, \cdots, v_l\in H$ and $v_{l+1}, \cdots, v_n\notin H$. Then 
\[L_x^+=\Big \{(z, i)\in L_x^+, r(z)\in H\Big\}\bigcup \Big\{(z, i)\in L_x^+, r(z)\notin H\Big\},\] and 
\[L_x^-=\Big\{(z, i)\in L_x^-, r(z)\in H\Big\}\bigcup \Big\{(z, i)\in L_x^-, r(z)\notin H\Big\}.\] And $\{(z, i)\in L_x^+, r(z)\in H\}$ and $\{(z, i)\in L_x^-, r(z)\in H\}$ have the same cardinality $h'$; 
$\{(z, i)\in L_x^+, r(z)\notin H\}$ and $\{(z, i)\in L_x^-, r(z)\notin H\}$ have the same cardinality $h''$. We choose two bijections $[\cdot]: L_x^+\xra \{1, \cdots, h'+h''\}$ and $\langle \cdot \rangle: L_x^-\xra \{1, \cdots, h'+h''\}$ such that $[x, i]=\langle y, j\rangle$ implies $r(x)=r(y)$ and that $[\cdot]$ restricts to an isomorphism from $\{(x, i)\in L_x^+, r(x)\in H\}$ onto $\{1, \cdots, h'\}$. In this case $[\cdot]$ restricts to an isomorphism from $\{(x, i)\in L_x^+, r(x)\notin H\}$ onto $\{h'+1,\cdots, h'+h''\}$. 
 Then we have 
 \[{\chi_1}_E(x)=\ol{[V_x+1-P_x]}_1\in \mathbb{M}_{h'+h''}(\mathcal{M}(L(E))),\] where $V_x=\tiny{\begin{pmatrix}V_{11}&V_{12}\\ V_{21}&V_{22}\end{pmatrix}}$ with $V_{11}\in \mathbb{M}_{h'}(\mathcal{M}(L(E_H)))$, and $V_{12}$ and $V_{21}$ matrices with entries in the two sided ideal $\langle H\rangle$ of $L(E)$.  On the other hand, $P_x=\tiny{\begin{pmatrix}P_{11}&0\\ 0&P_{22}\end{pmatrix}}$ is a $(h'+h'')\times(h'+h'')$-matrix 
with $P_{11}$ a $h'\times h'$-matrix with entries in the two sided ideal $\langle H\rangle$ of $L(E)$. The element $\tau'(x)$ has finitely many nonzero entries $ x_{v_{l+1}},\cdots, x_{v_n}$ with $v_{l+1}, \cdots, v_n\notin H$. Then we have \[L_{\tau'(x)}^+(E/H)=\{(x, i)\in L_x^+, r(x)\notin H\},\] and \[L_{\tau'(x)}^-(E/H)=\{(x, i)\in L_x^-, r(x)\notin H\}.\] We use the restrictions of $[\cdot]: L_x^+\xra \{1, \cdots, h'+h''\}$ and $\langle \cdot \rangle: L_x^-\xra \{1, \cdots, h'+h''\}$. Then we have ${\chi_1}_{E/H}(\tau'(x))=\ol{[V_{22}+1-P_{22}]}_1$. Therefore we have 
\begin{equation*}
\begin{split}
(\alpha' \circ {\chi_1}_E)(x)
&=\alpha' ({\chi_1}_E(x))\\
&= \ol{\Big[ \tiny{\begin{pmatrix}0&0\\ 0& V_{22}\end{pmatrix}}+{\begin{pmatrix}I&0\\ 0& I\end{pmatrix}}-{\begin{pmatrix}0&0\\ 0& P_{22}\end{pmatrix}}\Big]}_1\\
&=\ol{ \Big[ \tiny{\begin{pmatrix}I&0\\ 0& V_{22}+1-P_{22}\end{pmatrix}} \Big]}_1\\
&= \ol{[ V_{22}+1-P_{22} ]}_1\\
&=({\chi_1}_{E/H}\circ \tau')(x).
\end{split}
\end{equation*}
This shows $\alpha'\circ {\chi_1}_E={\chi_1}_{E/H}\circ \tau'$ and thus the proof is complete. 
\end{proof}

We will need the following lemma whose proof is straightforward. 

\begin{lem} \label{fact} Consider the following commutative diagram consisting of two split short exact sequences of modules over a ring $R$  
\begin{equation}
\xymatrix{0\ar[r]&A\ar[r]^{i}\ar[d]^{f}&B\ar[r]^{j}\ar[d]^{g}&C\ar@{.>}@/^/[l]^{s}\ar[r]\ar[d]^{h}&0\\
0\ar[r]&A' \ar[r]^{i'}& B' \ar[r]^{j'}&C'\ar[r]\ar@{.>}@/^/[l]^{s'}&0,
}
 \end{equation}
 such that $js=\idd_C$,  $j's'=\idd_{C'}$ and  $gs=s'h$. Then we have the following commutative diagram \begin{equation*}
\xymatrix{B\ar[r]^<<<<<<{\scriptscriptstyle{\cong\ \ }}\ar[d]^{g}&A\oplus C\ar[d]^{\tiny{\begin{pmatrix}f&0\\ 0&h\end{pmatrix}}}\\
 B' \ar[r]^<<<<<<{\scriptscriptstyle{\cong\ \ }}&A'\oplus C'.
}
\end{equation*} 
The isomorphism in the top right arrow is $\begin{pmatrix} i^{-1}(1-sj)\\ j \end{pmatrix}$, with inverse $\begin{pmatrix} i & s \end{pmatrix}$.
\end{lem}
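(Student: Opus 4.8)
The plan is to exhibit the two horizontal isomorphisms explicitly and then verify commutativity of the square by a direct diagram chase, handling the two coordinates of the target $A'\oplus C'$ separately.

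First I would recall the standard splitting attached to a split short exact sequence. Given $0\to A\xrightarrow{i}B\xrightarrow{j}C\to 0$ with $js=\idd_C$, the map $\theta\colon B\to A\oplus C$, $\theta=\left(\begin{smallmatrix} i^{-1}(1-sj)\\ j\end{smallmatrix}\right)$, is well defined because $j(b-sj(b))=j(b)-js(j(b))=0$, so that $b-sj(b)\in\Ker j=\Imm i$ and $i^{-1}$ may be applied (recall $i$ is injective). A one-line check shows that $\theta$ and $\psi=\left(\begin{smallmatrix}i&s\end{smallmatrix}\right)$ are mutually inverse: indeed $\psi\theta(b)=i\big(i^{-1}(b-sj(b))\big)+s(j(b))=b$, while for $(a,c)\in A\oplus C$ one has $j\psi(a,c)=c$ and $\psi(a,c)-s(j\psi(a,c))=i(a)$, whence $\theta\psi(a,c)=(a,c)$. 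Applying the same recipe to the bottom row produces the isomorphism $\theta'=\left(\begin{smallmatrix}(i')^{-1}(1-s'j')\\ j'\end{smallmatrix}\right)$ with inverse $\left(\begin{smallmatrix}i'&s'\end{smallmatrix}\right)$.

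It then remains to verify that $\left(\begin{smallmatrix}f&0\\0&h\end{smallmatrix}\right)\theta=\theta' g$, which I would establish coordinatewise on an arbitrary $b\in B$. The second coordinate amounts to $h(j(b))=j'(g(b))$, which is exactly the commutativity $j'g=hj$ of the right-hand square of the given diagram. For the first coordinate, set $a=i^{-1}(b-sj(b))$, so $i(a)=b-sj(b)$. The left-hand square relation $i'f=gi$ gives $i'(f(a))=g(i(a))=g(b)-g\big(s(j(b))\big)$. At this point the section-compatibility hypothesis $gs=s'h$, combined with $j'g=hj$, yields $g\big(s(j(b))\big)=s'\big(h(j(b))\big)=s'\big(j'(g(b))\big)$, so that $i'(f(a))=g(b)-s'j'(g(b))$. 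Since $i'$ is injective, this reads $f(a)=(i')^{-1}\big(g(b)-s'j'(g(b))\big)$, which is precisely the first coordinate of $\theta' g(b)$. Both coordinates thus agree and the square commutes.

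The whole argument is a routine diagram chase, so I do not expect any genuine obstacle; the only point requiring a little care is the role of the condition $gs=s'h$, which is exactly what is needed, beyond the commutativity of the original ladder, to force the first coordinates to match.
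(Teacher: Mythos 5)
Your proof is correct: the well-definedness of $i^{-1}(1-sj)$, the mutual-inverse check for $\bigl(\begin{smallmatrix} i^{-1}(1-sj)\\ j\end{smallmatrix}\bigr)$ and $\bigl(\begin{smallmatrix}i & s\end{smallmatrix}\bigr)$, and the coordinatewise verification of the square using $i'f=gi$, $j'g=hj$ and the crucial hypothesis $gs=s'h$ are exactly the routine diagram chase the paper has in mind when it declares the proof ``straightforward'' and records only the explicit isomorphisms. There is nothing to add or correct.
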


Combining Lemma \ref{Y} and Lemma \ref{fact}, we have the following immediate consequence.

\begin{prop}\label{propforquotientcom} Let $E$ be a row-finite graph and $H$ a hereditary saturated subset of $E^0$. We write the adjacency matrix $A_E$ of $E$ 
by arranging vertices in $H$ first such that $A_E=\tiny{\begin{pmatrix}A_{E_H}&0\\
X&A_{E/H}\end{pmatrix}}$. There is a commutative diagram
$$\xymatrix@C+.1pc@R+1.1pc{\overline{K}_1\big(L(E_H)\big)\ar[d]^{\alpha}\ar[r]^>>>>>>>{\scriptscriptstyle{\cong\ \ }}&
\overline{\Coker}{\begin{pmatrix}A_{E_H}^t-I\end{pmatrix}} \oplus \Ker{\begin{pmatrix}A_{E_H}^t-I\end{pmatrix}}\ar[d]^{\tiny{\begin{pmatrix}\overline{\sigma}&0\\ 0&\tau\end{pmatrix}}}\\\overline{K}_1\big(L(E)\big)\ar[d]^{\alpha'}\ar[r]^>>>>>>>>>>{\scriptscriptstyle{\cong\ \ }}& \overline{\Coker}{(\begin{matrix}A_E^t-I\end{matrix})} \oplus \Ker{(\begin{matrix}A_E^t-I\end{matrix})}\ar[d]^{\tiny{\begin{pmatrix}\overline{\sigma'}&0\\ 0&\tau'\end{pmatrix}}}\\\overline{K}_1\big(L(E/H)\big) \ar[r]^>>>>>{\scriptscriptstyle{\cong\ \ }}&\overline{\Coker}{\begin{pmatrix}A_{E/H}^t-I\end{pmatrix}}\oplus \Ker{\begin{pmatrix}A_{E/H}^t-I\end{pmatrix}}.
 }$$ 
\end{prop}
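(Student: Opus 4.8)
The plan is to realise each horizontal isomorphism as the canonical splitting of the split short exact sequence supplied by Proposition~\ref{newprop}(1), and then to obtain the two commuting squares directly by feeding the relevant rows of Lemma~\ref{Y} into the elementary diagram lemma, Lemma~\ref{fact}. All the genuine content has already been packaged in those two results, so the proof is a matter of assembling them correctly.

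First I would record the input data. By Proposition~\ref{newprop}(1), each of the three rows
\[
0 \longrightarrow \ol{\Coker}\big(A^t-I\big) \xrightarrow{\ \lambda\ } \ol{K}_1\big(L(\cdot)\big) \xrightarrow{\ \xi\ } \Ker\big(A^t-I\big) \longrightarrow 0
\]
in the diagram of Lemma~\ref{Y} (for $\cdot = E_H$, $E$ and $E/H$) is a split short exact sequence, and by Proposition~\ref{newprop}(2) the homomorphism $\chi_1$ satisfies $\xi\circ\chi_1=\idd$, so $\chi_1$ is a section of $\xi$. In the notation of Lemma~\ref{fact} I therefore take $i=\lambda$, $j=\xi$ and $s=\chi_1$ in each row.

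Next I would apply Lemma~\ref{fact} twice. For the top square I use the upper two rows of Lemma~\ref{Y} with $f=\ol{\sigma}$, $g=\alpha$, $h=\tau$: the hypotheses $js=\idd$ and $j's'=\idd$ are two instances of $\xi\circ\chi_1=\idd$, and the essential compatibility $gs=s'h$ is precisely the identity $\alpha\circ{\chi_1}_{E_H}={\chi_1}_E\circ\tau$ proved in Lemma~\ref{Y}. Lemma~\ref{fact} then produces the horizontal isomorphisms $\ol{K}_1\big(L(E_H)\big)\cong\ol{\Coker}(A_{E_H}^t-I)\oplus\Ker(A_{E_H}^t-I)$ and $\ol{K}_1\big(L(E)\big)\cong\ol{\Coker}(A_E^t-I)\oplus\Ker(A_E^t-I)$, each given by $\begin{pmatrix}\lambda^{-1}(1-\chi_1\xi)\\ \xi\end{pmatrix}$, together with the commutativity of the top square whose right vertical map is $\begin{pmatrix}\ol{\sigma}&0\\ 0&\tau\end{pmatrix}$. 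For the bottom square I repeat the argument on the lower two rows with $f=\ol{\sigma'}$, $g=\alpha'$, $h=\tau'$, where now the required identity $gs=s'h$ is the second compatibility $\alpha'\circ{\chi_1}_E={\chi_1}_{E/H}\circ\tau'$ from Lemma~\ref{Y}.

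The only point needing a word of justification, and the closest thing to an obstacle, is that the isomorphism placed on the middle group $\ol{K}_1\big(L(E)\big)$ agrees in the two applications of Lemma~\ref{fact}. This is immediate, since in both applications the map is defined by the same formula $\begin{pmatrix}\lambda_E^{-1}(1-{\chi_1}_E\,\xi_E)\\ \xi_E\end{pmatrix}$, which depends only on the fixed splitting of the middle row and not on the adjacent rows. Consequently the two squares share a common middle isomorphism and paste together into the single commutative diagram of the statement.
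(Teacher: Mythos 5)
Your proposal is correct and is essentially the paper's own proof: the paper derives Proposition~\ref{propforquotientcom} as an immediate consequence of Lemma~\ref{Y} combined with Lemma~\ref{fact}, exactly as you do, with Proposition~\ref{newprop} supplying the split exactness of the rows and the section property of $\chi_1$. Your additional remark that the middle isomorphism $\begin{pmatrix}\lambda_E^{-1}(1-{\chi_1}_E\,\xi_E)\\ \xi_E\end{pmatrix}$ is the same in both applications of Lemma~\ref{fact} is a worthwhile detail the paper leaves implicit.
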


\subsection{Graded ideals of Leavitt path algebras}

Let $E$ be a row-finite graph and $H\subseteq E^0$ a hereditary saturated subset of $E^0$. We define
$$F(H)= \{\alpha \;|\; \alpha= e_1\cdots e_n \text{~is a finite path in~} E \text{ with } r(e_n)\in H \text{ and } s(e_n)\notin {H} \}.$$  
Note that the paths in $F(H)$ must be of length $1$ or greater. 
Let $\overline{F}(H)$ denote another copy of $F(H)$ and write $\overline{\alpha}$ for the copy of $\alpha$ in $F(H)$.

Define $\overline{E}_{H}$ to be the graph with
\begin{equation}
\begin{split}
 &\overline{E}^0_{H}= H \cup F(H)\\
 &\overline{E}^1_{H}= \{e\in E^1 : s(e)\in  H\} \cup \overline{F}(H)\end{split}
 \end{equation}
and we extend $s$ and $r$ to $\overline{E}^1_H$ by defining $s(\overline{\alpha}) = \alpha$ and $r(\overline{\alpha}) = r(\alpha)$ for $\overline{\alpha}\in \overline{F}(H)$.
 
Recall from \cite[Theorem 6.1]{rt2014} the graded ideal $I_H$ generated by $H$ in $L(E)$ is isomorphic to $L(\overline{E}_{H})$ with $H\subseteq E^0$ a hereditary saturated subset of $E^0$. More precisely, the isomorphism \begin{equation}\label{ideals}L(\overline{E}_{H})\longrightarrow{} I_{H}\end{equation} of algebras is defined by $v\mapsto \mathcal{Q}_v$ for each $v\in \overline{E}^0_{H}$ and $e\mapsto T_e$, $e^*\mapsto T_e^*$ for each $e\in \overline{E}^1_{H}$, where 
\begin{equation*}
\mathcal{Q}_v=\begin{cases}v, & v\in H;\\
\alpha\alpha^*, & v=\alpha\in F(H);
\end{cases}
\end{equation*}

\begin{equation*}
T_e=\begin{cases}e, & e\in E^1;\\
\alpha, & e=\overline{\alpha}\in \overline{F}(H);\\
\end{cases}
\end{equation*} and \begin{equation*}
T_e^*=\begin{cases}e^*, & e\in E^1;\\
\alpha^*, & e=\overline{\alpha}\in \overline{F}(H).\\
\end{cases}
\end{equation*}


\begin{lem}
\label{kone} Let $E$ be a row-finite graph and $H$ a hereditary saturated subset of $E^0$. Then the natural homomorphism of algebras 
\begin{equation}
\begin{split}
\label{nn}
L(E_H) &\longrightarrow L(\overline{E}_{H})\\
 v  &\longmapsto v\\
 e  &\longmapsto e\\
 e^*  &\longmapsto e^*, 
 \end{split}
 \end{equation}  
    for $v\in H$ and $e\in E_H^1$ induces isomorphisms $K_1\big(L(E_H)\big)\xra K_1\big(L(\overline{E}_{H})\big)$  and $\overline{K}_1\big(L(E_H)\big)\xra \overline{K}_1\big(L(\overline{E}_{H})\big)$ of $K$-groups. 
\end{lem}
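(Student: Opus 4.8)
The plan is to describe $\ol E_H$ explicitly as a graph, to record $K_1$ and $\ol K_1$ through the split short exact sequences of Lemma \ref{k0k1}(ii) and Proposition \ref{newprop}(1), to check that \eqref{nn} induces a morphism between these sequences for $E_H$ and $\ol E_H$, and finally to apply the short five lemma once the induced maps on the two outer terms are shown to be isomorphisms. First I would fix the notation $R_H=H\cap R$, $S_H=H\cap S$ and $F=F(H)$. Each $\alpha\in F$ is a \emph{source} of $\ol E_H$ that emits the single edge $\ol\alpha\colon\alpha\to r(\alpha)\in H$ and receives nothing; hence the vertices of $F$ are non-sinks, the sinks of $\ol E_H$ are exactly $S_H$, and, ordering the vertices as $R_H,F,S_H$,
$$A_{\ol E_H}=\begin{pmatrix} B_{E_H} & 0 & C_{E_H}\\ D_R & 0 & D_S\\ 0&0&0\end{pmatrix},\qquad \begin{pmatrix} B_{\ol E_H}^t-I\\ C_{\ol E_H}^t\end{pmatrix}=\begin{pmatrix} B_{E_H}^t-I & D_R^t\\ 0 & -I\\ C_{E_H}^t & D_S^t\end{pmatrix},$$
where $(D_R\mid D_S)$ records the edges $\ol\alpha$. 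The key structural facts are that the column of $\begin{pmatrix} B_{\ol E_H}^t-I\\ C_{\ol E_H}^t\end{pmatrix}$ indexed by $\alpha\in F$ equals $\mathbf e_{r(\alpha)}-\mathbf e_\alpha$, and that its block in the rows indexed by $F$ is $(\,0\mid -I\,)$.

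Next I would establish naturality. By the universal property of Cohn path algebras, \eqref{nn} lifts to a morphism of the extensions \eqref{sescohn} for $E_H$ and $\ol E_H$: since $s_{\ol E_H}^{-1}(v)=s_{E_H}^{-1}(v)$ for every $v\in H$, the generator $v-\sum_{s(e)=v}ee^*$ of $\mathcal K(E_H)$ maps to the corresponding generator of $\mathcal K(\ol E_H)$, exactly as in the proof of Lemma \ref{Y}. Consequently the five-term sequence \eqref{01} is natural for this pair, and so are the maps $\lambda',\xi'$ together with their reduced versions $\lambda,\xi$ from \eqref{comfortwosses}; the projections $p$ commute because $G_{E_H}$ is carried into $G_{\ol E_H}$. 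Extracting from the exact rows the induced short exact sequences $0\to\Coker_{k^\times}\to K_1\to\Ker_{\Z}\to 0$ and $0\to\ol\Coker\to\ol K_1\to\Ker_{\Z}\to 0$, I obtain two commutative ladders whose vertical maps on the outer terms are induced by the inclusions $\Z^{R_H}\hookrightarrow\Z^{R_H\sqcup F}$ and $(k^\times)^{H}\hookrightarrow(k^\times)^{H\sqcup F}$.

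It then remains to compute those outer maps. The map on kernels is an isomorphism because the $F$-rows $(\,0\mid -I\,)$ force the $F$-coordinates of any element of $\Ker\begin{pmatrix} B_{\ol E_H}^t-I\\ C_{\ol E_H}^t\end{pmatrix}$ to vanish, so projection to the $R_H$-coordinates identifies it with $\Ker\begin{pmatrix} B_{E_H}^t-I\\ C_{E_H}^t\end{pmatrix}$, inverting the inclusion. The map on $\Coker_{k^\times}$ is surjective since each column relation gives $\mathbf e_\alpha\equiv\mathbf e_{r(\alpha)}$, so every class is represented over $H$; it is injective because if $z\in(k^\times)^{H}$ lies in the image of $\begin{pmatrix} B_{\ol E_H}^t-I\\ C_{\ol E_H}^t\end{pmatrix}$, reading off the $F$-rows shows the preimage has trivial $F$-part, whence $z\in\operatorname{Im}\begin{pmatrix} B_{E_H}^t-I\\ C_{E_H}^t\end{pmatrix}$. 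The identical argument, now performed modulo $\{-1,1\}$ on the $F$-coordinates, gives the isomorphism on $\ol\Coker$ (here one only uses that $y_\alpha\in\{-1,1\}$ propagates to a global sign). With both outer maps isomorphisms, the short five lemma shows that \eqref{nn} induces isomorphisms $K_1(L(E_H))\to K_1(L(\ol E_H))$ and $\ol K_1(L(E_H))\to\ol K_1(L(\ol E_H))$.

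The main obstacle I expect is the naturality step: one must genuinely produce the morphism of Cohn extensions realizing \eqref{nn} and verify that it is compatible with the connecting map $\xi'$ and with the reduction modulo $G$, so that the split sequences of Lemma \ref{k0k1}(ii) and Proposition \ref{newprop}(1) form commutative ladders. Once this functoriality is secured, the isomorphism statements on $\Ker$, $\Coker_{k^\times}$ and $\ol\Coker$ are a direct, if multiplicatively bookkept, consequence of the fact that every added vertex $\alpha\in F(H)$ is a source emitting a single edge into $H$.
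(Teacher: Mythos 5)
Your proposal is correct and follows essentially the same route as the paper's proof: the same block description of $A_{\overline{E}_H}$ with the $F(H)$-rows equal to $(\,0\mid -I\,)$, the same verification that the coordinate-inclusion maps on $\Ker$, on $\Coker$ over $k^{\times}$, and on $\overline{\Coker}$ are isomorphisms (your sign-propagation argument is exactly the computation behind the paper's claim about the restriction $\varsigma_|$ to the sign subgroups), and the same naturality of \eqref{01} obtained from a morphism of Cohn extensions \eqref{sescohn}. The only cosmetic difference is that you finish with the short five lemma where the paper invokes its splitting Lemma \ref{fact}.
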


\begin{proof} Denote by $\overline{A}$ the adjacency matrix for the graph $\overline{E}_{H}$. We arrange vertices in $H$ first such that $\overline{A}=\tiny{\begin{pmatrix}A_{E_H}&0\\
Z&0 \end{pmatrix}}$ where $Z$ is a matrix such that each row has exactly one entry with 1 and all the other entries are zero. 

Clearly the map 
\begin{equation}\begin{split}\label{kk}
\vartheta: \Ker \big (A_{E_H}^t-I: \Z^{H\setminus S}\longrightarrow \Z^{H} \big)&\longrightarrow \Ker\bigg({\tiny{\begin{pmatrix}A_{E_H}^t-I& Z^t\\ 0& -I\end{pmatrix}}}: \Z^{{\overline{E}_{H}^0}\setminus S}
\longrightarrow \Z^{\overline{E}_{H}^0}\bigg),\\
(n_v)^t &\longmapsto ((n_v), \mathbf{0})^t\end{split}\end{equation} is an isomorphism with $S$ the set of sinks in $E$. 
Now we show that the map 
\begin{equation}\label{cc}
\begin{split}
\varsigma : \Coker\big (A_{E_H}^t-I: {k^{\times}}^{H\setminus S}\longrightarrow {k^{\times}}^{H}\big )&\longrightarrow \Coker\bigg({\tiny{\begin{pmatrix}A_{E_H}^t-I& Z^t\\ 0& -I\end{pmatrix}}}: {k^{\times}}^{{\overline{E}_{H}^0}\setminus S} \longrightarrow {k^{\times}}^{\overline{E}_{H}^0}\bigg),\\
(m_1, \cdots, m_n)^t &\longmapsto (m_1, \cdots, m_n, 1, \cdots, 1)^t\end{split}\end{equation} is an isomorphism. Note that $\varsigma$ is well-defined as $(m_1, \cdots, m_n, 1, \cdots, 1)^t\in 
\Imm \tiny{\begin{pmatrix}A_{E_H}^t-I& Z^t\\ 0& -I\end{pmatrix}}$ if $(m_1, \cdots, m_n)^t\in {\rm Im} (A_{E_H}^t-I)$. Actually if $(m_1, \cdots, m_n)^t\in {\rm Im}(A^t_{E_H}-I)$, we may write $(A^t_{E_H}-I)x=(m_1,\cdots,m_n)^t$
for some $x\in {k^{\times}}^{H\setminus S}$. Then we have $\begin{pmatrix}A^t_{E_H}-I& Z^t\\ 0& -I\end{pmatrix}\begin{pmatrix}x\\1\\\cdots \\1\end{pmatrix}=(m_1,\cdots, m_n, 1,\cdots,1)^t$ as the 
$\mathbb Z$-action on $k^{\times}$ is given by $n\cdot \mu=\mu^n$ and $(n_1+n_2)\cdot \mu=\mu^{n_1}\mu^{n_2}$ for $n, n_1,n_2\in\mathbb Z$ and $\mu\in k^{\times}$. To show that $\varsigma$ is injective, suppose that \begin{equation}
\begin{pmatrix}m_1\\ \vdots\\ m_n\\ 1\\ \vdots \\1\end{pmatrix}=\begin{pmatrix}A_{E_H}^t-I& Z^t\\ 0& -I\end{pmatrix}\begin{pmatrix}x_1\\ \vdots\\  x_n\\  \vdots\\ \vdots\\  x_l\end{pmatrix}.\end{equation} 
Then $\tiny\begin{pmatrix}x_{n+1}^{-1}\\ x_{n+2}^{-1}\\ \vdots\\  x_l^{-1}\end{pmatrix}=\begin{pmatrix}1\\ \vdots\\  1\end{pmatrix}$ and so 
$\tiny\begin{pmatrix}x_{n+1}\\ x_{n+2}\\ \vdots\\  x_l\end{pmatrix}=\begin{pmatrix}1\\ \vdots\\  1\end{pmatrix}$, which gives $\tiny\begin{pmatrix}m_1\\ \vdots\\  m_n\end{pmatrix}\in \Imm (A_{E_H}^t-I)$. Hence $\varsigma$ is injective. 
To show that $\varsigma$ is surjective, we observe that 
\begin{equation}
\begin{split}
\tiny\begin{pmatrix}y_1\\ \vdots\\  y_n\\  \vdots\\ \vdots\\  y_l\end{pmatrix}\Imm \tiny{\begin{pmatrix}A_{E_H}^t-I& Z^t\\ 0& -I\end{pmatrix}}
&=\tiny{\begin{pmatrix}y_1\\ \vdots\\  y_n\\  \vdots\\ \vdots\\  y_l\end{pmatrix}} \tiny{\begin{pmatrix}A_{E_H}^t-I& Z^t\\ 0 & -I\end{pmatrix}} \tiny{\begin{pmatrix}1\\ \vdots\\  1 \\ y_{n+1}\\ \vdots \\ y_{l}\end{pmatrix}} 
\Imm {\tiny\begin{pmatrix}A_{E_H}^t-I& Z^t\\ 0& -I\end{pmatrix}} \\
&= \tiny\begin{pmatrix}z_1\\ \vdots\\  z_n\\ 1 \\ \vdots\\  1 \end{pmatrix} \Imm {\tiny\begin{pmatrix}A_{E_H}^t-I& Z^t\\ 0& -I\end{pmatrix}}.
\end{split}
\end{equation}
 
By Lemma \ref{fact}, in order to prove the induced isomorphism $K_1(L(E_H))\xrightarrow{} K_1(L(\overline{E_H}))$ by \eqref{nn}, it suffices to show that the following diagram commutes
\begin{equation}
\label{diagram-original}
\xymatrix@C+.1pc@R+1.1pc{
\Coker\left(A_{E_H}^t-I: {k^{\times}}^{H\setminus S}\xra {k^{\times}}^{H}\right) \ar[r]^>>>>>>>>{\lambda'_{E_H}} \ar[d]^{\varsigma}& K_1\big(L(E_H)\big) \ar[r]^>>>>>>>>>{\xi'_{E_H}} \ar[d]^{} & \Ker\left(A_{E_H}^t-I: \Z^{H\setminus S}\xra \Z^{H}\right)\ar[d]^{\vartheta}\\
\Coker\left({\begin{pmatrix}A_E^t-I& Z^t\\ 0& -I\end{pmatrix}}: {k^{\times}}^{\overline{E}_H^0\setminus S}\xra {k^{\times}}^{\overline{E}_H^0}\right) \ar[r]^>>>>>>>{\lambda'_{\overline{E}_H}}& K_1\big(L(\overline{E}_H)\big) \ar[r]^>>>>>>{\xi'_{\overline{E}_H}}& \Ker\left({\begin{pmatrix}A_E^t-I& Z^t\\ 0& -I\end{pmatrix}}: {\Z}^{\overline{E}_H^0\setminus S}\xra{\Z}^{\overline{E}_H^0}\right).}
\end{equation}
Obviously, the left square of the above diagram commutes. The right square of the above diagram commutes, as there is a commutative diagram with natural maps
\[\xymatrix@C+.1pc@R+1.1pc{0\ar[r]&\mathcal{K}(E_H)\ar[r] \ar[d]& C(E_H) \ar[r] \ar[d]& L(E_H) \ar[r] \ar[d]&0\\
0 \ar[r]&\mathcal{K}(\overline{E}_H) \ar[r]& C(\overline{E}_H) \ar[r]& L(\overline{E}_H) \ar[r]&0.}
\] 

Now we show that the induced morphism $\overline{K}_1\big(L(E_H)\big)\xra \overline{K}_1\big(L(\overline{E}_{H})\big)$ is an isomorphism. By \eqref{diagram-original}, we have the induced commutative diagram \begin{equation}
\label{diagram-new}
\xymatrix@C+.1pc@R+1.1pc{
\overline{\Coker}\left(A_{E_H}^t-I\right) \ar[r]^>>>>>>>>{\lambda_{E_H}} \ar[d]^{\ol{\varsigma}}& \overline{K}_1\big(L(E_H)\big) \ar[r]^>>>>>>>>>{\xi_{E_H}} \ar[d]^{} & \Ker\left(A_{E_H}^t-I: \Z^{H\setminus S}\xra \Z^{H}\right)\ar[d]^{\vartheta}\\
\overline{\Coker}\left({\begin{pmatrix}A_E^t-I& Z^t\\ 0& -I\end{pmatrix}}\right) \ar[r]^>>>>>>>{\lambda_{\overline{E}_H}}& \overline{K}_1\big(L(\overline{E}_H)\big) \ar[r]^>>>>>>{\xi_{\overline{E}_H}}& \Ker\left({\begin{pmatrix}A_E^t-I& Z^t\\ 0& -I\end{pmatrix}}: {\Z}^{\overline{E}_H^0\setminus S}\xra{\Z}^{\overline{E}_H^0}\right).}
\end{equation} It remains to show that $\ol{\varsigma}:\overline{\Coker}(A^t_{E_H}-I)\xrightarrow{} \overline{\Coker} {\begin{pmatrix}A_E^t-I& Z^t\\ 0& -I\end{pmatrix}} $ is an isomorphism. As $\overline{\Coker} (A^t_{E_H}-I)$ is a quotient of $\Coker(A^t_{E_H}-I)$ (see \eqref{diagram-fortwo}), $\ol{\varsigma}:\overline{\Coker}(A^t_{E_H}-I)\xrightarrow{} \overline{\Coker} {\begin{pmatrix}A_E^t-I& Z^t\\ 0& -I\end{pmatrix}} $ is surjective. We only need to show that  $\ol{\varsigma}:\overline{\Coker}(A^t_{E_H}-I)\xrightarrow{} \overline{\Coker} {\begin{pmatrix}A_E^t-I& Z^t\\ 0& -I\end{pmatrix}} $ is injective.

 We claim that the restriction map $$\varsigma_|: \bigg(\{-1,1\}^{H}\cdot {\rm Im} (A_E^t-I)\bigg)\bigg/{\rm Im} (A_E^t-I) \longrightarrow{} \bigg(\{-1,1\}^{\overline{E}_H^0}\cdot {\rm Im}\begin{pmatrix}A_E^t-I& Z^t\\ 0& -I\end{pmatrix}\bigg)\bigg/{\rm Im} \begin{pmatrix}A_E^t-I& Z^t\\ 0& -I\end{pmatrix}, $$ sending the isomorphism class of $(m_1,\cdots, m_n)^t$ to the isomorphism class of $(m_1,\cdots, m_n,1,\cdots, 1)^t$ with $m_1,\cdots, m_n$ (either $-1$ or $1$) the corresponding coefficients of vertices in $H$, is surjective. For any $v\in \overline{E}_H^0\setminus H$, there is only one edge $e$ starting from $v$ with $r(e)\in H$. We have that
\begin{align}
\label{equation1}
	\tiny\begin{pmatrix}1\\ \vdots\\  1\\  \vdots\\ -1\\ \vdots\\  1\end{pmatrix}\Imm \tiny{\begin{pmatrix}A_{E_H}^t-I& Z^t\\ 0& -I\end{pmatrix}}&=	\tiny\begin{pmatrix}1\\ \vdots\\  1\\  \vdots\\ -1\\ \vdots\\  1\end{pmatrix}\tiny{\begin{pmatrix}A_{E_H}^t-I& Z^t\\ 0& -I\end{pmatrix}}\tiny\begin{pmatrix}1\\ \vdots\\  1\\  \vdots\\ -1\\ \vdots\\  1\end{pmatrix}\Imm \tiny{\begin{pmatrix}A_{E_H}^t-I& Z^t\\ 0& -I\end{pmatrix}}\\
	\label{equation2}
&=	\tiny\begin{pmatrix}1\\ \vdots\\ -1\\ \vdots\\  1\\  \vdots\\  1\end{pmatrix} \Imm \tiny{\begin{pmatrix}A_{E_H}^t-I& Z^t\\ 0& -I\end{pmatrix}}\in \Imm \varsigma_|.
\end{align} Here the element $(1,\cdots, 1,\cdots, -1,\cdots,1)^t$  in \eqref{equation1} has only one entry $-1$ corresponding to $v$ and all the other entries are $1$; the element $(1,\cdots,-1,\cdots, 1,\cdots,  1)^t$ in \eqref{equation2} has only one entry $-1$ corresponding to $r(e)\in H$ and all the other entries are $1$. Hence the proof for the claim is completed.   

Suppose that $\ol{\varsigma}(\ol{x})=0$ for $\ol{x}\in \overline{\Coker}(A^t_{E_H}-I)$. This implies $\varsigma(x)$ is belonging to $$\bigg(\{-1,1\}^{\overline{E}_H^0}\cdot {\rm Im}\begin{pmatrix}A_E^t-I& Z^t\\ 0& -I\end{pmatrix}\bigg)\bigg/{\rm Im} \begin{pmatrix}A_E^t-I& Z^t\\ 0& -I\end{pmatrix},$$ which is a subgroup of $\Coker \begin{pmatrix}A_E^t-I& Z^t\\ 0& -I\end{pmatrix} $. By the above claim, $\varsigma_|$ is surjective, implying that $$x\in \bigg(\{-1,1\}^{H}\cdot {\rm Im} (A_E^t-I)\bigg)\bigg/{\rm Im} (A_E^t-I).$$
Hence $\ol{x}=0$. This completes the proof.
\end{proof}

\section{Filtered $K$-theory} 

Filtered $K$-theory of a ring is an invariant consisting of $K$-theory of certain ideals and their quotients and the long exact sequence of $K$-theory relating them (Definition~\ref{filterkk}).  In its current form, it was first formulated in the setting of $C^*$-algebras by Eilers, Restorff and Ruiz, along with a working conjecture that the filtered $K$-theory is a complete invariant for all graph $C^*$-algebras. In major works \cite{errs2,errs3}, the authors managed to prove the conjecture for unital graph $C^*$-algebras by first proving that when a pair of $C^*$-algebras associated to graphs $E$ and $F$, respectively, have the same filtered $K$-theory, then $E$ may be changed into $F$ by a sequence of geometric ``moves'' applied to the graphs in a way resembling the Reidemeister moves for knots.

In \cite{errs} the algebraic version of filtered $K$-theory was formulated and it was shown that, for two graphs, if the algebraic filtered $K$-theories of their Leavitt path algebras are isomorphic then the 
filtered $K$-theories of their graph $C^*$-algebras are also isomorphic (Theorem~\ref{lateradd}). 

Here we briefly recall the notion of algebraic filtered $K$-theory following~\cite{errs} and in \S\ref{hfgffdsret2} we relate the graded $K$-theory of a Leavitt path algebras to its algebraic filtered $K$-theory.

\subsection{Prime spectrum for a ring} \label{hgfdsa} Let $R$ be a ring. We denote by $\mathbb{I}(R)$ the lattice of ideals in $R$. Let $\mathscr{S}$ be a sublattice of $\mathbb{I}(R)$ closed under arbitrary intersections and containing the trivial ideals $\{0\}$ and $R$. 
An ideal $\mathfrak{p}\in \mathscr{S}$ is called $\mathscr{S}$-prime if $\mathfrak{p}\neq R$ and for any ideals $I, J \in \mathscr{S}$, $$IJ\subseteq \mathfrak{p}\Longrightarrow I\subseteq \mathfrak{p} \text{~or~} J\subseteq \mathfrak{p}.$$ Define 
\begin{equation}\label{hgyhfyhry5}
\Spec_{\mathscr{S}} (R):=\{ \mathfrak{p} \in \mathscr{S} \mid \mathfrak{p} \text{ is } \mathscr{S}\text{-prime} \}.
\end{equation}
 For each subset $T \sub \Spec_{\mathscr{S}} (R)$, define the kernel of $T$ as $$\Ker(T)=\bigcap_{\mathfrak{p}\in T}\mathfrak{p}$$

 Throughout this section we assume that for any $I \in \mathscr{S}$, we have 
\begin{equation}\label{intersec}
I=\bigcap_{I \subseteq \mathfrak{p}}  \mathfrak{p},  \text{~where~}  \mathfrak{p} \in \Spec_{\mathscr{S}} (R).
\end{equation}

One can then equip $\Spec_{\mathscr{S}} (R)$ with the \emph{Jacobson topology}, where the open sets are 
\begin{equation}
\label{opensets}
W(I)=\{\mathfrak{p}\in\Spec^{\gr}(R)   \mid   I \nsubseteq  \mathfrak{p}\},\end{equation} where $I \in \mathscr{S}$ (\cite[Lemma 3]{errs} and \cite[Theorem 2]{errs}).

Let $X$ be a topological space and  $\mathcal{O}(X)$ denote the set of all open subsets of $X$. Let $Y$ be a subset of $X$. We call $Y$ \emph{locally closed} if $Y =U\setminus V$, where $V \subseteq U$ are open subsets of $X$. We let $\mathbb{LC}(X)$ be the set of locally closed subsets of $X$.

By \cite[Theorem 2]{errs} there is a lattice isomorphism  
\begin{align}\label{pogfythdtr65}
\phi : \mathcal{O}(\Spec_{\mathscr{S}}(R))&\longrightarrow \mathscr{S},\\
U&\longmapsto  \Ker (U^c).\notag
\end{align}
Note that since $U=W(I)$, for some ideal $I\in \mathscr{S}$, then $\phi(U)=\Ker (U^c)=\bigcap_{I\subseteq  \mathfrak{p}}  \mathfrak{p}=I$. 
We will use this correspondence to define our filtered $K$-groups in \S\ref{jfutjf2233}.

\subsection{Filtered $K$-theory}\label{jfutjf2233}

Suppose $R$ is a ring and $\mathscr{S}$ is a sublattice of ideals as in \S\ref{hgfdsa}. Moreover, assume that every ideal $I\in \mathscr{S}$ has a countable approximate unit consisting of idempotents, i.e., there exists a sequence $\{e_n\}_{n=1}^{\infty}$ in $I$ such that

\begin{itemize}
\item $e_n$ is an idempotent for all $n\in \mathbb{N}$,
\item $e_n e_{n+1}=e_n$ for all $n\in \mathbb{N}$, and
\item for all $r\in I$, there exists $n\in \mathbb{N}$ such that $r e_n=e_n r=r$.
\end{itemize}

In keeping with the notation from $C^*$-algebras, for every open set $U\in \mathcal{O}(\Spec_{\mathscr{S}} (R))$, using the lattice isomorphism $\phi$ from (\ref{pogfythdtr65})
we define
$$R[U] :=\phi (U).$$ 
Whenever we have open sets $V \subseteq U$ we can form the quotient $R[U] / R[V]$. Then \cite[Lemma 4]{errs} shows that the quotient $R[U]/ R[V]$ only depends on the set difference $U\setminus V$ up to canonical isomorphism. For $Y =U \setminus V \in \mathbb{LC}(\Spec_{\mathscr{S}} (R))$, define
$$R[Y] := R[U] / R[V].$$

Then for any locally closed subset $Y =U\setminus V$ of $\Spec_{\mathscr{S}} (R)$, we have a collection of
abelian groups $\{K_n(R[Y])\}_{n\in\Z}$, where $K_n$ are the Quillen algebraic $K$-groups. Moreover, for all $U_1, U_2, U_3\in \mathcal{O}(\Spec_{\mathscr{S}} (R))$ with $U_1\subseteq U_2\subseteq U_3$, by \cite[Lemma 3.10]{rt}, we have a long exact sequence in algebraic $K$-theory 
\begin{align}\CD
 \label{ktheroyses}
K_n(R[U_2\setminus U_1])@>{\iota_*}>> K_n(R[U_3\setminus U_1])@>{\pi_*}>>K_n(R[U_3\setminus U_2])@>{\partial_*}>>K_{n-1}(R[U_2\setminus U_1]). 
\endCD
\end{align}

We are ready to define the filtered $K$-theory of a ring. 

\begin{defi} \label{filterkk}
The \emph{filtered $K$-theory} of the ring $R$ with respect to the sublattice of ideals  $\mathscr{S}$ as above, is the collection 
\[\big \{K_n(R[Y]) \big \}_{k\leq n\leq m, Y\in \mathbb{LC}(\Spec_{\mathscr{S}} (R))},\] 
equipped with the natural transformations $\iota_*, \pi_*, \partial_*$ given in \eqref{ktheroyses}. We denote this with  ${\FK}_{k,m}(\Spec_{\mathscr{S}} (R);R)$.
\end{defi}

Let $R, R'$ be rings with sublattices of ideals $\mathscr{S}$ and  $\mathscr{S}'$, respectively.  For $k, m\in \Z$ with $k\leq 0\leq m$, an isomorphism  
\begin{equation}\label{poploko}
{\FK}_{k,m}(\Spec_{\mathscr{S}} (R);R) \longrightarrow {\FK}_{k,m}(\Spec_{\mathscr{S}'} (R'); R')
\end{equation}
consists of a homeomorphism \[\phi: \Spec_{\mathscr{S}} (R)\xra \Spec_{\mathscr{S}'} (R')\] and an isomorphism \[\alpha_{Y, n}: K_n(R[Y]) \rightarrow K_n(R'[\phi(Y)]),\] for each $n$ with $k \leq n \leq m$ and for each $Y\in \mathbb{LC}(\Spec_{\mathscr{S}} (R))$ such that the diagrams involving the natural transformations commute. If the isomorphism (\ref{poploko}) restricts to an order isomorphism on $K_0(R[Y])$ for all
$Y\in \mathbb{LC}(\Spec_{\mathscr{S}} (R))$, we write
$${\FK}^{+}_{k,m}(\Spec_{\mathscr{S}} (R);R)\cong{\FK}^{+}_{k,m}(\Spec_{\mathscr{S}'} (R'); R').$$

Next we specialise to the case of Leavitt path algebras and graph $C^*$-algebras. For a countable graph $E$, we consider the set $\mathscr{S}$ to be $\LL^{\gr}\big(\L(E)\big)$; the lattice of graded (two-sided) ideals of $L(E)$.  Since $\Spec_{\mathscr{S}} (L(E))$ by definition (\ref{hgyhfyhry5}) becomes the set of graded prime ideals of $L(E)$, we denote it by $\Spec^{\gr}(L(E))$. 
By \cite[Lemma 6]{rt}, every graded-ideal of $L(E)$ has a countable approximate unit consisting of idempotents. Thus we can carry out the construction of the filtered $K$-theory in this setting. 






\section{Graded $K$-theory gives a certain filtered $K$-theory}\label{hfgffdsret2}

In this section we prove that the graded Grothendieck group of the Leavitt path algebra of a row-finite graph determines a certain portion of its filtered $K$-theory. Recall from \S\ref{lpabasis} that $\LL^{\gr}\big(\L(E)\big)$ denotes the lattice of graded two-sided ideals of $L(E)$ and $\Spec^{\gr}(\L(E))$ the set of graded prime ideals of $\L(E)$. 

\begin{lem}\label{topology} Let $E, F$ be row-finite graphs. Suppose that there exists an order-preserving $\mathbb Z[x,x^{-1}]$-module isomorphism 
$$\varphi: K_0^{\gr}(\L(E))\longrightarrow K_0^{\gr}(\L(F)).$$ Then 
there exists a lattice isomorphism 
\[\varphi \colon \LL^{\gr} (L(E)) \longrightarrow \LL^{\gr} (L(F)),\] which restricts to 
a homeomorphism of topological spaces 
\[\varphi: \Spec^{\gr}(\L(E))\xra \Spec^{\gr}(\L(F)),\]
such that $\VV^{\gr}(\varphi (I)) = \varphi (\VV^{\gr}(I))$ for all $I\in \LL^{\gr}\big(\L(E)\big)$. 
\end{lem}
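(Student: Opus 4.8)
The plan is to transport the ordered module isomorphism $\varphi$ through the chain of lattice isomorphisms established in Section~3, and then read off the topological and monoid-theoretic consequences. First I would use that, since $\varphi$ is an isomorphism of ordered $\Z[x,x^{-1}]$-modules, both $\varphi$ and $\varphi^{-1}$ carry the positive cone into the positive cone; as $K_0^{\gr}(L(E))_{+}\cong \VV^{\gr}(L(E))$ with the canonical map injective (by \cite[Corollary~5.8]{ahls}), $\varphi$ restricts to a $\Z$-monoid isomorphism $\VV^{\gr}(L(E))\to \VV^{\gr}(L(F))$. Because the $\Z$-action is exactly multiplication by $x$, this $\varphi$ carries $\Z$-order ideals to $\Z$-order ideals: closure under the $\Z$-action is preserved since $\varphi$ is $\Z[x,x^{-1}]$-linear, and hereditariness is preserved since $\varphi$ and $\varphi^{-1}$ preserve the order. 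Hence $\varphi$ induces a lattice isomorphism $\varphi_{*}\colon \LL(K_0^{\gr}(L(E)))\to \LL(K_0^{\gr}(L(F)))$, compatible with the identification of $\Z$-order ideals of a monoid with those of its group completion.

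Next I would define the asserted map by composing with the lattice isomorphisms of \eqref{latticncnc}:
\[
\LL^{\gr}(L(E)) \cong \LL(\VV^{\gr}(L(E))) \cong \LL(K_0^{\gr}(L(E))) \xrightarrow{\ \varphi_{*}\ } \LL(K_0^{\gr}(L(F))) \cong \LL(\VV^{\gr}(L(F))) \cong \LL^{\gr}(L(F)),
\]
and call the composite $\varphi$, as in the statement. This is a lattice isomorphism by construction. Since an isomorphism of lattices preserves prime elements (\S\ref{laikfgut6}) and, by Theorem~\ref{thmprime} together with the discussion in \S\ref{lpabasis}, the prime elements of $\LL^{\gr}(L(E))$ are precisely the graded prime ideals $\Spec^{\gr}(L(E))$, the map $\varphi$ restricts to a bijection $\Spec^{\gr}(L(E))\to \Spec^{\gr}(L(F))$.

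For the homeomorphism I would use that the Jacobson topology has basic open sets $W(I)=\{\mathfrak p : I\nsubseteq \mathfrak p\}$ indexed by $I\in\LL^{\gr}$ (see \eqref{opensets}). Because $\varphi$ is a lattice isomorphism, $I\subseteq \mathfrak p$ if and only if $\varphi(I)\subseteq \varphi(\mathfrak p)$ for every graded prime $\mathfrak p$, so $\varphi(W(I))=W(\varphi(I))$; thus $\varphi$ and its inverse carry basic open sets to basic open sets, giving a homeomorphism. Finally, the identity $\VV^{\gr}(\varphi(I))=\varphi(\VV^{\gr}(I))$ is the unwinding of the construction: under the identifications \eqref{idealmon} and \eqref{latticncnc}, the monoid $\VV^{\gr}(I)$ of a graded ideal $I=\langle H\rangle$ is precisely the $\Z$-order ideal $\Phi(H)\subseteq \VV^{\gr}(L(E))$, and by definition of the composite $\varphi$ its image is $\varphi_{*}(\Phi(H))$, which is the $\Z$-order ideal of $\VV^{\gr}(L(F))$ corresponding to $\varphi(I)$, namely $\VV^{\gr}(\varphi(I))$.

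The main obstacle will be the bookkeeping in the first and last steps: verifying that the restriction of $\varphi$ to positive cones is a genuine $\Z$-monoid isomorphism (which needs the order-isomorphism in both directions, not merely order-preservation), and checking that the several identification squares --- between graded ideals, hereditary saturated subsets, $\Z$-order ideals of $\VV^{\gr}$, and $\Z$-order ideals of $K_0^{\gr}$ --- commute on the nose, so that the last equation holds as a genuine equality of submonoids rather than merely up to isomorphism.
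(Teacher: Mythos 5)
Your proposal is correct and follows essentially the same route as the paper's proof: transporting $\varphi$ through the positive cones to a lattice isomorphism of $\Z$-order ideals, composing with the chain of lattice isomorphisms in \eqref{latticncnc} to get the isomorphism $\LL^{\gr}(L(E))\to\LL^{\gr}(L(F))$ satisfying $\VV^{\gr}(\varphi(I))=\varphi(\VV^{\gr}(I))$, restricting to prime elements, and verifying openness via the computation $\varphi(W(I))=W(\varphi(I))$ on basic open sets. The extra detail you supply (the positive-cone argument in both directions and the explicit check that $\Z$-order ideals are preserved) is exactly the bookkeeping the paper leaves implicit.
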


\begin{proof} 
%
The order-preserving isomorphism $\varphi: K_0^{\gr}(L(E))\xra K_0^{\gr}(L(F))$ sends $\VV^{\gr}(L(E))$ to $\VV^{\gr}(L(F))$. In particular $\varphi $ induces a lattice isomorphism, also
denoted by $\varphi$, from $\LL (\VV^{\gr}(L(E)))$ onto $\LL (\VV^{\gr}(L(F)))$. By \eqref{latticeisosec} and \eqref{idealmon}, we have that the lattice isomorphism given in \eqref{latticncnc} sending $I$ to $\VV^{\gr}(I)$, for 
any graded ideal $I$ of $L(E)$. Therefore it follows from \eqref{latticncnc} that there exists a lattice isomorphism 
$$\varphi \colon \LL^{\gr} (L(E)) \longrightarrow \LL^{\gr} (L(F)),$$
such that $\VV^{\gr} (\varphi (I)) = \varphi (\VV^{\gr} (I))$, for any $I\in \LL^{\gr} (L(E))$. The restriction of this isomorphism to the set of prime elements gives
the desired bijection $\varphi: \Spec^{\gr}(\L(E))\xra \Spec^{\gr}(\L(F))$.

To conclude, it suffices to show that $\varphi: \Spec^{\gr}(\L(E)) \xra \Spec^{\gr}(\L(F))$ is open. By \eqref{opensets}, open sets of $\Spec^{\gr}(\L(E))$ are precisely the set of the form $W(I)=\{\mathfrak{p}\in\Spec^{\gr}(\L(E))\;|\;   I \nsubseteq  \mathfrak{p} \}$ for some proper graded ideal $I$ of $\L(E)$. Observe that for a proper graded ideal $I$ of $\L(E)$
\begin{equation*}
\begin{split}
\varphi(W(I))
&=\varphi(\{ \mathfrak{p}\in\Spec^{\gr}(\L(E))\;|\;    I \nsubseteq  \mathfrak{p} \})\\
&=\{\varphi(\mathfrak{p})\;|\; \mathfrak{p}\in\Spec^{\gr}(\L(E)),   I \nsubseteq  \mathfrak{p}\}\\
&=\{\varphi(\mathfrak{p})\in\Spec^{\gr}(\L(F))\;|\;      \varphi(I)  \nsubseteq  \varphi(\mathfrak{p})  \}\\
&=W(\varphi(I))
\end{split}
\end{equation*} and $W(\varphi(I))$ is open. Thus we prove that $\varphi: \Spec^{\gr}(\L(E)) \xra \Spec^{\gr}(\L(F))$ is an open map. Similar argument shows that the inverse of $\varphi$ is open. 
\end{proof}

From now on, let $E, F$ be row-finite graphs. Suppose that there is an order-preserving $\Z[x, x^{-1}]$-isomorphism
\begin{equation}
\label{suppose}\varphi:K_0^{\gr}(\L(E)) \longrightarrow K_0^{\gr}(\L(F)).
\end{equation} 

We record this observation on the level of $K$-groups in the following lemma. 


\begin{lem}\label{hfbcvgfbfg567} Let $E, F$ be row-finite graphs. Suppose that there exists an order-preserving $\Z[x, x^{-1}]$-isomorphism
\[\varphi: K_0^{\gr}(\L(E)) \longrightarrow K_0^{\gr}(\L(F)).\] Then for any graded ideals $I\sub J \subseteq L_k(E)$, there are 
order-preserving $\Z[x, x^{-1}]$-isomorphisms $K_0^{\gr}(I)\cong K_0^{\gr}(\varphi(I))$ and $K_0^{\gr}(J/I)\cong K_0^{\gr}(\varphi(J)/\varphi(I))$ which are induced by $\varphi$. 
\end{lem}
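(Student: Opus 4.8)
The statement to prove is Lemma~\ref{hfbcvgfbfg567}: an order-preserving $\Z[x,x^{-1}]$-isomorphism $\varphi\colon K_0^{\gr}(L(E))\to K_0^{\gr}(L(F))$ induces order-preserving $\Z[x,x^{-1}]$-isomorphisms $K_0^{\gr}(I)\cong K_0^{\gr}(\varphi(I))$ and $K_0^{\gr}(J/I)\cong K_0^{\gr}(\varphi(J)/\varphi(I))$ for graded ideals $I\subseteq J$ of $L(E)$.

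Let me think about how the pieces fit together.

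The key tools already established in the excerpt are: (i) Lemma~\ref{topology}, which upgrades $\varphi$ to a lattice isomorphism $\varphi\colon \LL^{\gr}(L(E))\to\LL^{\gr}(L(F))$ satisfying the crucial compatibility $\VV^{\gr}(\varphi(I))=\varphi(\VV^{\gr}(I))$; (ii) the identification $K_0^{\gr}(I)_+\cong\VV^{\gr}(I)$ coming from cancellativity (Corollary 5.8 of [ahls]), i.e. the positive cone recovers the monoid; and (iii) the quotient formula $\VV^{\gr}(J/I)\cong\VV^{\gr}(J)/\VV^{\gr}(I)$ from~\eqref{quotientiso}, together with the fact that for cancellative monoids a short exact sequence of monoids group-completes to a short exact sequence of groups (used to derive~\eqref{shortexactfor} and diagram~\eqref{2020}).

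So here is the plan. First, for the ideal statement: by Lemma~\ref{topology} we have $\VV^{\gr}(\varphi(I))=\varphi(\VV^{\gr}(I))$ as submonoids, where $\VV^{\gr}(I)$ is identified with the $\Z$-order ideal of $\VV^{\gr}(L(E))$ corresponding to $I$ under~\eqref{latticncnc}. Since $\varphi$ is an order-preserving $\Z[x,x^{-1}]$-module isomorphism, its restriction $\varphi|_{\VV^{\gr}(I)}\colon\VV^{\gr}(I)\to\VV^{\gr}(\varphi(I))$ is a $\Z$-monoid isomorphism. Because $K_0^{\gr}(I)$ is the group completion of the cancellative monoid $\VV^{\gr}(I)$ (and likewise for $\varphi(I)$), and group completion is functorial, the restriction lifts to a group isomorphism $K_0^{\gr}(I)\to K_0^{\gr}(\varphi(I))$ that is $\Z[x,x^{-1}]$-linear and order-preserving, the positive cones being exactly the images of the monoids. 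That handles the first isomorphism. For the quotient: apply the same reasoning to the induced map on quotient monoids. Concretely, $\varphi$ restricts to $\varphi\colon\VV^{\gr}(J)\to\VV^{\gr}(\varphi(J))$ carrying the order-subideal $\VV^{\gr}(I)$ isomorphically onto $\VV^{\gr}(\varphi(I))$, hence descends to a $\Z$-monoid isomorphism $\VV^{\gr}(J)/\VV^{\gr}(I)\to\VV^{\gr}(\varphi(J))/\VV^{\gr}(\varphi(I))$. Now invoke~\eqref{quotientiso} on both sides to rewrite these as $\VV^{\gr}(J/I)$ and $\VV^{\gr}(\varphi(J)/\varphi(I))$, and group-complete (using cancellativity) to obtain the desired order-preserving $\Z[x,x^{-1}]$-isomorphism $K_0^{\gr}(J/I)\cong K_0^{\gr}(\varphi(J)/\varphi(I))$.

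I expect the main obstacle to be \emph{bookkeeping the identifications rather than any deep argument}: one must be careful that the abstract order-ideal $\varphi(\VV^{\gr}(I))$ of $\VV^{\gr}(L(F))$ genuinely coincides, under the canonical isomorphism~\eqref{idealmon}, with the intrinsic monoid $\VV^{\gr}(\varphi(I))$ of the ideal $\varphi(I)$ of $L(F)$—this is precisely what Lemma~\ref{topology} asserts, so it must be cited cleanly. Similarly, checking that the descended map on quotient monoids is well defined requires $\varphi$ to carry $\VV^{\gr}(I)$ \emph{onto} $\VV^{\gr}(\varphi(I))$ (not merely into it), which follows since $\varphi$ is a lattice isomorphism and $I\subseteq J$. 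The one genuinely non-formal input is that cancellativity of $\VV^{\gr}$ makes the group-completion functor exact enough that order ideals and quotients are preserved; but this is already recorded in the excerpt preceding~\eqref{shortexactfor}, so I would simply cite it. No new computation beyond matching these canonical isomorphisms should be needed.
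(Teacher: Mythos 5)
Your proposal is correct and is essentially the paper's own argument: both proofs rest on Lemma~\ref{topology} (the compatibility $\VV^{\gr}(\varphi(I))=\varphi(\VV^{\gr}(I))$), the identification \eqref{idealmon} of $\VV^{\gr}(I)$ with a $\Z$-order ideal of $\VV^{\gr}(L(E))$, the quotient formula \eqref{quotientiso}, and cancellativity of the graded monoid. The only difference is bookkeeping: you run everything at the level of $\VV^{\gr}$ and group-complete at the end, whereas the paper defines the same maps directly on $K_0^{\gr}$ via the restriction graphs $E_{H_I}$ and $(E/H_I)_{H_{J/I}}$ (graded Morita equivalence) and obtains the quotient isomorphism from the induced map on the short exact sequences \eqref{shortexactfor} rather than from quotient monoids.
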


\begin{proof} For a graded ideal $I$ of $L(E)$, we denote by $H_I$ the corresponding hereditary saturated subset of $E^0$. We define $\varphi_{|}: K_0^{\gr}(I)\xrightarrow{}K_0^{\gr}(\varphi(I))$ as the composition of the following isomorphisms 
\[K_0^{\gr}(I)\cong K_0^{\gr}(L(E_{H_I}))\xrightarrow{}K_0^{\gr}(L(F_{H_{\varphi(I)}}))\cong K_0^{\gr}(\varphi(I)),\] where the isomorphism $K_0^{\gr}(L(E_{H_I}))\xrightarrow{} K_0^{\gr}(L(F_{H_{\varphi(I)}}))$ is given by $v(i)\mapsto \varphi(v(i))$, for $v\in H_I$ and $i\in\Z$. The isomorphism $K_0^{\gr}(I)\cong K_0^{\gr}(L(E_{H_I}))$ is given by the graded Morita context (refer to \cite[Theorem 5.7(3)]{tomforde}) and similarly we have the isomorphism $K_0^{\gr}(\varphi(I))\cong K_0^{\gr}(L(F_{H_{\varphi(I)}}))$.
 Obviously $\varphi_{|}$ is an order-preserving $\Z[x, x^{-1}]$-isomorphism.

Recall from \eqref{shortexactfor} that we have the short exact sequence 
\begin{align*}\CD
 0@>>> K_0^{\gr}(L(E_{H_I}))@>{}>>K_0^{\gr}(L(E))@>{}>>K_0^{\gr}(L(E/H_I))@>>>0,
\endCD
\end{align*}  and thus the following commutative diagram 
\begin{align}
\xymatrix{
0\ar[r] &K_0^{\gr}(L(E_{H_I})) \ar[r]^{} \ar[d]^{\varphi_{|}}& K_0^{\gr}(L(E))\ar[r]^{}\ar[d]^{\varphi} & K_0^{\gr}(L(E/H_I))\ar[d]\ar[r]&0\\
0\ar[r] &K_0^{\gr}(L(F_{H_{\varphi(I)}})) \ar[r]^{} & K_0^{\gr}(L(F))\ar[r]&K_0^{\gr}(L(F/H_{\varphi(I)}))\ar[r]&0,}
\end{align} where the map $K_0^{\gr}(L(E/H_I))\xrightarrow{} K_0^{\gr}(L(F/H_{\varphi(I)}))$ is induced by $\varphi: K_0^{\gr}(L(E))\xrightarrow {}K_0^{\gr}(L(F))$. 

 For two graded ideals $I$ and $J$ of $L(E)$ with $I\subseteq J$, we have that $J/I$ is a graded ideal of ${L(E)/I} \cong L(E/{H_I})$. Similarly we have the following order-preserving $\Z[x, x^{-1}]$-isomorphism
\[K_0^{\gr}(J/I)\cong K_0^{\gr}(L(E/H_I)_{H_{J/I}})\longrightarrow{} K_0^{\gr}(L(F_{H_{\varphi(I)}})_{H_{\varphi(J)/\varphi(I)}})\cong K_0^{\gr}(\varphi(J)/\varphi(I)).\qedhere\]  \end{proof}


Recall from \eqref{definition-quotient} that for a row-finite graph $E$ we define $\overline{K}_1(L(E))$ as the quotient $K_1(L_k(E))/G_E$ of $K_1(L_k(E))$. In the following, we define a certain quotient of $K_1(I)$ for a graded ideal $I$ of $L(E)$. 

Let $H$ be a hereditary saturated subset of $E^0$ and $I_H$ the corresponding graded ideal of $L(E)$. Recall from Lemma \ref{kone} that the homomorphism $L(E_H)\xrightarrow{}L(\overline{E}_H)$ \eqref{nn} of algebras induces isomorphisms $K_1(L(E_H))\xrightarrow{} K_1(L(\overline{E}_H))$. As we have the composition map \begin{equation}
    \label{zzz0}
    L(E_{H})\longrightarrow{} L(\overline{E}_{H})\longrightarrow{} I_H.\end{equation} It follows that it induces isomorphisms $$K_1(L(E_{H}))\longrightarrow{} K_1(L(\overline{E}_H))\longrightarrow{} K_1(I_H)$$ of K-groups. We denote the above composition isomorphism by $f$. By \eqref{definition-quotient}, there exists a subgroup $G_H$ such that $\overline{K_1}(L(E_H))=K_1(L(E_H))/G_H$. We define $\overline{K}_1(I_H)=K_1(I_H)/f(G_H)$ such that the following diagram commutes
\begin{equation}
\label{definition-idealkone}
\xymatrix@C+.1pc@R+1.1pc{
K_1(L(E_H))\ar[r]^{f}\ar[d]^{}&K_1(I_H)\ar[d]\\
\overline{K}_1(L(E_H))\ar[r]&\overline{K}_1(I_H),}  
\end{equation} where the induced morphism $\overline{K}_1(L(E_H))\xrightarrow{} \overline{K}_1(I_H)$ by $f$ is also an isomorphism.


\begin{lem}
\label{lemma-pre}
Let $E$ be a row-finite graph, $I\subseteq J$ two graded ideals of $L(E)$, and $H_I$ and $H_J$ the corresponding hereditary saturated subsets of $E^0$. Then we have the commutative diagram with exact rows: 
\begin{equation}
\label{equation-pre}
\xymatrix@C+.1pc@R+1.1pc{
\overline{K}_1(I)\ar[r]^{}
\ar[r]^{}& \overline{K}_1(J)\ar[r]^{} 
& \overline{K}_1(J/I)\ar[r]^{} 
& K_0(I)
\\
\overline{K}_1\big(L(E_{H_I})\big)\ar[u]^{{\scriptscriptstyle{\cong\ \ }}}\ar[r]^{\alpha} & \overline{K}_1\big(L(E_{H_J})\big)\ar[u]^{{\scriptscriptstyle{\cong\ \ }}}\ar[r]^>>>>{\alpha'}& \overline{K}_1\big(L(({E/H_I})_{H_{J/I}})\big)\ar[u]^{{\scriptscriptstyle{\cong\ \ }}}\ar[r] & K_0(L(E_{H_I}))\ar[u]^{{\scriptscriptstyle{\cong\ \ }}}}
\end{equation}	
\end{lem}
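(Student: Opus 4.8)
The plan is to show that both rows of \eqref{equation-pre} are the \emph{same} four-term sequence, one phrased for the ideals $I\subseteq J\subseteq L(E)$ and one for the restriction and quotient graphs, linked by the vertical isomorphisms; so the real work splits into (a) identifying those vertical isomorphisms, (b) proving exactness of the bottom row, and (c) checking that every square commutes. First I would record two graph identifications. Since $H_I\subseteq H_J$ are hereditary and saturated, $H_I$ is again hereditary saturated inside $E_{H_J}$, and one checks directly that $(E_{H_J})_{H_I}=E_{H_I}$; moreover, because an edge $e$ with $s(e)\in H_J$ and $r(e)\notin H_I$ must have $s(e)\notin H_I$ (else hereditariness forces $r(e)\in H_I$), one gets $E_{H_J}/H_I=(E/H_I)_{H_{J/I}}$. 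Thus the bottom row is precisely the sequence attached to the pair $(E_{H_J},H_I)$. The vertical isomorphisms then come from the machinery already in place: $\overline{K}_1(I)\cong\overline{K}_1(L(E_{H_I}))$, and likewise for $J$ and $J/I$, follow from Lemma~\ref{kone} together with the definition \eqref{definition-idealkone} and the algebra isomorphism \eqref{ideals}, while $K_0(I)\cong K_0(L(E_{H_I}))$ is the graded Morita equivalence \eqref{idealmon} (in particular a Morita equivalence, hence a $K_0$-isomorphism).

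Next I would obtain exactness. The extension $0\to I\to J\to J/I\to 0$ consists of rings with local units, so \cite[Theorem 2.4.1]{cortinas} supplies the exact sequence $K_1(I)\xrightarrow{a}K_1(J)\xrightarrow{b}K_1(J/I)\xrightarrow{\partial}K_0(I)$. I would then check on the generators $[-v]_1$ that the maps respect the relevant subgroups $G$: the inclusion carries $G_{E_{H_I}}$ into $G_{E_{H_J}}$, and the quotient map sends $[-v]_1\mapsto[-v]_1$ for $v\in H_J\setminus H_I$ while $[-v]_1\mapsto 0$ for $v\in H_I$ (here one uses that $\lambda'$ is a homomorphism carrying the trivial unit to $0$, so no infinite sum of vertices is involved). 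Consequently $b(G_J)=G_{J/I}$, and in particular $G_{J/I}\subseteq\Imm b=\Ker\partial$, so $\partial$ descends to a map $\overline{K}_1(J/I)\to K_0(I)$. With these facts a short diagram chase gives exactness of the quotient sequence: at $\overline{K}_1(J)$ one uses $b^{-1}(G_{J/I})=\Ker b+G_J$ (which is exactly where $b(G_J)=G_{J/I}$ enters), and at $\overline{K}_1(J/I)$ one uses $\Imm b=\Ker\partial$ together with $G_{J/I}\subseteq\Ker\partial$. (Under the splitting of Proposition~\ref{propforquotientcom} applied to $(E_{H_J},H_I)$ this also recovers the $\Ker$-part from the snake sequence of Lemma~\ref{ssz}.)

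Finally I would treat commutativity. The vertical isomorphisms are all induced by the canonical algebra homomorphisms $L(E_{H_I})\to L(E_{H_J})\to L(E_{H_J}/H_I)$, which factor at the level of ideals through the inclusion $I\hookrightarrow J$ and the quotient $J\twoheadrightarrow J/I$, together with the graded Morita contexts. Since these homomorphisms are compatible with inclusion and passage to the quotient, functoriality of $K$-theory and naturality of the connecting map $\partial$ make every square commute; this is exactly the compatibility packaged in Lemma~\ref{Y}. I would expect the main obstacle to be precisely this bookkeeping: one must verify that the reduced groups $\overline{K}_1(I)$, defined through the specific composite isomorphism $f$ of \eqref{definition-idealkone}, are matched by the inclusion/quotient maps to the corresponding groups of the restriction and quotient graphs, so that the subgroups $G$ are transported consistently and the squares genuinely commute. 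The two graph identifications above ($(E_{H_J})_{H_I}=E_{H_I}$ and $E_{H_J}/H_I=(E/H_I)_{H_{J/I}}$) are what make this matching possible.
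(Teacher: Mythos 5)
Your proposal is correct and follows essentially the same route as the paper's proof: the commuting algebra-level diagram linking $L(E_{H_I})\to L(E_{H_J})\to L\big((E/H_I)_{H_{J/I}}\big)$ to $I\to J\to J/I$ (the paper's diagram \eqref{hide}), the algebraic $K$-theory long exact sequence for $0\to I\to J\to J/I\to 0$, the vertical isomorphisms coming from Lemma~\ref{kone}, \eqref{ideals} and \eqref{definition-idealkone}, and passage to the reduced groups $\overline{K}_1$. Your explicit bookkeeping of the subgroups $G$ (in particular that the quotient map carries $G_J$ onto $G_{J/I}$, so the reduced rows stay exact and the connecting map descends to $\overline{K}_1(J/I)\to K_0(I)$) and the graph identifications $(E_{H_J})_{H_I}=E_{H_I}$, $E_{H_J}/H_I=(E/H_I)_{H_{J/I}}$ are details the paper compresses into the single assertion that \eqref{equation-pre} ``follows as an induced commutative diagram,'' since the paper simply takes the bottom row of \eqref{koneforhide} to be the sequence induced from the top row through the vertical isomorphisms.
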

\begin{proof} We have the commutative diagram \eqref{hide} whose second row is given by canonical homomorphisms of algebras.

\begin{equation}
\label{hide}
\xymatrix@C-1pc@R-0.3pc{I\ar[r]
&J\ar[r]&J/I\\
L(E_{H_I})\ar[r]\ar[u]&L(E_{H_J})\ar[r]\ar[u]&L(({E/H_I)}_{H_{J/I}}).\ar[u]^{}
}
\end{equation} We emphasize that here $L(E_{H_I})\xrightarrow{} I$ in \eqref{hide} is given by  \eqref{zzz0}. By the long exact sequence of algebraic K-theory for the short exact sequence $0\xrightarrow{} I\xrightarrow{}J \xrightarrow{} J/I\xrightarrow{}0$, we have the following commutative diagram. We emphasize that the second row for K-groups of Leavitt path algebras in \eqref{koneforhide} is induced by the first row.
 \begin{equation}
\label{koneforhide}
\xymatrix@C-1pc@R-0.3pc{K_1(I)\ar[r]
&K_1(J)\ar[r]&K_1(J/I)\ar[r]&K_0(I)\\
K_1(L(E_{H_I}))\ar[r]\ar[u]^{{\scriptscriptstyle{\cong\ \ }}}&K_1(L(E_{H_J}))\ar[r]\ar[u]^{{\scriptscriptstyle{\cong\ \ }}}&K_1\big(L(({E/H_I)}_{H_{J/I}})\big)\ar[u]^{{\scriptscriptstyle{\cong\ \ }}}\ar[r]&K_0(L(E_{H_I}))\ar[u]^{{\scriptscriptstyle{\cong\ \ }}}
}
\end{equation} We observe that \eqref{equation-pre} follows as an induced commutative diagram.
\end{proof}

We can now define the corresponding version of $\FK_{0,1}(L_k(E))$.

\begin{defi}
	\label{def:FKbar} Let $E$ be a row-finite graph and $k$ be a field. Define 
	the {\it algebraic filtered $\overline{K}$-theory} $\FKbar _{0,1}(L_k(E))$ as
	the collection 
	$$\{\ol{K}_n (J/I)\}_{0\le n\le 1}$$ 
	where $(I,J)$ ranges over all the graded ideals of $L_k(E)$ such that $I\subseteq J$, $\ol{K}_0(J/I)= K_0(J/I)$ and 
	$$\ol{K}_1(J/I)=K_1(J/I)/f(G_{(E/H_I)_{H_{J/I}}})$$ as given in \eqref{definition-idealkone}. For $I\subseteq J \subseteq P$, there are 
	natural maps
	$$\xymatrix@C+.1pc@R+1.1pc
		{\overline{K}_1(J/I)\ar[r]^{}\ar[r]^{}& \overline{K}_1(P/I)\ar[r]^{}  & \overline{K}_1(P/J)\ar[r]^{}   & K_0(J/I)\ar[r]^{} & K_0(P/I)\ar[r]^{} & K_0(P/J)}$$
		as given in Lemma \ref{lemma-pre}. Note that $\FKbar _{0,1}(L_k(E))$ is a quotient of $\FK _{0,1}(L_k(E))$. 
	\end{defi}

We are in a position to state the main theorem of this paper. 

\begin{thm}\label{maintheorem}
Let $E, F$ be row-finite graphs and $k$ a field. Suppose that there exists an order-preserving $\Z[x, x^{-1}]$-isomorphism
\[\varphi: K_0^{\gr}(\L_k(E))  \longrightarrow  K_0^{\gr}(\L_k(F)).\] Then
we have an isomorphism $\FKbar _{0,1}^+(L_k(E))\cong \FKbar_{0,1}^+(L_k(F))$, that is,  there is a homeomorphism of topological spaces 
\[\varphi: {\Spec}^{\gr}({\L}_k(E))\longrightarrow {\Spec}^{\gr}({\L}_k(F)),\] and induced order-preserving $\Z[x, x^{-1}]$-isomorphisms $\varphi_|: K_0^{\gr}(I)\xra K_0^{\gr}(\varphi(I))$ and $K_0^{\gr}(J/I)\cong K_0^{\gr}(\varphi(J)/\varphi(I))$, 
for any $I,J\in \LL^{\gr}\big(\L(E)\big)$ with $I\sub J$. 
Moreover, for any $I,J, P\in \LL^{\gr}\big(\L(E)\big)$ with $I\sub J\sub P$, there are isomorphisms for $n=0,1$
\begin{align*}
\alpha_{J/I, n}: \overline{K}_n(J/I)  \longrightarrow \overline{K}_n(\widetilde{J}/\widetilde{I}),\quad
\alpha_{P/I, n}: \overline{K}_n(P/I)  \longrightarrow \overline{K}_n(\widetilde{P}/\widetilde{I}),\quad
\alpha_{P/J, n}: \overline{K}_n(P/J) \longrightarrow \overline{K}_n(\widetilde{P}/\widetilde{J}),
\end{align*} 
where $\widetilde{I}=
\varphi(I), \widetilde{J}=\varphi(J)$ and $\widetilde{P}=\varphi(P)$ such that the following diagram commutes
\begin{equation}
\label{last}
\xymatrix@C+.1pc@R+1.1pc{
\overline{K}_1(J/I)\ar[r]^{}\ar[d]^{\alpha_{J/I, 1} \hskip .2in{\txt{\tt } \hskip -.4in}} \ar[r]^{}& \overline{K}_1(P/I)\ar[r]^{} \ar[d]^{\alpha_{P/I, 1} \hskip .2in{\txt{ \tt } \hskip -.4in}}& \overline{K}_1(P/J)\ar[r]^{} \ar[d]^{\alpha_{P/J, 1} \hskip .2in{\txt{ \tt } \hskip -.4in}} & K_0(J/I)\ar[r]^{}\ar[d]^{\alpha_{J/I, 0} \hskip .2in{\txt{ \tt } \hskip -.4in}}&K_0(P/I)\ar[r]^{}\ar[d]^{\alpha_{P/I, 0} \hskip .2in{\txt{ \tt } \hskip -.4in}}&K_0(P/J)\ar[d]^{\alpha_{P/J, 0}}\\
\overline{K}_1(\widetilde{J}/\widetilde{I})\ar[r]^{}& \overline{K}_1(\widetilde{P}/\widetilde{I})\ar[r]^{} & \overline{K}_1(\widetilde{P}/\varphi(J))\ar[r]^{} & K_0(\widetilde{J}/\widetilde{I})\ar[r]^{}&K_0(\widetilde{P}/\widetilde{I})\ar[r]^{}&K_0(\widetilde{P}/\widetilde{J}).
}
\end{equation} Here the exact rows come from the long exact sequence in algebraic $K$-theory (see \cite[Theorem 2.4.1]{cortinas}). 
 \end{thm}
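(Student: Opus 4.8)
The plan is to assemble the statement from the functorial pieces established above, organising everything around the single algebraic observation that the map $\phi$ of \eqref{mendhfuti6} is the action of the element $x^{-1}-1\in\mathbb{Z}[x,x^{-1}]$ on the module $K_0^{\gr}(L(E))$ (by \eqref{mendhfuti6} and the convention ${}^1 v(i)=v(i-1)$). First I would invoke Lemma~\ref{topology} to produce the homeomorphism $\varphi\colon \Spec^{\gr}(L_k(E))\to\Spec^{\gr}(L_k(F))$ together with the lattice isomorphism of graded ideals, and Lemma~\ref{hfbcvgfbfg567} to obtain, for every pair of graded ideals $I\subseteq J$, the induced order-preserving $\mathbb{Z}[x,x^{-1}]$-isomorphisms $\varphi_|\colon K_0^{\gr}(I)\to K_0^{\gr}(\widetilde I)$ and $K_0^{\gr}(J/I)\to K_0^{\gr}(\widetilde J/\widetilde I)$. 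Since $\phi=x^{-1}-1$ and every such $\varphi_|$ is $\mathbb{Z}[x,x^{-1}]$-linear, each $\varphi_|$ commutes with $\phi$; hence it restricts to an isomorphism on $\Ker\phi$ and descends to an isomorphism on $\Coker\phi$. By Proposition~\ref{someother} (applied to the Leavitt path algebra graded Morita equivalent to $J/I$, as in Lemma~\ref{lemma-pre} and Definition~\ref{def:FKbar}) the latter is exactly an isomorphism $\alpha_{J/I,0}\colon K_0(J/I)\to K_0(\widetilde J/\widetilde I)$, order-preserving because $\varphi_|$ is; this defines the $K_0$-half of \eqref{last} and gives the ``$+$'' in $\FKbar^+$.

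For the $\ol{K}_1$-half I would use the natural splitting $\ol{K}_1(J/I)\cong \ol{\Coker}(A_{G}^t-I)\oplus \Ker(A_{G}^t-I)$ of Corollary~\ref{cor-new} and Proposition~\ref{newprop}, where $G=(E/H_I)_{H_{J/I}}$ is the graph with $\ol{K}_1(L(G))\cong\ol{K}_1(J/I)$. On the kernel summand I identify $\Ker(A_G^t-I)\cong\Ker\phi$ via the isomorphism $\psi$ of Proposition~\ref{4itemses} and let $\varphi_|$ act. The key point for the cokernel summand, $R$ denoting the regular vertices of $G$, is the identification
\[
\ol{\Coker}(A_G^t-I)\cong \ol{k}^{\times}\otimes_{\mathbb{Z}}\Coker\big(A_G^t-I\colon \mathbb{Z}^{R}\to\mathbb{Z}^{G^0}\big)=\ol{k}^{\times}\otimes_{\mathbb{Z}} K_0(J/I),
\]
which holds by right-exactness of $\ol{k}^{\times}\otimes_{\mathbb{Z}}-$ together with Lemma~\ref{k0k1}(i); it reduces the multiplicative cokernel of Corollary~\ref{cor-new} to a tensor functor applied to the already-constructed $K_0$-isomorphism. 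I then set $\alpha_{J/I,1}:=(1\otimes\alpha_{J/I,0})\oplus(\varphi_|\!\restriction_{\Ker\phi})$ under these identifications, and define $\alpha_{P/I,1},\alpha_{P/J,1}$ analogously.

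It then remains to verify that \eqref{last} commutes. The two horizontal maps between the $\ol{K}_1$-groups and the two between the $K_0$-groups respect the direct-sum decompositions by the functoriality recorded in Proposition~\ref{propforquotientcom} and Lemma~\ref{Y}; on the kernel summands they are the maps $\tau,\tau'$, hence compatible with $\varphi_|$ by naturality of $\psi$ and of the snake diagram \eqref{2020}, and on the cokernel summands they are $\ol{k}^{\times}\otimes_{\mathbb{Z}}(-)$ applied to the corresponding $K_0$-maps, hence compatible by functoriality of the tensor product.

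\textbf{The main obstacle} is the commutativity of the square built on the connecting homomorphism $\partial\colon \ol{K}_1(P/J)\to K_0(J/I)$. Here I would first show that $\partial$ annihilates the cokernel summand $\ol{\Coker}(A_{P/J}^t-I)=\operatorname{Im}\lambda$: each class $\lambda([\sum_{v}\mu_v v])$ lifts along the quotient map $\ol{K}_1(P/I)\to\ol{K}_1(P/J)$ (extend the tuple $(\mu_v)$ by $1$ on the vertices of $H_{J/I}$), so it dies under $\partial$ by exactness. Consequently $\partial$ factors through $\xi$, and precomposing with the section $\chi_1$ identifies $\partial$ on the kernel summand with the snake connecting map $\delta$ of Lemma~\ref{ssz}, equivalently with $\rho$ through the relation $\rho\circ\psi=\psi\circ\delta$ of Lemma~\ref{2dsnakelemma}. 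Since $\varphi$ is a morphism of the short exact sequences \eqref{2020} commuting with $\phi$, naturality of the snake connecting homomorphism gives $\alpha_{J/I,0}\circ\rho=\rho\circ(\varphi_|\!\restriction_{\Ker\phi})$, which is exactly the required commutativity. I expect the genuinely delicate points to be the matrix-level identification of $\partial$ with $\rho$ under the section $\chi_1$ (carried out on the invertibles $U_x$ of \eqref{u} using $\xi'\circ\chi_1'=\operatorname{id}$ from Proposition~\ref{prop-section}(ii)) and the verification that the identifications $\ol{\Coker}\cong\ol{k}^{\times}\otimes_{\mathbb{Z}} K_0$ and $\Ker\cong\Ker\phi$ are natural with respect to the graph inclusions and quotients, for which Lemma~\ref{ssz}, Lemma~\ref{Y} and Proposition~\ref{propforquotientcom} supply the compatible integer-matrix diagrams.
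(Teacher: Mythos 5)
Your proposal follows essentially the same route as the paper's own proof: you obtain the $K_0$-isomorphisms from $\Coker\phi\cong K_0$ using $\Z[x,x^{-1}]$-linearity (the paper's Step I and diagram \eqref{3d}), you build the $\ol{K}_1$-isomorphisms from the splitting of Corollary~\ref{cor-new} with the cokernel summand transported through $K_0\otimes_{\Z}\ol{k}^{\times}$ and the kernel summand through $\Ker\phi$ (the paper's maps $\Theta$ in Step II), and you treat the connecting-map square exactly as in Step III --- kill the cokernel summand, identify $\partial\circ\chi_1$ with the combinatorial snake map of Lemma~\ref{ssz} via the Cohn-extension computation behind Proposition~\ref{prop-section}(ii), and conclude by snake-lemma naturality together with Lemma~\ref{2dsnakelemma}. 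Your observations that $\phi$ is multiplication by $x^{-1}-1$ (making compatibility with $\varphi$ automatic) and that $\partial\circ\lambda=0$ follows from a lifting argument are welcome clarifications of points the paper verifies more implicitly, but the architecture and the key lemmas invoked are the same.
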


\begin{proof} We denote by $H_I$ the hereditary saturated subset of $E^0$ such that $I=\langle H_I\rangle$. Similarly $H_{\widetilde{I}}$ is the hereditary saturated subset of $F^0$ such that $\widetilde{I}=\langle H_{\widetilde{I}}\rangle$.  

We first prove that for any $I,J\in \LL^{\gr}\big(\L(E)\big)$ with $I\sub J$, there are isomorphisms
\begin{align*}
\alpha_{I, 0}: K_0(I) \longrightarrow K_0(\widetilde{I}),\quad
\alpha_{J, 0}: K_0(J) \longrightarrow K_0(\widetilde{J}),\quad
\alpha_{J/I, 0}: K_0(J/I) \longrightarrow K_0(\widetilde{J}/\widetilde{I}),\\
\alpha_{I, 1}: \overline{K}_1(I) \longrightarrow \overline{K}_1(\widetilde{I}),\quad
\alpha_{J, 1}: \overline{K}_1(J)  \longrightarrow \overline{K}_1(\widetilde{J}),\quad
\alpha_{J/I, 1}: \overline{K}_1(J/I) \longrightarrow \overline{K}_1(\widetilde{J}/\widetilde{I})
\end{align*} such that the following diagram commutes
\begin{equation}
\label{ktheoryp}
\xymatrix@C+.1pc@R+1.1pc{
\overline{K}_1(I)\ar[r]^{}\ar[d]^{\alpha_{I, 1} \hskip .2in{\txt{\tt $A_1$} \hskip -.4in}} \ar[r]^{}& \overline{K}_1(J)\ar[r]^{} \ar[d]^{\alpha_{J, 1} \hskip .2in{\txt{ \tt $A_2$} \hskip -.4in}}& \overline{K}_1(J/I)\ar[r]^{} \ar[d]^{\alpha_{J/I, 1} \hskip .2in{\txt{ \tt $C$} \hskip -.4in}} & K_0(I)\ar[r]^{}\ar[d]^{\alpha_{I, 0} \hskip .2in{\txt{ \tt $B_1$} \hskip -.4in}}&K_0(J)\ar[r]^{}\ar[d]^{\alpha_{J, 0} \hskip .2in{\txt{ \tt $B_2$} \hskip -.4in}}&K_0(J/I)\ar[d]^{\alpha_{J/I, 0}}\\
\overline{K}_1(\widetilde{I})\ar[r]^{}& \overline{K}_1(\widetilde{J})\ar[r]^{} & \overline{K}_1(\widetilde{J}/\widetilde{I})\ar[r]^{} & K_0(\widetilde{I})\ar[r]^{}&K_0(\widetilde{J})\ar[r]^{}&K_0(\widetilde{J}/\widetilde{I}).
}
\end{equation}

\medskip 

{\bf Step I:}  We show the existence of $\alpha_{I,0}, \alpha_{J, 0}$ and $\alpha_{J/I, 0}$ and  check that the squares $B_1$ and $B_2$ in \eqref{ktheoryp} are commutative.

Recall that we have the exact sequence given by \eqref{sesgroup}. Then we have the following commutative diagram

\[\xymatrix@C+.1pc@R+1.1pc{
K_0^{\gr}(I) \ar[d]^{{\scriptscriptstyle{\cong\ \ }}}\ar[rr]^{\phi_I}&&K_0^{\gr}(I)\ar[d]^{{\scriptscriptstyle{\cong\ \ }}}\ar[rr]&&K_0(I)\ar[r]\ar[d]^{{\scriptscriptstyle{\cong\ \ }}}&0\\
K_0^{\gr}(L(E_{H_I}))\ar[rr]^{\phi}\ar[d]^{{\scriptscriptstyle{\cong\ \ }}}&&K_0^{\gr}(L(E_{H_I}))\ar[d]^{{\scriptscriptstyle{\cong\ \ }}}\ar[rr]^{}&&K_0(L(E_{H_I}))\ar[d]^{{\scriptscriptstyle{\cong\ \ }}}&\\
K_0^{\gr}(L(F_{H_{\widetilde{I}}}))\ar[rr]^{\phi}&&K_0^{\gr}(L(F_{H_{\widetilde{I}}}))\ar[rr]^{}&&K_0(L(F_{H_{\widetilde{I}}}))&\\
K_0^{\gr}(\widetilde{I}) \ar[u]_{{\scriptscriptstyle{\cong\ \ }}}\ar[rr]^{{\phi_{\widetilde{I}}}} &&K_0^{\gr}(\widetilde{I}) \ar[u]_{{\scriptscriptstyle{\cong\ \ }}}\ar[rr]&&K_0(\widetilde{I}) \ar[u]_{{\scriptscriptstyle{\cong\ \ }}}\ar[r]&0,
}\] where the composition of the first column is the isomorphism $\varphi_{|}:K_0^{\gr}(I)\xrightarrow[]{}K_0^{\gr}(\widetilde{I})$ in Lemma \ref{hfbcvgfbfg567}.   
The map $\phi_I: K_0^{\gr}(I)\xrightarrow{} K_0^{\gr}(I)$ is defined as the composition $K_0^{\gr}(I)\xrightarrow{}K_0^{\gr}(L(E_{H_I}))\xrightarrow{\phi} K_0^{\gr}(L(E_{H_I}))\cong K_0^{\gr}(I)$, where $\phi$ is the homomorphism given by \eqref{mendhfuti6} for Leavitt path algebras. 

For two graded ideals $I$ and $J$ of $L(E)$ with $I\subseteq J$, we have that $J/I$ is a graded ideal of ${L(E)/I} \cong L(E/{H_I})$. We have the isomorphism $K_0^{\gr}(J/I)\cong K_0^{\gr}(\widetilde{J}/\widetilde{I})$ given in Lemma \ref{hfbcvgfbfg567}. We also have the map $\phi_{J/I}: K_0^{\gr}(J/I)\xrightarrow{} K_0^{\gr}(J/I)$.

For two graded ideals $I$ and $J$ of $L(E)$ with $I\subseteq J$, we have the short exact sequence 
\[\CD
 0@>>> K_0^{\gr}(I)@>{\overline{\iota}}>>K_0^{\gr}(J)@>{\overline{\pi}}>>K_0^{\gr}(J/I)@>>>0
\endCD\] given in \eqref{2020}. Similarly, we have the short exact sequence 

\[\CD
 0@>>> K_0^{\gr}(\widetilde{I})@>{\widetilde{\iota}}>>K_0^{\gr}(\widetilde{J})@>{\widetilde{\pi}}>>K_0^{\gr}(\widetilde{J}/\widetilde{I})@>>>0.
\endCD\] We now prove that the following diagram commutes
\begin{align}
\label{2020hou}
\xymatrix{
0\ar[r] &K_0^{\gr}(I) \ar[r]^{\overline{\iota}} \ar[d]^{\varphi_|}& K_0^{\gr}(J)\ar[r]^{\overline{\pi}}\ar[d]^{\varphi_|} & K_0^{\gr}(J/I)\ar[d]\ar[r]&0\\
0\ar[r] &K_0^{\gr}(\widetilde{I}) \ar[r]^{\widetilde{\iota}} & K_0^{\gr}(\widetilde{J})\ar[r]^{\widetilde{\pi}}&K_0^{\gr}(\widetilde{J}/\widetilde{I})\ar[r]&0.}
\end{align} We only show that $\varphi_|\circ \overline{\iota}=\widetilde{\iota}\circ \varphi_|$ and similarly we have that the right square commutes. To show that $\varphi_|\circ \overline{\iota}=\widetilde{\iota}\circ \varphi_|$, observe that we have the commutative diagram 

\begin{align}
\label{2020houlai}
\xymatrix{
K_0^{\gr}(I) \ar[r]^{} \ar[d]^{\overline{\iota}}& K_0^{\gr}(L(E_{H_I}))\ar[r]^{}\ar[d]^{} &K_0^{\gr}(L(F_{H_{\widetilde{I}}}))\ar[d]&K_0^{\gr}(\widetilde{I})\ar[l]\ar[d]^{\overline{\iota}}\\
K_0^{\gr}(J) \ar[r]^{} & K_0^{\gr}(L(E_{H_J}))\ar[r]^{} &K_0^{\gr}(L(F_{H_{\widetilde{J}}}))&K_0^{\gr}(\widetilde{J})\ar[l].}
\end{align} with the middle two vertical maps induced by the natural homomorphisms of algebras. By \eqref{2020} the first left and the first right squares of \eqref{2020houlai} commute. Obviously the middle square of \eqref{2020houlai} commute.

So we have the following  commutative diagram
\begin{equation}\label{3d}
\begin{split}
\xymatrix@C-1.5pc@R+0.01pc{
&    & &&   0 \ar[dd] &&0\ar[dd]&&&& \\
& &  &  0 \ar[dd]& &0\ar[dd]&&&&&\\
&    & \Ker(\phi_I)\ar[dl]_{\scriptscriptstyle{\cong\ \ }}\ar@{->}'[d][dd] \ar@{->}'[r][rr] & & K_0^{\gr}(I)\ar[dl]_{\scriptscriptstyle{\cong\ \ }}
\ar@{->}'[r][rr]^>>>>>>>>>{{\phi_I}} \ar@{->}'[d][dd] && 
K_0^{\gr}(I)\ar[dl]_{\scriptscriptstyle{\cong\ \ }}
\ar[rr] \ar@{->}'[d][dd]&& K_0(I)\ar@[blue][dl]_{\alpha_{I, 0}}\ar@[blue]@{->}'[d][dd] \ar[rrr]& & & 0  \\
&\Ker(\phi_{\widetilde{I}}) \ar[rr]\ar[dd] && 
K_0^{\gr}(\widetilde{I})
\ar[rr]^>>>>>>>>>{\phi_{\widetilde{I}}} \ar[dd]  && 
K_0^{\gr}(\widetilde{I})
\ar[rr]\ar@{->}[dd] & & K_0(\widetilde{I})\ar@[blue]@{->}[dd]\ar@{->}[rr]& &0
 \\
&   & \Ker(\phi_J)\ar@{->}'[d][dd]
\ar@{->}'[r][rr] \ar[dl]_{\scriptscriptstyle{\cong\ \ }}& & K^{\gr}_0(J)
\ar@{->}'[r][rr]^>>>>>>>>>{\phi_J}  \ar[dl]_{\scriptscriptstyle{\cong\ \ }}\ar@{->}'[d][dd]&& K^{\gr}_0(J)\ar[dl]_{\scriptscriptstyle{\cong\ \ }}\ar@{->}'[r][rr]\ar@{->}'[d][dd]&&
K_0(J) \ar@[blue][dl]_{\alpha_{J, 0}}\ar[rrr]\ar@[blue]@{->}'[d][dd] &&  &0 \\
   &  \Ker(\phi_{\widetilde{J}}) \ar[dd] 
\ar[rr] &&  K^{\gr}_0(\widetilde{J})
\ar[rr]^>>>>>>>>>{\phi_{\widetilde{J}}}  
\ar@{->}[dd]&& K^{\gr}_0(\widetilde{J})\ar[rr]\ar@{->}[dd]&&
K_0(\widetilde{J}) \ar@[blue]@{->}[dd] \ar@{->}[rr]& &0\\
&& \Ker(\phi_{J/I})\ar@{->}'[r][rr]\ar[dl]_{\scriptscriptstyle{\cong\ \ }} & & K_0^{\gr}(J/I)\ar[dl]_{\scriptscriptstyle{\cong\ \ }}\ar@{->}'[d][dd]\ar@{->}'[r][rr]^>>>>>>>{\phi_{J/I}} && K^{\gr}_0(J/I)\ar[dl]_{\scriptscriptstyle{\cong\ \ }}\ar@{->}'[r][rr] \ar@{->}'[d][dd]&& K_0(J/I)\ar[rrr]\ar@[blue][dl]_{\alpha_{J/I, 0}}&&&0
\\
 &\Ker(\phi_{\widetilde{J}/\widetilde{I}})\ar[rr]&& K^{\gr}_0(\widetilde{J}/\widetilde{I})\ar[dd]\ar[rr]^>>>>>>>{\phi_{\widetilde{J}/\widetilde{I}}} &&K^{\gr}_0(\widetilde{J}/\widetilde{I})\ar[rr]
\ar[dd] &&K_0(\widetilde{J}/\widetilde{I})\ar[rr]& &0&&
\\
&  & & &  0 &&0&&&&\\
 & & &  0 &&0&&&&&}
 \end{split}
\end{equation}   Therefore $\alpha_{I,0}, \alpha_{J,0}$ and $\alpha_{J/I, 0}$ exist and $B_1$ and $B_2$ are commutative.

\medskip

{\bf Step II:} We define the maps $\alpha_{I, 1}:\overline{K}_1(I)\xra \overline{K}_1(\widetilde{I})$, $\alpha_{J, 1}: \overline{K}_1(J)\xra \overline{K}_1(\widetilde{J})$ and $\alpha_{J/I, 1}: \overline{K}_1(J/I)\xra \overline{K}_1(\widetilde{J}/\widetilde{I})$ and check that the squares $A_1$ and $A_2$ in \eqref{ktheoryp} are commutative (recall that $\widetilde I:= \varphi(I)$ and $\widetilde J:= \varphi(J)$). 

Denote by $A_I, A_J$  and $A_{J/I}$ the transposes of the adjacency matrices for $E_{H_I}, E_{H_J}$ and $(E/H_I)_{H_{J/I}}$ respectively. 
Denote by $A_{\widetilde{I}}, A_{\widetilde{J}}$ and $A_{\widetilde{J}/\widetilde{I}}$ the transposes of the adjacency matrices for $F_{H_{\widetilde{I}}}, F_{H_{\widetilde{J}}}$ and $(F/H_{\widetilde{I}})_{H_{\widetilde{J}/\widetilde{I}}}$
respectively. 

We have the following diagram and the maps $\alpha_{I, 1}$, $\alpha_{J,1}$ and $\alpha_{J/I, 1}$ are compositions of the vertical maps in the first three columns of \eqref{kkkk}, respectively.  In order to show that the squares $A_1$ and $A_2$ in \eqref{ktheoryp} are commutative, it suffices to show that the squares $X_1, X_2, Y_1, Y_2, Z_1$ and $Z_2$ in \eqref{kkkk} are commutative.

%
%
%

\begin{equation}
\label{kkkk}
\xymatrix@C+.1pc@R+1.1pc{
\overline{K}_1(I)\ar[r]^{}
\ar[r]^{}& \overline{K}_1(J)\ar[r]^{} 
& \overline{K}_1(J/I)\ar[r]^{} 
& K_0(I)
\\
\overline{K}_1\big(L(E_{H_I})\big)\ar[u]^{{\scriptscriptstyle{\cong\ \ }}\hskip .2in{\txt{\tt $X_1$} \hskip -.4in}}\ar[r]^{\alpha}\ar[d]_{{\scriptscriptstyle{\cong\ \ }} \hskip .2in{\txt{\tt $Y_1$} \hskip -.4in}} & \overline{K}_1\big(L(E_{H_J})\big)\ar[u]^{{\scriptscriptstyle{\cong\ \ }}\hskip .2in{\txt{\tt $X_2$} \hskip -.4in}}\ar[r]^>>>>{\alpha'}\ar[d]_{{\scriptscriptstyle{\cong\ \ }} \hskip .2in{\txt{\tt $Y_2$} \hskip -.4in}} & \overline{K}_1\big(L(({E/H_I})_{H_{J/I}})\big)\ar[d]_{{\scriptscriptstyle{\cong\ \ }}\hskip .2in{\txt{\tt $Y_3$} \hskip -.4in}}\ar[u]^{{\scriptscriptstyle{\cong\ \ }}\hskip .2in{\txt{\tt $X_3$} \hskip -.4in}}\ar[r]^{\partial} & K_0(L(E_{H_I}))\ar[u]\ar[d]_{\scriptscriptstyle{\cong\ \ }\pi}\\ 
\tiny{{\begin{matrix}\overline{\Coker} \left(A_{I}-I\right)\\
\oplus \Ker \left(A_{I}-I\right)\end{matrix}}}\ar[r]^{\tiny{\begin{pmatrix}\overline{\sigma}&0\\ 0& \tau\end{pmatrix}}}\ar[d]^{\Theta_I \hskip .2in{\txt{\tt $Z_1$} \hskip -.4in}} & \tiny{{\begin{matrix}\overline{\Coker} \left(A_{J}-I\right)\\
\oplus \Ker \left(A_{J}-I\right)\end{matrix}} } \ar[r]^{\tiny{\begin{pmatrix}\overline{\sigma'}&0\\ 0& \tau' \end{pmatrix}}}\ar[d]^{\Theta_J \hskip .2in{\txt{\tt $Z_2$} \hskip -.4in}}&
\tiny{{\begin{matrix}\overline{\Coker} \left({A_{{J/I}}-I}  \right)\\
\oplus \Ker \left({A_{{J/I}}-I}\right)\end{matrix}} } \ar[r]^>>>>>{\tiny{\tiny{\begin{pmatrix}0&0\\ 0&\delta_{E_{H_I}, E_{H_J}} \end{pmatrix}}}} \ar[d]^{\Theta_{J/I} \hskip .2in{\txt{\tt $Z_3$} \hskip -.4in}} & \tiny{{\begin{matrix}\Coker \left({A_{I}-I}\right)\end{matrix}}}\ar[d]^{\Theta} \\
\tiny{{\begin{matrix}\overline{\Coker} \left({A_{\widetilde{I}}-I} \right)\\
\oplus \Ker \left({A_{\widetilde{I}}-I} \right)\end{matrix}}}\ar[r] & \tiny{{\begin{matrix}\overline{\Coker} \left({A_{\widetilde{J}}-I} \right)\\ 
\oplus \Ker \left( {A_{\widetilde{J}}-I} \right)\end{matrix}}}\ar[r] &
\tiny{{\begin{matrix}\overline{\Coker} \left({A_{\widetilde{J}/\widetilde{I}}-I}  \right)\\
\oplus \Ker \left({A_{\widetilde{J}/\widetilde{I}}-I} \right)\end{matrix}} } \ar[r] & \tiny{{\begin{matrix}\Coker \left({A_{\widetilde{I}}-I} \right)\end{matrix}}}\\
\overline{K}_1\big(L(F_{H_{\widetilde{I}}})\big)\ar[u]^{\hskip .2in{\txt{\tt $\widetilde{Y_1}$} \hskip -.4in}}\ar[r]\ar[d]_{ \hskip .2in{\txt{\tt $\widetilde{X_1}$} \hskip -.4in}} & \overline{K}_1\big(L(F_{H_{\widetilde{J}}})\big)\ar[u]^{\hskip .2in{\txt{\tt $\widetilde{Y_2}$} \hskip -.4in}}\ar[r]\ar[d]_{ \hskip .2in{\txt{\tt $\widetilde{X_2}$} \hskip -.4in}} & \overline{K}_1\big(L({(F/H_{\widetilde{I}})}_{H_{{\widetilde{J}}/{\widetilde{I}}}})\big)\ar[d]_{ \hskip .2in{\txt{\tt $\widetilde{X_3}$} \hskip -.4in}}\ar[u]^{\hskip .2in{\txt{\tt $\widetilde{Y_3}$} \hskip -.4in}}\ar[r] & K_0(L(F_{H_{\widetilde{I}}}))\ar[d]\ar[u]^{\widetilde{\pi}}\\
\overline{K}_1({\widetilde{I}})\ar[r]^{}& \overline{K}_1({\widetilde{J}})\ar[r]^{} & \overline{K}_1({\widetilde{J}}/{\widetilde{I}})\ar[r]^{} & K_0({\widetilde{I}})}
\end{equation}

By Lemma \ref{lemma-pre} the squares $X_1$ and $X_2$ in \eqref{kkkk} are commutative. Recall the maps  in the third row of the diagram \eqref{kkkk} from Lemma \ref{ssk} and Lemma \ref{ssz}. By Proposition \ref{propforquotientcom} the squares $Y_1$ and $Y_2$ in \eqref{kkkk} are commutative.



Now we define $\Theta_I$. We denote $k^{\times}/\{-1,1\}$ by $\overline{k}^{\times}$.  We have the composition of the following maps, denoted by $\Lambda_I=\begin{pmatrix}\Lambda_I^{11}&0\\
0&\Lambda_I^{22}\end{pmatrix}$,
\begin{align*}
\xymatrix@C-0.01pc@R+1pc{
\ol{\Coker}(A_I-I)\oplus \Ker(A_I-I) \ar[r]^>>>>>{\tiny{\begin{pmatrix}\Lambda &0\\ 0&\Lambda'\end{pmatrix}}}&(K_0(I)\otimes_{\Z}\ol{k}^{\times})\oplus \Ker(\phi_I)\ar[d]^{\tiny{\begin{pmatrix}\alpha_{I,0}\otimes {\ol{k}^{\times}} &0\\ 0& \Sigma\end{pmatrix}}}\\
\ol{\Coker}(A_{\widetilde{I}}-I)\oplus \Ker(A_{\widetilde{I}}-I) &(K_0({\widetilde{I}})\otimes_{\Z}{\ol{k}^{\times}})\oplus \Ker(\phi_{\widetilde{I}})\ar[l]^>>>>>{\tiny{\begin{pmatrix}\widetilde{\Lambda}^{-1} &0\\ 0&\widetilde{\Lambda}'^{-1}\end{pmatrix}}}.
}
\end{align*}  The map $\Lambda: \ol{\Coker}(A_{I}-I: {\ol{k}^{\times}}^{R}\xra {\ol{k}^{\times}}^{E_{H_I}^0})\xra K_0(I)\otimes_{\Z} {\ol{k}^{\times}}$ with $R$ the set of non-sink vertices in $E_{H_I}$ is the composition 
\[\ol{\Coker}(A_{E_{H_I}}-I: {\ol{k}^{\times}}^{R}\longrightarrow {\ol{k}^{\times}}^{{E}_{H_I}^0})\cong \Coker(A_{E_{H_I}}-I: \Z^{R}\longrightarrow\Z^{{E}_{H_I}^0})\otimes_{\Z} \ol{k}^{\times}\cong K_0(I)\otimes_{\Z}\ol{k}^{\times}.\] We emphasize here that $\ol{\Coker}(A_I-I)$ denotes by $\ol{\Coker}(A_{E_{H_I}}-I: {\ol{k}^{\times}}^{R}\longrightarrow {\ol{k}^{\times}}^{{E}_{H_I}^0})$.
 Similarly we have the map $\widetilde{\Lambda}: \ol{\Coker}(A_{F_{H_{\widetilde{I}}}}-I)\xra K_0(\widetilde{I})\otimes_{\Z} {\ol{k}^{\times}}$. The map $\Lambda':\Ker(A_{I}-I)\xra\Ker(\phi_I)$ is given by the composition of 
\[\Ker(A_{I}-I)\cong \Ker(K_0^{\gr}(L(E_{H_I}))\longrightarrow K_0^{\gr}(L(E_{H_I})))\cong \Ker(\phi_I)\] with the first isomorphism following from Proposition \ref{4itemses}. Similarly we have the map $\widetilde{\Lambda}': \Ker(A_{\widetilde{I}}-I)\xra\Ker(\phi_{\widetilde{I}})$. The map $\Sigma: \Ker(\phi_I)\xra \Ker(\phi_{\widetilde{I}})$ is given by \eqref{3d}. 

Note that $\Lambda_I^{11}$ and $\Lambda_I^{22}$ are isomorphisms. 
The homomorphism $\Theta_I$ is defined as $\begin{pmatrix}\Lambda_I^{11}&0\\
0&\Lambda_I^{22}\end{pmatrix}$. The homomorphisms $\Theta_J$ and $\Theta_{J/I}$ can be defined similarly and we omit the details here.

To show that $Z_1$ and $Z_2$ are commutative, note that by \eqref{3d} the squares in the following diagram \eqref{kkc} with blue colour are commutative. We define $l_I, l_J$ and $l_{J/I}$ as compositions of maps such that their squares in horizontal level commute. We can check directly that the two back squares involving $\tau$ and $\tau'$ are commutative. Thus the following diagram \eqref{kkc} is commutative. 
\begin{equation}\label{kkc}
\xymatrix@C-.7pc@R-.2pc{
&\Ker(A_I-I)\ar[rr]^{}\ar'[d][dd]^>>>>>>>>>{\tau}\ar[dl]_{l_I}&&\Ker(\phi_I)
\ar@[blue][dd]^{}   \ar@[blue][dl]_{}\\
\Ker(A_{\widetilde{I}}-I)\ar[rr]\ar[dd]&&\Ker(\phi_{\widetilde{I}})\ar@[blue][dd]^{}&\\
&\Ker(A_J-I)\ar'[r][rr]\ar[dl]_{l_J}\ar'[d][dd]^>>>>>>>>>{\tau'}&&\Ker(\phi_J)\ar@[blue][dd]^{}\ar@[blue][dl]_{}\\
\Ker(A_{\widetilde{J}}-I)\ar[rr]\ar[dd]&&\Ker(\phi_{\widetilde{J}})\ar@[blue][dd]^{}&\\
&\Ker(A_{J/I}-I)\ar'[r][rr]\ar[dl]_{l_{J/I}}&&\Ker(\phi_{J/I})\ar@[blue][dl]_{}\\
\Ker(A_{\widetilde{J}/\widetilde{I}}-I)\ar[rr]&&\Ker(\phi_{\widetilde{J}/\widetilde{I}})
}\end{equation} Similarly by \eqref{3d} the squares in the following diagram \eqref{ccc} with blue colour are commutative.


 
\begin{equation}\label{ccc}
\xymatrix@C-.7pc@R-0.2pc{
&\overline{\Coker}(A_I-I)\ar@[brown][dl]_{\Lambda_I}\ar@[brown]'[d][dd]\ar[rr]^{\Lambda}&&K_0(I)\otimes_{\Z}{\overline{k}^{\times}}\ar@[blue][dd]^{}\ar@[blue][dl]_{\alpha_{I,0}\otimes \overline{k}^{\times}}\\
\overline{\Coker}(A_{\widetilde{I}}-I)\ar@[brown][dd]\ar[rr]^>>>>>>>>>{\widetilde{\Lambda}}&&K_0({\widetilde{I}})\otimes_{\Z}{\overline{k}^{\times}}\ar@[blue][dd]^{}&\\
& \overline{\Coker}(A_J-I)\ar@[brown][dl]\ar@[brown]'[d][dd]\ar'[r][rr]&&K_0(J)\otimes_{\Z}{\overline{k}^{\times}}\ar@[blue][dd]^{}\ar@[blue][dl]_{}\\
\overline{\Coker}(A_{\widetilde{J}}-I)\ar@[brown][dd]\ar[rr]&&K_0(\widetilde{J})\otimes_{\Z}{\overline{k}^{\times}}\ar@[blue][dd]^{}&\\
& \overline{\Coker}(A_{J/I}-I)\ar@[brown][dl]\ar'[r][rr]&&K_0(J/I)\otimes_{\Z}{\overline{k}^{\times}}\ar@[blue][dl]_{}\\
\overline{\Coker}(A_{\widetilde{J}/\widetilde{I}}-I)\ar[rr]&&K_0(\widetilde{J}/\widetilde{I})\otimes_{\Z}{\overline{k}^{\times}}
}\end{equation} Observe that the two back squares in \eqref{ccc} are commutative  as we have the following commutative diagram
\begin{equation}
\label{adddiag}
\xymatrix{
\overline{\Coker}(A_I-I)\ar[r]^>>>>>>{\scriptscriptstyle{\cong\ \ }}\ar[d]^{\overline{\sigma}}& \Coker(A_I-I)\otimes_{\mathbb Z}\overline{k}^{\times}\ar[r]^>>>>>>{\scriptscriptstyle{\cong\ \ }}\ar[d]^{\overline{\tau}\otimes {\rm id}} &K_0(I)\otimes_{\mathbb Z} \overline{k}^{\times}\ar[d]^{\alpha_{I,0}\otimes {\rm id}}\\
\overline{\Coker}(A_J-I)\ar[r]^>>>>>>{\scriptscriptstyle{\cong\ \ }}\ar[d]^{\overline{\sigma'}} & \Coker(A_J-I)\otimes_{\mathbb Z}\overline{k}^{\times}\ar[r]^>>>>>>{\scriptscriptstyle{\cong\ \ }}\ar[d]^{\overline{\tau'}\otimes {\rm id}} &K_0(J)\otimes_{\mathbb Z} \overline{k}^{\times}\ar[d]^{\alpha_{J,0}\otimes {\rm id}}\\
\overline{\Coker}(A_{J/I}-I)\ar[r]^>>>>>>{\scriptscriptstyle{\cong\ \ }}& \Coker(A_{J/I}-I)\otimes_{\mathbb Z}\overline{k}^{\times}\ar[r]^>>>>>>{\scriptscriptstyle{\cong\ \ }}&K_0(J/I)\otimes_{\mathbb Z} \overline{k}^{\times}.}
\end{equation} Here the two maps $\sigma$ and $\sigma'$ in \eqref{adddiag} are naturally induced by $\sigma: {k^{\times}}^{H_I}\xrightarrow{} {k^{\times}}^{H_J} $ and by $\sigma': {k^{\times}}^{H_J}\xrightarrow{} {k^{\times}}^{H_J\setminus H_I}$ given in Lemma \ref{ssk}, respectively. The maps $\tau$ and $\tau'$ in \eqref{adddiag} are given in Lemma \ref{ssz}.  Thus the squares with brown colour in the diagram \eqref{ccc} are commutative. By \eqref{kkc} and \eqref{ccc} the squares $Z_1$ and $Z_2$ in \eqref{kkkk} are commutative.

\medskip

 {\bf Step III:}  We  define the maps in the fourth column of \eqref{kkkk} and  show that $\alpha_{I, 0}$ is the composition of them. We check that  the square $C$ in \eqref{ktheoryp} is commutative.
 
 We have the isomorphism $\pi: K_0(L(E_{H_I}))\xra \Coker(A_I-I)$, 
 sending $v$ to ${\bf e}_v$. Similarly we have the isomorphism
 $\widetilde{\pi}: K_0(LF_{H_{\widetilde{I}}})\xra \Coker(A_{\widetilde{I}}-I)$. The isomorphism $\Theta: \Coker(A_I-I)\xra \Coker(A_{\widetilde{I}}-I)$ is defined to be $\widetilde{\pi}\circ \alpha_{I,0}\circ \pi^{-1}$. Obviously $\alpha_{I, 0}$ is the composition of the maps in the fourth column of \eqref{kkkk}.

 We check that the square $C$ in \eqref{ktheoryp} is commutative. It suffices to show that the squares $X_3, Y_3$ and $Z_3$ in \eqref{kkkk} are commutative. By Lemma \ref{lemma-pre}, we have that the square $X_3$ in \eqref{kkkk} is commutative.

 In order to define the map $\delta_{E_{H_I}, E_{H_J}}: \Ker(A_{J/I}-I)\xra \Coker(A_{I}-I)$, note that $H_J\setminus H_I$ is a hereditary saturated subset of $(E/H_I)^0$ and $J/I=\langle H_J\setminus H_I\rangle$ is a graded ideal of the Leavitt path algebra $L(E)/I\cong L(E/H_I)$. Then we have $H_{J/I}=H_J\setminus {H_I}$ and the restriction graph $$(E/H_{I})_{H_{J/I}}:=(E/H_{I})_{H_J\setminus {H_I}}.$$ Observe that we have $(E/H_{I})_{H_{J/I}}=E_{H_J}/{H_I}.$ By Lemma \ref{ssz} the 
 connecting map $\delta_{E_{H_I}, E_{H_J}}$ is defined.

 To show that the square $Y_3$ in \eqref{kkkk} is commutative, by Lemma \ref{fact} we need to show that the following diagram is commutative with $\partial \circ \chi_1=\pi^{-1}\circ \delta_{E_{H_I}, E_{H_J}}$:
 \begin{equation}
\xymatrix@C+.1pc@R+1.1pc{0\ar[r]&\ol{\Coker}{(A_{J/I}-I)}\ar[r]^{\lambda}\ar[d]^{}&\overline{K}_1(L(E/H_I)_{H_{J/I}})\ar[r]^{\xi}\ar[d]^{\partial}&\Ker{(A_{J/I}-I)}\ar@{.>}@/^/[l]^>>>{\chi_1}\ar[r]\ar[d]^{\delta_{E_{H_I}, E_{H_J}}}&0\\
0\ar[r]&0\ar[r]^{}& K_0(L(E_{H_I})) \ar[r]^{\pi}&\Coker(A_{I}-I)\ar[r]&0.
}
\end{equation} Here $\partial \colon \overline{K}_1(L(E/H_I)_{H_{J/I}})\to K_0(L(E_{H_I}))$ is the map appearing in diagram \eqref{kkkk}.
We can check directly that $\partial \circ \lambda=0$. We only need to show that $\delta_{E_{H_I}, E_{H_J}}\circ\xi=\pi\circ \partial$, as it implies $\pi^{-1}\circ \delta_{E_{H_I}, E_{H_J}} =\pi^{-1}\circ\delta_{E_{H_I}, E_{H_J}}\circ\xi\circ\chi_1= \partial\circ\chi_1$ (see Proposition \ref{newprop}(2)). Observe that we have the following commutative diagram 
\begin{equation}\label{ll}
\xymatrix{0\ar[r]&I\ar[r]&J\ar[r]&J/I\ar[r]&0\\
&L(E_{H_I})\ar[u]^{}&L(E_{H_J})\ar[u]^{}&L({(E/H_I)}_{H_{J/I}})\ar[u]^{}&\\
0\ar[r]& \mathcal{K}(({E/{H_I}})_{H_{J/I}})\ar[r]\ar[u]^{\wp}&C(({E/{H_I}})_{H_{J/I}})\ar[r]\ar[u]^{\wp}&L(({E/H_I)}_{H_{J/I}})\ar[u]\ar[r]&0,
}
\end{equation} 
where the map $\wp$ is given by $\wp(v)=v, \wp(e)=e$ and $\wp(e^*)=e^*$ for $v\in ({E/{H_I}})_{H_{J/I}}^0$ and $e\in ({E/{H_I}})_{H_{J/I}}^1$. 
Note that we have the following equality in $J$, for each $v\in (H_J\setminus H_I)\cap R'$ with $R'$ the set of non-sink vertices in $E_{H_J}$,
$$ v - \sum_{e\in s^{-1}(v),  r(e)\notin H_I} ee^* = \sum _{f\in s^{-1}(v), r(f)\in H_I} ff^*.$$
We denote by $\mu \colon \Ker (A_{J/I}-I) \to K_0(\mathcal{K}(({E/{H_I}})_{H_{J/I}}))$ the natural inclusion map.

By \eqref{ll} and the naturality of the connecting map in $K$-theory, we have the following commutative diagrams:

\begin{equation}
\xymatrix@C+.1pc@R+1.1pc{K_1(L(({E/{H_I}})_{H_{J/I}}))\ar[d]\ar[r]^{\partial'}&K_0(L(E_{H_I}))\ar[d]\\
\overline{K}_1(L(({E/{H_I}})_{H_{J/I}}))\ar[r]^{\partial}&K_0(L(E_{H_I}))
}	\end{equation}
and
\begin{equation}\label{sl}
\xymatrix@C+.1pc@R+1.1pc{
K_1(L(({E/{H_I}})_{H_{J/I}}))\ar[r]^{\xi'}\ar[d]^{\partial'}& \Ker(A_{J/I}-I)\ar[r]^{\mu}&K_0(\mathcal{K}(({E/{H_I}})_{H_{J/I}}))\ar[r]^>>>>>{K_0(\wp)}\ar[d]^{}\ar[d]^{K_0(\wp)}& K_0(L(E_{H_I}))\ar[r]^>>>>>>>{\pi}& \Coker(A_I-I)\ar[d]^{\text{id}}\\
K_0(L(E_{H_I}))\ar[rr]^{\text{id}}&&K_0(L(E_{H_I}))\ar[rr]^{\pi}&&\Coker(A_I-I).
}
\end{equation} 
We claim that $\delta_{E_{H_I}, E_{H_J}}\circ\xi'=\pi\circ \partial'$. By \eqref{sl} it suffices to prove that $\delta_{E_{H_I}, E_{H_J}} = \pi\circ K_0(\wp)\circ \mu $. 
Recall from \eqref{ktheorymaps} that $[v-\sum_{e\in s^{-1}(v), r(e)\notin H_I}ee^*]_0$, for $v\in (H_J\setminus H_I)\cap R'$, are the canonical generators of the free abelian group 
$K_0(\mathcal{K}(({E/{H_I}})_{H_{J/I}}))$. 
Now for $v\in (H_J\setminus H_I)\cap R'$ we have
\begin{equation}
\begin{split}
(\pi\circ K_0(\wp))([v-\sum_{\substack{e\in s^{-1}(v)\\ r(e)\notin H_I}}ee^*]_0)
=\sum_{\substack{f\in s^{-1}(v)\\ r(f)\in H_I}} [{\bf{e}}_{r(f)}].
\end{split}
\end{equation}
On the other hand, it follows from Lemma \ref{ssz} that  
$$\delta_{E_{H_I}, E_{H_J}}(x) = [X^tx]$$
for every $x\in \Ker (A_{J/I}-I)$, where $X$ is the matrix such that $X(v,w)=A_E(v,w)$ for each $v\in (H_J\setminus H_I)\cap R'$ and $w\in H_I$. It follows that
$\delta_{E_{H_I}, E_{H_J}} = \pi\circ K_0(\wp)\circ \mu $. So the proof for the claim is completed. By \eqref{thenewxi}, we have $\delta_{E_{H_I}, E_{H_J}}\circ\xi=\pi\circ \partial$ as desired. 

To show that $Z_3$ in \eqref{kkkk} is commutative, by \eqref{3d} and the Snake Lemma, we have connecting maps 
$\rho:\Ker(\phi_{J/I})\rightarrow K_0(I)$ and $\widetilde{\rho}: \Ker (\phi_{\widetilde{J}/\widetilde{I}})\rightarrow K_0(\widetilde{I})$ 
such that the blue square in the following diagram is commutative (similarly as in Lemma \ref{2dsnakelemma}). 

\begin{equation}\label{zzz}
\xymatrix@C-1pc@R-0.3pc{&\Ker(\phi_{J/I})\ar@[blue][rr]^{\rho}\ar@[blue]'[d][dd]&&K_0(I)\ar@[blue][dd]^{}\\
\Ker(A_{J/I}-I)\ar[rr]^{\delta_{E_{H_I}, E_{H_J}}}\ar[ur]^{\psi}\ar[dd]^{\Theta_{J/I}}&&\Coker(A_I-I)\ar[ur]^{}\ar[dd]_<<<<<<<<{\Theta}&\\
&\Ker(\phi_{\widetilde{J}/\widetilde{I}})\ar@[blue]'[r][rr]^>>>>>>>>>>{\widetilde{\rho}}&&K_0(\widetilde{I})\\
\Ker(A_{\widetilde{J}/\widetilde{I}}-I)\ar[ur]\ar[rr]_{\delta_{F_{H_{\widetilde{I}}}, F_{H_{\widetilde{J}}}}}&&\Coker(A_{\widetilde{I}}-I)\ar[ur]^{}&}\end{equation} We need to check that the top and bottom faces in the above diagram \eqref{zzz} are commutative.  Hence we obtain that $Z_3$ in \eqref{kkkk} which is the front face of the above diagram is commutative. 
By Lemma \ref{2dsnakelemma} we have $\rho\circ \psi=\pi^{-1}\circ \delta_{E_{H_I}, E_{H_J}}$, implying that the top face in the above diagram \eqref{zzz} is commutative as well. This completes the proof to show that \eqref{ktheoryp} is commutative.

In order to have the commutative diagram given in \eqref{last}, 
we take graded ideals $I\sub J\sub P$ of $L(E)$. Then we have $J/I\sub P/I$ which are graded ideals of $L(E)/I\cong L(E/H_I)$. By Lemma \ref{hfbcvgfbfg567}, we have an order-preserving $\Z[x, x^{-1}]$-isomorphism $K_0^{\gr}(L(E/H_I))\cong K_0^{\gr}(L(F/H_{\widetilde{I}}))$. Similarly as the argument to obtain the commutative diagram \eqref{ktheoryp}, we have the commutative diagram \eqref{last}. 
\end{proof}

\begin{rmk}
	\label{remark:invariantFKbar}
	We have shown in Theorem \ref{maintheorem} that $\FKbar_{0,1}^+$ is a {\it graded invariant} for Leavitt path algebras of row-finite graphs. Indeed if $g\colon L_k(E)\to L_k(F)$ is a graded isomorphism of $k$-algebras, then $g$ induces a $\Z[x,x^{-1}]$-module order isomorphism between the $K_0^{\mathrm{gr}}$-groups, and thus we get an isomorphism $\FKbar _{0,1}^+(L_k(E))\cong \FKbar _{0,1}^+(L_k(F))$ by Theorem \ref{maintheorem}. It is easily checked that $\FKbar_{0,1}$ is indeed an algebra  invariant. If $h\colon L_k(E)\to L_k(F)$ is an algebra homomorphism, $v\in E^0$ and $\lambda \in k^{\times}$, then we can find $w_1,\dots , w_n\in F^0$ such that $\text{diag}(h(v),0,\dots ,0)\sim \text{diag}(w_1, \cdots ,w_n)$ in $M_n(L_k(F))$ for some $n\ge 1$. We can therefore find an invertible matrix  $U\in M_{2n}(\widetilde{L_k(F)})$ such that   $\text{diag}(h(v),0,\dots ,0) = U \text{diag}(w_1, \cdots , w_n,0,\dots, 0)U^{-1}$. Now we have
	$$  \text{diag}(\lambda h(v)+(1-h(v)),1,\dots ,1) = U \text{diag}(\lambda w_1+(1-w_1), \cdots , \lambda w_n+(1-w_n),1,\dots, 1)U^{-1},$$
	which shows that $h_*([\lambda v +(1-v)]_1)= \sum_{i=1}^n [\lambda w_i+(1-w_i)]_1$. In particular we have that $h_*(G_E)\subseteq G_F$, and so if $h$ is an algebra isomorphism, then  
	$h_*$ induces a natural isomorphism $\ol{K}_1(L_k(E))\cong \ol{K}_1(L_k(F))$. 
		\end{rmk}

Recall that, in the analytic case, filtered $K$-theory of graph $C^*$-algebras is built from the space of all prime gauge invariant ideals of $C^*(E)$, denoted by $\Prime_{\gamma}(C^*(E))$,  and the $K$-groups involved are topological $K$-theory groups as developed in \cite{errs3}.

The following theorem which relates the graded $K$-theory of Leavitt path algebras to the filtered $K$-theory of their corresponding graph $C^*$-algebras will be used in \S\ref{gdtbryr777}. 

\begin{thm}\label{lateradd}Let $E, F$ be row-finite graphs with no sinks and $\mathbb C$ the field of complex numbers. Suppose that there exists an order-preserving $\Z[x, x^{-1}]$-isomorphism
\[\varphi: K_0^{\gr}(\L_{\mathbb C}(E))  \longrightarrow  K_0^{\gr}(\L_{\mathbb C}(F)).\] Then there is an isomorphism   \[{\FK}_{0,1}({\Prime_{\gamma}}(C^*(E)); C^*(E))\cong {\FK}_{0,1}({\Prime_{\gamma}}(C^*(F)); C^*(F)).\] 
\end{thm}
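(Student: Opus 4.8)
The plan is to deduce Theorem~\ref{lateradd} by transporting the reduced algebraic filtered $K$-theory isomorphism of Theorem~\ref{maintheorem} across the canonical comparison between Leavitt path algebras over $\mathbb{C}$ and their graph $C^*$-algebras, and then invoking the main comparison result of~\cite{errs}. First I would apply Theorem~\ref{maintheorem} to the given order-preserving $\mathbb{Z}[x,x^{-1}]$-isomorphism $\varphi$, obtaining the homeomorphism $\varphi\colon \Spec^{\gr}(L_{\mathbb{C}}(E))\to \Spec^{\gr}(L_{\mathbb{C}}(F))$ together with the compatible family of isomorphisms $\alpha_{Y,n}$ witnessing $\FKbar_{0,1}^{+}(L_{\mathbb{C}}(E))\cong \FKbar_{0,1}^{+}(L_{\mathbb{C}}(F))$. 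Next I would match the two indexing spaces: the standard correspondence between graded ideals of $L_{\mathbb{C}}(E)$, gauge-invariant ideals of $C^*(E)$, and hereditary saturated subsets $\TT_E$ identifies $\Spec^{\gr}(L_{\mathbb{C}}(E))$ with $\Prime_{\gamma}(C^*(E))$ as topological spaces, so $\varphi$ descends to the homeomorphism $\Prime_{\gamma}(C^*(E))\to \Prime_{\gamma}(C^*(F))$ required by the analytic filtered $K$-theory.

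The core of the argument is the passage from the reduced algebraic $K$-groups to the topological $K$-groups of the subquotients. For each locally closed $Y$ the subquotient $C^*(E)[Y]$ is (Morita equivalent to) a graph $C^*$-algebra, and the canonical $*$-homomorphism $L_{\mathbb{C}}(E)[Y]\to C^*(E)[Y]$ induces natural comparison maps $K_n(L_{\mathbb{C}}(E)[Y])\to K_n^{\mathrm{top}}(C^*(E)[Y])$. Using the no-sinks hypothesis together with Lemma~\ref{k0k1} and Corollary~\ref{cor-new}, I would show that $K_0(L_{\mathbb{C}}(E)[Y])$ and $K_0^{\mathrm{top}}(C^*(E)[Y])$ are both the cokernel of the associated presentation matrix, so $\alpha_{Y,0}$ transports directly, while $\overline{K}_1(L_{\mathbb{C}}(E)[Y])$ splits functorially into a $\overline{\Coker}$-part and a $\Ker$-part, the latter being carried isomorphically onto $K_1^{\mathrm{top}}(C^*(E)[Y])\cong \Ker$ by the comparison map (compare the isomorphism of~\cite{cet}, which matches $\chi_1$). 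The decisive observation is that the generators $[-v+(1-v)]_1$ of $G_E$ map to zero in topological $K_1$, so the comparison transformation factors through the reduced groups $\overline{K}_1$; hence the analytic filtered $K$-theory of $C^*(E)$ is recovered, naturally in $Y$, from $\FKbar_{0,1}^{+}(L_{\mathbb{C}}(E))$.

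Finally, since the comparison natural transformation factors through the reduced algebraic filtered $K$-theory, I would invoke~\cite{errs} to conclude that the isomorphism $\FKbar_{0,1}^{+}(L_{\mathbb{C}}(E))\cong \FKbar_{0,1}^{+}(L_{\mathbb{C}}(F))$ furnished by Theorem~\ref{maintheorem} transports to an isomorphism of the analytic filtered $K$-theories, yielding
\[\FK_{0,1}(\Prime_{\gamma}(C^*(E));C^*(E))\cong \FK_{0,1}(\Prime_{\gamma}(C^*(F));C^*(F)).\]
The main obstacle I anticipate is the verification that the comparison transformation intertwines the algebraic long exact sequences appearing in diagram~\eqref{last} of Theorem~\ref{maintheorem} with the topological six-term sequences of the subquotients. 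The index maps $K_1^{\mathrm{top}}\to K_0^{\mathrm{top}}$ correspond to the connecting maps $\delta$ of Lemma~\ref{2dsnakelemma}, but the exponential maps $K_0^{\mathrm{top}}\to K_1^{\mathrm{top}}$ of the periodic sequence have no literal algebraic counterpart; reconciling these, and ensuring that the reduced data $\FKbar$ is exactly what the topological side detects, is precisely the $C^*$-algebraic input supplied by~\cite{errs} rather than a purely formal transport of the diagram.
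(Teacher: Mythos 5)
Your proposal is correct and follows essentially the same route as the paper's proof: both apply Theorem~\ref{maintheorem}, observe that the classes $[-v+(1-v)]_1$ generating $G_E$ vanish in $K_1^{\mathrm{top}}$ (the paper makes this explicit via the path of unitaries $(\lambda_t v+(1-v))_t$), so the comparison maps $\gamma_{1,\ast}$ of \cite[Theorem 4]{errs} factor through $\overline{K}_1$, and then recover topological $K_1$ from the reduced algebraic $K_1$ (the paper via the argument of \cite[Theorem 8.4(2)]{abramstom}, you via the $\Ker$/$\overline{\Coker}$ splitting and \cite{cet}, which amounts to the same thing). Your closing caveat about the six-term sequence is also resolved exactly as you anticipate: the paper defers that naturality entirely to \cite[Theorem 4]{errs} rather than verifying it by hand.
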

\begin{proof} Let $I\subseteq J\subseteq P$ be graded ideals of $L_{\mathbb C}(E)$. Whenever we have a $*$-homomorphism $\iota_{\mathscr{U}}\colon A\to \mathscr{U}$ from a $*$-algebra $A$ to a $C^*$-algebra $\mathscr{U}$, we denote the composition $$
K_n(A)\stackrel{K_n(\iota_{\mathscr{U}})}{\longrightarrow} K_n(\mathscr{U})\stackrel{}{\longrightarrow} K_n^{\rm top}(\mathscr{U})$$ by $\gamma_{n,\mathscr{U}}$, where $K_n^{\rm top}(\mathscr{U})$ is the topological K-theory of the $C^*$-algebra $\mathscr{U}$.
By \cite[Theorem 4]{errs},  excluding the brown arrows, we have the commutative diagram (with black arrows).  \begin{equation}
\label{forsymbolic}
\xymatrix@C+.1pc@R+1.1pc{
K_1(J/I)\ar[r]^{}\ar[d]^{p} \ar[r]^{}\ar@{.>}@/_2pc/@[blue][dd]_<<<<<<<<<{\gamma_{1,J/I}}& K_1(P/I)\ar[r]^{} \ar[d]^{p}\ar[r]^{}\ar@{.>}@/_2pc/@[blue][dd]_<<<<<<<<<{\gamma_{1,P/I}}& K_1(P/J)\ar[r]^{} \ar[d]^p{}\ar[r]^{}\ar@{.>}@/_2pc/@[blue][dd]_<<<<<<<<<{\gamma_{1,P/J}} & K_0(J/I)\ar[r]^{}\ar[d]^{\rm id}\ar[r]^{}
&K_0(P/I)\ar[r]^{}\ar[d]^{\rm id}
&K_0(P/J)\ar[d]^{\rm id}\\
\overline{K}_1(J/I)\ar@[brown][r]^{}\ar[d]^{\beta_{1,J/I}} \ar@[brown][r]^{}
& \overline{K}_1(P/I)\ar@[brown][r]^{} \ar[d]^{\beta_{1,P/I}}
& \overline{K}_1(P/J)\ar@[brown][r]^{} \ar[d]^{\beta_{1,P/J}} & K_0(J/I)\ar@[brown][r]^{}\ar[d]^{\gamma_{0,J/I}}&K_0(P/I)\ar@[brown][r]^{}\ar[d]^{\gamma_{0,P/I}}&K_0(P/J)\ar[d]^{\gamma_{0,P/J}}\\
K^{\rm top}_1(\widetilde{J}/\widetilde{I})\ar[r]^{}& K^{\rm top}_1(\widetilde{P}/\widetilde{I})\ar[r]^{} & K^{\rm top}_1(\widetilde{P}/\widetilde{J})\ar[r]^{} & K^{\rm top}_0(\widetilde{J}/\widetilde{I})\ar[r]^{}&K^{\rm top}_0(\widetilde{P}/\widetilde{I})\ar[r]^{}&K^{\rm top}_0(\widetilde{P}/\widetilde{J}).
}
\end{equation}We claim that the three blue (dotted) maps $\gamma_{1,*}$ factor through the natural projection map $p$ (to show $\gamma_{1,*}=\beta_{1,*}\circ p$).
But this is obvious because the elements $[-v]_1\in K_1(L_{\mathbb C}(H))$, for $v\in H$, are sent to $0$ through the canonical map $K_1(L_{\mathbb C}(H))\to K_1^{\rm top}(C^*(H))$ for every graph $H$. (Just consider any continuous path $(\lambda_t)$ in $\mathbb T$ connecting $-1$ with $1$ and the corresponding path of unitaries $(\lambda_tv+(1-v))_t$ in $\widetilde{C^*(H)}$.) 
 We observe that the squares in the first level of \eqref{forsymbolic} commutes. Hence by the claim, so do the squares in the second level of \eqref{forsymbolic}. 

Similarly as the proof of \cite[Theorem 8.4(2)]{abramstom}, we have the statement: if $\overline{K}_1(L_{\mathbb C}(E))\cong \overline{K}_1(L_{\mathbb C}(F))$, then $K^{\rm top}_1(C^*(E))\cong K^{\rm top}_1(C^*(F))$. Combining with Theorem \ref{maintheorem}, we have the following commutative diagram with the vertical maps isomorphisms (the $K_0$ parts are omitted).
\begin{equation}\label{comparefortop}
\xymatrix@C1.1pc@R1.1pc{&\overline{K}_1(J/I)\ar[dl]\ar[rr]\ar'[d][dd]&&\overline{K}_1(P/I)\ar[rr]^{}\ar'[d][dd]^>>>>>>>>>*{}\ar[dl]&&\overline{K}_1(P/J)\ar[dd]^{}\ar[dl]_{}\\
K^{\rm top}_1(J^{\rm top}/I^{\rm top})\ar[dd]\ar[rr]&&K^{\rm top}_1(P^{\rm top}/I^{\rm top})\ar[rr]\ar[dd]&&K^{\rm top}_1(P^{\rm top}/J^{\rm top})\ar[dd]^{}&\\
& \overline{K}_1(\widetilde{J}/\widetilde{I})\ar'[r][rr]\ar[dl]&&\overline{K}_1(\widetilde{P}/\widetilde{I})\ar'[r][rr]\ar[dl]&&\overline{K}_1(\widetilde{P}/\widetilde{J})\ar[dl]_{}\\
K^{\rm top}_1(\widetilde{J}^{\rm top}/\widetilde{I}^{\rm top})\ar[rr]&&K^{\rm top}_1(\widetilde{P}^{\rm top}/\widetilde{I}^{\rm top})\ar[rr]&&K^{\rm top}_1(\widetilde{P}^{\rm top}/\widetilde{J}^{\rm top})&
}\end{equation} Then the proof of the theorem is completed.
\end{proof}


%
%

\section{Shift equivalence and the stability of corresponding graph $C^*$-algebras}\label{gdtbryr777}

Let $A$ be an integral $n\times n$ matrix with nonnegative entries. Consider the following directed system of abelian groups with $A$ acting as an order-preserving group homomorphism 
\[\mathbb Z^n \stackrel{A}{\longrightarrow} \mathbb Z^n \stackrel{A}{\longrightarrow}  \mathbb Z^n \stackrel{A}{\longrightarrow} \cdots,
\]
where the ordering in $\mathbb Z^n$ is defined point-wise.  The direct limit of this system, $\Delta_A:= \varinjlim_{A} \mathbb Z^n$,  along with its positive cone, $\Delta^+= \varinjlim_{A} \mathbb N^n$, and the automorphism which is induced by multiplication by $A$ on the direct limit, 
$\delta_A:\Delta_A \rightarrow \Delta_A$, is called \emph{Krieger's dimension group}. Following~\cite{lindmarcus}, we denote this triple by $(\Delta_A, \Delta_A^+, \delta_A)$.  It can be shown that two matrices $A$ and $B$ are shift equivalent if and only if their associated Krieger's dimension groups are isomorphic (\cite[Theorem~4.2]{krieger1}, and~\cite[Theorem~7.5.8]{lindmarcus}, see also~\cite[\S7.5]{lindmarcus} for a detailed algebraic treatment). We say two finite graphs $E$ and $F$ are  \emph{shift equivalent} if their adjacency matrices $A_E$ and $A_F$ are shift equivalent. If the matrices $A_E$ and $A_F$ are shift equivalent then by~\cite[Theorem~7.4.17]{lindmarcus}, their Bowen-Franks groups are isomorphic, namely, 
$\BF(A_{E})\cong \BF(A_{F})$. On the other hand, by~\cite[Exercise~7.4.4, for $p(t)=1-t$]{lindmarcus},
$\det(1-A_{E})=\det(1-A_{F})$. Recall that a graph $E$ is \emph{irreducible} if given any two vertices $v$ and $w$ in $E$, there is a path from  $v$ to $w$. Now if the graphs $E$ and $F$ are irreducible, the main theorem of Franks~\cite{franks} gives that $A_{E}$ is flow equivalent to $A_{F}$. Thus $F$ can be obtained from $E$ by a finite sequence of 
in/out-splitting and expansion of graphs (see~\cite{parrysullivan,batespask}). 
Each of these transformation preserve the Morita equivalence (\cite{batespask}) and thus $C^*(E)$ is Morita equivalent to $C^*(F)$. 

Our result now allows us to extend this fact from finite irreducible graphs to finite graphs without sinks (see the Figure~\ref{tgftgrtgete3}).  

\begin{prop}\label{bfg1998d}
Let $E$ and $F$ be finite graphs with no sinks. If $E$ and $F$ are shift equivalent, then the $C^*$-algebras $C^*(E)$ and $C^*(F)$ are Morita equivalent. 
\end{prop}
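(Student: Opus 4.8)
The plan is to assemble the vertical chain of equivalences displayed in diagram~\eqref{tgftgrtgete3}, reading from the top (shift equivalence of the adjacency matrices) down to the bottom (Morita equivalence of the graph $C^*$-algebras), and then to convert the resulting stable isomorphism into a Morita equivalence. First I would translate the shift equivalence of $E$ and $F$ into the language of dimension groups: by Krieger's theorem (\cite[Theorem~4.2]{krieger1}, \cite[Theorem~7.5.8]{lindmarcus}), the matrices $A_E$ and $A_F$ are shift equivalent if and only if there is an isomorphism of the ordered triples $(\Delta_{A_E},\Delta_{A_E}^+,\delta_{A_E})\cong(\Delta_{A_F},\Delta_{A_F}^+,\delta_{A_F})$. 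Such an isomorphism is an order-isomorphism of abelian groups intertwining the automorphisms $\delta_{A_E}$ and $\delta_{A_F}$; since the $\Z[x,x^{-1}]$-module structure on these groups is precisely the one in which $x$ acts by $\delta$, this is exactly an order-preserving $\Z[x,x^{-1}]$-module isomorphism.

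The next step is to identify these dimension groups with graded Grothendieck groups. Because $E$ and $F$ are finite graphs with no sinks, there are order-preserving $\Z[x,x^{-1}]$-module isomorphisms $K_0^{\gr}(L_{\mathbb C}(E))\cong\Delta_{A_E}$ and $K_0^{\gr}(L_{\mathbb C}(F))\cong\Delta_{A_F}$ (\cite{roozbehhazrat2013}). Composing these with the dimension-group isomorphism from the previous step yields an order-preserving $\Z[x,x^{-1}]$-module isomorphism
$$\varphi\colon K_0^{\gr}(L_{\mathbb C}(E))\longrightarrow K_0^{\gr}(L_{\mathbb C}(F)).$$
Theorem~\ref{lateradd} then applies directly, since all of its hypotheses (row-finite graphs, no sinks, coefficients $\mathbb C$, and an order-preserving $\Z[x,x^{-1}]$-isomorphism of the $K_0^{\gr}$-groups) are met. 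This produces an isomorphism of filtered $K$-theories $\FK_{0,1}(\Prime_{\gamma}(C^*(E));C^*(E))\cong\FK_{0,1}(\Prime_{\gamma}(C^*(F));C^*(F))$ of the associated graph $C^*$-algebras.

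Finally, since $E$ and $F$ are finite, $C^*(E)$ and $C^*(F)$ are unital, and I would invoke the classification theorem of Eilers, Restorff, Ruiz and S\o rensen~\cite{errs3}, according to which filtered $K$-theory is a complete invariant for unital graph $C^*$-algebras up to stable isomorphism; this yields that $C^*(E)$ and $C^*(F)$ are stably isomorphic. As both algebras are separable, the Brown--Green--Rieffel theorem converts stable isomorphism into Morita equivalence, completing the argument. The main obstacle I anticipate is this last matching step: one must verify that the isomorphism supplied by Theorem~\ref{lateradd}---built from the gauge-invariant prime ideal space $\Prime_{\gamma}$ and the topological $K$-groups of the subquotients, together with the order on the $K_0$-groups inherited from the order-preserving $\varphi$---is exactly the flavour of (ordered, reduced) filtered $K$-theory that \cite{errs3} employs as its classifying invariant, so that their theorem genuinely applies. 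Confirming this compatibility, and in particular that order-preservation on the $K_0$-level survives the passage through Theorem~\ref{lateradd}, is the delicate point of the proof.
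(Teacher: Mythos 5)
Your proposal follows essentially the same route as the paper's proof: shift equivalence gives an isomorphism of Krieger's dimension groups, which are identified with the graded Grothendieck groups $K_0^{\gr}(L_{\mathbb C}(E))$ and $K_0^{\gr}(L_{\mathbb C}(F))$, so that Theorem~\ref{lateradd} yields an isomorphism of filtered $K$-theories, and the classification theorem of Eilers, Restorff, Ruiz and S{\o}rensen~\cite{errs3} then gives the Morita equivalence. Your extra detour through stable isomorphism and Brown--Green--Rieffel, and your closing caveat about matching the invariant of \cite{errs3}, are reasonable elaborations but do not change the argument, which is exactly the paper's.
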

\begin{proof}
Since $E$ and $F$ are shift equivalent, we have an isomorphism of Krieger's dimension groups 
\[ (\Delta_E, \Delta_E^+, \delta_E) \cong (\Delta_F, \Delta_F^+, \delta_F).\]
But since Krieger's dimension group for the graph $E$ coincides with the graded Grothendieck group $K_0^{\gr}(L(E))$ (~\cite[Lemma~11]{haz3}), we obtain an order-preserving $\Z[x,x^{-1}]$-module isomorphism $K_0^{\gr}(L(E))\cong K_0^{\gr}(L(F))$. By Theorem~\ref{lateradd}, the filtered $K$-theory of the corresponding graph $C^*$-algebras are  isomorphic. Now the main theorem of \cite{errs3}
gives that the $C^*$-algebras $C^*(E)$ and $C^*(F)$ are Morita equivalent. 
\end{proof}

\section{Acknowledgements} The authors would like to acknowledge Australian Research Council grant DP160101481. They would like to thank Guillermo  Corti\~nas, S\o ren Eilers and Petter Nyland for discussions and valuable comments on this work.

\end{document}